\documentclass[11pt]{amsart}
\usepackage{fullpage}
\usepackage{amssymb,amsmath,amsthm,amscd,mathrsfs,graphicx, color}
\usepackage[cmtip,all,matrix,arrow,tips,curve,color]{xy}
\usepackage[dvipsnames]{xcolor}
\usepackage{tikz}
\usepackage{pdfpages}


\numberwithin{equation}{section}
\newtheorem{teo}{Theorem}[section]
\newtheorem{pro}[teo]{Proposition}
\newtheorem{lem}[teo]{Lemma}
\newtheorem{cor}[teo]{Corollary}

\newtheorem{fact}[teo]{Fact}

\theoremstyle{definition}
\newtheorem{dfn}[teo]{Definition}
\newtheorem{exa}[teo]{Example}

\newtheorem{construction}[teo]{Construction}

\theoremstyle{remark}
\newtheorem{rem}[teo]{Remark}

\newcommand{\calA}{\mathcal{A}}
\newcommand{\calB}{\mathcal{B}}
\newcommand{\calD}{\mathcal{D}}

\newcommand{\calH}{\mathcal{H}}

\newcommand{\calM}{\mathcal{M}}

\newcommand{\calJ}{\mathcal{J}}
\newcommand{\calR}{\mathcal{R}}
\newcommand{\calX}{\mathcal{X}}
\newcommand{\calS}{\mathcal{S}}

\newcommand{\Sp}{\operatorname{Sp}}

\newcommand{\Pic}{\operatorname{Pic}}

\newcommand{\Sing}{\operatorname{Sing}}
\newcommand{\Prym}{\operatorname{Prym}}

\newcommand{\ZZ}{\mathbb{Z}}
\newcommand{\QQ}{\mathbb{Q}}
\newcommand{\RR}{\mathbb{R}}
\newcommand{\CC}{\mathbb{C}}

\newcommand{\PP}{\mathbb{P}}

\newcommand{\ab}[1][g]{\calA_{#1}}
\newcommand{\Vor}[1][g]{{\overline{\calA}_{#1}^{V}}}
\newcommand{\Perf}[1][g]{{\overline{\calA}_{#1}^{P}}}
\newcommand{\Centr}[1][g]{{\overline{\calA}_{#1}^{C}}}
\newcommand{\Matr}[1][g]{{{\calA}_{#1}^{\operatorname{matr}}}}
\newcommand{\Sat}[1][g]{{{\calA}_{#1}^{*}}}

\newcommand{\AV}[1][5]{\overline{\calA}_{#1}^V}
\newcommand{\AP}[1][5]{\overline{\calA}_{#1}^P}

\newcommand{\tM}{\widetilde{\calM}}
\newcommand{\hM}{\widehat{\calM}}
\newcommand{\bM}{\overline{\calM}}
\newcommand{\BG}{(\calB/\Gamma)^*}
\newcommand{\tDA}[1][1]{\widetilde{D}_{A_{#1}}}

\newcommand{\tDD}[1][4]{\widetilde{D}_{D_{#1}}}

\newcommand{\tDh}{\widetilde{D}_{h}}

\newcommand{\hDh}{\widehat{D}_{h}}

\newcommand{\cDh}{{\calD}_{h}}
\newcommand{\cDn}{{\calD}_{n}}

\newcommand*\circled[1]{\tikz[baseline=(char.base)]{
  \node[shape=circle,draw,inner sep=1pt] (char) {#1};}}
\newcommand{\stab}{{\operatorname{stab}}}
\newcommand{\real}{{\operatorname{real}}}


\begin{document}


\title{Complete moduli of cubic threefolds and their intermediate Jacobians}

\author[S. Casalaina-Martin]{Sebastian Casalaina-Martin}
\address{University of Colorado, Department of Mathematics, Boulder, CO 80309}
\email{casa@math.colorado.edu}

\author[S. Grushevsky]{Samuel Grushevsky}
\address{Stony Brook University, Department of Mathematics, Stony Brook, NY 11794-3651}
\email{sam@math.stonybrook.edu}

\author[K. Hulek]{Klaus Hulek}
\address{Institut f\"ur Algebraische Geometrie, Leibniz Universit\"at Hannover,  30060 Hannover, Germany}
\email{hulek@math.uni-hannover.de}

\author[R. Laza]{Radu Laza}
\address{Stony Brook University, Department of Mathematics, Stony Brook, NY 11794-3651}
\email{rlaza@math.stonybrook.edu}

\thanks{Research of the first author was supported in part  by NSF grant DMS-11-01333 and a Simons Foundation
Collaboration Grant for Mathematicians
(317572). Research of the second author was supported in part by NSF grants DMS-12-01369 and DMS-15-01265, and a Simons Fellowship in mathematics. Research of the third author was supported in part by DFG grant Hu-337/6-2. Research of the fourth author was supported in part by NSF grants DMS-12-00875 and DMS-12-54812.}

\begin{abstract}
The intermediate Jacobian map, which associates to a smooth cubic threefold its intermediate Jacobian,  does not extend to the GIT compactification of the space of cubic threefolds, not even as a map to the Satake compactification of the moduli space of principally polarized abelian fivefolds. A better ``wonderful'' compactification $\tM$ of the space of cubic threefolds was constructed by the first and fourth authors --- it has a modular interpretation, and divisorial normal crossing boundary. We prove that the intermediate Jacobian map extends to a morphism from $\tM$ to the second Voronoi toroidal compactification of $\ab[5]$ --- the first and fourth author previously showed that it extends to the Satake compactification. Since the second Voronoi compactification has a modular interpretation, our extended intermediate Jacobian map encodes all of the geometric information about  the degenerations of intermediate Jacobians, and allows for the  study of the  geometry of cubic threefolds via degeneration techniques. As one application we  give a complete classification of all degenerations of intermediate Jacobians of cubic threefolds of torus rank 1 and 2.
\end{abstract}


\makeatletter
\let\@wraptoccontribs\wraptoccontribs
\makeatother

\date{\today}

\maketitle


Cubic threefolds are rich geometric objects that have played a tremendous role in algebraic geometry,  providing an important testing ground for a wide range of
questions in Hodge theory, birational geometry, and the theory of algebraic cycles.
A famous example of this is the proof of Clemens and Griffiths
that a smooth cubic threefold $X \subset \PP^4$ is unirational, but never rational \cite{cg}.
The moduli space of cubic threefolds $\mathcal M$  has also proven to be a remarkably interesting object.
In \cite{a}  and \cite{yokoyama} respectively,  Allcock   and Yokoyama gave an explicit description of the compactification $\overline{\mathcal M}$ obtained via  geometric invariant theory.  With this as a starting point, Allcock--Carlson--Toledo \cite{act} and Looijenga--Swierstra \cite{ls} showed that $\mathcal M$ is the complement of an arithmetic arrangement $\mathcal H$ of hyperplanes in a ten dimensional ball quotient  $\mathcal B/\Gamma$; the Baily--Borel compactification then gives a  compactification $(\mathcal B/\Gamma)^*$.  Moreover, inside of this model  lie  a number of other moduli spaces with arithmetic descriptions, including the ball-quotient model of genus five hyperelliptic curves due to Deligne--Mostow \cite{dm, act}, as well as a model of Kondo's ball quotient for genus $4$ curves \cite{k2,cmjl1}.

The \cite{act} ball quotient model was obtained using the period map for cubic fourfolds \cite{voisin,laza, looijenga4folds}. It is more natural to study cubic threefolds via the period map used by Clemens and Griffiths, associating to a cubic threefold its intermediate Jacobian, which is a principally polarized abelian fivefold. Recall that they prove the Torelli theorem for cubic threefolds:  the map
\begin{equation}
IJ: \mathcal M \to \ab[5]
\end{equation}
from the moduli space  $\mathcal M$ of cubic threefolds to the moduli space $\ab[5]$ of principally polarized abelian varieties of dimension $5$ is an injective immersion (the local and global Torelli Theorems hold and
the automorphism groups  coincide, see  \cite[Thm. 9.8 (b)]{griffiths}, \cite[(0.11)]{cg}, and \cite[Thm. 5.1]{zz}).
It is thus natural to compactify this map, i.e., to study degenerations of intermediate Jacobians. Our main result is:

\begin{teo}[Main theorem]\label{teo:main}
The map $IJ: \calM\to\calA_5$ sending a smooth cubic threefold to
its intermediate Jacobian extends to a morphism $\widetilde{IJ}^{V}:\tM\to\Vor[5]$
from the wonderful blow-up of the GIT compactification of the moduli
space of cubic threefolds to the second Voronoi toroidal compactification
of the moduli space of principally polarized abelian fivefolds.
\end{teo}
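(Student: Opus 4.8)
The plan is to use the already-established extension to the Satake compactification to reduce the theorem to a local statement along the boundary of $\tM$, and then to identify the limits of the intermediate Jacobians by means of the explicit degenerate cubic threefolds supplied by the modular interpretation of $\tM$. By the work of the first and fourth authors, $IJ$ extends to a morphism $\widetilde{IJ}^{*}\colon\tM\to\Sat[5]$ to the Satake compactification; the second Voronoi compactification carries a projective birational morphism $p\colon\Vor[5]\to\Sat[5]$ which is an isomorphism over $\ab[5]$. Since $IJ$ is an inclusion (Clemens--Griffiths), on $\calM$ the map $\widetilde{IJ}^{*}$ factors through $\ab[5]\subset\Vor[5]$, so one obtains a rational lift $\widetilde{IJ}^{V}\colon\tM\dashrightarrow\Vor[5]$; the content of the theorem is that it is everywhere defined. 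As $\tM$ is smooth with normal crossing boundary $\partial\tM=\tM\setminus\calM$, this is exactly the statement that the period map $\calM\to\ab[5]$ extends across $\partial\tM$ to $\Vor[5]$, and by the standard extension criterion for period maps into toroidal compactifications it suffices to check, for every $t\in\tM$, that the monodromy cone of the family of intermediate Jacobians near $t$ is contained in a single cone of the second Voronoi fan $\Sigma^{V}$ (up to $\GL(5,\ZZ)$), compatibly along faces. (Should some local monodromies be only quasi-unipotent, one first passes to a level cover of $\ab[5]$ and a corresponding cover of $\tM$, then descends.)

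To carry this out I would use the modular description of the strata of $\partial\tM$: a general point of a stratum corresponds to a cubic threefold $X$ with a prescribed configuration of $A_n$ ($n\le 5$) and $D_4$ singularities, the chordal and secant degenerations having been resolved, so that, for example, $\tDA$ is the divisor of one-nodal cubic threefolds and $\tDD$ that of cubics with a $D_4$ singularity. For a general one-parameter smoothing $\calX\to\Delta$ of such an $X$, the intermediate Jacobians of the smooth fibers give a map $\Delta^{*}\to\ab[5]$; I would compute its limiting mixed Hodge structure --- and hence the local monodromy $T\in\Sp(10,\ZZ)$ and its logarithm $N=\log T$ --- by combining the Clemens--Schmid sequence for $\calX\to\Delta$ with the realization of $IJ(X)$ as $\Prym(\widetilde C/C)$, the Prym variety of the \dcpq obtained by projecting $X$ from a general line. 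The degeneration of $IJ(X)$ is then controlled by the admissible degeneration of the double cover $\widetilde C\to C$, whose limit is an explicit stable semiabelic variety of torus rank equal to the corank of $N$; assembling these nilpotents over a stratum where boundary branches $D_{i_1},\dots,D_{i_\ell}$ meet produces the monodromy cone $\sigma=\RR_{\ge0}N_{i_1}+\dots+\RR_{\ge0}N_{i_\ell}$, a rational polyhedral cone in the cone of positive semidefinite quadratic forms in five variables.

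It then remains to verify that every such $\sigma$ lies in a cone of $\Sigma^{V}$ --- equivalently, and this is precisely why the second Voronoi compactification is the natural target, that the limit semiabelic variety constructed above is a principally polarized stable semiabelic variety in the sense of Alexeev, and hence gives a genuine point of $\Vor[5]$, necessarily lying over the prescribed point of $\Sat[5]$. For torus ranks $1$ and $2$ this is a direct, if intricate, computation, which also yields the classification of those degenerations announced as an application; in general one must check that no further blow-up of $\tM$ is needed, i.e.\ that the monodromy cones of all cubic threefold degenerations are contained in single cones of $\Sigma^{V}$ rather than crossing one of its walls. Granting this, the extension criterion produces the morphism $\widetilde{IJ}^{V}\colon\tM\to\Vor[5]$, whose restriction to $\calM$ is $IJ$ by construction.

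The main obstacle is this last verification, combined with the monodromy computation at the deepest strata of $\partial\tM$, where the torus rank is largest, the monodromy cones are biggest, and the combinatorics of the second Voronoi fan --- living in the $15$-dimensional space of quadratic forms in five variables --- is most delicate; a careful stratum-by-stratum analysis of the degenerate intermediate Jacobians seems unavoidable. I would expect the torus-rank $\le 2$ case to be a finite, checkable computation, while the remaining strata should be handled uniformly through the Prym/double-cover picture, exhibiting the limits as explicit stable semiabelic varieties --- which is also exactly what singles out the second Voronoi compactification, as opposed to some other admissible toroidal model, as the correct target.
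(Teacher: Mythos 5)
Your overall frame (lift the Satake extension, reduce to containment of monodromy cones in cones of the second Voronoi fan, use the Prym description $IJ(X)\cong\Prym(\widetilde C/C)$ to study the degenerations) matches the paper's starting point, but the central verification is left as a ``check'' rather than proved, and this is exactly where the real content lies. The authors state explicitly that a direct computation of the monodromy cones of cubic threefold degenerations is something they could not carry out; the proof does not proceed stratum by stratum at all. Instead it uses two specific inputs that are absent from your proposal: (i) the known characterization (Alexeev--Brunyate--Hulek--et al., Vologodsky) that the indeterminacy locus of the Prym map $\overline{\calR}_6\dashrightarrow\Vor[5]$ is precisely the union of the closures of the Friedman--Smith loci $\overline{FS}_n$, $n\ge 2$; and (ii) a purely combinatorial statement (Proposition \ref{prop:nonFS}) that the admissible covers arising from stable reduction of \'etale double covers of plane quintics with $AD$-singularities never lie in any $\overline{FS}_n$, $n\ge 2$. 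Your expectation that the deeper strata can be ``handled uniformly through the Prym/double-cover picture, exhibiting the limits as explicit stable semiabelic varieties'' is precisely the point that fails in general: for Prym degenerations the monodromy cone need \emph{not} lie in a single Voronoi cone --- that is the Friedman--Smith phenomenon --- so producing a limit semiabelic variety fiberwise does not yield the extension; one must rule out the bad combinatorial types, which is what the paper does.

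Two further gaps. First, the reduction from degenerations of cubics to degenerations of quintic double covers is not automatic from ``project from a general line'': near a singular cubic one must show that a non-special line can be chosen compatibly in families and that the discriminants (hence the wonderful blow-ups) of the two deformation problems are identified, which is the deformation-theoretic comparison $\Def(X,\ell)\to\Def(\widetilde D,D)$, the wonderful blow-ups of Weyl covers, and the descent argument of Section \ref{sec:compare} --- the technical core of the paper, which your sketch glosses over. Second, the Prym picture breaks down entirely along the chordal divisor $\tDh$ (the discriminant quintic becomes non-reduced), so that divisor needs a separate argument; the paper handles it by identifying $\tDh$ with the closure of the hyperelliptic genus~$5$ locus, noting the monodromy around $\tDh$ is finite, and invoking an extension statement for trivial-monodromy divisors (Proposition \ref{prop:trivialextension}). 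Without (i), (ii), the family-level reduction, and the chordal case, the proposal does not close the argument.
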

The wonderful compactification $\tM$ appeared for the first time in the work \cite{cml} of the first and fourth author; it has a normal crossing divisorial boundary (in a stack sense), and a modular interpretation that we will review below.

The method of our proof of Theorem \ref{teo:main}  is to study the monodromy cones. These turn out to be spanned by rank $1$ quadrics (see Remark \ref{rem_moncone}), and thus the image of $\widetilde {IJ}^V$ is contained in the matroidal locus $\Matr[5]$, which by the results of Melo and Viviani \cite{MV12} is the maximal partial compactification of $\ab[5]$ contained in both the second Voronoi compactification $\Vor$ and the perfect cone compactification
$\Perf$. As a result we obtain:

\begin{cor}\label{cor:matroidal}
The image of the intermediate Jacobian map  $\widetilde{IJ}^{V}:\tM\to\Vor[5]$
 is contained
in the matroidal locus $\Matr[5]$. In particular, the intermediate Jacobian map also extends to a morphism  $\widetilde{IJ}^{P}:\tM\to\Perf[5]$.
\end{cor}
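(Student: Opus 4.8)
The plan is to deduce the corollary from the proof of Theorem~\ref{teo:main} together with the description of the matroidal locus due to Melo--Viviani~\cite{MV12}, with essentially no new geometry. Recall that a morphism from $\tM$, which has normal crossing boundary, into a toroidal compactification of $\ab[g]$ is controlled locally by the monodromy of the family of intermediate Jacobians: near a boundary point lying on $k$ branches of the boundary divisor, the morphism is governed by the rational polyhedral cone spanned by the logarithms $N_1,\dots,N_k$ of the local monodromy operators, viewed as positive semidefinite quadratic forms, and the image point lands in the toroidal stratum indexed by the smallest cone of the chosen fan containing this monodromy cone. The essential input, produced in the course of proving Theorem~\ref{teo:main}, is that every such $N_i$ is the logarithm of a rank $1$ unipotent transformation; in the case analysis the monodromy cones are in fact identified with the cones $\sigma_\Gamma$ attached to certain graphs $\Gamma$ arising from the combinatorics of the degenerating cubic threefold, so in particular they are spanned by rank $1$ quadrics.

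First I would recall that a cone $\sigma_\Gamma$ spanned by the rank $1$ quadrics associated to the edges of a graph $\Gamma$ has graphic, hence regular, associated matroid, and is therefore a \emph{matroidal cone}; by \cite{MV12} every matroidal cone occurs both in the second Voronoi decomposition and in the perfect cone decomposition, and the open subsets of $\Vor[5]$ and of $\Perf[5]$ corresponding to the subfan of matroidal cones are the two natural realizations of $\Matr[5]$, which is characterized in \cite{MV12} as the largest partial compactification of $\ab[5]$ that embeds into both. Moreover any face of $\sigma_\Gamma$ is spanned by a subset of its rank $1$ generators and so is again matroidal. Consequently, for every point of $\tM$ the smallest cone of the second Voronoi fan containing the monodromy cone at that point is matroidal, so the image point lies in a matroidal stratum; hence $\Im(\widetilde{IJ}^V)\subseteq\Matr[5]\subseteq\Vor[5]$, which is the first assertion.

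Finally, since $\Matr[5]\hookrightarrow\Vor[5]$ is an open immersion containing the image of $\widetilde{IJ}^V$, the map factors uniquely through a morphism $\widetilde{IJ}:\tM\to\Matr[5]$; composing with the other open immersion $\Matr[5]\hookrightarrow\Perf[5]$ from \cite{MV12} gives the morphism $\widetilde{IJ}^P:\tM\to\Perf[5]$, and it restricts to $IJ$ on $\calM$ because both immersions are the identity over the interior $\ab[5]$. I do not expect a real obstacle at this stage: the only point requiring care is to confirm that the local toroidal charts used to define $\widetilde{IJ}^V$ are precisely those attached to the matroidal cones $\sigma_\Gamma$, so that the factorization through $\Matr[5]$ is genuinely a morphism rather than a set-theoretic containment --- but this is immediate once the monodromy cones have been identified with the $\sigma_\Gamma$ in the proof of Theorem~\ref{teo:main}. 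The substantive work, namely the rank $1$ analysis of the monodromy, has already been carried out there and is not repeated.
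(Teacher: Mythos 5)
Your argument is correct and is essentially the paper's own deduction: the corollary is obtained exactly as you describe, by combining the monodromy-cone analysis underlying Theorem~\ref{teo:main} (the cones are generated by rank~$1$ quadrics attached to the edge/graph data of the degenerating curves and covers, hence are matroidal, as are their faces) with Melo--Viviani's characterization of $\Matr[5]$ as the maximal partial compactification sitting inside both $\Vor[5]$ and $\Perf[5]$, and then factoring $\widetilde{IJ}^{V}$ through the open immersions $\Matr[5]\hookrightarrow\Vor[5]$ and $\Matr[5]\hookrightarrow\Perf[5]$. The only cosmetic difference is bookkeeping (graphic versus cographic matroids, and the precise identification of the monodromy cones via the Prym/admissible-cover description), which does not affect the argument since all such matroids are regular.
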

These results are the culmination
 of much of our previous work on the subject. The first and the fourth author introduced and studied the wonderful
compactification in \cite{cml}, the second and third authors investigated the class of the cycle given by intermediate Jacobians in \cite{grhu1} and
together we studied the extension of period maps, in particular of the Prym map in  \cite{cmghl}.  Clearly, all of this is closely connected to the extensive literature of
degenerations of Jacobians and Pryms (see esp. \cite{fs}, \cite{abh}, \cite{ab}, and \cite{g}).

One motivation for our work is to make it possible to study the geometry of intermediate Jacobians of cubic threefolds via degeneration techniques --- recall that by the results of \cite{alexeev02}, \cite{olsson} the Voronoi compactification $\Vor[5]$ has a modular interpretation, see in particular Theorems \ref{theo:codimension1intro} and \ref{theo:divisors}, below.
In fact, after the appearance of the first version of our manuscript, the degenerations of intermediate Jacobians of cubic threefolds were used in \cite{LSV} to give an alternative construction of  O'Grady's exceptional $10$-dimensional compact hyperk\"ahler manifold (see \cite{OG10}). While the two papers share a number of ideas and techniques, their goals are  complementary. Namely, here the focus is on the modularity of the degenerations of intermediate Jacobians, leading us to considering the wonderful blow-up $\widetilde \calM$ of the moduli space. In contrast, in \cite{LSV}, it is essential to compactify the relative intermediate Jacobian family (allowing worse degenerations than those we consider here) arising from hyperplane sections of a fixed cubic fourfold without modifying the base (isomorphic to $\PP^5$) of the fibration.
In a different direction, we point out that while a mildly singular cubic threefold is rational, and thus the standard degeneration techniques do not apply for the rationality question, the degeneration data (e.g. for the Segre cubic) from the perspective of abelian varieties is rich enough to decide the irrationality of the generic cubic threefold (see \cite{g}).

The question of extending morphisms, given by period maps, between moduli spaces is a classical problem in algebraic geometry.
The prime example is the Torelli map $t_g:  \mathcal M_g \to \ab[g]$ from the moduli space of curves.
It is a classical result of Mumford and Namikawa \cite{nam76I, nam76II} that the Torelli map has a natural extension $\overline t^{V}:\overline \calM_g\to  \Vor$ from the Deligne--Mumford compactification $\overline \calM_g$.
Alexeev--Brunyate \cite{ab} showed that the image of the Torelli map lies in the matroidal locus, and thus the Torelli maps also extends to a morphism $\overline t^{P}:\overline\calM_g\to\Perf$. In contrast, the Prym map does not extend to a morphism from  $\overline{\mathcal R}_{g}$, the compactification of the moduli space  of connected \'etale  double covers of curves  \cite{beauville}, to any of the usual toroidal compactifications, as shown by Friedman and Smith \cite{fs}. This result was further refined in \cite{abh}, \cite{vologodsky} and \cite{cmghl} where the indeterminacy locus of the Prym map was studied.

The Prym map is closely related to our situation: Mumford showed  that intermediate Jacobians of cubic threefolds are Pryms of \'etale double covers of plane quintic curves, and thus many of the arguments for Prym varieties still apply in the case of cubic threefolds.
From the results about the Prym map it is anything but obvious that the intermediate Jacobian map extends to a good compactification of $\calM$. It is the main result of our paper that this is indeed the case.

At this point it is necessary to discuss what the correct compactification of the moduli space $\mathcal M$ of smooth cubics is.
The natural starting point  is the GIT compactification $\overline \calM$ (studied in \cite{a}), however, it was already shown in  \cite{cml} that the intermediate Jacobian map
does not extend to a morphism on  $\overline \calM$, not even to the Satake compactification $\Sat[5]$.
Indeed, the general theory of degenerations of Hodge structures (e.g. \cite{cks}) tells us that one should modify this compactification so that the discriminant locus
$\Sigma_{\overline \calM} =\overline \calM \setminus \calM$
becomes a normal crossing divisor.
It is, of course, always possible to consider a log resolution of $(\overline \calM,\Sigma_{\overline \calM})$.  However,  we want this to be done in a controllable and geometrically meaningful way, so that we will have some hope of using this resolution of the period map to describe the image.
The result is the wonderful compactification $\tM$ which made its first appearance in \cite{cml} where it was
shown (as an easy consequence of the Borel extension theorem) that one obtains an extended period map $\tM\to \Sat[5]$.
However, the map to the Satake compactification loses a lot of information: $\Sat[5]$ is not a fine moduli space, and the map there does not help in studying degenerations of intermediate Jacobians in families.  For instance,  note that on the one hand all degenerations with trivial abelian part (i.e., torus rank $5$) will correspond to a single point in $\Sat[5]$, while, on the other hand, it is known from Gwena \cite{g} that the extension data for degenerations to the  Segre cubic are rich enough to distinguish it from degenerations of Jacobians (and thus to prove non-rationality of the general cubic threefold). For this reason, the knowledge of the map to  $\Sat[5]$ is not sufficient from a geometric point of view, while the extension of the intermediate Jacobian map to a morphism to $\Vor[5]$ contains the relevant geometric information for applying degeneration methods.

While a complete description of the boundary strata of $\tM$ is quite intricate, the corresponding combinatorics is fairly easy -- it is governed by the root lattices associated to the singularities. The map $\widetilde {IJ}^V$ must send a codimension $k$ boundary stratum in $\tM$ to a codimension at least $k$ stratum in the boundary of the locus of intermediate Jacobians, which we can then describe explicitly. In the case of codimension $1$ strata the result is the following:

\begin{teo}\label{theo:codimension1intro}
The boundary of the  ten-dimensional  intermediate Jacobian locus $IJ$ in $\Vor[5]$
consists of
the closures of the following  nine-dimensional loci:
\begin{eqnarray*}
\mathcal H_5 &:=& \text{The hyperelliptic locus in } \mathcal A_5, \ \\
{\bf K}&:=&\lbrace  A \mid  A=E\times J(C)\in \mathcal A_1\times (\mathcal J_4\cap \theta_{\operatorname{null}})\rbrace,\\
{\bf A}&:=&\lbrace (A,z_4)\mid A=J(C) \in\calJ_4,z_4\in 2_*\Sing\Theta_C\rbrace,\\
{\bf B}&:=&\lbrace (A,z_4)\mid A=E\times J(C')\in\calA_1\times \calH_3,
  z_4=(z_1,z_3),  z_3\in \Theta_{C'} = C' - C' \rbrace.
\end{eqnarray*}
Here $\calH_g$ denotes the hyperelliptic locus in genus $g$,
 $\calJ_4$ denotes
the Schottky locus in $\ab[4]$ and
$\theta_{\operatorname{null}}$ denotes the locus in $\mathcal A_4$ of abelian varieties with a vanishing theta-null, while
  $J(C)$ is the (degree $0$) Jacobian. In $\mathbf A$ we take  $\Theta_C$ to be any symmetric theta divisor in $J(C)$, in $\mathbf B$ we take
 $\Theta_{C'}$ to be the unique symmetric theta divisor in $J(C')$ whose singularity is at the origin. We use $z_k$ to denote a point in an abelian variety of dimension $k$, giving the extension datum determining the corresponding point in Mumford's partial compactification (torus rank $1$).  The same result holds for the closure of $IJ$ in the perfect cone compactification $\Perf[5]$.
\end{teo}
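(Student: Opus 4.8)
The plan is to combine the extended intermediate Jacobian map $\widetilde{IJ}^V:\tM\to\Vor[5]$ of Theorem \ref{teo:main} with an explicit analysis of the codimension-$1$ boundary strata of the wonderful compactification $\tM$. Since $\tM$ is obtained from $\overline{\calM}$ by a controlled sequence of blow-ups governed by the root lattices of the singularities of the cubic, its boundary divisors are indexed by a short list of singularity types; by \cite{cml} the generic cubic threefold on each such divisor is known. I would first enumerate these divisors: the discriminant divisor of cubics with a single node ($A_1$), and the further divisors coming from the wonderful blow-up corresponding to deeper singularities (the chordal cubic / secant construction, the $A_2$ through $A_5$ and $D_4$ loci, etc.). For the present statement only the images of the \emph{divisorial} strata matter, so I would restrict attention to those whose image in $\Vor[5]$ is again a divisor, i.e.\ torus rank $0$ or $1$ on the boundary.

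Next, for each boundary divisor of $\tM$ I would compute the image under $\widetilde{IJ}^V$ using Mumford's description of intermediate Jacobians of cubic threefolds as Pryms of \'etale double covers of plane quintics, together with the degeneration theory of Prym varieties (the monodromy-cone computation underlying Theorem \ref{teo:main}, and the results of \cite{fs,abh,ab,cmghl}). The key point is that the monodromy cones are spanned by \emph{rank $1$} quadrics, so each codimension-$1$ degeneration is either (a) of torus rank $0$, giving a principally polarized abelian fivefold in the closure of the image, or (b) of torus rank $1$, described by Mumford's partial compactification data $(A,z_4)$ with $A$ a ppav of dimension $4$ and $z_4\in A$ the extension class. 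One then reads off: the smooth-but-degenerate (torus rank $0$) locus produces the hyperelliptic locus $\calH_5$ (from the cubic acquiring an $A_1$ singularity, where the Prym of the stable double cover of the degenerate quintic is a hyperelliptic Jacobian) and the locus $\mathbf K = E\times(\calJ_4\cap\theta_{\mathrm{null}})$ (from a different boundary divisor where the Prym splits off an elliptic factor and the genus-$4$ part acquires a vanishing theta-null). The torus rank $1$ loci $\mathbf A$ and $\mathbf B$ arise from the boundary divisors where the double cover of the quintic degenerates so that the Prym construction yields a semi-abelian variety; here I would use the explicit formulas of \cite{cmghl} for the extension datum of a degenerating Prym to identify $z_4\in 2_*\Sing\Theta_C$ in case $\mathbf A$ and $z_4=(z_1,z_3)$ with $z_3\in C'-C'$ in case $\mathbf B$. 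Finally, a dimension count confirms each image is exactly $9$-dimensional, and since $\widetilde{IJ}^V$ is proper with image the closure $\overline{IJ}$, the boundary $\overline{IJ}\setminus IJ$ is covered by these images; comparing with the known $10$-dimensionality of $IJ$ shows there are no other components.

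The main obstacle I expect is case $\mathbf A$: correctly identifying the extension datum as landing in $2_*\Sing\Theta_C$ rather than in some larger or smaller subvariety of the Jacobian $\calJ_4$. This requires pinning down precisely how the Abel--Prym map degenerates and how the singular locus of the theta divisor of the Prym interacts with the ``doubling'' map $2_*$; the appearance of $\Sing\Theta_C$ is dictated by the geometry of the plane quintic and its double cover at the relevant degeneration, and getting the symmetric theta divisor normalizations right (as flagged in the statement) is delicate. The case $\mathbf B$ is analogous but the splitting $A=E\times J(C')$ with $C'$ \emph{hyperelliptic} of genus $3$ and $z_3\in\Theta_{C'}=C'-C'$ must be matched to the corresponding boundary divisor of $\tM$ (a cubic degenerating so that its associated double cover disconnects appropriately). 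The statement that the same conclusion holds in $\Perf[5]$ is then immediate from Corollary \ref{cor:matroidal}, since on the matroidal locus $\Matr[5]$ the two compactifications agree, so no separate argument is needed there.
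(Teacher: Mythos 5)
Your overall strategy is the same as the paper's: use the morphism $\widetilde{IJ}^{V}:\tM\to\Vor[5]$ of Theorem \ref{teo:main}, note that $\overline{IJ}\setminus IJ$ is covered by the images of the seven boundary divisors $\tDA[1],\dots,\tDA[5],\tDD[4],\tDh$, identify those images, and conclude. One difference of route: the paper does not re-derive the torus rank $1$ loci from Prym degeneration formulas as its primary argument; it quotes the Fourier--Jacobi/theta-function computation of \cite{grhu1} (Theorem \ref{theo:codimension1}), which already says that $\overline{IJ}\cap\beta_1^0=\overline{\mathbf A}\cup\overline{\mathbf B}$ with the exact extension data, and then matches boundary divisors of $\tM$ to these loci by comparing compact parts via \cite[Table 1]{cml}. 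The Prym-theoretic computation you propose (and correctly flag as the delicate point for $\mathbf A$) is carried out in the paper only as a secondary geometric re-derivation: for $\mathbf A$ the extension datum is pinned down by Mumford's singularity lemma together with the trigonal construction (Lemma \ref{lem:extensiondata}), and for $\mathbf B$ by the elliptic-tail stable reduction and \cite[Prop.~1.5]{abh}. So your route is viable, but it is the shortcut through \cite{grhu1} that makes the completeness argument quick.

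There is, however, a genuine gap in your reduction step. You dismiss torus rank $\ge 2$ by arguing that since the monodromy cones are spanned by rank $1$ quadrics, ``each codimension-$1$ degeneration is of torus rank $0$ or $1$.'' That inference is false: the monodromy around a boundary divisor of $\tM$ is a sum of rank $1$ forms and can have rank up to $5$; indeed $\tDD[4]$ maps generically into the torus rank $2$ stratum $\beta_2^0$, which is $13$-dimensional in $\Vor[5]$, so a $9$-dimensional component of $\partial\overline{IJ}$ could a priori live there. To get the stated equality you must separately (i) show that the images of the contracted divisors $\tDA[4]$, $\tDA[5]$, $\tDD[4]$ are at most $8$-dimensional, and (ii) show that these images are contained in $\overline{\calH}_5\cup\overline{\mathbf K}\cup\overline{\mathbf A}\cup\overline{\mathbf B}$; your closing appeal to the $10$-dimensionality of $IJ$ does neither, since it does not exclude extra lower-dimensional boundary components outside the four closures. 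The paper settles both points via the dimension counts in Table \ref{tablecon} and the explicit torus rank $2$ analysis (Theorem \ref{theo:degenerationscod2} and the computations of \cite{grhu2}). Finally, a factual slip in your identification: $\calH_5$ is the image of the chordal divisor $\tDh$ (Collino's degeneration to the secant cubic), not of the $A_1$ divisor; the $A_1$ divisor is precisely what produces $\mathbf A$ (and $A_2$ produces $\mathbf K$, $A_3$ produces $\mathbf B$). Within your method this would surface once the Prym of the double cover of a one-nodal quintic is actually computed, but as written the attribution is wrong.
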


The boundary divisors corresponding to  $\mathcal H_5$ and $\mathbf A$ were known to Collino \cite{col}  and Clemens--Griffiths \cite{cg}.  The boundary divisor corresponding to  $\mathbf K$ can be found in \cite{cml, grsmordertwo}
and the boundary divisor corresponding to $\mathbf B$ was discovered  in \cite{grhu1}.
The key addition in the theorem above is that, using the explicit resolution $ \widetilde {\mathcal M}\to \Vor[5]$, we are now able to  show that there are no other boundary divisors in $\overline{IJ}$.
In fact, our proof  provides a complete answer to how the boundary divisors of $\tM$ are related to degenerations of intermediate Jacobians. We recall that the boundary divisors of $\tM$ are labeled $\tDA[1],\tDA[2],\tDA[3],\tDA[4],\tDA[5],\tDD[4],\tDh$, corresponding to the blow-ups in the GIT compactification of the loci of cubics with an isolated singularity of the type indicated by the subscript, and,  to the blow-up of the point corresponding to the chordal cubic (giving the divisor $\tDh$).
The result is then:

\begin{teo}\label{theo:divisors} Under the map $\widetilde{IJ}^{V}:\tM\to\Vor[5]$
all but four boundary  divisors of the wonderful blow-up are contracted. The boundary divisors which are not contracted and their images are:

\begin{itemize}

\item[(1)] The chordal divisor $\tDh$ is mapped to the closure $\overline \calH_5$ of the hyperelliptic locus.

\item[(2)] The divisor $\tDA[2]$ is mapped to the closure $\overline {\mathbf K}$ of the locus $\mathbf K$.

\item[(3)] The divisor $\tDA[1]$ is mapped to the closure $\overline {\mathbf A}$ of the locus $\mathbf A$.

\item[(4)]  The divisor $\tDA[3]$ is mapped to the closure $\overline {\mathbf B}$ of the locus $\mathbf B$.

\end{itemize}
The same statement holds for the map $\widetilde{IJ}^{P}:\tM\to\Perf[5]$ to the perfect cone compactification.
\end{teo}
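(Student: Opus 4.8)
The plan is to analyze the restriction of $\widetilde{IJ}^{V}$ to each boundary divisor of $\tM$ separately, using the modular description of the wonderful blow-up from \cite{cml}, Mumford's presentation of intermediate Jacobians as Pryms of \'etale double covers of plane quintics, and the monodromy cone computation that underlies Theorem \ref{teo:main}. Recall that $\tDA[1],\dots,\tDA[5]$ and $\tDD[4]$ are exceptional divisors of the wonderful blow-up lying over the loci in $\overline{\calM}$ of cubics with one isolated singularity of the indicated type, and $\tDh$ is the exceptional divisor over the chordal point. A general point of each such divisor is represented by a one-parameter family $\{X_t\}_{t\ne 0}$ of smooth cubic threefolds degenerating, as $t\to 0$, to a cubic with a single singular point of the given type (resp.\ to the chordal cubic). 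By Theorem \ref{teo:main} the limit $\lim_{t\to 0}IJ(X_t)$ exists in $\Vor[5]$, and the corresponding monodromy cone --- spanned by rank-one quadrics --- fixes the torus rank of this limit; so the remaining task, divisor by divisor, is to identify the abelian part and the extension datum of the limit.

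For $\tDh$ I would use the classical fact that the vanishing cohomology along the chordal degeneration is that of a genus-five hyperelliptic curve: the limit is a hyperelliptic Jacobian, a general point of $\tDh$ records such a curve (equivalently its twelve branch points on $\PP^1$), and since $\dim\tDh = 9 = \dim\overline{\calH}_5$ the image is all of $\overline{\calH}_5$. For the remaining divisors I would work on the Prym side: projecting $X_t$ from a line presents it as a conic bundle over $\PP^2$ with discriminant a plane quintic $Q_t$ carrying an \'etale double cover $\tilde Q_t$, with $IJ(X_t) = \Prym(\tilde Q_t/Q_t)$; as $t\to 0$ the cover $\tilde Q_t\to Q_t$ degenerates --- after stable reduction --- to an admissible double cover of stable curves whose combinatorial type is dictated by the singularity acquired by $X_0$, and the limit Prym, together with its torus rank, abelian part, and extension point, is then read off exactly as in the analysis of the Prym map in \cite{cmghl} (compare \cite{abh, ab}). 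I expect this to produce: from an $A_1$-degeneration, a torus-rank-one limit with abelian part a general $J(C)\in\calJ_4$ and extension datum $z_4\in 2_*\Sing\Theta_C$ (recovering the divisor of Collino and Clemens--Griffiths \cite{col, cg}); from an $A_2$-degeneration, an honest ppav $E\times J(C)$ with $J(C)\in\calJ_4\cap\theta_{\operatorname{null}}$, the elliptic factor $E$ arising as the elliptic tail in the stable reduction of the cuspidal quintic (the divisor of \cite{cml, grsmordertwo}); from an $A_3$-degeneration, a torus-rank-one limit $E\times J(C')$ with $C'$ hyperelliptic of genus three and extension datum $z_3\in C'-C'$ (the divisor of \cite{grhu1}); and from an $A_4$-, $A_5$-, or $D_4$-degeneration, a limit lying in a locus of dimension at most eight, so that the corresponding divisor is contracted.

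It then remains to check the surjectivity statements --- that the maps $\tDh\to\overline{\calH}_5$, $\tDA[2]\to\overline{\mathbf K}$, $\tDA[1]\to\overline{\mathbf A}$ and $\tDA[3]\to\overline{\mathbf B}$ are dominant, hence surjective since the targets are closed and irreducible of dimension $9$, which is a dimension count once the generic fibre is understood --- and that $\tDA[4]$, $\tDA[5]$, $\tDD[4]$ really have image of codimension $\ge 2$ in $\overline{IJ}$. Finally, the assertion for $\widetilde{IJ}^{P}:\tM\to\Perf[5]$ follows from Corollary \ref{cor:matroidal}: the image is contained in the matroidal locus, over which $\Vor[5]$ and $\Perf[5]$ coincide (and in any case they agree over the torus-rank $\le 1$ part of the boundary), so the same images and incidences are obtained.

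The step I expect to be the main obstacle is the exact determination of the extension data in the two torus-rank-one cases: showing that the extension point sweeps out precisely $2_*\Sing\Theta_C$ on $\tDA[1]$ and precisely $C'-C'$ on $\tDA[3]$, neither more nor less. This is not forced by the limit mixed Hodge structure alone; it requires genuinely computing the extension class along the degenerating family --- effectively identifying the Abel--Prym image of the node(s) created on the discriminant quintic --- which means combining the explicit local normal forms of $\tM$ near these divisors with the Prym degeneration and tracking how the semiabelian extension varies. A secondary, more bookkeeping-type difficulty is to confirm that $\tDA[4]$, $\tDA[5]$ and $\tDD[4]$ are genuinely contracted, i.e.\ that these deeper singularities force the extra drop in dimension, so that no further boundary divisor of $\overline{IJ}$ appears.
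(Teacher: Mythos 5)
Your plan would work, but it follows a genuinely different (and heavier) route than the paper's actual proof of items (2)--(4), and it coincides instead with what the paper offers only as supplementary ``second geometric proofs'' in Section \ref{sec:components}. You propose to compute, divisor by divisor, the limit semi-abelian variety --- abelian part \emph{and} extension datum --- by degenerating the discriminant double covers, taking stable reduction, and applying the Prym degeneration results of \cite{abh} together with Recillas' trigonal construction; you rightly flag the extension-data computation as the crux, and as written you defer it. The paper sidesteps that step entirely for the main statement: it quotes Theorem \ref{theo:codimension1} (proved in \cite{grhu1} by Fourier--Jacobi expansions of theta functions on the $\ab[5]$ side) that the torus-rank-one boundary of $\overline{IJ}$ consists of exactly the two nine-dimensional loci $\mathbf A$ and $\mathbf B$; since by Theorem \ref{teo:main} the map $\widetilde{IJ}^{V}$ is a morphism, each of these loci must be dominated by some boundary divisor of $\tM$, and the Satake-level data of \cite[Table 1]{cml} --- which records only the \emph{compact part} of the limit for each divisor --- already singles out $\tDA[1]$ (compact part a general point of $\calJ_4$) and $\tDA[3]$ (compact part in $\calA_1\times\calH_3$) as the only possible candidates, so no extension classes need to be tracked to make the identification; irreducibility of the relevant divisors (Remark \ref{rem:components}) then pins the images down. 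Item (1) is Collino's result and item (2) was already proved in \cite{cml}, while the contraction of $\tDA[4]$, $\tDA[5]$ and $\tDD[4]$ is read off from the same table together with the torus-rank-two analysis of Section \ref{sec:2strata}, which shows $\overline{IJ}$ has no divisorial components of torus rank $2$. The computations you defer are exactly those carried out in the paper's supplementary arguments (Lemma \ref{lem:extensiondata} and the trigonal construction for the $A_1$ case; the elliptic-bridge tail and \cite[Prop.~1.5]{abh} giving the extension point in $T\times\Theta_Y$ for the $A_3$ case), so your route has no fatal gap, but to be a complete proof it must actually perform them; what it buys is independence from the theta-function computation of \cite{grhu1}, whereas the paper's route gets (2)--(4) in a few lines at the price of relying on that prior computation of $\overline{IJ}\cap\beta_1^0$. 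Your handling of $\Perf[5]$ via Corollary \ref{cor:matroidal} (or the coincidence of the compactifications over torus rank $\le 1$) agrees with the paper.
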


The fact that the chordal divisor $\tDh$ is mapped to the closure $\overline \calH_5$ of the hyperelliptic locus is due to Collino \cite{col}.
The fact that $\overline{\mathbf A}$ is the image of  $\tDA[1]$ is well-known (see e.g., \cite{cg,cm}).
The fact that  $\overline{\mathbf K}$ is the image of  $\tDA[2]$ was shown in \cite{cml}.

Our techniques also yield a geometric description of deeper boundary strata; we easily see that the divisor $\tDA[4]$ is contracted to a locus whose generic point lies in $\mathcal A_5$,  the divisor $\tDA[5]$ is contracted to a locus whose generic point is contained in the torus rank $1$ boundary, and the divisor $\tDD[4]$ is contracted to a locus whose generic point is contained in the torus rank $2$ boundary. Theorem \ref{theo:degenerationscod2} gives a complete description of the boundary of the locus of intermediate Jacobians of torus rank $2$; it uses both the Prym theoretic and the theta function approach. In principle any stratum can be studied geometrically by our methods, and various further  cases have been considered in  \cite{havasi}.

\subsection*{Structure of the paper}
In Section \ref{sec:tM} we start by recalling the won\-der\-ful compactification $\tM$ of the moduli space $\calM$ of cubic threefolds, which was
first constructed in \cite{cml}.
For this we also use the  Allcock--Carlson--Toledo ball quotient model $\BG$, where $\calB$ is the $10$-dimensional ball,
$\Gamma$ a suitable arithmetic group acting on $\calB$ and $\BG$ the Baily--Borel compactification
of the moduli space of cubic threefolds (see \cite{act, ls}). This has the
advantage that we have good control of the discriminant locus.  A common resolution $\widehat \calM$ of $\overline \calM$  (the GIT quotient) and  $\BG$
was constructed in \cite{act} and \cite{ls}; in fact $\widehat\calM$ is a  Kirwan blow-up  of  $\overline \calM$. The discriminant locus in  $\widehat \calM$ is now essentially a hyperplane arrangement.
Consequently, blowing up the linear strata (starting with the deepest one) will give a normal crossing compactification (where technically, we mean normal crossing in the sense of stacks; i.e.,  we get normal crossing after passing to a finite  cover).
Here  various choices are involved and it is not clear which one is optimal,  but there is,
in a sense,  a minimal one --- the so called wonderful blow-up of de Concini--Procesi.   Due to the fact that the cubic threefolds parameterized by $\overline{\mathcal M}$ have $AD$-singularities (with the exception of the chordal cubic which is dealt with differently), this wonderful blow-up is also distinguished due to its arising in a natural way  from the associated $ADE$ root systems.  Applying this wonderful blow-up, we
get an explicit normal crossing compactification $\tM$  (whose boundary divisors are indexed by singularities of the cubic).

In Section \ref{sec:Ag} we first recall some basic facts about compactifications of $\ab[g]$. In principle,
there are several possible approaches to the extension problem. One is to study directly the degenerations of  intermediate Jacobians as abelian varieties, an approach
which is implicit in  \cite{grhu1}. Here we choose a different approach which has proved very useful for the study of intermediate Jacobians of cubic
threefolds, namely we relate the problem to Prym varieties.
The point is that projection from a general line $\ell \subset X$ defines a conic bundle $\widetilde{X}_{\ell} \to \PP^2$. The discriminant curve of this conic bundle
is a plane quintic $D\subset \PP^2$ and if $\ell$ is chosen sufficiently general, we will call such an $\ell$ non-special,
then all singular conics are reduced pairs of lines. This defines an \'etale double cover $\widetilde{D} \to D$
and by Mumford's result the intermediate Jacobian $IJ(X)$  is isomorphic to the Prym variety of  $\widetilde{D} \to D$. Hence this connects our problem to the
extension of the Prym map and we recall the known results for this, in particular the definition of the Friedman--Smith loci.

This makes it  intuitively clear that the extension of the  intermediate Jacobian  map is related to the extension of the Prym map.
There are, however, some subtle points which have to be clarified in order to be able to handle the singular case and variations in families, and this is the content of Section \ref{sec:compare}.
Mumford's construction can indeed be generalized to GIT semistable cubics with isolated singularities (note that this excludes only the chordal cubic case) and will still give  a double cover  $\widetilde D\to D$, this time of a singular (reduced, but possibly reducible) quintic $D$.
By extending arguments due to Beauville in the smooth case, we show in Section \ref{sec:compare} that for a generic choice of line $\ell\subset X$, the natural map from deformations of the pair $(X,\ell)$ (deforming $X$ in $\mathbb P^4$) to deformations of the cover $\widetilde D \to D$ (deforming $D$ in $\mathbb P^2$) induced by projection from a line  is formally smooth (Proposition \ref{P:CtPQdef}).
This allows us to obtain an identification of discriminants (up to a smooth factor) between cubic threefolds and \'etale double covers of plane quintics (see Proposition \ref{P:DefCtoPQ}).

Recall that the construction of $\tM$ is based on performing a wonderful blow-up on the discriminant for cubics.
Via the identification of discriminants, we obtain thus a wonderful blow-up for double covers of quintics. As discussed in \cite{cml2}, this is essentially the same as performing a simultaneous semistable reduction for the associated universal family $\widetilde {\mathcal D}\to \mathcal D$ of double covers of plane  quintics.
In this way one can reduce the extension problem for pairs $(X,\ell)$ to an extension  problem for the Pryms of \'etale double covers of plane quintics.
Once this is established, a descent argument  then shows the extension of the intermediate Jacobian map to the boundary components
associated to cubics with $AD$-singularities of $\tM$ itself. Thus Section \ref{sec:compare} reduces the extension problem of the intermediate Jacobian map, at least outside the chordal cubic locus, to a study of \'etale double covers of plane quintics. This is the technical core
of our paper.

Once the reduction to Pryms  is established, the actual extension results are proved in Sections \ref{sec:proof} and \ref{sec:chordal}. Namely, in Section \ref{sec:proof}, an essentially combinatorial argument shows that \'etale double covers of reduced plane quintics never lie in the closure of  the Friedman--Smith loci in $\overline{\calR}_6$, and thus the period map for Pryms extends there.
Finally, in Section \ref{sec:chordal} we consider the degenerations to the chordal cubic (this is the only remaining case, where the techniques discussed above do not apply).
In this case, we note, based on \cite{act} and a natural compatibility of the wonderful blow-up, that the locus $\tDh$ in $\tM$ corresponding to chordal cubics is naturally identified with the moduli  of stable hyperelliptic genus $5$ curves (see also \cite{col}). Hence, by Mumford--Namikawa, there will be a natural extended period map from $\tDh$ to
$\Vor[5]$. Finally, a monodromy argument shows that this extension is compatible with the extension over $\tM\setminus \tDh$,  therefore giving an extension over all of
the wonderful blow-up  $\tM$.

In Sections \ref{sec:components} and \ref{sec:2strata} we will apply the extension result and classify the torus rank $1$ and torus rank $2$ degenerations of intermediate Jacobians respectively.

\subsection*{Acknowledgements} We are thankful to I.~Dolgachev for interesting discussions on cubics and their Fano surfaces of lines and to G.~M.~Greuel and E.~Shustin
for discussions about equisingular loci. In particular Remark \ref{rem:components} is due to E.~Shustin.
Through the years (investigating cubics, Pryms, and related objects) we have benefited from discussion with many people, especially V.~Alexeev, R.~Friedman, and F.~Viviani.

Much of the work was completed while K.~Hulek and R.~Laza were in residence at the Institute for Advanced Study in Princeton. During this period S.~Casalaina-Martin and
S.~Grushevsky visited IAS for short terms. We are all grateful for the wonderful research environment at IAS. The stay of R.~Laza was partially supported through the NSF institute grant DMS-1128155, the stay of K.~Hulek was partially supported by the Fund for Mathematics.

We are grateful to  the referee for very substantial feedback to our manuscript. This has led to significant improvements of the presentation and clarifications in various places.

\subsection*{Convention} Throughout the paper we will be working  over the complex numbers $\CC$.


\section{The wonderful compactification $\tM$ of the moduli of cubic threefolds}\label{sec:tM}
The purpose of this section is to recall the construction from \cite{cml} of a normal crossing (up to passing to finite covers) compactification $\tM$ for the moduli space of cubic threefolds. For such a compactification one can deduce easily from the Borel extension theorem that there exists an extended period map
$\tM\to \Sat[5]$ to the Satake compactification. The content of this paper is to prove that this extended period map actually lifts to the Voronoi compactification $\Vor[5]$. While the only thing needed for the extension to the Satake compactification is local liftability and the normal crossing assumption, for the extension to the Voronoi   compactification an extra ingredient is needed, namely sufficient geometric information about  $\tM$ that will  eventually allow one to control the monodromy. We review this construction here, based on \cite{cml} and the further refinements of \cite{cml2}.

The starting point of the construction of a normal crossing compactification $\tM$ (with weak geometric meaning) for the moduli space of cubic threefolds is to first
have a compactification $\widehat \calM$ for which we have
\begin{itemize}
\item[(1)] some geometric meaning (essentially in the sense of GIT),
\item[(2)] control of the discriminant locus $\widehat \calM\setminus \calM$.
\end{itemize}
Such a compactification was constructed in \cite{act} and \cite{ls} in connection with the ball quotient model of Allcock--Carlson--Toledo. The following three theorems summarize the necessary information about the compactification $\widehat \calM$.

The starting point of the construction of $\widehat \calM$ (and $\widetilde \calM$) is the GIT compactification:

\begin{teo}[{GIT compactification for cubic threefolds,  \cite{a}}] \label{T:GITcub}
Let $\overline{\calM}:=\overline{\calM}^{GIT}$ be the GIT compactification of the moduli space of smooth cubic threefolds. The following hold:
\begin{itemize}
\item[(1)] A cubic threefold is GIT stable if and only if it has at worst $A_1, \ldots ,A_4$-singularities.
\item[(2)] A cubic threefold is GIT semistable if and only if it has at worst $A_1, \ldots ,A_5$ or $D_4$-singularities or its orbit closure contains the chordal cubic\footnote{This final case corresponds to $A_k$ singularities ($6\le k\le 11$) satisfying a certain condition (\cite[Thm. 1.3(iii)]{a}) or the chordal cubic. In particular,  the chordal cubic is the only semistable cubic with non-isolated singularities.}.  In particular all (semi)stable cubics are integral.
\item[(3)] The GIT boundary (i.e., the complement of the stable locus in $\overline{\calM}$) consists of a rational curve $T$ and an isolated point $\Delta$.
\item[(4)] The polystable orbit corresponding to $\Delta$ is a cubic with $3D_4$-singularities (explicitly the cubic $V(x_0x_1x_2+x_3^3+x_4^3)$). Under a suitable identification $T\cong \PP^1$, the polystable orbits parameterized by $T\setminus\{0,1\}$ correspond to cubics with precisely $2A_5$-singularities. The special point $0\in T$ corresponds to a cubic with $2A_5+A_1$-singularities, and the special point $1\in T$ corresponds to the orbit of the chordal cubic. For future reference we denote the point corresponding to the orbit of the chordal cubic by $\Xi \in T$.
\end{itemize}
\end{teo}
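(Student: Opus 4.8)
The statement is exactly the Geometric Invariant Theory analysis of cubic threefolds carried out in \cite{a} (see also \cite{yokoyama}); here is how I would reconstruct it. Work in $\PP^{34}=\PP(\Sym^3(\CC^5)^\vee)$ with its natural $\mathrm{SL}_5$-action, so that a cubic $X=V(f)$ is GIT (semi)stable if and only if $f$ is. First I would invoke the Hilbert--Mumford numerical criterion: after fixing a maximal torus and Weyl chamber, (semi)stability need only be tested against the one-parameter subgroups $\lambda_a(t)=\mathrm{diag}(t^{a_0},\dots,t^{a_4})$ with $a_0\ge\dots\ge a_4$ and $\sum a_i=0$. Concretely, $X$ fails to be stable precisely when, in some coordinate system, $f$ lies in the span of the monomials $x^I=x_0^{i_0}\cdots x_4^{i_4}$ of non-negative $\lambda_a$-weight for some nonzero such $a$, and fails to be semistable when one can moreover force all occurring weights to be strictly positive. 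The first step is then the purely combinatorial enumeration of the \emph{maximal} families of cubic monomials in five variables that can simultaneously have non-negative (resp.\ positive) weight; as the relevant weight polytope has only finitely many faces this is a bounded computation, producing an explicit short list of destabilizing linear subspaces $\PP(W_a)\subset\PP^{34}$.

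The heart of the proof is the translation of this list into the classification by singularity type. For the forward direction of (1) I would show that if $X$ has a singularity which is not of type $A_1,\dots,A_4$ --- that is, an isolated $A_k$ with $k\ge5$, a $D_k$, $E_k$, a simple-elliptic or worse isolated singularity, a non-isolated singularity, or a point of multiplicity $\ge3$ (so that $X$ is a cone) --- then, placing the bad point at a coordinate vertex and using the affine local normal form $f=q_2+q_3$ together with the rank of $q_2$ and the restriction of $q_3$ to its kernel, one writes down an explicit $\lambda_a$ of non-negative (often strictly positive) weight on $f$; for example a cone, or a cubic singular along a line, is seen directly to be non-semistable. Conversely I would run through each of the finitely many subspaces $\PP(W_a)$ and check that every cubic lying in $\PP(W_a)$, for \emph{every} member of its $\mathrm{SL}_5$-orbit, either already carries a singularity worse than $A_4$ or is reducible or non-reduced; hence a cubic with at worst $A_1,\dots,A_4$ singularities is never destabilized, which gives (1).

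It remains to describe the strictly semistable boundary, i.e.\ the polystable (closed) orbits. For this I would compute, for each destabilizing $\lambda_a$, the normal forms occurring as the flat limits $\lim_{t\to0}\lambda_a(t)\cdot f$, and then use the Hilbert--Mumford/Kempf minimal-orbit analysis to single out those with closed orbit. The expected outcome is: a single isolated polystable point $\Delta$, represented by $V(x_0x_1x_2+x_3^3+x_4^3)$, which one checks has three $D_4$-singularities (the germ at each of the coordinate vertices $[1{:}0{:}0{:}0{:}0]$, etc.\ is analytically $u^2+v^2+(x_3^3+x_4^3)$, the cubic part being three distinct lines), and whose positive-dimensional reductive stabilizer forces its orbit to be closed; together with a one-parameter family of polystable cubics with exactly two $A_5$-singularities, which one writes down explicitly with the two $A_5$ points at prescribed coordinate vertices and whose single modulus identifies this family with a rational curve $T\cong\PP^1$. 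Finally I would analyze its two distinguished points: the chordal cubic --- the secant variety of the rational normal quartic in $\PP^4$, singular along that curve, and fixed by the image of $\mathrm{SL}_2\hookrightarrow\mathrm{SL}_5$ coming from $\Sym^4$, hence of closed orbit --- appears as a specialization of the $2A_5$-family and is the point $\Xi=1\in T$, while at the other special value $0\in T$ the normal form acquires one extra node, giving the $2A_5+A_1$-cubic. Since non-semistable cubics contribute no points to $\overline{\calM}$, the complement $\overline{\calM}\setminus\calM$ is exactly $T\sqcup\{\Delta\}$, and the remaining closure relations follow from the explicit equations.

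The step I expect to be the main obstacle is the two-directional bookkeeping of the second paragraph, i.e.\ proving completely and uniformly that the combinatorial list of destabilizing subspaces matches ``cubics with a singularity worse than $A_4$, or with a non-isolated singularity''. The reverse direction in particular demands tight control of how small a Newton polytope an $A_{\le4}$-singular cubic can have --- one must rule out, subspace by subspace, that such a cubic can be moved into any $\PP(W_a)$ --- and the precise identification of the $2A_5$-family with $\PP^1$, together with the recognition of the chordal cubic as one of its members, is the other place where genuine geometry rather than pure torus combinatorics is required.
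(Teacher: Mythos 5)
This statement is not proved in the paper at all: it is quoted verbatim as Allcock's (and Yokoyama's) GIT analysis, with \cite{a} and \cite{yokoyama} as the references, so there is no in-paper argument to compare against. Your sketch is essentially the strategy of those references — Hilbert--Mumford numerical criterion, finite enumeration of maximal destabilizing monomial families, translation into singularity types via the local normal form $q_2+q_3$, and identification of the closed orbits in the strictly semistable locus (the $3D_4$ cubic, the $2A_5$ pencil $T$, and the chordal cubic) — and the places you flag as delicate (matching destabilizing subspaces with ``worse than $A_4$'' singularities in both directions, and recognizing the chordal cubic inside the $2A_5$ family) are indeed where the real work in \cite{a} lies. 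One local correction: a reductive (even positive-dimensional) stabilizer does \emph{not} force an orbit to be closed — Matsushima's criterion only gives the converse — so closedness of the orbits of $V(x_0x_1x_2+x_3^3+x_4^3)$, the chordal cubic, and the $2A_5$ cubics must come from the minimal-orbit/limit analysis you already invoke, not from the stabilizer computation.
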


\begin{rem}
We recall that the chordal cubic is the secant variety of the rational normal curve of degree $4$ in $\PP^4$. Thus, it follows that the chordal cubic is stabilized by a $\mathrm{PGL}(2)$ subgroup of $\mathrm{PGL}(5)$.
 In contrast, the stabilizers of all the other polystable cubics are virtually abelian. In particular $\overline \calM$ has toric singularities away from the special point $\Xi$. Kirwan has defined a stratification of the singularities of GIT quotients in terms of the stabilizers of the associated polystable orbits. From this perspective, the point $\Xi$ is the worst singularity of $\overline \calM$. By applying the Kirwan blow-up procedure (essentially, blow-up the semistable locus along the orbit of the chordal cubic), we obtain a partial resolution of $\overline \calM$ which will have  only toric singularities. It turns out that this is the model $\widehat \calM$ which we are looking for, see Theorem \ref{resgitball} below.
\end{rem}

\begin{dfn}[{\cite[Def.~2.2]{cml}}]\label{def-allowable}  We say that a hypersurface singularity is \emph{allowable} if it is either of type $A_k$ for some $k\le 5$, or of type $D_4$.
\end{dfn}

\begin{rem}\label{rem-allowable}
In view of Theorem \ref{T:GITcub}, saying that a cubic threefold $X$ has allowable singularities is a shorthand for
saying that $X$ is GIT semistable, but the closure of its orbit does not contain the chordal cubic. The separation of the semistable cubics into two cases (with allowable singularities, or  the chordal cubic) is natural. In the  former case, the reduction to the Prym map  works (see Sections \ref{sec:compare}), while in the latter, a different method is needed (see Section \ref{sec:chordal}).
In fact, for all our arguments, we can assume without loss of generality that we are working with polystable cubics (in which case allowable simply means not the chordal cubic). Namely, the Luna Slice Theorem implies that the local structure of the GIT quotient is governed by the slices transverse to the orbits of polystable points.  Moreover, Shah's polystable reduction for GIT \cite[Prop. 2.1, p.488]{shah} applied in our situation states that any family of polystable cubics over a punctured disk can be replaced by a family of isomorphic cubics that has limit a polystable cubic.
\end{rem}

The following Lemma will be used repeatedly:
\begin{lem}\label{L:TotTju}
 Let $X$ be a cubic threefold with allowable singularities. The following holds
\begin{equation}\label{tautotallow}
\tau_{tot}(X)\le 12,
\end{equation}
where $\tau_{tot}(X)=\sum_{p\in \Sing(X)} \tau(X,p)$ denotes the total Tjurina number, or equivalently (in this set-up) the total Milnor number.
\end{lem}
\begin{proof}
If $X$ has at worst $A_4$-singularities, then $X$ is GIT stable (Theorem \ref{T:GITcub}(1)). In particular, it has finite stabilizer. Applying \cite[Cor. 1.6]{dP}, we see that the universal family of cubic threefolds gives a simultaneous versal deformation for the singularities of $X$. It follows that $\tau_{tot}(X)$ is less than the dimension of the moduli space of cubic threefolds, i.e., $10$ (N.B. $10$ can be achieved, e.g. the Segre cubic). In the remaining cases, $X$ has either an $A_5$ or a $D_4$-singularity. Furthermore, $X$ is strictly semistable, and the orbit closure of $X$ contains a polystable cubic $X_0$ with $2A_5$, or $2A_5+A_1$, or $3D_4$-singularities. Using the semicontinuity of Tjurina numbers, we conclude that $\tau_{tot}(X)\le \tau_{tot}(X_0)$, and thus less than or equal to $12$ (which can be only achieved for the $3D_4$ case).
\end{proof}

While the GIT compactification has a rather nice geometric description, it is difficult to understand the structure of the discriminant. To get a hold on the discriminant, one needs to use the ball quotient model of Allcock--Carlson--Toledo \cite{act}. This model is based on an auxiliary construction involving the period map for cubic fourfolds. We will not recall the details, but only note the following:

\begin{teo}[{The ball quotient model,  \cite{act} and \cite{ls}}]\label{thm_ballmodel}
 Let $\calB/\Gamma$ be the ball quotient model of \cite{act}. The following hold:
 \begin{itemize}
\item[(1)] The period map
 $$P:\calM\to \calB/\Gamma$$
 is an open embedding with the complement of the image the union of two irreducible Heegner divisors $D_n:=\cDn/\Gamma$ (called the nodal divisor) and $D_h:=\cDh/\Gamma$ (called the  hyperelliptic divisor), where  $\cDn$ and $\cDh$ are $\Gamma$-invariant hyperplane arrangements.
 \item[(2)] The boundary of the Satake--Baily--Borel compactification $\BG$ consists of two cusps (i.e., $0$-dimensional boundary components).
 \end{itemize}
\end{teo}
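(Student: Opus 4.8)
The plan is to realize $\calB/\Gamma$ through an auxiliary period map for cubic fourfolds. To a cubic threefold $X=V(F)\subset\PP^4$ one associates the cyclic triple cover $Y_X=V(F+x_5^3)\subset\PP^5$ of $\PP^4$ branched along $X$: this is a cubic fourfold carrying a faithful $\mu_3$-action ($x_5\mapsto\omega x_5$, with $\omega$ a primitive cube root of unity), smooth exactly when $X$ is. The primitive cohomology $H^4_{\mathrm{prim}}(Y_X,\ZZ)$ is then a Hermitian $\ZZ[\omega]$-lattice $\Lambda$ of rank $11$, and one checks that the Hermitian form induced by the intersection pairing has signature $(1,10)$; hence the $\omega$-eigenspace of the Hodge decomposition determines a point of the $10$-dimensional complex ball $\calB$, and the monodromy representation lands in an arithmetic subgroup $\Gamma$ of the corresponding unitary group. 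This produces a holomorphic period map $P\colon\calM\to\calB/\Gamma$, and one must prove (i) that $P$ is an open embedding whose complement is a union of two irreducible Heegner divisors, and (ii) that $\BG$ has exactly two cusps.

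For (i), I would first deduce injectivity of $P$ from the global Torelli theorem for cubic fourfolds \cite{voisin,laza,looijenga4folds}, using that a cubic fourfold of the above shape together with its $\mu_3$-action is reconstructed from, and reconstructs, the cubic threefold $X$. Since $\dim\calM=10=\dim\calB/\Gamma$, infinitesimal Torelli for cubic fourfolds, applied to the $\mu_3$-invariant part of $H^1(Y_X,T_{Y_X})$ --- which is exactly the space of cubic-threefold deformations and whose cup products preserve the $\omega$-eigenspace --- shows that $dP$ is an isomorphism, so $P$ is an open embedding onto its image. To identify the image I would analyze one-parameter degenerations: when $X$ acquires a node, $Y_X$ acquires an $A_2$-singularity whose vanishing cohomology contributes a vector $r\in\Lambda$ of the appropriate norm, so the period point lands on the Heegner divisor $r^\perp/\Gamma=:D_n$; the chordal cubic and the nearby ``hyperelliptic'' degenerations are treated by an analogous but more delicate computation giving the second divisor $D_h$. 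Irreducibility of $D_n$ and $D_h$ follows from transitivity of $\Gamma$ on the relevant classes of vectors in $\Lambda$. Finally, surjectivity of $P$ onto $(\calB/\Gamma)\setminus(D_n\cup D_h)$ I would obtain by compactifying: the period map extends to a morphism from a GIT/Kirwan model $\widehat\calM$ of $\overline\calM$ onto $\BG$ (the common resolution, Theorem \ref{resgitball}), and comparing the boundary divisors of $\widehat\calM$ and of $\BG$ via properness forces $P$ to be surjective onto the complement of the two Heegner divisors.

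For (ii), the $0$-dimensional Baily--Borel boundary components of $\calB/\Gamma$ correspond to $\Gamma$-orbits of rational isotropic lines in the signature $(1,10)$ Eisenstein lattice $\Lambda$. An explicit lattice-theoretic enumeration --- classifying primitive isotropic vectors of $\Lambda$ up to $\Gamma$ --- shows there are exactly two such orbits, hence two cusps; along the way one also records which boundary strata of $\widehat\calM$ dominate each cusp.

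The main obstacle is step (i): proving that the complement of the image is \emph{precisely} the union of these two Heegner divisors. This requires the global Torelli theorem for cubic fourfolds in the relevant non-generic locus, together with a careful analysis of the degenerations of the triple covers $Y_X$ --- in particular near the chordal cubic, where $\overline\calM$ is most singular because the stabilizer of the chordal cubic is non-abelian; it is precisely this feature that forces the passage to the Kirwan blow-up $\widehat\calM$ and the attendant semistable-reduction analysis carried out in \cite{act,ls}.
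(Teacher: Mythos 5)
This theorem is not proved in the paper at all: it is imported verbatim from \cite{act} and \cite{ls} (the text explicitly says ``We will not recall the details, but only note the following''), so there is no in-paper argument to compare your proposal against. That said, your sketch does follow the actual strategy of \cite{act}: the cyclic triple cover $Y_X=V(F+x_5^3)$ with its $\mu_3$-action, the rank-$11$ Eisenstein structure on $H^4_{\mathrm{prim}}(Y_X,\ZZ)$ with Hermitian signature $(1,10)$, the induced period map to a $10$-ball quotient, the nodal degenerations producing $A_2$-singularities on $Y_X$ and hence the divisor $D_n$, the chordal/hyperelliptic degenerations producing $D_h$, and the count of cusps via $\Gamma$-orbits of isotropic vectors. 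The one place your sketch is looser than the actual proofs is the identification of the image: in \cite{act} and \cite{ls} the surjectivity of $P$ onto $(\calB/\Gamma)\setminus(D_n\cup D_h)$ rests on the global Torelli theorem together with the characterization of the image of the period map for cubic fourfolds (\cite{voisin,laza,looijenga4folds}) and a detailed semistable/degeneration analysis near the chordal orbit, rather than on a soft properness comparison of boundary divisors; as stated, your final step is closer to a plausibility argument than to what those references do. As a blind reconstruction of the cited proof it is a fair outline, but it should be understood as a summary of \cite{act,ls}, not as a self-contained argument.
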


Finally, we note that the ball quotient model is closely related to the GIT model. Specifically, the following holds:

\begin{teo}[{GIT to ball quotient comparison,  \cite{act} and \cite{ls}}]\label{resgitball}
As above, let $\overline{\calM}$ be the GIT compactification. Let $\BG$ be the Baily-Borel compactification of the ball quotient model of \cite{act}. Then there exists a diagram
$$
\xymatrix{
&\widehat{\mathcal M} \ar[ld]_{p} \ar[rd]^{q}&\\
\overline {\mathcal M}\ar@{-->}[rr]^{\overline P} &&  \BG \\
}
$$
resolving the birational map between $\overline \calM$ and $\BG$ such that
\begin{itemize}
\item[(1)] $p:\widehat \calM\to \overline\calM$ is the Kirwan blow-up of the point $\Xi\in \overline\calM$. The exceptional divisor $E:=p^{-1}(\Xi)$
 of this blow-up is naturally identified with the GIT quotient for $12$ unordered points in $\PP^1$.
 \item[(2)] $q:\hM \to  \BG$ is a small semi-toric modification as constructed by Looijenga \cite{l1}. The morphism $q$ is an isomorphism over the interior $\calB/\Gamma$ and one of the two cusps of  $\BG$.  The preimage under $q$ of the other cusp is a curve, which is identified with the strict transform of $T\subset \bM$ under $p$.
 \end{itemize}
 In particular note that the period map $P:\calM\to \calB/\Gamma$  extends to a morphism $\overline P$ everywhere on $\bM$  except the point $\Xi$. Furthermore, the following hold

 \begin{itemize}
  \item[(3)] The exceptional divisor $E\subset \hM$ of $p$ maps to the closure $D_h^*$ of the Heegner divisor $D_h=\cDh/\Gamma$ in $\BG$ and is an isomorphism over $D_h$ (i.e., $q(E)=D_h^*$, and $q_{\mid q^{-1}(D_h)}:q^{-1}(D_h)\subset E\to D_h$ is an isomorphism).
 \item[(4)] $q$ is an isomorphism over the stable locus (cubics with at worst $A_4$-singularities) in $\hM$ and in a neighborhood of $\Delta$ (the polystable orbit of cubics with $3D_4$-singularities). The image of the locus of cubics with $A_1,\dots,A_4$-singularities is $(\cDn\setminus\cDh)/\Gamma$.
 \item[(5)] $q$ maps $\Delta$ and the strict transform of the curve $T$ to the two cusps of $\BG$ respectively (N.B. $q$ is an isomorphism near $\Delta$, and a small contraction near $T$).
  \end{itemize}
\end{teo}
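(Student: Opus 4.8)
The plan is to assemble the statement from the ball quotient construction of Allcock--Carlson--Toledo \cite{act}, the refinements of Looijenga--Swierstra \cite{ls}, Allcock's GIT analysis (Theorem \ref{T:GITcub}), Kirwan's partial desingularization of GIT quotients, and Looijenga's comparison \cite{l1} between GIT quotients and semi-toric modifications of Baily--Borel compactifications of ball quotients. First I would recall that the period map $P\colon\calM\to\calB/\Gamma$ of \cite{act} is built by sending a cubic threefold $X=V(f)\subset\PP^4$ to the triple cyclic cover $Y=V(f+x_5^3)\subset\PP^5$, a cubic fourfold with a $\mu_3$-action, and taking a distinguished eigenspace of its polarized Hodge structure; the global Torelli theorem for cubic fourfolds \cite{voisin,laza,looijenga4folds} together with a lattice computation identify the image of $P$ with the complement of the two Heegner divisors $D_n, D_h$. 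Borel's extension theorem and properness of $\BG$ then yield a rational map $\overline P\colon\bM\dashrightarrow\BG$; since every cubic with $A_1,\dots,A_4$-singularities carries a limiting mixed Hodge structure with unipotent monodromy and no new cusp appears, a descent argument over the universal family of such cubics shows $\overline P$ is already a morphism on the GIT-stable locus, so that only the chordal point $\Xi$ (and a priori the curve $T$) can be points of indeterminacy.

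\textbf{Kirwan blow-up.} By the Remark following Theorem \ref{T:GITcub}, the stabilizer of the chordal cubic is an $\mathrm{SL}(2)$, whereas all other polystable cubics have virtually abelian stabilizer, so $\bM$ has toric singularities away from $\Xi$; Kirwan's procedure then shows that a single blow-up $p\colon\hM\to\bM$ centered at $\Xi$ produces a space with only toric quotient singularities. The exceptional divisor $E=p^{-1}(\Xi)$ is, by the Luna slice theorem, a GIT quotient of a normal slice to the $\mathrm{SL}(2)$-orbit of the chordal cubic, and writing this slice explicitly as a space of binary forms of degree $12$ identifies $E$ with the GIT quotient $(\PP^1)^{12}/\!/\mathrm{SL}(2)$ of $12$ unordered points on $\PP^1$ --- compatibly with the branch data of genus $5$ hyperelliptic double covers and with the later identification of $\tDh$ with stable hyperelliptic genus $5$ curves. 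This gives (1).

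\textbf{The morphism $q$ and the bookkeeping of strata.} The core of the argument is Looijenga's comparison theorem \cite{l1}: when a hyperplane arrangement is suitably matched to the linearization defining a GIT quotient, that quotient and the corresponding semi-toric modification of the Baily--Borel compactification become isomorphic after a controlled sequence of blow-ups and blow-downs dictated by the arrangement. Following \cite{act,ls} one verifies the hypotheses of this theorem for the arrangement $\cDn\cup\cDh$, and, combined with the previous step, this produces the morphism $q\colon\hM\to\BG$ together with the claim that $q$ is an isomorphism over the interior $\calB/\Gamma$ and over the cusp at $\Delta$ (near which the period map is already a local isomorphism on the $3D_4$-cubic), while the strict transform of $T$ is contracted --- necessarily by a small contraction, $T$ having codimension $\geq 2$ --- onto the other cusp; this is (2) and (5). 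Parts (3) and (4) then reduce to tracking limiting periods: a cubic with an $A_k$-singularity for $k\le 4$ has vanishing cohomology spanning a single root and hence period on $\cDn\setminus\cDh$, giving (4) and the isomorphism of $q$ over the stable locus and near $\Delta$; and $E$, which records the hyperelliptic-type degenerations, maps onto the closure $D_h^*$ and isomorphically over $D_h$ itself, giving (3). All of these lattice-theoretic identifications are carried out in \cite{act,ls}.

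\textbf{Main obstacle.} The principal difficulty is showing that the single Kirwan blow-up $\hM$ simultaneously dominates $\bM$ and $\BG$ and identifying $q$ precisely: this requires both Looijenga's semi-toric machinery and a delicate local analysis at the chordal cusp --- the unique point where neither $p$ nor $q$ is an isomorphism --- to verify that exactly the strict transform of $T$, and nothing more, is contracted. The slice computation identifying $E$ with $(\PP^1)^{12}/\!/\mathrm{SL}(2)$ is the other genuinely non-formal input.
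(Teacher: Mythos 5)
Your proposal is correct and follows essentially the same route as the paper, which offers no independent proof of this statement but, exactly as you do, assembles it from Allcock's GIT analysis, the Kirwan blow-up at the chordal point with the Luna-slice identification of $E$ with $12$ unordered points on $\PP^1$, and the Allcock--Carlson--Toledo/Looijenga--Swierstra comparison via Looijenga's semi-toric modification \cite{l1}. The genuinely non-formal inputs you flag (the slice computation at the chordal cubic and the matching of the arrangement $\cDn\cup\cDh$ to Looijenga's machinery) are precisely the parts the paper delegates to \cite{act} and \cite{ls}.
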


\begin{rem}
As discussed in \cite{act}, the identification of the exceptional divisor $E$ (which is by construction the GIT quotient for $12$ unordered points in $\PP^1$) with the Heegner divisor
$D_h=\cDh/\Gamma$ (which is naturally a $9$-dimensional ball quotient) is compatible with the Deligne--Mostow \cite{dm} construction. Furthermore, the moduli of $12$ distinct unordered points in $\PP^1$, which can be identified with $(\cDh\setminus \cDn)/\Gamma$, is naturally identifiable with the moduli of (smooth) hyperelliptic curves of genus $5$.   This will be relevant later in Section \ref{sec:chordal}.
Note that this is also related to Collino's proof that one-parameter degenerations of smooth cubic threefolds to the chordal cubic give rise to a hyperelliptic Jacobian in the limit of the associated family of abelian varieties \cite{col}.  Geometrically, given a pencil say $F_0+tF_1$, where $F_0$ is a homogeneous form defining the chordal cubic, and $F_1$ is a homogeneous form defining a smooth cubic threefold, the intersection of $\{F_1=0\}$ with the rational normal curve determines $12$ points on $\mathbb P^1$, and the limit abelian variety is the Jacobian of the hyperelliptic curve obtained as the double cover of $\mathbb P^1$ branched at those $12$ points.
\end{rem}

At this point we have obtained a compactification $\hM$ of the moduli of cubic threefolds on which we have both a good geometric description  (coming from GIT) and a good structure for the discriminant locus (coming from the ball quotient model).
However, for the purpose of studying the degenerations of the intermediate Jacobians of cubic threefolds, or equivalently of the associated
weight one variations of Hodge structures (VHS),
$\hM$ is not yet suitable. Namely, from a degeneration of VHS perspective, we need a smooth normal crossing compactification. In fact it suffices to allow smooth and normal crossing in a stack sense (i.e., up to passing to a finite cover). We  constructed such a model $\tM$ in \cite[\S  6]{cml}. While this construction is not unique, our space
$\tM$ has the advantage of being quite explicit and, in a certain sense, minimal.

\begin{construction}\label{constructmtilde}
Let $\widehat \calM$ be as in Theorem \ref{resgitball}. We construct $\widetilde \calM\to \widehat \calM$ as follows:
\begin{itemize}

\item[(Step 1)] {\it Consider the full toroidal resolution $\hM^{\operatorname{tor}}$ of $\BG$}.\\ Note that, since we are in a ball quotient case (i.e., rank $1$),  $\hM^{\operatorname{tor}}$ is canonical (i.e., it does not  depend on any choices). By  the general theory, $\hM^{\operatorname{tor}}$ will be smooth after passing to a finite cover (e.g.,  by taking a neat subgroup of $\Gamma$). Furthermore, since $\hM$ is semi-toric, by construction $\hM^{\operatorname{tor}}$ will dominate $\hM$ (see \cite[\S 5, esp. 5.2]{l1} for a precise discussion).

\item[(Step 2)] {\it Consider the wonderful blow-up associated to the hyperplane arrangement $\cDn$. This will induce a blow-up $\tM$ of  $\hM^{\operatorname{tor}}$, which will be a smooth normal crossing compactification of $\calM$ (after passing to finite covers).}\\
Let us recall that the complement of $\calM$ in $\calB/\Gamma$ is the union of two Heegner divisors $D_n=\cDn/\Gamma$ and $D_h=\cDh/\Gamma$, where $\cDn$ and $\cDh$ are hyperplane arrangements in $\calB$. Since we are working up to finite covers, it makes no difference if we  consider $\cDn$ and $\cDh$ or the associated quotients  $\cDn/\Gamma$ and
$\cDh/\Gamma$. Also, while there are infinitely many hyperplanes in these arrangements, the arrangements are always locally finite, and finite modulo the action of $\Gamma$.

From \cite{act} (see Section \ref{sec:chordal} below for further details), we see that $\cDh$ does not self-intersect and that it meets $\cDn$ transversally. Thus a blow-up of $\cDn$ making it normal crossing will make the entire complement of $\calM$  normal crossings. A hyperplane arrangement is easily resolved to normal crossing by blowing up linear strata starting with the minimal ones in the natural ordering. We apply here the wonderful blow-up, which simply means that one needs to blow up only the so called irreducible strata (see \cite{cml}, \cite{cml2}). This has the advantage of being somewhat minimal and does not to depend on the order in which the linear strata are blown up.   In fact, considering $\cDn$, the hyperplane arrangement is locally determined by hyperplane arrangements determined by $ADE$ root systems, and this can be explained also combinatorially (see \cite{cml2}).  A similar statement can be made for $\cDh$, although the geometric interpretation is different, having instead to do with $12$ points on $\mathbb P^1$ (see Remark \ref{rem12pts} for a discussion).

Having discussed the wonderful blow-up on the interior, we now recall  how this induces the blow-up on the toroidal resolution.  The exceptional divisors for the toroidal resolution $\hM^{\operatorname{tor}}\to (\mathcal B/\Gamma)^*$ are (up to finite covers) abelian varieties lying over the two cusps.  The hyperplane arrangement in the interior cuts on these exceptional divisors an abelian arrangement (see \cite[Ex. 1.6, \S3.3]{l1}). This is, in this case, determined  by a root system. Using this description one checks that the maximal irreducible strata in this abelian arrangement are induced from the hyperplane arrangement on the interior. It follows that the wonderful blow-up on the interior extends to give the wonderful blow-up
$\tM\to \hM^{\operatorname{tor}}$.
\end{itemize}
\end{construction}

We conclude:
\begin{teo}[{\cite[\S 6]{cml}}]\label{theo:wonderfulblowup}
Let $\tM$ be the space constructed in (\ref{constructmtilde}). Then
\begin{itemize}
\item[(1)] $\tM$ is a compactification of $\calM$ such that locally \'etale $\calM\subset \tM$ is the quotient of a normal crossing compactification by a finite group.
\item[(2)] $\tM$ resolves the rational map $\hM\dashrightarrow \Sat[5]$ as in the diagram:
\begin{equation}\label{tmdiagram}
\xymatrix{
&&{\tM} \ar[rrdd] \ar[ld]&&\\
&{\hM} \ar[ld]\ar@{-->}[rrrd] \ar[rd]&&&\\
{\bM} &&  \BG & & \Sat[5].\\
}
\end{equation}
\item[(3)] The boundary divisors $\tM \setminus \calM$ are $\tDA[1], \tDA[2], \tDA[3], \tDA[4], \tDA[5] , \tDD$
and $\tDh$ corresponding to the strict transform of the discriminant divisor in the moduli of cubics, the blow-up of the loci of cubics with a single $A_2,\dots, A_5, D_4$-singularity, and the blow-up of $\Xi\in \bM$ respectively.
\end{itemize}
\end{teo}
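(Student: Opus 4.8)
The plan is to run Construction \ref{constructmtilde} explicitly and verify the three assertions in turn; the external inputs are Theorem \ref{resgitball}, Looijenga's theory of semi-toric and toroidal resolutions \cite{l1}, the de Concini--Procesi description of the wonderful blow-up of a subspace arrangement, and the Borel extension theorem. Concretely, I first fix a neat finite-index normal subgroup $\Gamma'\trianglelefteq\Gamma$, so that $\calB/\Gamma'$ is smooth; because we are in the rank-one (ball quotient) situation, the full toroidal resolution $\hM^{\operatorname{tor}}$ over the two cusps of the Baily--Borel compactification is canonical and, over $\Gamma'$, smooth with normal crossing boundary, and it dominates $\hM$ since $\hM$ is semi-toric \cite[\S5]{l1}. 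Then I form $\tM$ by the wonderful blow-up of the strict transform of the hyperplane arrangement $\cDn$, i.e.\ by blowing up its irreducible strata in order of increasing dimension. Every step is carried out $(\Gamma/\Gamma')$-equivariantly, so the result descends.

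For (1), the content is that the de Concini--Procesi theorem turns the arrangement $\cDn$ into a simple normal crossing divisor after the wonderful blow-up, and since the argument is local it applies to the merely locally finite arrangement $\cDn$. It then remains to check that the other two pieces of the boundary are not spoiled. Using the statement recalled from \cite{act} that $\cDh$ is smooth, does not self-intersect, and meets $\cDn$ transversally, the centers of the wonderful blow-up meet $\cDh$ transversally, so its strict transform remains smooth and meets the exceptional divisors and the strict transform of $\cDn$ transversally; and the toroidal boundary over the cusps is already normal crossing, with the blow-up centers either contained in it or transverse to it by the local picture near the cusps. Thus on the $\Gamma'$-model the total boundary is SNC; since $\Gamma$ acts on $\calB$ with finite stabilizers, $\calM\subset\tM$ is \'etale-locally the quotient of a smooth SNC pair by a finite group, which is (1).

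For (2) and (3): the maps $\tM\to\hM^{\operatorname{tor}}\to\hM\to\bM$ and $\hM\to\BG$ are morphisms by construction and by Theorem \ref{resgitball}, giving the left portion of the diagram \eqref{tmdiagram}. For the morphism to $\Sat[5]$ one invokes the Borel extension theorem: the intermediate Jacobian (Clemens--Griffiths) period map is induced by the polarized weight-one variation of Hodge structure on the primitive third cohomology of the universal cubic threefold over $\calM$, it is locally liftable, and $\tM$ is, after passing to the smooth $\Gamma'$-cover, a normal crossing compactification of $\calM$; hence $IJ$ composed with $\ab[5]\hookrightarrow\Sat[5]$ extends to a morphism $\tM\to\Sat[5]$ which descends and completes \eqref{tmdiagram}, commutativity being automatic since it holds on the dense open $\calM$. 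For (3), the boundary $\tM\setminus\calM$ is the union of the strict transforms of the two Heegner divisors $D_n$ and $D_h$, the exceptional divisors produced by the wonderful blow-up of $\cDn$, and the exceptional divisors of the toroidal resolution over the two cusps. Since $D_n$ is the discriminant locus, its strict transform is $\tDA[1]$; by Theorem \ref{resgitball}(1),(3) the divisor over $\Xi$ is the strict transform of the Kirwan exceptional divisor $E$, which maps onto $D_h^*$, and this is $\tDh$. To name the rest one uses that the ball-quotient model of \cite{act} linearizes the discriminant, so near a cubic with allowable singularities the discriminant in $\hM$ is, up to a finite quotient, the reflection hyperplane arrangement of the product of $ADE$ root systems attached to those singularities (cf.\ \cite{cml, cml2}); the irreducible strata of $\cDn$ that get blown up then correspond exactly to the closures of the equisingular loci of cubics with a single singularity of type $A_2,\dots,A_5$ or $D_4$, producing $\tDA[2],\dots,\tDA[5]$ and $\tDD$ and exhausting the list.

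I expect the main obstacle to be precisely this last point: establishing the local normal form for $\hM$ along its boundary --- the reduction to $ADE$ reflection arrangements, which is where the Hodge theory of cubic fourfolds and the classification of allowable singularities genuinely enter --- together with the correct identification, in the de Concini--Procesi sense, of the irreducible strata of $\cDn$ with the equisingular loci. Once this dictionary is in hand, the normal crossing property, the extension to $\Sat[5]$, and the enumeration of boundary divisors are comparatively formal; the transversality of $\cDh$ and the $(\Gamma/\Gamma')$-equivariance of the construction are routine but must be verified globally rather than only formally.
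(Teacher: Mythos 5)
Your proposal is correct and follows essentially the same route as the paper: item (1) by construction (de Concini--Procesi plus the transversality of $\cDh$ from \cite{act}), item (2) from (1) together with the Borel extension theorem, and item (3) by tracking the irreducible strata of $\cDn$ through the $ADE$ dictionary with equisingular loci of cubics, for which the paper simply cites \cite[\S 6]{cml} and \cite{cml2}. The only difference is one of detail: the paper's proof is a brief citation-level argument, whereas you spell out the equivariance, transversality, and strata-identification steps explicitly, and you correctly locate the genuine content in the reduction to the $ADE$ reflection arrangements established in \cite{cml,cml2}.
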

\begin{proof}
Item (1) follows by construction, (2) follows from (1) and the Borel extension theorem. Finally, the boundary divisors come from tracking the irreducible strata and the connection to the singularities of cubics --- see \cite[\S  6]{cml}, and also \cite{cml2} for further discussion.
\end{proof}
\begin{rem}\label{rem:components}
A priori we do not know whether all boundary divisors mentioned in the above theorem are irreducible. As pointed out to us by E. Shustin, one can use normal forms of the  cubic equations to see that  the divisors $\tDA[k]$ are irreducible for $k \leq 3$. Similarly, the locus of cubics with two $A_1$-singularities is irreducible, since
one can assume that the singularities are at given points in $\PP^4$ and then the only closed conditions are the vanishing of the gradients at these two points.
We will use this in Sections \ref{sec:components} and \ref{sec:2strata}.
\end{rem}

The main result of \cite{cml} is to identify the image of $\tM$ in $\Sat[5]$ and to describe where the various top strata are mapped --- see \cite[Table 1, p.52]{cml} and
Table \ref{tablecon}
for a summary.
The goal of the current paper is to lift the map $\tM\to \Sat[5]$ to the second Voronoi and perfect cone toroidal compactifications. Since $\calM\subset \tM$ has a toroidal structure given by the snc discriminant (property (1) of Theorem \ref{theo:wonderfulblowup} above), the question of lifting the period map to toroidal compactifications is a question about the combinatorics of monodromy cones. In principle, this can be analyzed directly in terms of the topology of degenerations of cubic threefolds. However, we have not been able to make a complete analysis using this approach. Therefore, we are using the auxiliary construction of Pryms associated to cubic threefolds (due to Mumford and Beauville) and analyze the degenerations of plane quintics (and their \'etale double covers) to conclude the desired extension from $\tM$ to $\Vor[5]$ and to $\Perf[5]$.

In the following sections, we will see that the wonderful blow-up construction is compatible with an analogous construction for plane quintics, and this is in turn closely related to the simultaneous semistable resolution for curves (as discussed in \cite{cml2}).


\section{Toroidal compactifications, the Prym map, and Friedman--Smith loci}\label{sec:Ag}
In order to describe our result we first have to recall some basic facts about compactifications of the moduli space ${\mathcal A}_g$ of principally polarized abelian varieties. The
{\em Satake compactification}, sometimes also called the {\em Baily--Borel--Satake compactification}  $\Sat$,  is the compactification given by the ring of modular forms. Set-theoretically
$$
\Sat = {\mathcal A}_g \sqcup {\mathcal A}_{g-1} \sqcup \ldots \sqcup {\mathcal A}_0.
$$
The boundary of $\Sat$ has codimension $g$ and the space is highly singular along its boundary. {\em Toroidal compactifications} were first introduced in \cite{AMRT} and have the property that
the boundary is a divisor. They, however,  depend on the choice of an admissible cone decomposition of the rational closure
of the cone $\operatorname{Sym}_{>0}^2(\RR^g)$ of positive  definite symmetric real $g \times g$ matrices. There are three well known possible such decompositions coming from the
reduction theory of quadratic forms. The resulting compactifications are  the {\em second Voronoi compactification}  $\Vor[g]$, the {\em first Voronoi} or {\em perfect cone compactification} $\Perf[g]$ and the
{\em central cone compactification} $\Centr[g]$.  For higher $g$  these decompositions are all different and none is a refinement of the other. For small $g$, however, there are some coincidences.
In particular all three compactifications coincide if $g \leq 3$. For $g=4$ the second Voronoi decompositions is a refinement of the perfect cone decomposition which, in this genus, coincides
with the central cone decomposition. Consequently, there is  a morphism $\Vor[4] \to \Perf[4]=\Centr[4]$.  The most relevant case for us is genus $5$. Here the second Voronoi decomposition
is still a refinement of the perfect cone decomposition \cite{rb}, a fact which fails for $g \geq 6$, see \cite{er}. Thus we have a morphism $\Vor[5] \to \Perf[5]$.
The central cone decomposition is now different from the perfect cone decomposition and neither is a refinement of the other.

By now the geometric meaning of the three toroidal compactifications is well known. The second Voronoi compactification $\Vor$ represents a moduli functor, as was shown by Alexeev \cite{alexeev02},
see also Olsson \cite{olsson}, and hence has an interpretation in terms of degenerate abelian varieties. More precisely, the boundary points correspond to stable semi-abelic varieties.
The perfect cone compactification $\Perf$, on the other hand, for $g\ge 12$ is a canonical model of $\calA_g$ in the sense of the minimal model program,
see Shepherd-Barron \cite{sb}, and
finally, the central cone compactification $\Centr$ is the Igusa blow-up of the Satake compactification $\Sat$, see Namikawa \cite{Nam}.

In order to put our results in perspective we want to recall very briefly what the situation is for extending the Torelli and the Prym maps to compactifications of the moduli spaces.
The Torelli map $t_g : {\mathcal M}_g \to {\mathcal A}_g$ which maps a curve $C$ to its polarized Jacobian ($\operatorname{Jac}(C), \Theta_C)$ is a morphism and it is natural to ask
whether this
can be extended to suitable compactifications of the source and target. For ${\mathcal M}_g$ there is a very natural choice of a compactification, namely the Deligne--Mumford
compactification $\overline{\mathcal M}_g$ parameterizing stable nodal curves.  For the target space ${\mathcal A}_g$ the choice is less clear. It was shown by Mumford and
Namikawa \cite{nam76I, nam76II} in the 1970's that the Torelli
map extends to a morphism $t_g^{V}: \overline{\mathcal M}_g \to \Vor$. The analogous question for the other toroidal compactifications of ${\mathcal A}_g$ was solved only much later, namely in 2012
by Alexeev and Brunyate \cite{ab}, see also \cite{aletal}.  They showed that the Torelli map extends as a morphism $t_g^{P}: \overline{\mathcal M}_g \to \Perf$. They also found that the situation is different
for the central cone compactification. The map $t_g^{C}: \overline{\mathcal M}_g \dasharrow \Centr$ is a morphism for $g \leq 8$, but has
points of indeterminacy for $g \geq 9$.
The case which is vital for our results is the extension of the Prym map. Let $C$ be a smooth curve of genus $g+1$ and $\pi: \widetilde C \to C$ be  a connected  \'etale degree $2$ cover.
The Prym variety $P=\operatorname{Prym}(\widetilde C \to C)$ is defined as the
identity component of the kernel of the norm map $\operatorname{Nm}(\pi): \operatorname{Jac}^0(\widetilde C) \to \operatorname{Jac}^0(C)$ given by
$\sum n_iP_i \mapsto \sum n_i\, \pi(P_i)$.
Clearly $P$ is an abelian variety of dimension $g$ and the restriction
of the theta divisor of $\widetilde C$ to $P$ gives ${\Theta_{\widetilde C}|}_P=2\Theta_P$ where $\Theta_P$ is a principal polarization.  Let
$$
{\mathcal R}_{g+1}= \{\pi: \widetilde C \to C\mid \, \pi \, \mbox{is a connected \'etale}\, \ 2:1 \, \mbox{ cover} \}
$$
be the space of  connected \'etale degree $2$ covers of smooth curves of genus $g+1$. Then the above procedure defines the so-called {\em Prym map}
$$
P_g : {\mathcal R}_{g+1} \to {\mathcal A}_g.
$$
The space ${\mathcal R}_{g+1}$ has a natural normal crossing (in a stack sense, i.e., after passing to a finite cover)  compactification  $\overline{{\mathcal R}}_{g+1}$ consisting of {\em admissible double  covers} of stable curves \cite{beauville}, which means that the
involution $\iota$ which is induced by the double cover does not have fixed nodes where the two
local branches are interchanged. In analogy we will also speak of {\em admissible involutions}. It was already noticed by Friedman and Smith in the mid 1980's that the Prym map
does not extend as a morphism to any (reasonable) toroidal compactification. Their examples were the so-called $FS_2$ covers.
We recall that a {\em Friedman--Smith cover of type $FS_n$} is an admissible double cover
$\pi: \widetilde C \to C$ where $\widetilde C= \widetilde C_1 \cup \widetilde C_2$ is the union of two smooth curves $\widetilde C_i, i=1,2$ intersecting in $2n$ points: $\widetilde C_1 \cap \widetilde C_2= \{P_1, \ldots , P_n,Q_1, \ldots ,Q_n\}$,
which admits an involution $\iota: \widetilde C \to \widetilde C$ which induces a fixed point free involution on each of the components $\widetilde C_i$ and interchanges the nodes $\iota(P_j)=Q_j, j=1 \ldots ,n$  pairwise,
such that  $C= \widetilde C / \langle \iota \rangle$ and $\pi: \widetilde C \to C$ is the quotient map. If we associate dual graphs to the curves $\widetilde C$ and $C$ in the usual way we obtain the  picture given in Figure \ref{Fig:dgFSG}.
\begin{figure}[htb]
\begin{equation*}
\xymatrix{
\widetilde \Gamma & *{\bullet} \ar @{-}@/_1pc/[rr]|-{\SelectTips{cm}{}\object@{>}}_{\tilde e_n^+} \ar @{-} @/_2.5pc/[rr] |-{\SelectTips{cm}{}\object@{>}}_{\tilde e_n^-}
 \ar@{-} @/^1pc/[rr]|-{\SelectTips{cm}{}\object@{>}}^{\tilde e_1^-}  \ar@{-}@/^2.5pc/[rr]|-{\SelectTips{cm}{}\object@{>}}^{\tilde e_1^+}^<{\tilde v_1}^>{\tilde v_2}  &\vdots &*{\bullet}
&&\Gamma&
 *{\bullet} \ar @{-}@/_1pc/[rr]|-{\SelectTips{cm}{}\object@{>}}_{e_n} \ar @{-}
 \ar@{-} @/^1pc/[rr]|-{\SelectTips{cm}{}\object@{>}}^{e_1}  ^<{ v_1}^>{ v_2}  &\vdots &*{\bullet}
}
\end{equation*}

\caption{  Dual graph of a Friedman--Smith example with $2n\ge 2$ nodes ($FS_n$).}\label{Fig:dgFSG}
\end{figure}
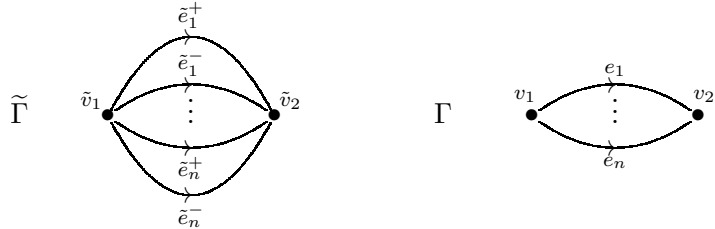

\noindent We also recall that the codimension of the $FS_n$ locus in $\overline {\mathcal R}_{g+1}$ is $n$.

The  Friedman--Smith covers are crucial for understanding the indeterminacy locus of the Prym map.
The first case to be analyzed in detail was the extension of the Prym map to the second Voronoi compactification.
It was shown, see  \cite{abh} and \cite{vologodsky} (see also \cite{cmghl})  that the indeterminacy locus of the Prym map
$$
P^{V}:\overline {\calR}_{g+1}\dashrightarrow \Vor
$$
is exactly the union $\cup_{n\geq 2}{\overline{FS}}_n$ of the closures of the loci of
Friedman--Smith covers of type $FS_n$ with $n\geq 2$.
In \cite{cmghl} we studied the situation for other toroidal compactifications. It turns out that the indeterminacy locus of the map $P^{P}:\overline {\calR}_{g+1}\dashrightarrow \Perf$ is strictly
smaller than for the map to $\Vor$. The closure $\overline{FS}_2 \cup \overline{FS}_3$ is still contained in the indeterminacy locus of $P^P$, but the open Friedman--Smith loci $FS_n, n \geq 4$ do not
meet the indeterminacy locus (although we could not exclude the possibility that some points in their closures, that are not already in $\overline{FS}_2 \cup \overline{FS}_3$,  might be in the indeterminacy locus). Finally, the indeterminacy locus of
$P^{{C}}:\overline {\calR}_{g+1}\dashrightarrow \Centr$ also contains
$\overline{FS}_2 \cup \overline{FS}_3$ and is disjoint from $FS_n, n\geq 4$, but for $g \geq 9$ it also contains points which are not degenerations of any $FS_n$-examples (in analogy to the
behavior of the Torelli map).

For future applications it is important to characterize the (degenerations of) Friedman--Smith covers combinatorially in terms of dual graphs with an admissible involution. For brevity we will  say that  a graph $\widetilde\Gamma$ together with an admissible involution $\iota$ lies
in the closure of the $FS_n$ locus if it arises from an admissible cover $\pi: \widetilde C \to C$ which is a degeneration of an $FS_n$ cover.
\begin{pro}\label{pro:FScombinatorics}
A dual graph $\widetilde\Gamma$ with an admissible involution $\iota$ lies in the closure $\overline{FS}_n$
of a Friedman--Smith locus $FS_n$  if and only if its set of vertices can
be decomposed into two disjoint subsets $\widetilde V=\widetilde V_1\sqcup \widetilde V_2$, such that the induced
subgraphs $\widetilde \Gamma_1$ and $\widetilde\Gamma_2$ (by which we mean the collections
of all edges both of whose endpoints are in $\widetilde V_i$) satisfy the following:
\begin{enumerate}
\item This decomposition is preserved by the involution $\iota$, i.e.,~$\iota(\widetilde \Gamma_i) =\widetilde \Gamma_i$;

\item the graphs $\widetilde\Gamma_i$ are connected;

\item $\widetilde\Gamma\setminus(\widetilde\Gamma_1\sqcup\widetilde\Gamma_2)$ is a collection of $2n$ edges (connecting a vertex in $\widetilde V_1$ to a vertex in $\widetilde V_2$), none of which are fixed by $\iota$.
\end{enumerate}
\end{pro}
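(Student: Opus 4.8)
The plan is to translate the statement into the standard combinatorics of the normal crossing boundary of $\overline{\calR}_{g+1}$. First I would recall (following \cite{beauville}, with the explicit description of strata and specialisations as in \cite{abh} and \cite{cmghl}) the dictionary between boundary strata of $\overline{\calR}_{g+1}$ and dual graphs $\widetilde\Gamma$ carrying vertex genera and an admissible involution $\iota$, together with the specialisation order: the stratum of $(\widetilde\Gamma,\iota)$ lies in the closure of the stratum of $(\widetilde\Gamma_0,\iota_0)$ if and only if there is an $\iota$-equivariant contraction $c\colon\widetilde\Gamma\to\widetilde\Gamma_0$ of a set of edges whose fibres over vertices are connected and which is compatible with the genus labels (the genus of a vertex of $\widetilde\Gamma_0$ equals the sum of the genera of the vertices over it plus the first Betti number of their induced subgraph). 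Since the generic $FS_n$ cover has dual graph $\widetilde\Gamma_n$ equal to two vertices $\widetilde v_1,\widetilde v_2$ joined by $2n$ edges, with $\iota$ fixing each $\widetilde v_i$ and pairing the $2n$ edges freely into $n$ orbits, membership in $\overline{FS}_n$ is equivalent to the existence of such a contraction $c\colon\widetilde\Gamma\to\widetilde\Gamma_n$.

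For the ``only if'' direction I would take such a $c$ and set $\widetilde V_i:=c^{-1}(\widetilde v_i)$. Then $\iota$-equivariance of $c$ together with the fact that $\iota$ fixes each $\widetilde v_i$ gives (1); connectedness of the fibres of a contraction gives (2); and one checks that the edges not contracted by $c$ are precisely the edges of $\widetilde\Gamma$ lying outside $\widetilde\Gamma_1\sqcup\widetilde\Gamma_2$ --- a contracted edge has both endpoints in a single $\widetilde V_i$, while an edge inside some $\widetilde\Gamma_i$ cannot be non-contracted since it would have to map to a loop of $\widetilde\Gamma_n$, which has none. As the non-contracted edges biject with the $2n$ edges of $\widetilde\Gamma_n$ and each therefore joins $\widetilde V_1$ to $\widetilde V_2$, and as an $\iota$-fixed one would map to an $\iota$-fixed edge of $\widetilde\Gamma_n$, we obtain (3).

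For the converse I would start from a decomposition satisfying (1)--(3) and contract all edges of $\widetilde\Gamma_1$ and of $\widetilde\Gamma_2$. By (2) this yields a graph with two vertices, by (3) the remaining $2n$ edges join them, and by (1) the involution descends, fixing both vertices and --- again by (3) --- acting freely on the $2n$ edges; so the contracted decorated graph is $\widetilde\Gamma_n$ with its $FS_n$ involution. It then remains to realise this combinatorial contraction by an actual one-parameter degeneration: the genus labels are determined and compatible by the Betti number computation above, and the restriction of the given admissible cover to the subcurve with dual graph $\widetilde\Gamma_i$ is again an admissible double cover, hence --- by the density of $\calR_h$ in $\overline{\calR}_h$ for the appropriate $h$ --- a limit of \'etale double covers of smooth curves carrying fixed point free involutions. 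Smoothing the two subcurves simultaneously while keeping the $2n$ nodes between them then produces a family of $FS_n$ covers specialising to $(\widetilde\Gamma,\iota)$, so $(\widetilde\Gamma,\iota)\in\overline{FS}_n$.

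The step I expect to be the main obstacle is this last one: upgrading the purely graph-theoretic contraction to a geometric degeneration. Concretely it relies on the smoothability of the restricted admissible covers (that is, on the density of $\calR_h$ in $\overline{\calR}_h$) and on the automatic compatibility of the genus and local monodromy data, which in turn uses that $\iota$ restricts to an admissible involution on each $\widetilde\Gamma_i$. Everything else is bookkeeping inside the standard correspondence between the boundary of $\overline{\calR}_{g+1}$ and decorated dual graphs.
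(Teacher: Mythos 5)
Your argument is correct and is essentially the paper's own reasoning spelled out in detail: the paper proves Proposition \ref{pro:FScombinatorics} with the single remark that the claim is clear from the definition of an $FS_n$ cover $\widetilde C=\widetilde C_1\cup\widetilde C_2$ whose two components may degenerate further, which is exactly the content of your bipartition $\widetilde V=\widetilde V_1\sqcup\widetilde V_2$ read through the standard contraction/specialization dictionary for boundary strata of $\overline{\calR}_{g+1}$. Your converse step (equivariantly smoothing the node-orbits inside $\widetilde\Gamma_1\sqcup\widetilde\Gamma_2$ while keeping the $2n$ connecting nodes) is the standard way to make the paper's ``can degenerate further'' precise, so no genuinely different route is being taken.
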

(We note in particular that the last condition includes the statement that neither $\tilde V_1$ nor $\tilde V_2$ are empty.)
\begin{proof}
This is clear from the definition of a Friedman--Smith cover $\widetilde C = \widetilde C_1 \cup \widetilde C_2$ where the two components can degenerate further.
\end{proof}

\begin{rem}\label{R:combG}
For the dual graph $\Gamma$ of the curve $C$ there is a natural graph morphism $\pi:\widetilde \Gamma \to \Gamma$ (sending vertices to vertices and edges to edges).
The proposition above implies that if $\widetilde C\to C$ lies in $\overline{FS}_{n}$, then the graph $\Gamma$ has the property that its vertices can be decomposed  into disjoint subsets  $V=V(\Gamma)=V_1\sqcup V_2$ such that for the induced sub-graphs $\Gamma_1$ and $\Gamma_2$ we have
$\pi^{-1}(\Gamma_i)$ ($i=1,2$) connected, and $\Gamma_1$ and $\Gamma_2$ are connected by $n$ edges each of which has two pre-images (for instance, take $V_i=\pi(\widetilde V_i)$).
\end{rem}

\begin{rem}
An admissible graph $({\widetilde \Gamma},\iota)$ can have more than one decomposition of the type described above if it lies in the intersection of two components of the locus $\cup_{n \geq 2}{\overline {FS}}_n$, or in the self-intersection of some locus $\overline{FS}_{n}$.
\end{rem}

\begin{rem}
Vologodsky in \cite[Lem.~1.2]{vologodsky} has proven a slightly different sufficient criterion: if $\widetilde \Gamma$ contains  two disjoint connected equivariant subgraphs $\widetilde \Gamma'_1$ and
$\widetilde \Gamma'_2 $ which are connected by at least $2n$ edges and which are interchanged pairwise by the involution $\iota$, then this curve is a degeneration of a Friedman--Smith cover of
type $FS_m$ with $m \geq n$. Note that in his statement the union of the vertices $V(\widetilde \Gamma'_1) \sqcup V(\widetilde \Gamma'_2)$ may not be equal to all of $\widetilde V$.
We further note that Gwena in \cite[Lem.~4]{g} also states one direction of Proposition \ref{pro:FScombinatorics}.
\end{rem}


\section{Extending the intermediate Jacobian map versus extending the Prym map}\label{sec:compare}

The goal of this section is to reformulate the question of extending the period map for cubic threefolds as the  question  of extending the Prym period map for \'etale double covers of plane quintics.

\subsection{Statement of the theorem}
We start by stating the main theorem of this section.   Let us  fix  some notation.
  Fix  $X\subseteq \mathbb P^4$  a polystable cubic threefold with isolated singularities (see Theorem \ref{T:GITcub}).  Denote by $$\mathcal X\to B_{X\hookrightarrow \mathbb P^4}$$ an algebraic  semiuniversal
  embedded deformation of   $X$; for instance, we may take $B_{X\hookrightarrow \mathbb P^4}$ to be an open subset of the Hilbert scheme (the projective space $\mathbb P^{34}$) of cubic threefolds, and $\mathcal X$ to be the restriction of the universal family (see \S \ref{S:DefCub}).
We denote by $$F(\mathcal X)=F(\mathcal X/B_{X\hookrightarrow \mathbb P^4})\to B_{X\hookrightarrow \mathbb P^4}$$ the relative Fano variety of lines.
      Let $F^{ns}(\mathcal X)$ be the smooth open  subset  of $F(\mathcal X)$  consisting of non-special lines (see Definition \ref{D:NSL} and Proposition \ref{P:Fano}).
 Projecting a cubic from a non-special line induces an \'etale  double cover of the discriminant curve (see  \S \ref{S:cub+disc}).  This  induces a rational map
$$
F^{ns}(\mathcal X)\dashrightarrow \overline R_6,
$$
which is a morphism   over the locus of pairs consisting of a smooth cubic and a non-special line.

The discriminant in  $B_{X\hookrightarrow\mathbb P^4}$,  and its pull-back to  $F^{ns}(\mathcal X)$, are determined by an $ADE$ root system corresponding to the singularities of $X$ (see \S \ref{S:DiscCPQ}).   Therefore,  they admit wonderful blow-ups of Weyl covers
$\widetilde B'_{X\hookrightarrow\mathbb P^4}\to B_{X\hookrightarrow \mathbb P^4}$ and $\widetilde F^{'ns}(\mathcal S)\to F^{ns}(\mathcal X)$, respectively  (see \S \ref{S:DefWCWB}).  The wonderful blow-up of the Weyl cover is a canonically defined finite cover followed by a sequence of blow-ups, all determined by the root system, such that the discriminant is normal crossing.
The main point of this section is to prove   that the wonderful blow-up of the Weyl cover $\widetilde F^{'ns}(\mathcal X)  \to F^{ns}(\mathcal X)$ resolves the rational map $F(\mathcal X)^{ns}\dashrightarrow \overline R_6$:

\begin{teo}\label{T:IJtoP}
The rational map $ \widetilde F^{'ns}(\mathcal X)\dashrightarrow \overline R_6$  extends to a morphism.
\end{teo}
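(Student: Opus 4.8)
The statement is local on the base $B_{X\hookrightarrow\mathbb P^4}$, so I would work étale-locally around a point of $F^{ns}(\mathcal X)$ lying over a pair $(X,\ell)$ with $\ell$ a non-special line. The first step is to make precise the ``identification of discriminants'' promised in the introduction: projection from a non-special line $\ell\subset X$ exhibits $\widetilde X_\ell\to\mathbb P^2$ as a conic bundle with discriminant curve a plane quintic $D$, and $\widetilde D\to D$ is an étale double cover. Extending Beauville's analysis of the smooth case to the GIT-semistable (mildly singular) setting, I would establish a natural isomorphism of germs $\Def(\widetilde D\to D)\cong\Def(X,\ell)$, compatible with the forgetful map $\Def(X,\ell)\to\Def(X)$, which is smooth for $\ell$ non-special (this is exactly what \S\ref{sec:compare} is claiming to prove, so in this plan I take it as the content to be verified). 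Crucially, under this isomorphism the discriminant hypersurface in $\Def(X,\ell)$ — which by Theorem \ref{T:GITcub} and the $ADE$-singularity structure is locally cut out by the mirror hyperplane arrangement of the root system attached to $\Sing X$ — is carried to the boundary divisor $\overline{\mathcal R}_6\setminus\mathcal R_6$ pulled back along the classifying map $F^{ns}(\mathcal X)\dashrightarrow\overline{\mathcal R}_6$. In particular the two arrangements coincide, root system by root system.

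**Reduction to a statement about $\overline{\mathcal R}_6$.** Given this identification, the rational map $\widetilde F^{'ns}(\mathcal X)\dashrightarrow\overline{\mathcal R}_6$ is, locally, the composition of the wonderful-blow-up-of-the-Weyl-cover morphism $\widetilde F^{'ns}(\mathcal X)\to F^{ns}(\mathcal X)$ with the classifying map into $\overline{\mathcal R}_6$. Since the wonderful blow-up of the Weyl cover is by construction the canonical finite Weyl cover followed by blowing up the irreducible strata of the arrangement, it is precisely a simultaneous semistable reduction for the universal family of étale double covers of plane quintics over $F^{ns}(\mathcal X)$, in the sense of \cite{cml2}. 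So after this blow-up the family of pointed stable curves $(\widetilde D\to D)$ has, étale-locally, semistable total space and the boundary is normal crossing; the monodromy becoming unipotent (indeed the local monodromy cone being explicitly generated by the transvections attached to the vanishing cycles of the $ADE$ configuration) means there is a well-defined limit stable curve with admissible involution. Concretely, I would show that the pulled-back family of admissible double covers extends over the normal-crossing boundary of $\widetilde F^{'ns}(\mathcal X)$ to a family in $\overline{\mathcal R}_6$ — this is a valuative/extension-over-codimension-one argument using properness of $\overline{\mathcal R}_6$ together with the fact that normal crossing boundary plus unipotent monodromy gives a unique Deligne-Mumford-stable limit. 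The map is then continuous, hence a morphism by normality of the source (after the Weyl cover, $\widetilde F^{'ns}(\mathcal X)$ is smooth) and separatedness of $\overline{\mathcal R}_6$.

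**The main obstacle.** The genuinely hard part is not the abstract extension-over-normal-crossings argument, which is formal once the geometry is set up, but rather the \emph{identification of deformation spaces and discriminants} in the singular case — i.e. verifying that Beauville's conic-bundle dictionary survives degeneration of $X$ to a cubic with $A_k$ ($k\le 5$) or $D_4$ singularities, that the forgetful map $\Def(X,\ell)\to\Def(X)$ stays smooth along a non-special line (one must check non-special lines remain available through the semistable locus, cf.\ Proposition \ref{P:Fano}), and that the resulting quintic $D$ acquires precisely the singularity dictated by the root system with the étale cover $\widetilde D\to D$ degenerating to an \emph{admissible} double cover. One must be careful that ``non-special'' is an open condition persisting under the relevant deformations, and that the double cover construction commutes with base change; the subtle compatibility is between the local normal form of the cubic near its singular point and the local normal form of the associated plane quintic near the image of $\ell$, which is where the explicit $ADE$ combinatorics enters. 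Once that dictionary is nailed down, the blow-up on the cubic side is \emph{by definition} the blow-up that resolves the period map on the quintic side, and Theorem \ref{T:IJtoP} follows.
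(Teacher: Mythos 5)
Your setup coincides with the paper's: extend Beauville's conic-bundle dictionary to cubics with allowable singularities (Propositions \ref{P:B1}, \ref{P:B2}, \ref{P:CtPQdef}), identify the discriminants with the $ADE$ arrangements attached to the singularities via du Plessis--Wall (Proposition \ref{P:DefCtoPQ}), and use the compatibility of the wonderful blow-ups of the Weyl covers (diagram \eqref{E:WBfibprod}) to reduce the statement to extending $\widetilde B'_{(\widetilde D,D\hookrightarrow\mathbb P^2)}\dashrightarrow\overline{\mathcal R}_6$ and then composing with the smooth morphism from $\widetilde F^{'ns}(\mathcal X)$. One small correction: the paper does not claim an isomorphism of germs $\operatorname{Def}(\widetilde D\to D)\cong\operatorname{Def}(X,\ell)$ (the dimensions differ), only a formally smooth morphism $\operatorname{Def}_{(X\hookrightarrow\mathbb P^4,\ell)}\to\operatorname{Def}_{(\widetilde D,D\hookrightarrow\mathbb P^2)}$ identifying the discriminants up to a smooth factor, which is all that is needed.

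Where you diverge is the last step, and that is also where your write-up has a genuine gap. You describe the extension over the normal crossing boundary as ``formal once the geometry is set up'' and justify it by properness of $\overline{\mathcal R}_6$, the valuative criterion, continuity, normality of the source and separatedness of the target. That is not sufficient: a rational map from a smooth variety to a proper variety can have indeterminacy in codimension $\ge 2$, and the valuative criterion only controls codimension one; unipotence of the local monodromies does not by itself produce an extension of the classifying map over a higher-dimensional nc boundary. The paper closes exactly this step with a nontrivial input, the theorem of de Jong--Oort and Cautis (Theorem \ref{T:dJOC}): for any family of smooth curves over the complement of an nc divisor in a regular base, the map to $\overline{\mathcal M}_g$ extends, with no monodromy hypothesis, and the lift to $\overline{\mathcal R}_6$ then follows from finiteness of $\overline{\mathcal R}_6\to\overline{\mathcal M}_6$ and normality of the source (Proposition \ref{P:ExtProp}). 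Your alternative route --- that the wonderful blow-up of the Weyl cover realizes a simultaneous (semi)stable reduction in the sense of \cite{cml2}, so the family of admissible covers literally extends over the boundary and the morphism to $\overline{\mathcal R}_6$ becomes tautological --- is legitimate, and is indeed the picture the authors cite as motivation; but then the main theorem of \cite{cml2} (together with the same finite-cover descent to handle the involution) must carry the full weight of the argument, rather than being treated as a reformulation that reduces the extension to an appeal to properness.
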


For the proof of the theorem (details in \S \ref{S:PfIJtoP}) we will reduce to the case of  double covers of plane quintics.   The basic strategy can be described as follows.
Due to  results of  Beauville \cite{bint,bdet},
there is (\'etale locally) a smooth morphism $F^{ns}(\mathcal X)\to B_{(\widetilde D,D\hookrightarrow \mathbb P^2)}$, where $B_{(\widetilde D,D\hookrightarrow \mathbb P^2)}$ is a  semiuniversal deformation   of the odd \'etale double cover of the discriminant   plane quintic (see \S \ref{S:DefCub}).
The discriminants in  both spaces  are determined by compatible $ADE$ root systems (see \S \ref{S:DiscCPQ}).   Therefore,   their wonderful blow-ups are  compatible in the sense that there is a commutative diagram
$$
\xymatrix@R=.5cm@C=.5cm{
\widetilde F^{'ns}(\mathcal X) \ar[r] \ar[d] &  \ar [d] F^{ns}(\mathcal X)\\
\widetilde B'_{(\widetilde D,D\hookrightarrow \mathbb P^2)}  \ar[r] & B_{(\widetilde D,D\hookrightarrow \mathbb P^2)}.\\
}
$$
Since the discriminant in the wonderful blow-up of the Weyl cover is normal crossing,  it follows from a    result of de Jong--Oort and Cautis  \cite{dJO,cautis} (see Theorem \ref{T:dJOC})   that $\widetilde B'_{(\widetilde D,D\hookrightarrow \mathbb P^2)} \dashrightarrow \overline R_6$ extends to a morphism.  This strategy of reducing the problem, via the explicit  wonderful blow-up of the Weyl cover, to a question about families of curves with  normal crossing boundary   is motivated  by the strategy of the proof of the main result of \cite{cml2}.

\vskip .1 in
Our motivation for proving Theorem \ref{T:IJtoP} comes from the following consequence, which essentially says that we have reduced the problem of resolving the period map for cubic threefolds  to resolving the Prym map for \'etale double covers of plane quintics.

\begin{cor}\label{C:IJtoP}
If the rational map $\widetilde F^{'ns}(\mathcal X)\to \overline R_6\dashrightarrow \Vor[5] $ extends to a morphism, then the rational map $\widetilde {\mathcal M}\to \Vor$ extends to a morphism in a neighborhood of the pre-image in $\widetilde {\mathcal M}$ of the point corresponding to $X$ in $\overline {\mathcal M}$.
\end{cor}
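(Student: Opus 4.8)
The plan is to combine Theorem~\ref{T:IJtoP} with the hypothesis and then descend the resulting morphism down to $\tM$. By Theorem~\ref{T:IJtoP} the rational map $\widetilde F^{'ns}(\mathcal X)\dashrightarrow \overline R_6$ is already a morphism, so composing with the assumed extension gives a morphism $g\colon \widetilde F^{'ns}(\mathcal X)\to\Vor[5]$. Over the locus of pairs $(X',\ell')$ consisting of a smooth cubic and a non-special line, Mumford's construction identifies $g(X',\ell')$ with the Prym variety of the associated \'etale double cover, which is $IJ(X')$; thus over that locus $g$ is independent of the non-special line $\ell'$ and invariant under replacing $X'$ by a projectively equivalent cubic. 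I would then record that the natural map $\widetilde F^{'ns}(\mathcal X)\to\tM$, defined near the fibre over the point $[X]\in\bM$ corresponding to $X$, factors as a smooth surjection $\varphi\colon\widetilde F^{'ns}(\mathcal X)\to\widetilde B'_{X\hookrightarrow\mathbb P^4}$ --- namely the base change along $\widetilde B'_{X\hookrightarrow\mathbb P^4}\to B_{X\hookrightarrow\mathbb P^4}$ of the forgetful morphism $F^{ns}(\mathcal X)\to B_{X\hookrightarrow\mathbb P^4}$, whose fibres are the non-special lines on the corresponding cubic (Proposition~\ref{P:Fano}), the two wonderful blow-ups of Weyl covers being compatible since the discriminant in $F^{ns}(\mathcal X)$ is pulled back from $B_{X\hookrightarrow\mathbb P^4}$ --- followed by a map $\widetilde B'_{X\hookrightarrow\mathbb P^4}\to\tM$ which, near the fibre over $[X]$, is the GIT quotient by the $\mathrm{PGL}(5)$-action followed by a finite quotient, the fibres over the smooth locus being the $\mathrm{PGL}(5)$-orbits of projectively equivalent cubics together with the finitely many sheets of the Weyl cover of the $ADE$ discriminant attached to $\Sing X$. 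This last factorization uses Luna's \'etale slice theorem and the $\mathrm{PGL}(5)$-invariance of the $ADE$ discriminant (hence the equivariance of its canonical Weyl cover and wonderful blow-up), and matches the way $\tM$ is built near the relevant boundary in Section~\ref{sec:tM}.

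The descent is then carried out in stages. First, the two pullbacks of $g$ to the fibre product $\widetilde F^{'ns}(\mathcal X)\times_{\widetilde B'_{X\hookrightarrow\mathbb P^4}}\widetilde F^{'ns}(\mathcal X)$ agree over the dense open locus of smooth cubics by the observation above; since this fibre product is reduced (being smooth over $\widetilde F^{'ns}(\mathcal X)$) and $\Vor[5]$ is separated, the rigidity principle --- two morphisms from a reduced scheme to a separated target that agree on a dense open subset agree everywhere --- forces them to agree everywhere, so $g$ is constant on the fibres of $\varphi$. Since morphisms to $\Vor[5]$ satisfy fppf descent, $g$ descends to a morphism $h\colon\widetilde B'_{X\hookrightarrow\mathbb P^4}\to\Vor[5]$. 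The same rigidity principle --- comparing again over the smooth locus, where the $\mathrm{PGL}(5)$-orbits of projectively equivalent cubics and the sheets of the Weyl cover all carry the same intermediate Jacobian --- shows $h$ is $\mathrm{PGL}(5)$-invariant and invariant under the finite Weyl-cover group. A $\mathrm{PGL}(5)$-invariant morphism to the separated target $\Vor[5]$ factors through the GIT quotient (a categorical quotient), and the remaining invariance allows one to descend along the finite quotient; composing the two descents yields a morphism $\tM\to\Vor[5]$ on a neighbourhood of the fibre over $[X]$, which is the asserted extension.

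The main obstacle I expect is not conceptual but lies in making the rigidity and descent steps robust over the \emph{singular} and boundary strata, rather than merely over the locus of smooth cubics where all identifications are transparent. One must know that the fibres of $\varphi$ and the relevant fibre products stay reduced and geometrically connected after the wonderful blow-ups --- in particular that $F^{ns}(\mathcal X)$ has connected, nonempty fibres over \emph{every} polystable cubic in the family (Proposition~\ref{P:Fano}) --- and one must verify the local presentation of $\tM$ near the fibre over $[X]$ as such a quotient of $\widetilde B'_{X\hookrightarrow\mathbb P^4}$ for all polystable $X$ with isolated singularities. The sensitive cases are $X=\Delta$ (the $3D_4$ cubic, whose stabilizer is a positive-dimensional torus) and the special cubics along $T$ (with $2A_5$, respectively $2A_5+A_1$, singularities), for which the comparison $\hM\to\BG$ is respectively a toroidal modification at a cusp and a small contraction: there one must invoke that, despite this cusp geometry, the local structure of $\tM$ is still governed by the associated root system --- precisely what the construction of $\tM$ in Section~\ref{sec:tM} and the comparison of discriminants in this section provide.
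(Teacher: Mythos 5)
Your proposal follows essentially the same architecture as the paper's proof: start from the morphism on $\widetilde F^{'ns}(\mathcal X)$ furnished by Theorem~\ref{T:IJtoP} plus the hypothesis, and push it down the chain of Diagram~\eqref{E:ExtDiag} to a neighborhood in $\tM$, using the compatibility of the wonderful blow-ups and Luna's slice theorem. The genuine difference is the descent mechanism. The paper never descends the extended morphism itself: it observes that the relevant rational map already exists on every space in the diagram (it is the intermediate Jacobian/Prym map on the dense locus of smooth cubics) and invokes Proposition~\ref{P:ExtProp} (the EGA criteria), which says that a rational map from a normal variety to a separated target extends to a morphism if and only if its pullback along a composition of smooth surjective and finite surjective morphisms does; running this down $\widetilde F^{'ns}(\mathcal X)\to \widetilde B'_{X\hookrightarrow \mathbb P^4}\to \widetilde B'_X$ produces the morphism $\widetilde B'_X\to \Vor$ in one stroke, after which $\operatorname{Aut}(X)$-equivariance and the \'etale map $\widetilde B'_X/\operatorname{Aut}(X)\to \tM$ finish the argument. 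Your route --- fppf descent of the morphism plus rigidity on the fibre product, then categorical-quotient arguments --- can be made to work, but it forces exactly the extra verifications you flag (reducedness and density in the fibre product, nonemptiness/surjectivity of the non-special-line fibres over every fibre of the family, invariance of $h$), and it contains the one genuinely loose step: describing $\widetilde B'_{X\hookrightarrow \mathbb P^4}\to \tM$ as ``the GIT quotient by the $\operatorname{PGL}(5)$-action followed by a finite quotient.'' The group $\operatorname{PGL}(5)$ does not act on the chosen neighbourhood $B_{X\hookrightarrow \mathbb P^4}$ (it is not invariant), and for the $3D_4$ cubic $\Delta$ and the $2A_5$ cubics along $T$ the stabilizer $\operatorname{Aut}(X)$ is positive-dimensional, so ``finite quotient'' fails there --- a point you notice at the end but which is in tension with the descent step as you state it. The paper sidesteps both issues by inserting the abstract deformation space $B_X$: the orbit directions are absorbed into the smooth forgetful map $B_{X\hookrightarrow \mathbb P^4}\to B_X$ (\'etale-locally $O_X\times B_X$, \S\ref{S:AbsDef}), so one only ever quotients the slice by $\operatorname{Aut}(X)$ before mapping \'etale to $\tM$. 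If you rephrase your descent through that intermediate space --- which your appeal to Luna's slice theorem essentially amounts to --- your argument becomes a correct, if heavier, variant of the paper's; what the paper's use of Proposition~\ref{P:ExtProp} buys is precisely that no descent data, fibre-constancy, or invariance statements ever need to be established, because only the locus of definition of a pre-existing rational map is being enlarged.
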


We can sketch the proof as follows; the details are in \S \ref{S:CubCor}.   Denote by $\mathcal X_{\text{abs}}\to B_X$   a semiuniversal deformation of $X$ as an abstract variety.    The discriminant in $B_X$ is determined by the same $ADE$ root system as in the other spaces (\S \ref{S:AbsDef}), and we  denote by $\widetilde B_X'\to B_X$ the wonderful blow-up.  During the course of the next several subsections, we explain how we  obtain a diagram  (where all arrows are defined \'etale locally)

\begin{equation}\label{E:ExtDiag}
\xymatrix@R=.5cm@C=.6cm{
&&&&&\widetilde F^{'ns}(\mathcal X) \ar[d]_<>(0.5){\text{smooth}} \ar@{.>}[llld] \ar@/^1pc/@{->}[rdddd]&\\
&&F^{ns}(\mathcal X) \ar@{.>}[d]_{\text{smooth}}&&&\widetilde B'_{X\hookrightarrow \mathbb P^4} \ar[ld]_<>(0.5){\text{smooth}} \ar[dd] \ar@{.>}[llld]&\\
&&B_{X\hookrightarrow \mathbb P^4} \ar@{.>}[ld]_{\text{smooth, forgetful functor \ \ \ }} \ar@{.>}[dd]^<>(0.1){(-)/\!\!/\operatorname{SL}}&&\widetilde B'_{X}  \ar[ld]^<>(0.2){(-)/\operatorname{Aut}(X)}  \ar@{.>}[llld]&&\\
&B_{X}\ar@{.>}[ld]_{(-)/\operatorname{Aut}(X)} &&\widetilde B'_{X}/\operatorname{Aut}(X)  \ar@{.>}[llld]\ar[rr]^{\text{\'etale}}&&\widetilde {\mathcal M} \ar@{-->}[rd]\ar@{.>}[llld] &\\
B_{X}/\operatorname{Aut}(X) \ar@{.>}[rr]^<>(0.6){\text{\'etale}}_<>(0.5){\text{Luna Slice Theorem}}&&\overline{\mathcal M} \ar@{-->}[rrrr]&&&&\Vor. \\
}
\end{equation}
The left side of the diagram has the stated properties via deformation theory, GIT, and the theory of Fano varieties.   The properties on the right hand side are deduced from those on the left hand side because the discriminants are all canonically identified, and consequently  the wonderful blow-ups are \'etale locally obtained from the others via fibered products.
The corollary then follows from the diagram via standard results on extending rational maps (e.g., Proposition \ref{P:ExtProp}).

\subsection{Preliminaries on cubic threefolds and discriminant double covers}\label{S:cub+disc}
In this section we review the connection between cubic threefolds  and double covers of plane quintics obtained from projecting the threefold from  a line.  The case of smooth cubic threefolds is the well-known story going  back to Mumford \cite{mprym}, and Beauville \cite{bint, bdet}.    Here we discuss  some  extensions to cubics with allowable singularities (in particular, polystable cubics with isolated singularities; see Remark \ref{rem-allowable}), referring also to  \cite[\S 3.2]{cml} and \cite{cmf}.  Most of the results in this section hold for cubics with slightly more general isolated double point singularities; however, for simplicity,  and uniformity with the rest of the paper, we only state the results for cubics with allowable singularities
(see  Definition \ref{def-allowable}).
In this section, the restrictions are used only to assure that the plane quintic cannot consist of a line meeting the residual quadric in a single point; later in \S \ref{S:DiscCPQ}, the hypothesis will be important in assuring that the total Tjurina number is less than $16$, in order to compare embedded versus abstract deformations.
We use the terminology allowable singularities, rather than polystable cubics with isolated singularities, so that we can use the same terminology for describing singularities of plane quintics.
(We recall that two hypersurface singularities of different dimensions are said to have the same type if, after an analytic change of coordinates, they differ by an iterated suspension, i.e., $V(f(x_1,\dots,x_k))\subset (\CC^k,0)$ and  $V(f(x_1,\dots,x_k)+x_{k+1}^2+\dots+x_{k+l}^2)\subset (\CC^{k+l},0)$ have the same type.)

\subsubsection{Cubic threefolds, conic bundles and the discriminant curve}
 Let $X$ be a cubic threefold  with isolated singularities, and $\ell\subseteq X$ a line not passing through any of the singular points of $X$.
   The blow-up $\mathbb P^4_\ell$ of the ambient projective space $\mathbb P^4$ along $\ell$ gives a commutative diagram
\begin{equation}\label{E:FinQdef}
\xymatrix{
X_\ell \ar@{^(->}[r] \ar@{->}[rd]_f & \mathbb P^4_\ell \ar@{->}[d]^\tau \\
& \mathbb P^2\\
}
\end{equation}
where $X_\ell$ is the strict transform of $X$, and $f$ and $\tau$ are the linear projections with center $\ell$.   The fibers of $f$ are conics (the residual conic to a plane through $\ell$), and  the general fiber of $f$ is a smooth conic.

Just as in the smooth case, $X_\ell/\mathbb P^2$ is a  quadric in the standard sense that there exists a rank $3$ vector bundle $E$ on $\mathbb P^2$, and a line bundle $L$ on $\mathbb P^2$ such that $X$ is defined in $\mathbb PE$ by $q \in  H^0(\mathbb P^2, \operatorname{Sym}^2(E^\vee) \otimes  L)=H^0(\mathbb PE,\mathcal O_{\mathbb PE}(2)\otimes f^*L)$.  The proof is \cite[Prop.~1.2 (ii)]{bint}; the key point to note is that since the fibers of $f$ are of dimension $1$, and Gorenstein (they are each hypersurfaces in $\mathbb P^2$), we may obtain $E$ as $(f_*\omega_{X_\ell/\mathbb P^2})^\vee$, so that we do not need the stronger  condition used in the proof of \cite[Prop.~1.2 (ii)]{bint} for the case of higher dimensional fibers, namely  that $X_\ell$ be factorial.

The discriminant $D$  of $f$ is by definition the locus in $\mathbb P^2$ where the fibers are singular.  This comes with a natural scheme structure.   For a  quadric defined by a section  $q:E\to E^\vee\otimes L$,
the discriminant  is defined by the section
$$
\delta= \det(q) \in  H^0(\mathbb P^2, \operatorname{Sym}^2(\det E^\vee) \otimes  L^{\otimes 3}).
$$

For cubic threefolds, it can be quite useful to describe the discriminant  explicitly with equations, and indeed we will use this later.
 After a change of coordinates, we may as well assume that $\ell$ is the line
 \begin{equation}\label{E:ExpL}
\ell=\{X_0=X_1=X_2=0\},
\end{equation}
 and we may then take $X$ to be defined by the homogeneous polynomial
\begin{equation}\label{E:ExpCubEq}
F(X_0,\cdots,X_4)=H+2X_3Q_1+2X_4Q_2+X_3^2L_1+2X_3X_4L_2+X_4^2L_3
\end{equation}
where $H$ (resp.~$Q_1,Q_2$, resp.~$L_1,L_2,L_3$) is a cubic (resp.~are quadrics, resp.~are linear forms) in $X_0,X_1,X_2$.  For each choice of $X_0,X_1,X_2$, we obtain a (non-homogeneous) quadric in $X_3,X_4$, which is the fiber of the fibration in quadrics $f:X_\ell\to \mathbb P^2$ (after projective completion).  The Jacobian criterion for smoothness then yields a determinantal condition for singularities; namely, it is immediate to check that the discriminant $D$ is defined by the determinant of the matrix
\begin{equation}\label{E:ExpCubMat}
M=
\left(
\begin{array}{ccc}
L_1 & L_2& Q_1\\
L_2& L_3 & Q_2\\
Q_1& Q_2& H
\end{array}
\right).
\end{equation}
Since the generic fiber of $f$ is smooth, this determinant is not identically zero, and therefore, $D=V(\det M)$ is a plane quintic.

\begin{rem}
Essentially the same analysis above shows that the conic bundle $X_\ell/\mathbb P^2$ determined by the cubic and line from Equation \eqref{E:ExpCubEq} and \eqref{E:ExpL} can be described explicitly in the following way.  Set $E=\mathcal O_{\mathbb P^2}(-2)\oplus \mathcal O_{\mathbb P^2}(-2)\oplus \mathcal O_{\mathbb P^2}(-3)$, set $L=\mathcal O_{\mathbb P^2}(-3)$ and take $q\in  H^0(\mathbb P^2, \operatorname{Sym}^2(E^\vee) \otimes  L)$ to be  defined by $M$ in \eqref{E:ExpCubMat}.  Then $X_\ell$ is determined by $V(q)$ in $\mathbb PE$.
\end{rem}

There is another discriminant that is important to consider.  Let $E\subseteq X_\ell$ be the exceptional divisor for the blow-up $X_\ell\to X$.  There is an induced map $f|_E:E\to \mathbb P^2$, which is a double cover (for a point in $\mathbb P^2$, one obtains a plane in $\mathbb P^4$ through $\ell$, and the intersection of the residual conic with $\ell$ give the two points in the pre-image on $E$).  Let $\mathcal Q\subseteq \mathbb P^2$ be the branch locus of  $f|_E\to \mathbb P^2$; this is identified generically with the locus where the residual conic meets $\ell$ in a single point.  One can check that $\mathcal Q$ is a conic, and in the coordinates above, one has that $\mathcal Q$ is defined by the determinant of the matrix
\begin{equation}\label{E:MatB}
B=
\left(
\begin{array}{cc}
L_1 & L_2\\
L_2 & L_3
\end{array}
\right).
\end{equation}

\subsubsection{The double cover of the discriminant}

We now examine the Stein factorization of $f|_{f^{-1}(D)}:f^{-1}(D)\to D$,  the restriction of the fibration in quadrics to the discriminant (here we really mean the blow-up of $f^{-1}(D)$ at the locus of singular points; this locus is isomorphic to $D$; we can also simply take the relative Fano variety of lines).   The following definition is quite useful.

\begin{dfn}[{\cite[Def.~3.4]{cml}}]\label{D:NSL}  Let $X$ be a cubic threefold with isolated singularities, and $\ell\subset X$ a line.  We say that $\ell$ is a \emph{non-special line} if for every other
 line $\ell'\subseteq X$ meeting $\ell$, the plane spanned by $\ell$ and $\ell'$ cuts out three distinct lines on $X$.
\end{dfn}

\begin{rem}\label{R:NSL}
A non-special line $\ell$ is contained in the smooth locus of $X$ (see \cite[Rem.~3.5 i)]{cml}).
\end{rem}

The main point from our perspective is the following:

\begin{pro}[{\cite[Pro.~3.6, Lem.~3.9, Clm.~1 p.39]{cml}}]\label{P:Fano}
Suppose that $X$ is a cubic threefold with allowable singularities.  Then the Fano variety  $F(X)$ of lines on $X$ is a (possibly reducible) surface, and the non-special lines form a (Zariski) open subset $F^{ns}(X)\subseteq F(X)$.

Moreover, if $\ell$ is a non-special line on $X$,
and $D$ is the discriminant curve obtained from the projection  $f:X_\ell \to \mathbb P^2$, then
\begin{enumerate}
\item There exists a natural $1$-to-$1$ correspondence between the singularities of $D$ and those of $X$, preserving the type of the $AD$-singularity.  More precisely, if $c$ is a local (analytic) equation for a singularity of $D$, then $c+x_3^2+x_4^2$ is a local equation for the corresponding singularity of $X$.

\item The Stein factorization of $f|_{f^{-1}(D)}:f^{-1}(D)\to D$ (see above) gives a sequence
$$
\begin{CD}
f^{-1}(D)@>>> \widetilde D @>\pi >> D
\end{CD}
$$
where $\pi:\widetilde D\to D$ is an \'etale double cover.
\end{enumerate}
In addition, for  the conic  $\mathcal Q\subseteq \mathbb P^2$  obtained as the branch locus of $f|_E:E\to \mathbb P^2$,  $\mathcal Q$ is smooth and does not pass through any singular points of $D$.
\end{pro}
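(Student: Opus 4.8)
The plan is to establish the three assertions in turn, the substantial one being the dictionary between the singularities of $X$ and those of $D$.

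\textbf{The Fano variety and the non-special locus.}
Lines in $\mathbb P^4$ are parametrized by the Grassmannian $\Gr(2,5)$, which has dimension $6$, and $F(X)$ is the zero scheme there of a section of the rank-$4$ bundle $\Sym^3\mathcal S^\vee$ ($\mathcal S$ the tautological rank-$2$ subbundle); hence every component of $F(X)$ has dimension at least $2$. To obtain $\dim F(X)=2$ I would use that a GIT polystable cubic threefold with isolated singularities is not a cone over a cubic surface (such a threefold point is not of type $A_k$ or $D_4$; see Theorem \ref{T:GITcub}), together with the standard fact that through a general point of a non-conical cubic threefold only finitely many lines of $X$ pass --- this forbids a component of $F(X)$ of dimension $3$ via the incidence correspondence $\{(x,\ell):x\in\ell\subseteq X\}$. (Alternatively one may cite the known structure of the Fano scheme of an $ADE$ cubic threefold.) For the non-special locus, inside the incidence variety $\{(\ell,\ell')\in F(X)\times F(X):\ell\cap\ell'\neq\emptyset,\ \ell\neq\ell'\}$ the locus where the plane $\langle\ell,\ell'\rangle$ meets $X$ in a divisor that is not three distinct lines is closed, and $F(X)\setminus F^{ns}(X)$ is its image under the (proper) first projection, hence closed; non-emptiness follows since $\Sing X$ is finite (so the lines of $X$ through it form only a curve in $F(X)$) and a general line disjoint from $\Sing X$ is non-special by a dimension count on the planes through it.

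\textbf{Matching the singularities.}
Fix a non-special line $\ell$. By Remark \ref{R:NSL} it lies in $X_{\mathrm{sm}}$, so $X_\ell\to X$ is an isomorphism near $\Sing X$; thus $\Sing X_\ell$ is a finite set, canonically $\Sing X$, and $f$ maps it to finitely many points of $\mathbb P^2$, necessarily on $D$. I would then work in the coordinates of \eqref{E:ExpL} and \eqref{E:ExpCubEq}, so that $X_\ell/\mathbb P^2$ is the quadric bundle with symmetric Gram matrix $M$ of \eqref{E:ExpCubMat} and $D=V(\det M)$, and over $p\in\mathbb P^2$ the fibre is the conic with matrix $M(p)$. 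A local computation with $M$ --- extending Beauville's analysis of quadric bundles \cite{bint,bdet} and following \cite{cml,cmf} --- should show that a point of the fibre over $p$ is a singular point of $X_\ell$ exactly when it is the vertex of $M(p)$ and $M$ degenerates to first order along $D$ there, and that the analytic germ $(X_\ell,x)$ is then isomorphic to the plane-curve germ $(D,p)$. This gives the one-to-one, analytic-type-preserving correspondence $\Sing X\leftrightarrow\Sing D$; in particular $D$ is reduced and, for the allowable singularities of Theorem \ref{T:GITcub}, $\operatorname{corank} M(p)=1$ at every point $p\in D$.

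\textbf{The double cover and the conic $\mathcal Q$.}
Non-speciality says that for every plane through $\ell$ the residual conic, when reducible, is a pair of two distinct lines, each different from $\ell$; equivalently no fibre of $f$ is a double line, i.e.\ $\operatorname{rk} M(p)\geq 2$ for all $p$. Consequently the fibre over any $p\in D$ is a reduced pair of distinct lines, the relative variety of lines of this family of conics over $D$ is a finite flat degree-$2$ cover $\pi:\widetilde D\to D$, and $\pi$ is unramified --- ramification would force the two lines to coincide, i.e.\ $\operatorname{rk} M(p)=1$ --- hence \'etale; taking the Stein factorization of $f|_{f^{-1}(D)}$ after blowing up the section of conic-vertices yields exactly $f^{-1}(D)\to\widetilde D\xrightarrow{\pi}D$. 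Finally, for the branch conic $\mathcal Q=V(\det B)$ of $f|_E$ with $B$ the upper-left $2\times 2$ block of $M$ (see \eqref{E:MatB}), I would rule out $\operatorname{rk} B\leq 1$ along a line using non-speciality to get smoothness of $\mathcal Q$, and show $\mathcal Q\cap\Sing D=\emptyset$ because a point of $\Sing D$ corresponds to a singularity of $X$ lying off $\ell$, while a point of $\mathcal Q$ corresponds to the residual conic being tangent to $\ell$ --- incompatible once $\ell$ misses $\Sing X$, by a short check with $M$ and $B$.

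\textbf{Expected main obstacle.}
The genuinely delicate point is the second step: reading off the \emph{analytic type} of the singularity of the total space $X_\ell$ (equivalently, of the quadric bundle) from that of the discriminant curve. This needs explicit normal forms for the $A_k$- and $D_4$-type degenerations of a two-parameter family of binary quadratic forms, generalizing Beauville's matching of nodes of $D$ with ordinary double points of $X$; the remaining steps are dimension counts and rank computations on $M$.
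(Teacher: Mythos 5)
The paper gives no argument of its own here: Proposition \ref{P:Fano} is imported wholesale from \cite[Prop.~3.6, Lem.~3.9, Claim~1 p.~39]{cml}, and your outline follows essentially the same route as that reference (Gram matrix $M$, corank control via non-speciality, local analysis of the conic bundle). So the strategy is the right one; the issues are in the one step you yourself flag as the main obstacle. As written, the central claim of your second step cannot be correct: the germ $(X_\ell,x)$ is a threefold hypersurface germ and cannot be isomorphic to the plane-curve germ $(D,p)$. The statement the completing-the-square computation actually yields is a \emph{suspension}: if $\operatorname{corank}M(p)=1$ (which is exactly what non-speciality buys you, since $\operatorname{rk}M\ge 2$ everywhere), then near the vertex of the fibre conic one can write $X_\ell$ as $\{x'^2+y'^2+h(s,t)=0\}$ with $h$ a unit times $\det M$, i.e.\ a local equation of $D$ at $p$; ``same analytic type'' then means equality of $ADE$ types under stabilization, and it also delivers the reverse direction (every singular point of $D$ produces a singular point of $X_\ell$ at the vertex, hence of $X$ since $\ell\subset X_{\mathrm{sm}}$). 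Relatedly, ``$M$ degenerates to first order along $D$'' is not the right condition: for an $A_k$ point with $k\ge 2$ the determinant does not vanish to first order; what is used is corank one, with the order of vanishing of $h$ encoding the singularity type. Since you only assert that the computation ``should show'' this, the heart of item (1) is still missing from the proposal, even though the intended computation is the standard one.

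Two smaller points. For item (2), two distinct reduced points in every fibre gives unramifiedness, but \'etaleness also needs flatness; since $D$ may be singular, you should add that a finite map onto the reduced curve $D$ with constant fibre length $2$ is flat, hence \'etale (or argue \'etale-locally via the two rulings, using again $\operatorname{corank}M\le 1$). Finally, your parenthetical that a general line disjoint from $\Sing X$ is non-special ``by a dimension count'' is glossing what is actually the content of Claim~1, p.~39 of \cite{cml}: for the worse polystable cubics (e.g.\ $2A_5$ or $3D_4$) the existence of non-special lines requires a genuine argument. The proposition as stated only asserts that $F^{ns}(X)$ is open, and your properness argument for that is fine, but if you intend to use non-emptiness later you should not treat it as automatic.
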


\begin{rem}\label{R:NSL2}
From Remark \ref{R:NSL} and \cite[Thm.~1.10, p.11]{ak} the open subset $F^{ns}(X)\subseteq F(X)$ of non-special lines is contained in the smooth locus of $F(X)$.
\end{rem}

\subsubsection{Cubic threefolds and theta characteristics on plane quintics}

Let $X$ be a cubic threefold with allowable singularities, and let $\ell$ be a non-special line.  As described above, we obtain an
 \'etale double cover
$$
\pi:\widetilde D\to D
$$
of the discriminant curve $D$.  Let $\eta$ be the $2$-torsion line bundle associated to this  cover; concretely, $\pi_*\mathcal O_{\widetilde D}$ splits canonically into even and odd parts as $\mathcal O_D\oplus \eta$.   Since $D$ is a plane quintic, it has a distinguished theta characteristic, namely $\mathcal O_D(1)$, and we denote by  $\kappa=\eta \otimes \mathcal O_D(1)$ the theta characteristic on $D$ determined by the double cover.
We will call an \'etale double cover of a plane quintic {\em even} or {\em odd} depending on the parity of $h^0(\kappa)$.

\begin{pro}\label{P:B1} Let $(X,\ell )$ be a  cubic threefold $X$ with allowable  singularities and non-special line $\ell$, taken in coordinates so that $X$ and $\ell$ are determined by  equations \eqref{E:ExpCubEq} and \eqref{E:ExpL}.
  Let $\eta$ be the $2$-torsion line bundle associated to the \'etale double cover $\pi:\widetilde D\to D$  of the discriminant curve $D$ obtained by projection from $\ell$.  Then the associated theta characteristic on $D$,  $\kappa:=\eta\otimes \mathcal O_D(1)$,
admits a resolution of the form
\begin{equation}\label{E:TCRes}
\begin{CD}
0@>>> \mathcal O_{\mathbb P^2}(-2)^{2} \oplus  \mathcal O_{\mathbb P^2}(-3) @>M>>  \mathcal O_{\mathbb P^2}(-1)^{2} \oplus  \mathcal O_{\mathbb P^2}@>>> \kappa@>>>0.
\end{CD}
\end{equation}
with $M$ the matrix  in \eqref{E:ExpCubMat}.
Moreover,  $\kappa$
 satisfies:
\begin{enumerate}
\item $h^0(\kappa)=1$;

\item $h^0(\kappa(-1))=0$.

\item The non-trivial sections of $\kappa$ are not killed by a linear form;  i.e., the cup product map
$$
H^0(D,\mathcal O_D(1))\otimes H^0(D,\kappa)\to H^0(D,\kappa(1))
$$
is injective.

\end{enumerate}
\end{pro}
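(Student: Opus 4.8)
The plan is to derive everything from the determinantal structure of the discriminant quintic $D$ together with the interpretation of $\kappa$ as the sheaf of sections of the conic bundle, and then to read off the three numerical claims from the resolution. First I would establish the resolution \eqref{E:TCRes}. The matrix $M$ in \eqref{E:ExpCubMat} is symmetric, $3\times 3$, with entries of degrees matching the stated twists: two rows/columns carry linear and quadratic entries as in the block decomposition $M=\left(\begin{smallmatrix} B & *\\ {}^t* & H\end{smallmatrix}\right)$ with $B$ the matrix \eqref{E:MatB}. Thinking of $M$ as a map $\mathcal O_{\mathbb P^2}(-2)^{2}\oplus\mathcal O_{\mathbb P^2}(-3)\to \mathcal O_{\mathbb P^2}(-1)^{2}\oplus\mathcal O_{\mathbb P^2}$, its determinant is $\det M$, which cuts out $D$; since $D$ is reduced of pure codimension one (a plane quintic), $M$ drops rank exactly along $D$ and the cokernel of $M$ is a line bundle on $D$ twisted appropriately, a rank-one torsion-free sheaf supported on $D$. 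The standard fact that the cokernel of a symmetric matrix of forms resolving a plane curve is a theta characteristic twisted by $\mathcal O(k)$ for the appropriate $k$ — this goes back to the classical theory of determinantal representations (Dixon, Beauville \cite{bdet}) — identifies the cokernel with a theta characteristic. To pin down that it is exactly $\kappa=\eta\otimes\mathcal O_D(1)$ rather than some other theta characteristic, I would invoke Proposition \ref{P:Fano} and the construction of $\widetilde D\to D$ from the Stein factorization of $f|_{f^{-1}(D)}$: the even part $\mathcal O_D$ and odd part $\eta$ of $\pi_*\mathcal O_{\widetilde D}$ are exactly the sheaves that appear from the conic bundle / quadric bundle $q:E\to E^\vee\otimes L$ via the classical correspondence between quadric bundles degenerating along a divisor and theta characteristics on that divisor (Beauville \cite{bint, bdet}), with the twist by $\mathcal O_D(1)$ coming from $L=\mathcal O_{\mathbb P^2}(-3)$ and the degrees of $E$. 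This is exactly the setup of \cite[\S 3.2]{cml}, so I would cite that for the identification.

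Next, from the resolution \eqref{E:TCRes} I would compute cohomology. Twisting the short exact sequence by $\mathcal O_{\mathbb P^2}$ and taking the long exact sequence, $h^0(\kappa)$ and $h^1(\kappa)$ are computed from $H^*(\mathbb P^2,\mathcal O(-1)^2\oplus\mathcal O)$ and $H^*(\mathbb P^2,\mathcal O(-2)^2\oplus\mathcal O(-3))$. Since $H^0(\mathbb P^2,\mathcal O(-1))=H^0(\mathbb P^2,\mathcal O(-2))=H^0(\mathbb P^2,\mathcal O(-3))=0$ and likewise all $H^1$ of these line bundles on $\mathbb P^2$ vanish, while $H^2(\mathbb P^2,\mathcal O(-3))\cong\mathbb C$ and $H^2$ of the others vanish, the long exact sequence forces $h^0(\kappa)=h^0(\mathbb P^2,\mathcal O_{\mathbb P^2})=1$. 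This gives (1). For (2), I would twist \eqref{E:TCRes} by $\mathcal O_{\mathbb P^2}(-1)$: then the right-hand term becomes $\mathcal O(-2)^2\oplus\mathcal O(-1)$, which has no global sections, so $h^0(\kappa(-1))\hookrightarrow H^0(\mathbb P^2,\mathcal O(-2)^2\oplus\mathcal O(-1))=0$, giving $h^0(\kappa(-1))=0$.

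Finally, (3) follows from (2) by a standard argument: suppose $s\in H^0(D,\kappa)$ is a nonzero section and $L\in H^0(D,\mathcal O_D(1))$ a nonzero linear form with $L\cdot s = 0$ in $H^0(D,\kappa(1))$. The base locus of the (one-dimensional) space of sections of $\kappa$ is some effective divisor $Z$ with $s=s_Z$ vanishing on $Z$; then $Ls=0$ says that the divisor $L\cap D$ of degree $5$ lies inside the support with multiplicities forced by $\mathcal O_D(1)$, i.e. $\kappa(-1)=\mathcal O_D(\operatorname{div}(s)-L\cap D)$ would have to be effective (one realizes a section of $\kappa(-1)$ as $s/L$), contradicting $h^0(\kappa(-1))=0$. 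Phrased sheaf-theoretically: multiplication by $L$ fits in $0\to\kappa(-1)\xrightarrow{L}\kappa\to \kappa|_{D\cap L}\to 0$ (using that $L$ is a non-zerodivisor on $D$, which holds since $D$ is a reduced, hence Cohen–Macaulay, curve with no component a line — a plane quintic — so $D\cap L$ is a zero-dimensional scheme), and the kernel of $H^0(D,\mathcal O_D(1))\otimes H^0(D,\kappa)\to H^0(D,\kappa(1))$, restricted to $L\otimes s$, would produce a nonzero element of $H^0(D,\kappa(-1))$, again contradicting (2). Hence the cup product map is injective.

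The main obstacle I expect is the first step: producing the resolution \eqref{E:TCRes} with the correct identification of the cokernel as precisely $\kappa=\eta\otimes\mathcal O_D(1)$ (and not merely as some theta characteristic twisted by $\mathcal O_D(1)$). This requires carefully matching the symmetric determinantal representation $M$ of $D$ with the $2$-torsion bundle $\eta$ coming from the étale double cover $\widetilde D\to D$ — equivalently, checking that the even/odd decomposition $\pi_*\mathcal O_{\widetilde D}=\mathcal O_D\oplus\eta$ is the one induced by the conic bundle $X_\ell/\mathbb P^2$ through Beauville's quadric-bundle formalism. Once that compatibility is in hand (which is essentially the content of \cite{bdet} together with \cite[\S 3.2]{cml}), the cohomology computations (1)–(3) are routine, being pushed onto vanishing of $H^i(\mathbb P^2,\mathcal O(k))$ for $-3\le k\le -1$.
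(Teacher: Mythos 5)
Your cohomology computations for (1) and (2) from the resolution \eqref{E:TCRes} are fine, but the two places where you lean on citations or on a ``standard argument'' are exactly where the paper's proof has genuine content that your proposal does not supply. First, the identification of $\operatorname{coker}(M)$ with $\kappa=\eta\otimes\mathcal O_D(1)$: your claim that the cokernel is automatically a line bundle ``since $D$ is reduced'' is false. At a singular point of $D$ the symmetric matrix $M$ of \eqref{E:ExpCubMat} may drop rank by two, in which case the cokernel is only a rank-one torsion-free theta characteristic, not locally free. The paper rules this out using the non-speciality of $\ell$ via Proposition \ref{P:Fano}: the conic $\mathcal Q$, cut out by $\det B$ with $B$ as in \eqref{E:MatB}, misses $\operatorname{Sing}(D)$, so $M$ has rank at least $2$ everywhere and the cokernel has one-dimensional fibers, hence is a line bundle. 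Moreover, the identification of this line bundle with $\kappa$ (rather than with some other theta characteristic) is only available from \cite{bint,bdet} and \cite{cmf} when $D$ is smooth; for singular $D$ the paper degenerates a family of smooth pairs $(X_t,\ell_t)$ to $(X,\ell)$, where $\kappa_t=\kappa'_t$, and invokes Raynaud's theorem (separatedness of $\operatorname{Pic}^0$ of the family of quintics, \cite{neron}) to conclude that the two limit line bundles agree. Citing \cite{bdet} together with \cite[\S 3.2]{cml} does not cover either of these steps, and they are the point of extending Beauville's result to allowable singularities.

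Second, your deduction of (3) from (2) fails in precisely the case the statement is about: a reducible $D$ containing a line. Allowable singularities do permit the discriminant quintic to have a line component (cf.\ Example \ref{exa:singularquintic1}), and your parenthetical ``no component a line --- a plane quintic'' is not a valid inference. If $V(L)\subset D$, then $L$ is a zerodivisor on $\mathcal O_D$, the sequence $0\to\kappa(-1)\xrightarrow{L}\kappa\to\kappa|_{D\cap L}\to 0$ is not exact on the left, and no section of $\kappa(-1)$ is produced; indeed (2) alone does not exclude a nonzero section of $\kappa$ supported on the line component, which is killed by that $L$. (For integral $D$, injectivity is immediate and needs neither (2) nor any argument.) The paper instead argues geometrically: if a linear form kills the section, the section is supported on a line contained in $D$, its restriction to the line is a section of $\mathcal O_{\mathbb P^2}(1)$ vanishing at all intersections with the other components, so the line meets the rest of $D$ in a single point; then the cubic has a unique singularity, and \cite[Cor.~3.7]{cml} forces the discriminant to be integral, a contradiction. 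Some such geometric input is needed to close this case.
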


\begin{proof}  For a smooth cubic threefold, this is due to Beauville \cite[6.27]{bint} (for details see \cite[Prop.~4.2]{cmf}).     The outline of the argument is as follows. We have seen already that $D=V(\det M)$. This matrix $M$ also determines a sheaf $\kappa'$ via the short exact sequence
\begin{equation}\label{E:TCRes'}
\begin{CD}
0@>>> \mathcal O_{\mathbb P^2}(-2)^{2} \oplus  \mathcal O_{\mathbb P^2}(-3) @>M>>  \mathcal O_{\mathbb P^2}(-1)^{2} \oplus  \mathcal O_{\mathbb P^2}@>>> \kappa'@>>>0.
\end{CD}
\end{equation}
From general results of Beauville \cite[Prop.~4.2]{bdet}, $\kappa'$ is also  a theta characteristic on $D$.
An explicit argument in coordinates  (see  \cite[Prop.~4.2]{cmf}) establishes that $\kappa\cong \kappa'$.
Once one knows that $\kappa\cong\kappa'$ then (1) and (2) follow from the long exact sequence in cohomology (in the smooth case they can also be established directly from the geometry).  (3) is also clear since $D$ is irreducible (we give the argument below in the general case).

Now for the singular case.
We start with the same set-up, and
it is clear from the discussion above that the key point (at least up to establishing (3)) is showing that $\kappa$ and $\kappa'$ agree.  Note that \emph{a priori} from the arguments in \cite{bdet} we only know that $\kappa'$ is a rank $1$, torsion-free theta characteristic (i.e., ${\mathcal Hom}_{\mathcal O_D}(\kappa', \omega_D)\cong \kappa'$).
The main issue is that at the singular points of $D$, the matrix $M$ may drop rank by more than $1$, giving a sheaf that is  not locally free.  However, since the line $\ell$ is non-special, Proposition \ref{P:Fano} implies that $\mathcal Q$  does not pass through any singular points of $D$; in particular, the sub-matrix $B$ of $M$ \eqref{E:MatB} has non-zero determinant at the singular points of $D$, and so the rank of $M$ is at least $2$.  Consequently, we may conclude that $\kappa'$ has
 fibers of dimension $1$ at every point of $D$; i.e.,  for each $d\in |D|$, we have $\dim_{\kappa(d)}\kappa'\otimes \kappa(d)=1$, where $\kappa(d)$ is the residue field at $d$.   One can then conclude that $\kappa$ is a line bundle (e.g., \cite[Ex.~II.5.8]{h}).

Finally, since $\kappa$ and $\kappa'$ are line bundles, we can use a degeneration argument to show they agree.  That is, we take families of smooth cubic threefolds and non-special lines  $(X_t,\ell_t)$ over the unit disk $\Delta$ degenerating to the given pair $(X,\ell)$ at $t=0$.
When $t\ne 0$, we have $\kappa_t=\kappa_t'$.  Let $\mathscr D/\Delta$ be the associated family of discriminant plane quintics.  A theorem of Raynaud (see \cite[Thm.~7 p.258, Thm.~2, p.259]{neron})  implies that $\operatorname{Pic}^0_{\mathscr D/\Delta}$  is
separated.  Therefore the line bundles $\kappa$ and $\kappa'$ in the limit must be the same (we actually employ Raynaud's theorem for $\kappa_t \otimes \mathcal O_{D_t}(-1)=\kappa'_t \otimes \mathcal O_{D_t}(-1)$).

For the cup product statement (3), we can  argue geometrically.  Suppose a linear section killed the non-trivial global section (up to scaling it is unique).  Then this global section must be supported on a line; in other words, the discriminant curve must contain a line, and the theta characteristic has a unique non-trivial global section (up to scaling), which is supported on this line.   The restriction of the theta characteristic to this line must be a root of the restriction of the canonical line bundle $K_D=\mathcal O_D(2)$ to the line.  Therefore it is a section of $\mathcal O_{\mathbb P^2}(1)$ restricted to the line.  It must vanish at all intersections with other components of $D$ (to be killed by the linear form which is zero only on the line, the section must be zero already on the other components), and so the line meets the rest of $D$ in a unique point.
A reducible plane quintic $D$ consisting of a line $D'$ meeting the  residual plane quartic $D''$ in a single point $p$ will not have allowable  singularities (e.g., \cite[Proof of Cor.~3.7]{cml}), giving a contradiction with Proposition \ref{P:Fano}.  More precisely, if $p$ is a type $A_k$-singularity of $D$, then by B\'ezout, it will be of type $A_k$ with $k\ge 7$, and if it is a type $D_k$-singularity, it will be of  type $D_k$ with $k\ge 8$, none of which are allowable.
\end{proof}

\begin{rem}\label{R:B1}
Note that the fact that $h^0(\kappa)=1<h^0(\mathcal O_D(1))$ implies that $\eta$ is nontrivial, and consequently, that the cover $\widetilde D\to D$ is nontrivial.
\end{rem}

\begin{rem}\label{E:KappaConic}
The theta characteristic $\kappa$ has yet another description.  The curve $\mathcal Q$ restricted to $D$ defines an effective  Weil (in fact Cartier) divisor $\mathcal Q|_D$ on $D$ supported on smooth points, since it meets $D$ at smooth points.   There is an effective Weil  divisor $\sqrt {K_D}$ supported at the same  smooth points as $\mathcal Q|_D$ such that $2\sqrt {K_D}=\mathcal Q|_D$, and
$\kappa=\mathcal O_D(\sqrt{K_D})$. In the case of a smooth cubic threefold and non-special line this is shown in the proof of \cite[Prop.~4.2]{cmf}. In the case of singular cubic threefolds, using the fact that we know that $\mathcal Q$ is smooth and meets $D$ at smooth points, the same analysis as in \cite[Prop.~4.2]{cmf} shows that there is an effective Weil divisor $\sqrt {K_D}$ supported on smooth points of $D$, and hence Cartier,  such that $2\sqrt {K_D}=\mathcal Q|_D$.
The same degeneration argument as above shows that $\kappa=\mathcal O_D(\sqrt{K_D})$.
\end{rem}

We also have a converse:

\begin{pro} \label{P:B2}
Let $D$ be a plane quintic with allowable  singularities.  Let $\kappa$ be a theta characteristic (line bundle) on $D$ satisfying
\begin{enumerate}
\item $h^0(\kappa)=1$.

\item $h^0(\kappa(-1))=0$.

\item The non-trivial sections of $\kappa$ are not killed by a linear form;  i.e., the cup product map
$$
H^0(D,\mathcal O_D(1))\otimes H^0(D,\kappa)\to H^0(D,\kappa(1))
$$
is injective.

\end{enumerate}
Then $\kappa$ admits a resolution as in \eqref{E:TCRes}, with matrix $M$ as in   \eqref{E:ExpCubMat}, and $D=V(\det M)$.
Moreover,  setting $X$ to be the cubic defined by \eqref{E:ExpCubEq},  and $\ell$ to be the line defined by \eqref{E:ExpL}, if $\ell$ does not pass through any of the singularities of $X$, then $X$ has allowable singularities, $\ell$   is non-special, and $\kappa$ is the theta characteristic associated to the \'etale double cover of $D$ obtained by projecting  $X$ from $\ell$.
\end{pro}

\begin{proof} The existence of the resolution \eqref{E:TCRes} in
the case where $D$ is smooth is explained in \cite[Prop.~4.2]{bdet}.  The case where $D$ is singular is essentially the same.  Since $\kappa$ is a line bundle on a Cohen--Macaulay curve, it is arithmetically Cohen--Macaulay \cite[Def.~(1.1)]{bdet}.  This provides the existence of the two-step minimal  resolution of $\kappa$
$$
\begin{CD}
0@>>> L_0^\vee(-3)@>M>>  L_0 @>>> \kappa @>>> 0
\end{CD}
$$
where $L_0$ is a direct sum of line bundles on $\mathbb P^2$, and $D=V(\det M)$ \cite[Thm.~B, Cor.~1.8, (1.7)]{bdet}.
 The condition $h^1(\kappa(1))=h^0(\kappa(-1))=0$ ensures that $\kappa$ is $2$-Castelnuovo--Mumford regular.
 This means that $L_0$ can be taken to be $\mathcal O_{\mathbb P^2}(-2)^q\oplus \mathcal O_{\mathbb P^2}(-1)^p \oplus \mathcal O_{\mathbb P^2}$ for some non-negative integers $p,q$ (the single copy of $\mathcal O_{\mathbb P^2}$ is determined by $h^0(\kappa)=1$).
 Since the resolution is minimal, the summand $\mathcal O_{\mathbb P^2}(-1)$ of $L_0^\vee(3)$ is mapped into $\mathcal O_{\mathbb P^2}$; this implies that $q\le 1$.  However, condition (3) implies in fact that $q=0$.  The fact that $D=V(\det M)$ is a plane quintic then implies that $p=2$.
 Therefore,  $\kappa$ admits a resolution as in \eqref{E:TCRes}, with matrix $M$ as in   \eqref{E:ExpCubMat}, and $D=V(\det M)$.

Now that we have the forms in the matrix $M$, let $(X,\ell)$ be the cubic  and line defined by  \eqref{E:ExpCubEq} and \eqref{E:ExpL}.  Let $f:X_\ell\to \mathbb P^2$ be the associated fibration in conics \eqref{E:FinQdef}.  The first observation is that since $\kappa$ is a line bundle, the co-rank of $M$ at any point $p\in \mathbb P^2$, and hence the co-rank of the associated conic, i.e., the fiber of $f$ over $p$, is at most $1$.
 We will now use this to show that $X$ has isolated allowable singularities in one-to-one  correspondence with those of $D$, including the type of the $AD$-singularity,  and that $\ell$ is non-special.

By assumption, there are no singularities of $X$ along $\ell$.
To show that $X$ has isolated singularities, we argue as follows.
It is elementary that any singularity of $X$ maps to a singularity of $D$, so let us consider a singular point $p\in D$.
It follows immediately from the co-rank observation (see e.g., \cite[Proof of Lem.~1.5.2]{bint}) that there is a local analytic
neighborhood of  $p\in \mathbb P^2$ such that $X_\ell$ is defined by an equation $X_3^2+X_4^2+cT^2$, where $c$ is the formal equation of the discriminant $D$ near $p$ in $\mathbb P^2$.   Since the only singularity of the associated conic, i.e., the fiber of $f$ over $p$, is at the point $[0:0:1]$ (in homogeneous coordinates $X_3,X_4,T$), and the singularities of $D$ are isolated by assumption, we see that the singularities of $X$ are isolated, as well.  Moreover, since  the local equation of the singularity of $D$ at $p$ is given by $c$, and  the local equation of the corresponding  singularity of $X$ is given by $x_3^2+x_4^2+c$, we see that the singularity types of the singularities are identified, as well.  Finally, the co-rank condition  on the conics that are the fibers of $f$ implies immediately that for every other line $\ell'\subseteq X$ meeting $\ell$, the plane spanned by $\ell$ and $\ell'$ cuts out three distinct lines on $X$, so that $\ell$ is non-special.

All that remains to establish the proposition is to show that $\kappa$ is the theta characteristic associated to the double cover induced by projection of $X$ from $\ell$.  But this now follows  from Proposition \ref{P:B1}, since $X$ has allowable  singularities and $\ell$ is non-special.
\end{proof}

\subsection{Deformations of cubic threefolds and plane quintics}\label{S:DefCub}
In the previous section, we identified pairs $(X,\ell)$ and $(D,\kappa)$, where $X$ is a cubic with allowable singularities, $\ell\subseteq X$ is a non-special line, $D$ is a plane quintic with allowable singularities, and $\kappa$ is an odd theta characteristic line bundle on $D$ (satisfying several further conditions).  The goal of this section is to use this correspondence to identify the associated embedded deformation spaces, up to smooth factors (Proposition \ref{P:CtPQdef}); by embedded deformations we mean that we deform the cubics in $\mathbb P^4$ and the plane quintics in $\mathbb P^2$ respectively.
At the level of deformation functors, this is essentially a formal consequence of the results in the previous section, since the arguments  hold over local Artin rings (of finite type over $\CC$).
This implies the corresponding results for deformation spaces since the associated Hilbert functors, and relative Picard functors are representable. Note that  since plane quintics have a canonical theta characteristic $\mathcal O_D(1)$, a pair $(D,\kappa)$ is equivalent to a pair $(D,\eta)$, where $\eta$ is a $2$-torsion line bundle, which is in turn equivalent to an \'etale double cover $\widetilde D\to D$, and so we identify these deformation problems and deformation spaces, as well.
As in the previous section, the results hold for a slightly more general class of singular cubics and plane quintics, but for concision, we will only state the results for cubics and plane quintics with allowable singularities.

\smallskip
Let $X$ be a complex hypersurface of degree $d$ in $\mathbb P^n$.  We denote by $\operatorname{Def}_{X\hookrightarrow \mathbb P^n}$ the local Hilbert deformation functor at $X$; in other words the deformation functor of $X$ as a closed subscheme of $\mathbb P^n$ (e.g., \cite[\S 3.2.1]{sernesi}).  The global Hilbert functor is representable, by a projective space, and we will take
$$
\xymatrix{
X\ar[r] \ar[d]& \mathcal X \ar[d] \\
\operatorname{Spec} \mathbb C \ar[r]& B_{X\hookrightarrow\mathbb P^n}
}
$$
to be the restriction of the universal family to a sufficiently small
open neighborhood of the point corresponding to $X$.  Note that as our arguments progress, we may replace this open neighborhood with a suitable \'etale cover.
The fibered product diagram above induces a morphism of functors of Artin rings
$$
B_{X\hookrightarrow \mathbb P^n} \to \operatorname{Def}_{X\hookrightarrow \mathbb P^n}
$$
that is formally smooth and an isomorphism on tangent spaces (in other words, in the language of \cite[Def.~2.2.6]{sernesi},  $\mathcal X\to B_{X\hookrightarrow \mathbb P^n}$  is a flat algebraic deformation of $X\subseteq \mathbb P^n$ that induces a semiuniversal formal element).

We are also interested in deforming linear spaces  along with the hypersurface.  Given a linear space $\ell \subseteq X\subseteq \mathbb P^n$ of dimension $m$,  define $\operatorname{Def}_{(X\hookrightarrow \mathbb P^n,\ell)}$ to be the associated deformation functor.  Let $F(\mathcal X)\to B_{X\hookrightarrow \mathbb P^n}$ be the relative Fano scheme of linear subspaces of dimension $m$ for $\mathcal X\to B_{X\hookrightarrow \mathbb P^n}$, and let $\mathcal L$ be the universal family of linear spaces, in the sense that we have a diagram
$$
\xymatrix@R=1em@C=1em{
&\ell \ar[rr] \ar@{_(->}[ld] \ar@/^.7pc/[ldd]&&\mathcal L \ar@{^(->}[ld] \ar@/^1pc/[ldd]\\
X \ar[rr] \ar[d]&&\mathcal X\times_{B_{X\hookrightarrow \mathbb P^n}} F(\mathcal X) \ar[d]&\\
\operatorname{Spec}\mathbb C \ar[rr]&&F(\mathcal X)&\\
}
$$
inducing a morphism
$$
F(\mathcal X)\to \operatorname{Def}_{(X\hookrightarrow \mathbb P^n,\ell)}
$$
that is formally smooth and an isomorphism on tangent spaces.  In other words, Fano schemes of linear spaces are examples of Hilbert schemes, which are representable.
In fact, in the case of cubic threefolds and lines, if we take $\ell$ to be a non-special line on a cubic threefold $X$ with allowable singularities, and set $F^{ns}(\mathcal X)$ to be the locus of non-special lines, then we still have that
$$
F^{ns}(\mathcal X)\to \operatorname{Def}_{(X\hookrightarrow \mathbb P^n,\ell)}
$$
 is formally smooth and an isomorphism of tangent spaces.

From a cubic threefold with allowable singularities and a non-special line, we obtain an \'etale double cover $\widetilde D\to D$ of a plane quintic $D$.  We would like to relate the deformation functors.
Let $D\subseteq \mathbb P^2$ be a plane quintic with allowable singularities, and let $\operatorname{Def}_{D\hookrightarrow \mathbb P^2}$ be the associated deformation functor.
Let $\mathcal D\to B_{D\hookrightarrow \mathbb P^2}$, as above, be
the restriction of the universal family to a sufficiently small
open subset containing the point corresponding to $D$, giving a semiuniversal formal element.

Let $\widetilde D\to D$ be an odd connected \'etale double cover.
Setting $\operatorname{Def}_{(\widetilde D,D\hookrightarrow \mathbb P^2)}$ to be the deformation functor for the double cover $\widetilde D\to D$, in the sense that we consider embedded deformations of the base $D$ in $\mathbb P^2$, and \'etale double covers of the base curves,  we claim this deformation functor also admits an algebraic deformation inducing a semiuniversal formal element.  To this end,
let $P^0_{\mathcal D/B}\to B_{D\hookrightarrow \mathbb P^2}$ be the connected component of the identity in the relative Picard scheme.  Let $B_{(\widetilde D,D\hookrightarrow \mathbb P^2)}\subseteq P^0_{\mathcal D/B}\to B_{D\hookrightarrow \mathbb P^2}$ be the sub-scheme of $2$-torsion line bundles (the kernel of the composition of the diagonal and the addition map for the group scheme).  This is an \'etale group scheme over $B_{D\hookrightarrow \mathbb P^2}$
(being a quasi-finite flat group scheme in characteristic $0$; or alternatively, being the kernel of an \'etale homomorphism of group schemes).
We view $B_{(\widetilde D,D\hookrightarrow \mathbb P^2)}$ as a pointed (disconnected) scheme, with the base-point determined by the double cover $\widetilde D\to D$, and from now on we denote by   $B_{(\widetilde D,D\hookrightarrow \mathbb P^2)}$ the connected component containing this point.

Identifying \'etale double covers with $2$-torsion line bundles, and using the fact that the Picard scheme and universal object induce a semiuniversal formal element for the Picard sheaf, we see that $B_{(\widetilde D,D\hookrightarrow \mathbb P^2)}$ together with the restriction of the  universal object  induce a semiuniversal formal element for $\operatorname{Def}_{(\widetilde D,D\hookrightarrow \mathbb P^2)}$.
Note that $\operatorname{Def}_{(\widetilde D,D\hookrightarrow \mathbb P^2)} \to  \operatorname{Def}_{D\hookrightarrow \mathbb P^2}$ is formally smooth and an isomorphism on tangent spaces; for instance, this follows from the fact that $B_{(\widetilde D,D\hookrightarrow \mathbb P^2)}\to B_{D\hookrightarrow \mathbb P^2}$ is \'etale, as explained above.   

\begin{pro}\label{P:CtPQdef}
Let $X\subseteq \mathbb P^4$ be a cubic threefold with allowable  singularities and let $\ell\subseteq  X$ be a non-special line.  Let $\pi:\widetilde D\to D$ be the connected \'etale double cover of the discriminant  plane quintic determined from projection from $\ell$.   Projection from lines induces a formally smooth morphism of  deformation functors
$$
\operatorname{Def}_{(X\hookrightarrow \mathbb P^4,\ell)}\to \operatorname{Def}_{(\widetilde D,D\hookrightarrow \mathbb P^2)}.
$$
\end{pro}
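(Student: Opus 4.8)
The plan is to reduce the statement to an explicit coordinate computation using the matrix description of Propositions \ref{P:B1} and \ref{P:B2}. The morphism of functors itself is the relative form of the projection construction of \S\ref{S:cub+disc}: a deformation $(X_A,\ell_A)$ of $(X,\ell)$ over a local Artin ring $A$ has $\ell_A$ again non-special (non-specialness is an open condition, satisfied on the central fibre), so projection from $\ell_A$ yields a flat family of plane quintics $D_A\subset\mathbb P^2_A$ with an \'etale double cover $\widetilde D_A\to D_A$; this defines $\operatorname{Def}_{(X\hookrightarrow\mathbb P^4,\ell)}\to\operatorname{Def}_{(\widetilde D,D\hookrightarrow\mathbb P^2)}$, and the content of the proposition is its formal smoothness. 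Here I would first observe that the forgetful map $\operatorname{Def}_{(\widetilde D,D\hookrightarrow\mathbb P^2)}\to\operatorname{Def}_{D\hookrightarrow\mathbb P^2}$ is formally \emph{\'etale}: along any deformation of $D$ the \'etale double cover deforms uniquely, since it is classified by the \'etale group scheme of $2$-torsion line bundles recalled in \S\ref{S:DefCub}. Since a morphism $f$ is formally smooth whenever $g\circ f$ is formally smooth and $g$ is formally \'etale (solve a lifting problem for $f$ by pushing it down along $g$, lifting for $g\circ f$, and invoking uniqueness of lifts along $g$), it suffices to prove that the ``discriminant'' functor
$$
\operatorname{Def}_{(X\hookrightarrow\mathbb P^4,\ell)}\longrightarrow\operatorname{Def}_{D\hookrightarrow\mathbb P^2},\qquad (X_A,\ell_A)\longmapsto D_A,
$$
is formally smooth.

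For this I would pass to coordinates. Since $\operatorname{Gr}(2,5)$ is homogeneous under $\operatorname{PGL}_5$, every deformation of the line $\ell\subset\mathbb P^4$ over $A$ becomes the constant deformation after an automorphism of $\mathbb P^4_A$ that restricts to the identity on the central fibre. Thus, after such a normalization, a deformation of $(X,\ell)$ over $A$ is given by a cubic form $F_A$ over $A$ lifting $F$ and lying in the ideal $(X_0,X_1,X_2)$ --- i.e.\ of the shape \eqref{E:ExpCubEq} --- equivalently by a symmetric matrix $M_A$ of forms over $A$ of the shape \eqref{E:ExpCubMat}; and the discriminant functor sends it to $D_A=V(\det M_A)$, this being the relative version of the Jacobian-criterion computation of \S\ref{S:cub+disc}.

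Now let $A'\twoheadrightarrow A$ be a small extension, let $\xi_A=(X_A,\ell_0)$ correspond to $M_A$ as above, and let $D_{A'}\subset\mathbb P^2_{A'}$ be a prescribed lift of $D_A$. Let $\eta_{A'}$ be the unique deformation over $D_{A'}$ of the $2$-torsion bundle $\eta$, and put $\kappa_{A'}:=\eta_{A'}\otimes\mathcal O_{D_{A'}}(1)$, the associated theta characteristic on $D_{A'}$; it is a line bundle. The numerical conditions (1)--(3) of Proposition \ref{P:B2} are open, hence hold for $\kappa_{A'}$ over $\operatorname{Spec} A'$; running the arithmetically Cohen--Macaulay and $2$-Castelnuovo--Mumford regularity argument of Proposition \ref{P:B2} in the family over $A'$ (via cohomology and base change) produces a relative minimal free resolution
$$
0\to\mathcal O_{\mathbb P^2_{A'}}(-2)^{2}\oplus\mathcal O_{\mathbb P^2_{A'}}(-3)\xrightarrow{M_{A'}}\mathcal O_{\mathbb P^2_{A'}}(-1)^{2}\oplus\mathcal O_{\mathbb P^2_{A'}}\to\kappa_{A'}\to 0,\qquad D_{A'}=V(\det M_{A'}).
$$
Such a resolution of $\kappa_A$ is unique up to automorphisms of its (free) terms, and those lift along $A'\to A$, so $M_{A'}$ may be chosen restricting to $M_A$ over $A$.

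Finally, $M_{A'}$ defines via \eqref{E:ExpCubEq} a cubic form $F_{A'}$ over $A'$ lifting $F_A$ and lying in $(X_0,X_1,X_2)$, hence a flat deformation $\xi_{A'}=(X_{A'}=V(F_{A'}),\ell_0)$ of $(X,\ell)$ over $A'$ lifting $\xi_A$ (flatness is automatic as $F_{A'}$ reduces to $F\neq0$, and $\ell_0\subset X_{A'}$ since $F_{A'}\in(X_0,X_1,X_2)$), whose discriminant is $V(\det M_{A'})=D_{A'}$. So the discriminant functor is formally smooth, and by the first paragraph so is the map of the Proposition. I expect the main obstacle to be exactly this relative version of Proposition \ref{P:B2}: verifying that the arithmetically Cohen--Macaulay structure, the $2$-regularity, and hence the minimal free resolution of $\kappa$ all persist in a flat family over the non-reduced base $\operatorname{Spec} A'$, and restrict compatibly over $A$. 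A minor additional point is to choose the coordinate normalization of $\ell$ over $A$ and over $A'$ compatibly, which one arranges by lifting the normalizing automorphism of $\mathbb P^4$ along $A'\to A$.
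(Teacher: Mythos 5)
Your overall strategy is the one the paper intends: its own proof of Proposition \ref{P:CtPQdef} is a one-line assertion that Beauville's analysis, i.e.\ Propositions \ref{P:B1} and \ref{P:B2}, holds at the level of local Artin rings, with the details left to the reader; your reduction (formal \'etaleness of $\operatorname{Def}_{(\widetilde D,D\hookrightarrow\mathbb P^2)}\to\operatorname{Def}_{D\hookrightarrow\mathbb P^2}$, normalization of the line via homogeneity of the Grassmannian, and a relative determinantal-representation argument) is a legitimate way of supplying those details.

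There is, however, one concrete gap at the last step. The relative ACM/Castelnuovo--Mumford argument only produces \emph{some} $3\times 3$ matrix $M_{A'}$ of forms of the correct degrees with cokernel $\kappa_{A'}$ and $D_{A'}=V(\det M_{A'})$; nothing in your construction makes $M_{A'}$ symmetric, nor symmetric compatibly with $M_A$ (your uniqueness argument adjusts by arbitrary automorphisms of the free terms, not by congruences). But symmetry is exactly what the final step consumes: the recipe \eqref{E:ExpCubEq} requires the six entries $L_1,L_2,L_3,Q_1,Q_2,H$ of a symmetric matrix as in \eqref{E:ExpCubMat}, and one also needs the discriminant of the resulting conic bundle to be $\det M_{A'}$ itself rather than the determinant of some symmetrization. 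In the absolute case this is the symmetric determinantal representation of theta characteristics from \cite{bdet} (already reflected in the self-dual shape of \eqref{E:TCRes}), and it must be redone relatively, e.g.\ via the self-duality $\kappa_{A'}\cong{\mathcal Hom}_{\mathcal O_{D_{A'}}}(\kappa_{A'},\omega_{D_{A'}/A'})$ and symmetrization in characteristic $0$. Alternatively, since you only ever work over a small extension there is a shortcut: after arranging $M_{A'}\equiv M_A$ modulo the kernel $(\epsilon)$, write $M_{A'}=\widetilde M_A+\epsilon N$ with $\widetilde M_A$ a symmetric entrywise lift of $M_A$; replacing $N$ by $\tfrac12\bigl(N+{}^tN\bigr)$ changes neither the restriction to $A$ nor $\det M_{A'}$ (use $\epsilon^2=0$, $\epsilon\,\mathfrak m_{A'}=0$, and the fact that $\operatorname{adj}$ of the symmetric central-fibre matrix is symmetric, so $\operatorname{tr}(\operatorname{adj}(M_0)\,{}^tN)=\operatorname{tr}(\operatorname{adj}(M_0)N)$), so you may take $M_{A'}$ symmetric after all. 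With this point added, your argument goes through and matches the proof the paper had in mind.
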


\begin{proof}
This is  the statement that Beauville's analysis \cite{bdet}, in particular  Propositions \ref{P:B1} and  \ref{P:B2} above,  hold at the level of local Artin rings.  We leave the details to the reader.
\end{proof}

\subsection{Discriminants for cubic threefolds and plane quintics}\label{S:DiscCPQ}

For an isolated singularity $x\in X$, of analytic type $T$, we denote by $\operatorname{Def}_{T}$ the deformation functor of the singularity of $X$ at $x$.  For local complete intersection singularities, there is a deformation  $\mathcal X_T\to B_T$ of the singularity of $X$ at $x$  that is semiuniversal in the sense that the induced map
of deformation functors
$$
B_T\to \operatorname{Def}_{T}
$$
is formally smooth and an isomorphism on tangent spaces.  If $X$ has exactly $n$ singular points $x_1,\ldots,x_n$, which are isolated lci singularities  of types $T_1,\ldots, T_n$ respectively, then after replacing the $B_{T_i}$ with suitable \'etale covers,
one obtains a commutative diagram of deformation functors
\begin{equation}\label{E:DefDiag}
\xymatrix{
B_{X\hookrightarrow \mathbb P^n} \ar[r] \ar[d]& \prod B_{T_i} \ar[d] \\
\operatorname{Def}_{X\hookrightarrow \mathbb P^n}\ar[r]& \prod \operatorname{Def}_{T}.
}
\end{equation}

The basic fact we want to use is the following:

\begin{fact}[{du Plessis--Wall \cite{DPW00}}] \label{F:DPW}
 Given a complex hypersurface $X$ of degree $d$ in $\mathbb P^n$ with only isolated singularities, the universal  family of hypersurfaces of degree $d$ induces a simultaneous versal deformation of all the singularities of $X$, provided $\tau(X)<\delta(d)$, where $\tau(X)$ is the total Tjurina number, and $\delta(d)=16$, $18$ or $4(d-1)$, for   $d=3$, $4$ or $d\ge 5$, respectively.
 \end{fact}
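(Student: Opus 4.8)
The plan is to translate ``simultaneous versal deformation'' first into the surjectivity of a single characteristic map, and then into a Castelnuovo--Mumford--type regularity statement for the singular scheme of $X$; the real content of \cite{DPW00} then sits in a sharp form of that regularity bound.

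\emph{First, a reduction to tangent level.} The assertion is that, in the square \eqref{E:DefDiag}, the deformation $\mathcal X\to B_{X\hookrightarrow\mathbb{P}^n}$ induces a \emph{versal} deformation of the multigerm $\coprod_i(X,x_i)$, i.e.\ that the classifying map to the miniversal base $\prod_i B_{T_i}$ is smooth. Since $X$ has only isolated hypersurface singularities, each $B_{T_i}$ is smooth (an affine space of dimension $\tau_i$), so the target $\prod_i B_{T_i}$ is smooth; and $B_{X\hookrightarrow\mathbb{P}^n}$ is smooth, being open in the projective space of forms. A morphism of smooth germs is smooth exactly when it is a submersion, i.e.\ surjective on tangent spaces, so it suffices to prove that the characteristic (Kodaira--Spencer) map
\[
\rho\colon H^0(\mathbb{P}^n,\calO(d))\longrightarrow \bigoplus_i T^1_{(X,x_i)}=\bigoplus_i \calO_{X,x_i}/\mathfrak{j}_i ,\qquad g\longmapsto \bigl(g \bmod \mathfrak{j}_i\bigr)_i ,
\]
is surjective; here $\mathfrak{j}_i$ is the Jacobian ideal of $X$ at $x_i$, and $\rho$ sends a first-order deformation $V(f+\varepsilon g)$ of $X$ to the induced first-order deformations of the singular germs.

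\emph{Second, a cohomological reformulation.} Let $\Sigma=\Sing(X)$ be the zero-dimensional Tyurina scheme, cut out in $\mathbb{P}^n$ by the Jacobian ideal $J=(\partial_0f,\dots,\partial_nf)\subset R:=\CC[x_0,\dots,x_n]$ (by the Euler relation $f\in J$, so $\Sigma=V(J)$), with $\deg\Sigma=\tau(X)$. Then $\rho$ is the map on global sections induced by the sheaf surjection $\calO_{\mathbb{P}^n}(d)\twoheadrightarrow\calO_\Sigma(d)$, under the identification $H^0(\Sigma,\calO_\Sigma(d))\cong\bigoplus_i T^1_{(X,x_i)}$ (both have dimension $\tau(X)$, since $\Sigma$ is finite). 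As $H^1(\mathbb{P}^n,\calO(d))=0$, the long exact sequence shows that $\rho$ is surjective if and only if
\[
H^1\bigl(\mathbb{P}^n,\calI_\Sigma(d)\bigr)=0 ,
\]
where $\calI_\Sigma$ is the ideal sheaf of $\Sigma$ (equivalently, the sheafification of $J$). In this form the Fact asserts that the singular scheme of a degree-$d$ hypersurface with isolated singularities imposes independent conditions on forms of degree $d$ whenever $\tau(X)<\delta(d)$ --- a bound on the regularity of $\Sigma$ in terms of its degree $\tau(X)$ and of $d$.

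\emph{Third, the regularity estimate --- the main obstacle.} An arbitrary zero-dimensional scheme of degree $\tau$ can fail to be $d$-regular already for $\tau$ as small as $d+2$ (e.g.\ collinear points), so the proof must exploit that $\Sigma$ is not arbitrary: it is defined by the $n+1$ partials $\partial_if$, which have degree $d-1$, so $J\subseteq\calI_\Sigma$ is an almost complete intersection of codimension $n$. I would use this by linking $\Sigma$, through the zero-dimensional complete intersection $Z$ cut out by $n$ general $\CC$-linear combinations of the $\partial_if$ (so $\deg Z=(d-1)^n$), to a residual \emph{arithmetically Gorenstein} scheme $\Sigma'$ with $\deg\Sigma+\deg\Sigma'=(d-1)^n$; tracking Hilbert functions through this link, together with the Gorenstein symmetry of the Jacobian ring --- for a smooth member of the family $R/J$ is Artinian Gorenstein of socle degree $(n+1)(d-2)$, and in the singular case this survives as a self-duality of the defect module $\calI_\Sigma/J$ about $(n+1)(d-2)/2$ --- reduces the desired vanishing to an explicit inequality among finitely many values of the Hilbert function of $R/J$. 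The entire weight of the argument lies here: pinning down the \emph{sharp} threshold, which works out to $\delta(d)=4(d-1)$ for $d\ge 5$, with the larger exceptional values $\delta(3)=16$ and $\delta(4)=18$ reflecting low-degree corrections to the Hilbert-function estimate that disappear once $d\ge 5$. (As a sanity check, $X$ smooth gives $\tau(X)=0$ and $\rho$ is trivially surjective.) This sharp numerical analysis is exactly the content of du Plessis--Wall \cite{DPW00}, which I would invoke for the remaining bookkeeping.
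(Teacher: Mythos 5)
The paper gives no proof of this statement at all: it is quoted as a Fact directly from du Plessis--Wall \cite{DPW00}, and your proposal ultimately rests on the same citation, since after the standard (and correct) translation of simultaneous versality into surjectivity of $H^0(\mathbb P^n,\mathcal O(d))\to\bigoplus_i T^1_{(X,x_i)}$, equivalently $H^1\bigl(\mathbb P^n,\mathcal I_\Sigma(d)\bigr)=0$ for the Tyurina scheme $\Sigma=V(J)$, you defer the sharp threshold $\delta(d)$ to \cite{DPW00}. So your treatment is essentially the paper's (cite the hard numerical content); the liaison/Gorenstein-duality sketch you append is a plausible but unverified guess at the internals of \cite{DPW00}, and is immaterial since you invoke that reference for the decisive bound anyway.
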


\begin{rem}\label{R:FDPW}
From Lemma~\ref{L:TotTju}, we have that Fact~\ref{F:DPW} applies to cubics with allowable singularities.  Since on a cubic with allowable singularities there is a non-special line, and the discriminant plane quintic $D$ associated to projecting from this line is a plane quintic with the same type of singularities (Proposition \ref{P:Fano}), we have that $\tau_{tot}(D)=\tau_{tot}(X)\le 12$, so that  Fact \ref{F:DPW} applies also in this situation.
\end{rem}

In other words, under the hypotheses of Fact \ref{F:DPW}, the horizontal maps in the Diagram \eqref{E:DefDiag}  above are formally smooth, and we have an \'etale equivalence
\begin{equation}
 B_{X\hookrightarrow \mathbb P^n}\simeq_{\operatorname{et}} B_{T_1}\times\ldots\times B_{T_n}\times{\mathbb A}_\CC^m
\end{equation}
for some $m$.
Let  $\Delta_{X\hookrightarrow \mathbb P^n}\subseteq B_{X\hookrightarrow \mathbb P^n}$, and $\Delta_{T_i}\subseteq B_{T_i}$ be the discriminants.   For an lci singularity, the discriminant is a divisor.  Under the identification above, and setting $\pi_i$ to be the projection onto $B_{T_i}$, we have
\begin{equation}
\Delta_{X\hookrightarrow \mathbb P^n} = \pi_1^*\Delta_{T_1}+\ldots+\pi_n^*\Delta_{T_n}.
\end{equation}

Restricting to allowable singularities, after choosing a generic line $\ell\subset X$, guarantees simultaneous versal deformations for both $X$ and the associated quintic $D$.
Note that for $ADE$-singularities, the semiuniversal spaces and deformation functors  for singularities of the same type in different dimensions can be naturally identified.
The following proposition establishes a  natural corresponding identification of the associated discriminants.

\begin{pro}\label{P:DefCtoPQ}
Let $(X,\ell)$ be a cubic threefold with $n$ allowable singularities  $x_1,\ldots x_n$ of types $T_1,\ldots,T_n$ respectively, together with a non-special line $\ell$. Then \'etale locally,
\begin{equation}
 B_{(X\hookrightarrow \mathbb P^4,\ell)}:=F^{ns}(\mathcal X)\simeq_{\operatorname{et}} B_{T_1}\times\ldots\times B_{T_n}\times{\mathbb A}_\CC^m
\end{equation}
for some $m$, and under this identification, setting $\pi_i$ to be the projection onto $B_{T_i}$, the pull-back $\Delta_{(X\hookrightarrow \mathbb P^4,\ell)}$ of the discriminant $\Delta_{X\hookrightarrow \mathbb P^n}$ from $B_{X\hookrightarrow \mathbb P^n}$ to $B_{(X\hookrightarrow \mathbb P^4,\ell)}$ can be described as:
\begin{equation}
\Delta_{(X\hookrightarrow \mathbb P^4,\ell)} = \pi_1^*\Delta_{T_1}+\ldots+\pi_n^*\Delta_{T_n}.
\end{equation}

Let $\widetilde D\to D$ be the  \'etale double cover of the plane quintic obtained by projecting $X$ from $\ell$,   with $D$ having   singularities  $y_1,\ldots y_n$ of types $T_1,\ldots,T_n$ respectively, obtained from the singularities $x_1,\ldots,x_n$ of the cubic.
Then \'etale locally,
\begin{equation}
 B_{(\widetilde D,D\hookrightarrow \mathbb P^2)}\simeq_{\operatorname{et}} B_{T_1}\times\ldots\times B_{T_n}\times{\mathbb A}_\CC^{m'}
\end{equation}
for some $m'$, and under this identification, setting $\pi_i$ to be the projection onto $B_{T_i}$, we have
\begin{equation}
\Delta_{(\widetilde D,D\hookrightarrow \mathbb P^2)} = \pi_1^*\Delta_{T_1}+\ldots+\pi_n^*\Delta_{T_n}.
\end{equation}

Finally, the formally smooth morphism $\operatorname{Def}_{(X\hookrightarrow \mathbb P^4,\ell)}\to \operatorname{Def}_{(\widetilde D/D\hookrightarrow \mathbb P^2)}$ of Proposition \ref{P:CtPQdef} induces, after taking appropriate \'etale covers,  a smooth morphism
$$
B_{(X\hookrightarrow \mathbb P^4,\ell)}\to B_{(\widetilde D/D\hookrightarrow \mathbb P^2)}
$$
such that the $B_{T_i}$ and $\Delta_{T_i}$ in both spaces  are identified.
\end{pro}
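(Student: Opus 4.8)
The plan is to obtain the first two product decompositions from two applications of the du Plessis--Wall versality criterion (Fact~\ref{F:DPW}), and to read off the compatibility in the last assertion from the local-analytic nature of the discriminant double cover construction (Propositions~\ref{P:Fano} and \ref{P:CtPQdef}). First, for the cubic threefold: in our situation $\tau(X)<16=\delta(3)$ --- one checks from the classification in Theorem~\ref{T:GITcub} that the largest total Tyurina number occurring on a GIT-semistable cubic threefold is $12$, for the $3D_4$ cubic, while in general $\tau(A_k)=k\le 5$ and $\tau(D_4)=4$ --- so Fact~\ref{F:DPW} and the discussion following it give, \'etale locally,
\begin{equation*}
B_{X\hookrightarrow\mathbb P^4}\simeq_{\operatorname{et}}B_{T_1}\times\cdots\times B_{T_n}\times\mathbb{A}^{m}_{\CC},\qquad \Delta_{X\hookrightarrow\mathbb P^4}=\sum_i\pi_i^*\Delta_{T_i}.
\end{equation*}
Since $\ell$ is non-special it lies in the smooth locus of $X$ (Remark~\ref{R:NSL}), so the singularities are unaffected by deforming $\ell$ and $B_{(X\hookrightarrow\mathbb P^4,\ell)}=F^{ns}(\mathcal X)\to B_{X\hookrightarrow\mathbb P^4}$ is smooth at the point of $(X,\ell)$ (cf.~\S\ref{S:DefCub} and Remark~\ref{R:NSL2}); pulling the identification above back along this smooth morphism and enlarging the affine factor accordingly yields the first assertion, together with $\Delta_{(X\hookrightarrow\mathbb P^4,\ell)}=\sum_i\pi_i^*\Delta_{T_i}$.

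Next, for the plane quintic: by Proposition~\ref{P:Fano}(1) the discriminant curve $D$ has isolated (hence lci) singularities $y_1,\dots,y_n$ of the same types $T_1,\dots,T_n$, so $\tau(D)=\tau(X)<16=\delta(5)$, and Fact~\ref{F:DPW} applied to $D\subseteq\mathbb P^2$ gives $B_{D\hookrightarrow\mathbb P^2}\simeq_{\operatorname{et}}B_{T_1}\times\cdots\times B_{T_n}\times\mathbb{A}^{m'}_{\CC}$ with $\Delta_{D\hookrightarrow\mathbb P^2}=\sum_i\pi_i^*\Delta_{T_i}$. The scheme $B_{(\widetilde D,D\hookrightarrow\mathbb P^2)}$ is the connected component through the marked point of the \'etale $2$-torsion group scheme of $P^0_{\mathcal D/B}$ over $B_{D\hookrightarrow\mathbb P^2}$ (see~\S\ref{S:DefCub}); being finite \'etale and sending its marked point to the point corresponding to $D$, it is \'etale-locally an isomorphism near those points and pulls the discriminant back to the discriminant, which gives the second assertion.

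For the final assertion I would use that the formally smooth morphism $\beta\colon B_{(X\hookrightarrow\mathbb P^4,\ell)}\to B_{(\widetilde D,D\hookrightarrow\mathbb P^2)}$ of Proposition~\ref{P:CtPQdef} is realized by the honest family of \'etale double covers of plane quintics obtained by projecting $\mathcal X$ from the universal non-special line over $F^{ns}(\mathcal X)$, and I would show that it fits into a commutative triangle over $\prod_i B_{T_i}$ --- equivalently, that the forgetful morphisms to $\prod_i B_{T_i}$ (which, under the product identifications above, are the projections onto the first factor) agree after composition with $\beta$. The point is that the matching of singularities in Proposition~\ref{P:Fano}(1) is local-analytic and propagates over the base: \'etale-analytically near a singular point $x_i$ of $X$, projection from $\ell$ carries a neighborhood of $x_i$ in $\mathcal X$ onto one of $y_i$ in $\mathcal D$ preserving the analytic type of the singularity in every fiber (up to suspension by squares of variables, which leaves the deformation functor unchanged), so the versal deformation of $T_i$ cut out of $\mathcal X$ is carried by $\beta$ onto the one cut out of $\mathcal D$. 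Granting the triangle, writing $\beta$ in product coordinates $\prod_i B_{T_i}\times\mathbb{A}^{m}_{\CC}\to\prod_i B_{T_i}\times\mathbb{A}^{m'}_{\CC}$ its first component is the projection, so $\beta$ identifies the factors $B_{T_i}$ on the two sides and pulls $\Delta_{T_i}$ back to $\Delta_{T_i}$.

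I expect the commutativity of this last triangle to be the main obstacle. The product decompositions on each side are essentially immediate from du Plessis--Wall once the Tyurina bound $\tau<16$ is in place, but the \emph{simultaneous} identification of all the factors $B_{T_i}$ across $\beta$ requires unwinding the local structure of Beauville's construction (Propositions~\ref{P:B1}, \ref{P:B2}, \ref{P:CtPQdef}) and verifying that the fiberwise local-analytic identification of $(X,x_i)$ with $(D,y_i)$ can be carried out compatibly with the semiuniversal families on both sides; this is the analogue, in families, of the one-to-one correspondence of singularities in Proposition~\ref{P:Fano}(1).
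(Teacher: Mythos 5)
Your proposal is correct and follows essentially the same route as the paper: the paper's proof consists precisely of the du Plessis--Wall decompositions for the cubic and the quintic together with a commutative diagram of deformation functors (the smooth forgetful maps for the line and for the double-cover datum, Proposition~\ref{P:CtPQdef}, and the cross-dimensional identification of the ADE functors $\prod\operatorname{Def}_{T_i}$), which is exactly the triangle whose commutativity you isolate, and the paper asserts that commutativity at no greater level of detail than your sketch via the fiberwise singularity correspondence of Proposition~\ref{P:Fano}(1). The only quibble is your appeal to Theorem~\ref{T:GITcub} for the bound $\tau(X)<16$, which that theorem does not literally provide (it classifies stable and polystable cubics only), though the bound itself for allowable singularity configurations is what the paper implicitly uses when invoking Fact~\ref{F:DPW}.
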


\begin{proof}
This follows from the discussion above, and the commutative diagram of deformation functors:
$$
\xymatrix@R=1em@C=3em{
&\operatorname{Def}_{(X\hookrightarrow \mathbb P^4,\ell)}\ar@{->}[r]^{\text{formally smooth}}_{\text{Prop.~\ref{P:CtPQdef}}} \ar@{->}[d]_{\text{formally smooth,  non-special   line}}&\operatorname{Def}_{(\widetilde D,D\hookrightarrow \mathbb P^2) }\ar@{->}[d]^{\text{formally smooth, isom.~tangent spaces}} & \\
& \operatorname{Def}_{(X\hookrightarrow \mathbb P^4)} \ar@{->}[d] _{\text{formally smooth, du Plessis--Wall}}& \operatorname{Def}_{(D\hookrightarrow \mathbb P^2)}\ar@{->}[d]^{\text{formally smooth, du Plessis--Wall}}&  \\
&\prod \operatorname{Def}_{T_i}\ar@{=}[r]& \prod \operatorname{Def}_{T_i}.&\\
}
$$
\end{proof}

\subsection{Abstract deformations and forgetful functors} \label{S:AbsDef}
Let $X\subseteq \mathbb P^n$ be a hypersurface.  There is a natural forgetful functor
$$
\operatorname{Def}_{X\hookrightarrow \mathbb P^n}\to \operatorname{Def}_X
$$
to the functor of abstract deformations of $X$.
For reduced hypersurfaces of degree $d$ in $\mathbb P^n$ with $n\ge 3$, $d\ge 2$,  and $(n,d)\ne (3,4)$, the forgetful functor $\operatorname{Def}_{X\hookrightarrow \mathbb P^n}\to \operatorname{Def}_X$ is formally smooth (e.g., \cite[p.135]{sernesi}).  In particular, it is formally smooth for cubic threefolds with isolated singularities. For cubic threefolds with isolated singularities of the type prescribed by du Plessis--Wall (Fact \ref{F:DPW}), i.e., $\tau_{tot}(X)<16$,
 we have at the level of semiuniversal spaces a diagram
\begin{equation}\label{E:EmbAbsDef}
\xymatrix{
B_{X\hookrightarrow \mathbb P^n} \ar[rr]^{\text{smooth, forgetful functor}} \ar[rd]_{\text{smooth, du Plessis--Wall}\ \ \ } &&B_X \ar[ld]\\
&\prod B_{T_i}&\\
}
\end{equation}
identifying the discriminant spaces.  It follows from the diagram that $B_X\to \prod B_{T_i}$ is formally smooth, and in this way we also obtain
\begin{equation}
 B_{X}\simeq_{\operatorname{et}} B_{T_1}\times\ldots\times B_{T_n}\times{\mathbb A}_\CC^m
\end{equation}
for some $m$.

If $X$ is a polystable cubic with isolated singularities (i.e., with allowable singularities, so that in particular $\tau_{tot}(X)<16$ by Remark \ref{R:FDPW})
we can say a little more using the Luna Slice Theorem.   First, it is elementary to show that  a transverse slice to the orbit $O_X$ of the point corresponding to $X$ in the Hilbert scheme, and the restriction of the universal family to this slice, provides a semiuniversal deformation of $X$ as an abstract variety.
In other words, we may take $B_X$ to be the Luna Slice, and we obtain that  $B_{X\hookrightarrow \mathbb P^4}$ is \'etale equivalent to $O_X\times B_X$.  This then gives an explicit  description of the compatibility of the discriminants in Diagram \eqref{E:EmbAbsDef}, above; namely the additional smooth factor in $B_{X\hookrightarrow \mathbb P^4}$  corresponds to $O_X$.

  The Luna Slice Theorem states further  that there is an \'etale morphism $B_X/\operatorname{Stab}_X\to \overline {\mathcal M}$, where $\operatorname{Stab}_X$ is the stabilizer of the point corresponding to $X$.
Via the natural  isogeny $\operatorname{SL}_5(\mathbb C)\to \mathbb P\operatorname{GL}_5(\mathbb C)$, we may identify
$B_X/\operatorname{Stab}_X\cong B_X/\operatorname{Aut}(X)$. We note here that the automorphism groups of $X$ as a variety and of $X$ as a cubic threefold coincide. This follows since by the Lefschetz theorem  \cite[Cor.~3.7, p.158]{grothendieck1968sga} the Picard group of $X$ has a unique positive generator $\mathcal O_X(1)$, which must then be preserved under every automorphism, together
with the fact that we embed by a full linear system.
\begin{rem}
For plane curves, the forgetful functor is not in general formally smooth.
 However, for curves with locally planar singularities, it is well known (see e.g., \cite{cml2}) that the natural morphism $\operatorname{Def}_D\to \prod \operatorname{Def}_{T_i}$ is formally smooth, and
  as in Fact \ref{F:DPW}, we obtain a commutative diagram
$$
\xymatrix{
B_{D\hookrightarrow \mathbb P^2} \ar[rr] \ar[rd]_{\text{smooth, du Plessis--Wall\ \ \ \ }} &&B_D \ar[ld]^{\text{smooth}}\\
&\prod B_{T_i}.&\\
}
$$
\end{rem}

\subsection{Weyl covers and wonderful blow-ups}\label{S:DefWCWB}

As explained in \cite{cml2}, spaces of the form
$B=B_{T_1}\times\ldots\times B_{T_n}\times{\mathbb A}_\CC^m$ where the $T_i$ are $ADE$-singularity types admit wonderful blow-ups of Weyl covers, determined by the associated root systems (we refer the reader to \cite{cml2} for more details).  We denote the Weyl cover by $B'$ and the wonderful blow-up of the Weyl cover by $\widetilde B'$.

Since all of the spaces we have been discussing are of this type,
and have compatible discriminants (as $\tau(X)=\tau(D)<16$; Remark \ref{R:FDPW}), we obtain a commutative  diagram of wonderful blow-ups of Weyl covers
\begin{equation}\label{E:WBfibprod}
\xymatrix{
\widetilde B'_X  \ar[d]&  \ar[l] \widetilde B'_{X\hookrightarrow \mathbb P^4} \ar[d]& \ar[l] \widetilde B'_{(X\hookrightarrow \mathbb P^4,\ell)} \ar[r] \ar[d]& \widetilde B'_{(\widetilde D,D\hookrightarrow \mathbb P^2)} \ar[r] \ar[d]& \widetilde B'_{D\hookrightarrow \mathbb P^2}  \ar[d]& \\
B_X &  \ar[l] B_{X\hookrightarrow \mathbb P^4} & \ar[l] B_{(X\hookrightarrow \mathbb P^4,\ell)} \ar[r]& B_{(\widetilde D,D\hookrightarrow \mathbb P^2)} \ar[r] & B_{D\hookrightarrow \mathbb P^2} \\
}
\end{equation}
where each square is a fibered product.   Recall that we have shown that all of the horizontal morphisms are smooth.
 In fact, the wonderful blow-up is described entirely in terms of  the factors coming from the deformations of singularities, which are identified in each space.

\subsection{Completion of the proof of Theorem \ref{T:IJtoP}} \label{S:PfIJtoP}

The following theorem is a direct consequence of  a theorem due to de Jong--Oort and Cautis:

\begin{teo}[{de Jong--Oort \cite[Thm.~5.1]{dJO}, Cautis \cite[Thm.~A]{cautis}}]\label{T:dJOC}
Let $B$ be a regular scheme (over $\mathbb Z[1/2]$) and $\Delta$ an nc divisor on $B$. Set $U =B-\Delta$. Given a family
$$
\xymatrix{
\widetilde {\mathscr C}_U \ar[rr]^{\pi_U} \ar[rd]& & \mathscr C_U \ar[ld]\\
& U&
}
$$
of connected  \'etale double covers of smooth curves of genus $g\ge 2$, the induced rational map to the moduli scheme $B\dashrightarrow \overline{\mathcal  R}_g$ extends to a morphism over $B$.
\end{teo}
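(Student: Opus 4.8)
The plan is to reduce the assertion to a purely local statement near the boundary divisor, strip off the finite part of the local monodromy by a ramified base change, invoke semistable reduction in the resulting unipotent situation, and then descend back down. To begin, I would observe that the statement is local on $B$: since $\overline{\mathcal R}_g$ --- the moduli of admissible double covers of Beauville --- has proper, hence separated, coarse moduli scheme, and $B$ is regular, hence normal, the classifying rational map $B\dashrightarrow\overline{\mathcal R}_g$ is automatically a morphism on the complement of a closed set of codimension $\ge 2$ (extend over each codimension-one point by the valuative criterion of properness applied to the corresponding discrete valuation ring, and patch by separatedness). So it suffices to rule out any indeterminacy at all, and this may be checked after passing to the strict henselization (or completion) of a local ring of $B$. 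There one may assume $B=\operatorname{Spec}R$ with $R$ regular and $\Delta=V(x_1\cdots x_k)$ for $x_1,\dots,x_k$ part of a regular system of parameters, with $U$ the complement.

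Next I would apply a cyclic covering trick. By the Local Monodromy Theorem the monodromy of $\widetilde{\mathscr C}_U\to\mathscr C_U\to U$ around each branch $\{x_i=0\}$ is quasi-unipotent, so for a suitable integer $m$ the cover $\rho\colon B'\to B$ obtained by adjoining $m$-th roots $t_i$ of the $x_i$ has the property that the pulled-back family has unipotent local monodromy around every branch of $\Delta':=\rho^{-1}(\Delta)$. The essential point, and the reason the normal crossing hypothesis is indispensable, is that $B'$ is again regular and $\Delta'$ is again a normal crossings divisor, so the problem is of the same shape after base change.

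The technical heart, which is precisely the content of the theorems of de Jong--Oort and Cautis, is then to extend over $B'$. With unipotent monodromy around each component of $\Delta'$, the family of smooth curves $\mathscr C_{U'}$ extends to a family of stable curves over all of $B'$, so that $U'\to\mathcal M_g$ extends to a morphism $B'\to\overline{\mathcal M}_g$; in one variable this is ordinary semistable reduction with no further base change, while the several-variable case genuinely uses the nc structure to glue the stable models built along the various coordinate strata into a single stable model over $B'$. To promote this to the double cover, one observes that the deck involution of $\widetilde{\mathscr C}_{U'}$ extends to the stable model of $\widetilde{\mathscr C}$ by uniqueness of stable models, and then performs the standard admissible-reduction procedure at the nodes where the two branches are exchanged, producing a family of admissible double covers over $B'$, i.e.\ a morphism $B'\to\overline{\mathcal R}_g$. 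I expect this to be the main obstacle: everything else is soft (properness, the covering trick, descent), whereas constructing the admissible-double-cover stable model over a higher-dimensional base --- rather than merely over each boundary component separately --- is exactly where the normal crossing hypothesis must be exploited and where the real work lies.

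Finally I would descend along $\rho$. The Galois group $G=(\mathbb Z/m)^{\oplus k}$ acts on $B'$ with $B'/G=B$, and since the family over $B'$ is by construction the pullback of the family over $B$, the morphism $B'\to\overline{\mathcal R}_g$ agrees with all of its $G$-translates over the dense open $U'$, hence everywhere since $\overline{\mathcal R}_g$ is separated. Therefore this morphism is $G$-invariant and factors through $B'/G=B$, yielding the desired morphism $B\to\overline{\mathcal R}_g$; it restricts over $U$ to the original classifying map because $\rho(U')=U$. This completes the extension, and gluing the local extensions gives the global statement.
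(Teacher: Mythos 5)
Your proposal inverts the logical economy of the intended argument and, in doing so, leaves its central step unproven. The paper's proof is short precisely because the cited results are used as stated: de Jong--Oort and Cautis give the extension of the \emph{coarse moduli map} $B\dashrightarrow \overline{\mathcal M}_g$ for the family $\mathscr C_U$ directly over $B$ --- with no base change and no unipotence hypothesis; that is the whole point of those theorems --- and the only thing left to do is lift through the finite surjective morphism $\overline{\mathcal R}_g\to\overline{\mathcal M}_g$ using normality of $B$ (Proposition \ref{P:ExtProp}, i.e.\ EGA IV 20.4.4). You never perform this lifting step; instead you try to manufacture the $\overline{\mathcal R}_g$-point directly by building an actual family of admissible covers over the Kummer cover $B'$. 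But the step you call the ``technical heart'' --- existence of a stable model of $\widetilde{\mathscr C}$ over the higher-dimensional base $B'$, gluing along strata, and then ``the standard admissible-reduction procedure at the nodes'' over $B'$ --- is exactly what you do not prove, and it is \emph{not} ``precisely the content'' of the cited theorems: those theorems assert the extension of the map to the coarse space $\overline{\mathcal M}_g$, not the existence of a stable (let alone admissible-cover) family over a higher-dimensional base, and no result about $\overline{\mathcal R}_g$ or admissible covers appears in them. Admissible reduction is a DVR-level procedure; over a base of dimension $\ge 2$ it requires modifications whose existence in families is a genuine issue, and you would in any case only need the map to the coarse space, which is obtained far more cheaply by the finite-lifting argument you omit.

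Two further points. First, your reduction to unipotent monodromy by a Kummer cover is not available in the stated generality: the theorem is over $\mathbb Z[1/2]$, and in positive residue characteristic the local monodromy can be wild, so no tame root extraction makes it unipotent (and for $p\mid m$ the Galois/descent setup also degenerates); your step 2 is essentially a characteristic-zero argument. Second, even granting the existence of stable models over $B'$ with unipotent monodromy (codimension one by Deligne--Mumford over DVRs without base change, codimension $\ge 2$ by a purity theorem of de Jong--Oort type), you still owe the passage from ``stable model of $\widetilde{\mathscr C}$ with involution'' to a morphism $B'\to\overline{\mathcal R}_g$, which again comes down to a lifting/normality argument of the very kind the paper uses on $B$ itself. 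So the soft ingredients of your sketch (codimension-one extension by properness, descent along $B'\to B$) are fine, but the core of the extension theorem is asserted rather than proved, and the simple route through $\overline{\mathcal M}_g$ plus finiteness of $\overline{\mathcal R}_g\to\overline{\mathcal M}_g$ is missing.
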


\begin{proof}
The family of connected  \'etale double covers over $U$ induces rational maps
$$
\xymatrix{
&\overline{\mathcal R}_g \ar[d]\\
B\ar@{-->}[r] \ar@{-->}[ru]& \overline{\mathcal M}_g.\\
}
$$
By the de Jong--Oort and Cautis theorem, the induced rational map $B\dashrightarrow \overline {\mathcal M}_g$ extends to a morphism.
The rational map $B\dashrightarrow \overline{\mathcal R}_g$ then extends to a morphism since $B$ is normal, $\overline{\mathcal R}_g\to \overline{\mathcal M}_g$ is finite surjective, and $\overline{\mathcal M}_g$ is a variety (see e.g.,  Proposition \ref{P:ExtProp}).
\end{proof}

We can now complete the proof of Theorem \ref{T:IJtoP}.

\begin{proof}[Proof of Theorem \ref{T:IJtoP}]
At this point we have explained the composition
$$
\widetilde F^{'ns}(\mathcal X)\to \widetilde B'_{(\widetilde D,D\hookrightarrow \mathbb P^2)}\dashrightarrow  \overline {\mathcal R}_6.
$$
  Since the discriminant in the wonderful blow-up of the Weyl cover is a normal crossings divisor in a smooth variety,  the extension of the map $B'_{(\widetilde D,D\hookrightarrow \mathbb P^2)}\dashrightarrow  \overline {\mathcal R}_6$ follows from the theorem of de Jong--Oort and Cautis.
\end{proof}

\subsection{Proof of Corollary \ref{C:IJtoP}}\label{S:CubCor}

As pointed out after the statement of Corollary \ref{C:IJtoP}, the proof will follow  from standard extension results, once we establish that the morphisms in \eqref{E:ExtDiag} have the stated properties.

We start on the left hand side of  Diagram \eqref{E:ExtDiag}. The fact that $F^{ns}(\mathcal X)\to B_{X\hookrightarrow \mathbb P^4}$ is  smooth
follows from Remark \ref{R:NSL2}. The fact that $B_{X\hookrightarrow \mathbb P^4}\to B_X$ is  smooth is a general result about forgetful functors for embedded deformations, discussed in \S \ref{S:AbsDef}.   Finally, the Luna Slice Theorem (see \S \ref{S:AbsDef}) asserts that there is an \'etale morphism $B_X/\operatorname{Aut}(X)\to \overline{\mathcal M}$.

We now move to the right hand side of Diagram \eqref{E:ExtDiag}.   We showed in \eqref{E:WBfibprod} that the wonderful blow-ups are compatible, and obtained from one another via fibered product diagrams.  In particular, the properties of the corresponding morphisms on the right hand side follow from those on the left hand side by base change.  In other words, $\widetilde F'^{ns}(\mathcal X)\to \widetilde B'_{X\hookrightarrow \mathbb P^4}$ and  $\widetilde B'_{X\hookrightarrow \mathbb P^4}\to \widetilde B'_X$ are formally smooth.
Now there is a natural action of $\operatorname{Aut}(X)$ on $\widetilde B'_X$, and from the construction of $\widetilde {\mathcal M}$, there is an \'etale morphism $\widetilde B_X'/\operatorname{Aut}(X)\to \widetilde {\mathcal M}$ (N.B. recall that $\widetilde {\mathcal M}$ is obtained via a wonderful blow-up of the discriminant in the ball quotient; up to passing to a finite cover, the discriminant is identified with the discriminant in the versal deformation).

We have now established  that the morphisms in \eqref{E:ExtDiag} have the stated properties.  To complete the proof we use the standard extension results for rational maps (see e.g., Proposition \ref{P:ExtProp} below).  In particular,  from Proposition \ref{P:ExtProp} we obtain a morphism $\widetilde B_X'\to \Vor$, since all of the morphisms in question are smooth and surjective.
The morphism $\widetilde B_X'\to \Vor$ is $\operatorname{Aut}(X)$-equivariant, and so we obtain a morphism $\widetilde B_X'/\operatorname{Aut}(X)\to \Vor$.    The morphism $\widetilde B_X'/\operatorname{Aut}(X)\to \widetilde {\mathcal M}$ is \'etale, and so the image is an open set over which the rational map $\widetilde {\mathcal M}\to \Vor$ extends to a morphism. This completes the proof.

\subsection{Extending rational maps}
For completeness we summarize below some  standard results we have been using regarding  extending rational maps, which follow  from \cite[Prop.~20.3.11, Prop.~20.4.4]{EGAIV4}.

\begin{pro}\label{P:ExtProp}
 Let $S$ be a scheme.
Let $X,Y$ be integral locally Noetherian $S$-schemes.
Assume that $X$ is normal, and $Y$ is separated and locally of finite type over $S$.
Consider a commutative diagram of morphisms and rational maps of $S$-schemes
$$
\xymatrix{
X'\ar@{-->}[r]^{\phi'} \ar[d]_f& Y' \ar[d]^g\\
X \ar@{-->}[r]^\phi& Y\\
}
$$
and assume that  $f$ is a composition of smooth surjective  and finite surjective morphisms, and $g$ is a finite surjective morphism.
Then $\phi$ extends to a morphism if and only if $\phi'$ extends to a morphism.
\end{pro}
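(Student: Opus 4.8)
The plan is to deduce both implications from the EGA results on extension of rational maps, \cite[20.3.11, 20.4.4]{EGAIV4}, which handle extension along finite, respectively smooth, morphisms; below I describe the mechanism. The first step is to reduce to the case in which $f$ is \emph{either} smooth surjective \emph{or} finite surjective, by running the argument along the links of the given factorization of $f$ one link at a time (smooth morphisms propagate normality and reducedness, so the intermediate schemes occurring at the smooth links remain normal and integral, while the finite-morphism case will be seen to use only normality of its base). In all cases the argument is carried out in terms of the scheme-theoretic closures of the graphs of $\phi$ and $\phi'$ inside $X\times_S Y$ and $X'\times_S Y'$, and repeatedly uses that a morphism from a reduced scheme to a separated scheme is determined by its restriction to a dense open subscheme.

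For the implication ``$\phi$ extends $\Rightarrow$ $\phi'$ extends'', I would assume $\phi\colon X\to Y$ is a morphism, so that $g\circ\phi'=\phi\circ f$ extends to a morphism $\psi\colon X'\to Y$, and then study the closure $\Gamma'$ of the graph of $\phi'$ in $X'\times_S Y'$. The key point is that, via $f\times g$ together with the identification of the graph of $\phi$ with $X$, the scheme $\Gamma'$ carries a finite birational morphism onto $X$ --- finiteness coming from the finiteness of $f$ and $g$; in the smooth case one works instead over $X'$ itself (which is then normal), using that $\Gamma'$ is finite and birational over $X'$ because $g$ is finite. Zariski's Main Theorem, applied using that $X$ (resp.\ $X'$) is normal and integral and that $\Gamma'$ is integral, upgrades this to an isomorphism, and a cancellation argument with the separated morphism $f$ then identifies $\Gamma'$ with $X'$ as well; equivalently, $\phi'$ extends to a morphism.

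For the converse implication ``$\phi'$ extends $\Rightarrow$ $\phi$ extends'', I would assume $\phi'\colon X'\to Y'$ is a morphism, so $\psi:=g\circ\phi'\colon X'\to Y$ is a morphism representing the rational map $\phi\circ f$. When $f$ is finite surjective I would show that the closure $\Gamma_\phi$ of the graph of $\phi$ in $X\times_S Y$ receives a surjective finite morphism from $\Gamma_\psi\cong X'$ (finiteness from finiteness of $f$ and the fact that $\psi$ is a morphism, surjectivity from surjectivity of $f$); hence $\Gamma_\phi\to X$ is proper with finite fibres, so finite, and being birational onto the normal integral scheme $X$ it is an isomorphism by Zariski's Main Theorem, giving the extension of $\phi$. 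When $f$ is smooth surjective I would instead invoke faithfully flat descent: $X'$ is reduced, the two pullbacks of $\psi$ along the projections $X'\times_X X'\to X'$ agree on a dense open subscheme and hence everywhere (the fibre product being reduced and $Y$ separated), and descent of morphisms to the scheme $Y$ produces a morphism $X\to Y$ extending $\phi$.

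The step I expect to require the most care --- the main obstacle --- is bookkeeping the properness and finiteness of the various graph closures. Since $Y$ is only assumed separated, not proper, over $S$, these closures need not be proper over the base \emph{a priori}; one must manufacture the finiteness needed to apply Zariski's Main Theorem out of the finiteness of $f$ and $g$, and in the smooth case substitute faithfully flat descent for the graph-closure argument altogether. The normality hypothesis on $X$ enters at exactly one point in each case, namely to pass from ``finite and birational'' to ``isomorphism''; everything else is formal and is precisely what \cite[20.3.11, 20.4.4]{EGAIV4} package up.
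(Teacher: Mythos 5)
Your overall strategy coincides with the paper's: reduce to one link of the factorization of $f$ at a time and treat the three elementary cases ($f$ smooth surjective, $f$ finite surjective, $g$ finite surjective), which the paper disposes of by citing \cite[Prop.~20.3.11(ii), Prop.~20.4.4]{EGAIV4}; you instead open up those black boxes. The direction the paper actually uses, that an extension of $\phi'$ descends to an extension of $\phi$, is correct as you sketch it: for finite $f$ the graph closure $\Gamma_\phi\subseteq X\times_S Y$ is the image of $X'$, hence finite and birational over the normal integral $X$, so an isomorphism; for smooth $f$, fppf descent of morphisms applies because the relevant fibre products are reduced and the loci of agreement are dense.

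The treatment of the other implication at a finite link, however, contains a genuine error. If $\phi$ is a morphism and $f$ is finite of generic degree $d>1$, the closure $\Gamma'$ of the graph of $\phi'$ in $X'\times_S Y'$ is birational over $X'$ and hence of generic degree $d$ over $X$; it is not ``finite birational onto $X$,'' so the chain ``ZMT gives $\Gamma'\cong X$, cancellation gives $\Gamma'\cong X'$'' collapses (an isomorphism $\Gamma'\cong X$ would in fact contradict the surjectivity of the finite map $\Gamma'\to X'$). Nor can the step be repaired under the stated hypotheses: what is needed is normality of $X'$ (then $\Gamma'\to X'$, finite and birational, is an isomorphism, exactly as in your smooth case), and normality of $X'$ does not follow from normality of $X$ once $f$ has finite links. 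Concretely, take $X=Y=\mathbb{A}^1$, $\phi=\mathrm{id}$, $g:Y'=\operatorname{Spec}k[u]\to Y$ the squaring map, $X'$ the nodal curve obtained from $\operatorname{Spec}k[v]$ by gluing $v=1$ to $v=-1$, $f$ induced by $v\mapsto v^2$, and $\phi'$ induced by $v\mapsto u=v$ away from the node: $\phi$ extends but $\phi'$ does not, since the two branches at the node select the two different sheets of $g$. So the ``only if'' direction requires $X'$ to be normal (as do the EGA statements the paper invokes, and as holds in every application in the paper, where the upstairs schemes are smooth); your write-up should either add this hypothesis or restrict that direction to normal $X'$. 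The analogous implicit assumption that intermediate schemes at finite links are normal is needed to chain the one-link cases at all, but that looseness is shared with the paper's own proof rather than being specific to yours.
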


\begin{proof}
The case where $f$ is smooth and surjective and $g$ is the identity  is a special case of  \cite[Prop.~20.3.11(ii)]{EGAIV4}.  The case where  $f$ is finite surjective and $g$ is the identity  can be deduced from  \cite[Prop.~20.4.4]{EGAIV4} (see also \cite[Lem.~2.4]{cautis} for a similar argument).   The case where $f$ is the identity and $g$ is finite surjective can also be deduced from \cite[Prop.~20.4.4]{EGAIV4}.  The general case follows from these special cases.
\end{proof}


\section{Combinatorics of monodromy cones for double covers of plane quintics}\label{sec:proof}
Every pair $(X,\ell)$ consisting of a semi-stable cubic $X$ with allowable (isolated) singularities  and a non-special line $\ell \subset X$ leads, via the conic bundle
given by projection from $\ell$, to an \'etale  double cover $\widetilde D \to D$
of a plane quintic $D$ with $AD$-singularities.
Given a one-parameter family of smooth cubics with non-special lines degenerating to $(X,\ell)$, one obtains a one-parameter family of \'etale double covers of smooth curves degenerating to $\widetilde D\to D$.
Moving to a stable reduction, one
finally obtains an \'etale admissible
double cover $\widetilde D_\stab\to D_\stab$. The aim of this section is to prove the following: whenever we start with $(X,\ell)$ as above, then the admissible double cover  $\widetilde D_\stab\to D_\stab$
is never contained in the closure $\overline{FS}_n\subset\overline\calR_g$ of any Friedman--Smith locus with any $n\ge 2$. This is an essential step in proving the extension of the
intermediate Jacobian map.

Since Proposition~\ref{pro:FScombinatorics} characterizes dual graphs of admissible covers which lie in the closure $\overline{FS}_n\subset\overline\calR_g$,
the question finally becomes combinatorial. In order to keep track of the singularities involved, as well as the process of taking a stable reduction, and possibly contracting
smooth rational components,  we introduce some further graph-theoretic notation, which allows us to deal combinatorially with arbitrary stable reductions of singular curves. Eventually we deduce from Proposition~\ref{pro:FScombinatorics} a necessary condition for some stable reduction of a singular curve to lie in $\overline{FS}_n$, which is the graph-theoretic Corollary~\ref{cor:bipartiteFS}.

Applying this condition to double covers of plane quintics is then a matter of analysis of cases, done in the proof of Proposition~\ref{prop:nonFS}. For our extension result we only need to consider plane quintics with $A_k, k\leq 4$ or
$D_4$-singularities, but this combinatorial proposition turns out to hold for all plane quintics with $AD$-singularities, which is the context we adopt for the rest of this section.

\subsection{Preliminaries on bipartite dual graphs of singular curves}
Throughout this section, we denote by $V(\Gamma)$ the set of all vertices of a graph $\Gamma$. When we speak of a {\em graph map}, we always mean  a map of cell complexes, i.e.,~edges are sent to edges, or collapsed to vertices, while vertices are mapped to vertices. Equivalently, this means that for each edge, the graph map either maps the edge isomorphically onto an edge, or contracts that edge to a vertex.

\begin{dfn}\label{def:bipartitegraph}
Let $C$ be a reduced curve, in particular $C$ has isolated singularities. We associate to $C$ a labeled {\em bipartite} graph~$\Gamma$ by the following procedure.
To each irreducible component of the curve $C$ we associate a black vertex (and typically label it
with the genus of its normalization) and to each singularity $p\in C$ we associate a white vertex and label it with the type of the singularity.
The edges of the graph are defined as follows. Let $\nu: C^{\nu}\to C$ be the normalization of $C$
(and note that the black vertices correspond also to the components of $C^{\nu}$).
To every pre-image $x\in \nu^{-1}(p)$ of a singular point $p \in C$ we associate an edge linking the white vertex corresponding to $p$ and the black
vertex corresponding to the component of $C^{\nu}$ on which $x$ lies. We call the result the {\em bipartite dual graph} of the curve $C$.
\end{dfn}

\begin{exa}\label{E:D4graph}
Suppose that $C=L_1\cup L_2\cup L_3$ is a reducible plane cubic comprised of three distinct lines $L_1,L_2,L_3$, all meeting at a single point $p$, which is a $D_4$-singularity.     Then the bipartite dual graph of $C$ is given in Figure \ref{F:D4}, below.
\end{exa}

\begin{figure}[htb]
\begin{equation*}
\xymatrix@R=1em@C=1em{
&*{\bullet} * \ar@{-}[d]|-{\SelectTips{cm}{}\object@{}}^<{0}^>{} &\\
&*{\circled{$D_4$}}* \ar@{-}[rd]|-{\SelectTips{cm}{}\object@{}}^<{}^>{0} \ar@{-}[ld]|-{\SelectTips{cm}{}\object@{}}^<{}_>{0}&\\
*{\bullet} * &&*{\bullet} * \\
}
\end{equation*}
\caption{  Bipartite dual graph $\Gamma$ associated to a plane cubic with a $D_4$-singularity}\label{F:D4}
\end{figure}
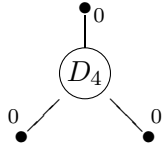

\begin{exa}\label{exa:singularquintic1}
Suppose that $D=Q\cup L$ is a plane quintic comprised of an irreducible plane quartic $Q$ and a line $L$.  Assume that $Q$ has exactly two singularities, one of which is a node (i.e.,~an
$A_1$-singularity of $Q$), and the other of which is a cusp (an $A_2$-singularity); the genus of the normalization of $Q$ is then $1$.  Assume that $L$ meets $Q$ transversally at two smooth points of $Q$ (thus each of these two intersections is an $A_1$-singularity of $D$), and also meets $Q$ at its node ``transversally'', so as to form a  $D_4$-singularity of $D$ at that point.   Then the bipartite dual graph of $D$ is
given in Figure \ref{F:PQuin}, below.
\end{exa}

\begin{figure}[htb]
\begin{equation*}
\xymatrix@R=1em@C=3em{
&&*{\circled{$A_1$}}*&\\
 *{\circled{$A_2$}}*   \ar@{-}[r] &*{\bullet} * \ar @{-}@/^1pc/[ru]|-{\SelectTips{cm}{}\object@{}}^<{1}^>{}    \ar @{-}@/_1pc/[rd]    \ar @{-}@/^1.5pc/[r]    \ar @{-}@/_1.5pc/[r]  &*{\circled{$D_4$}}* & *{\bullet } *    \ar @{-}@/^.7pc/[ld]  \ar @{-}@/_.7pc/[lu]|-{\SelectTips{cm}{}\object@{}}_<{0}^>{}   \ar@{-}[l]|-{\SelectTips{cm}{}\object@{}}_<{}^>{}  \\
 &&*{\circled{$A_1$}}* &\\
}
\end{equation*}

\caption{  Bipartite dual graph $\Gamma$ associated to a plane quintic} \label{F:PQuin}
\end{figure}
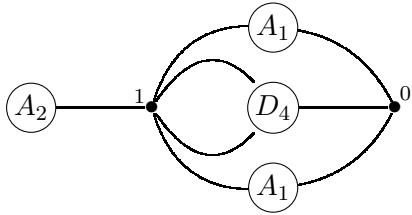

\begin{rem}
Associated to a singularity is a genus (this is related to stable reduction, discussed below).    In this way, we can associate to each vertex $v$ of the graph a genus $g(v)$, whether or not the vertex is black or white.  The genus of the graph is then defined to be $g(\Gamma):=b_1(\Gamma)+\sum_{v\in V(\Gamma)}g(v)$ (which is equal to the arithmetic genus of any stable reduction of the curve with such a bipartite dual graph).
The genus of an $A_1$-singularity is $0$, the genus of an $A_2$-singularity, and of a $D_4$-singularity, is $1$.    Therefore, in Example \ref{E:D4graph} the genus is $((0+0+0)+1)+(3-4+1)=1$ (the arithmetic genus of the plane cubic), and
 in Example \ref{exa:singularquintic1} the genus is $((1+0)+(1+1+0+0))+(8-6+1)=6$ (the arithmetic genus of the  plane quintic).
\end{rem}

We are interested in \'etale double covers $\widetilde C \to C$, where C is a plane quintic. This means that over each singularity of $C$ lie two singularities of
$\widetilde C$ of the same type. In terms of bipartite graphs this means that over each white vertex of the bipartite dual graph $\Gamma$ of $C$ lie two identical
white vertices of the bipartite dual graph $\widetilde \Gamma$ of $\widetilde C$.
Note, however, that it may well happen that a black vertex of $\Gamma$ is only covered by one black vertex of $\widetilde \Gamma$.
This leads us to:

\begin{dfn}\label{def:singularetale} We define
\begin{itemize}
\item[(1)] A double cover $\widetilde C \to C$ is called {\em \'etale over the singularities} if every singularity of $C$ is covered trivially by two copies of the same singularity, contained in $\widetilde C$.
\item[(2)] An involution $\iota$ of a labeled bipartite graph $\widetilde\Gamma$  is called {\em \'etale over the singularities} if
the involution preserves the colors and labels of the vertices, and does not fix any white vertex.
\end{itemize}
\end{dfn}

Starting with a singular curve  $C$ (a plane quintic in our case) we want to replace it by a stable reduction.
In terms of graphs this means that we replace each white node by the graph corresponding to the tail of a stable reduction.
Note that the type of the singularity does not determine the graph of the tail uniquely as the tail itself can degenerate.

Heuristically, the white vertices in the bipartite graph, drawn as they are as large circles, are meant to stand in for an unknown,  more complicated graph, the graph of the stable reduction, about which the only thing we know are its genus, and the collection of edges meeting the rest of the graph.

\begin{dfn}
Given a bipartite dual graph $\Gamma$ associated to a curve $C$ with isolated singularities, a (usual, not bipartite) graph $\Gamma_\real$  obtained from $\Gamma$ by
replacing all  white vertices by graphs of tails associated  to some stable reduction of the corresponding singularity, will be called a    {\em realization} of the bipartite graph $\Gamma$.
\end{dfn}

Let us explain in detail what this means for the $AD$-singularities which we encounter.
For further details we refer the reader to \cite{has}. The case
of an $A_1$-singularity is somewhat special: they are simply removed from the graph, and the two edges going to them are joined. In particular if an $A_1$ is connected to two different black vertices, then we simply connect these two vertices by an edge, while if an $A_1$ is connected by two edges to the same black vertex, we replace it by a loop attached to this black vertex.

Next we consider the  $A_{2k+1}$-singularities, $k\geq 1$. These  have two local branches and we replace the white vertex of~$\Gamma$ by the (usual) dual graph of a stable tail $T\in \overline{\calH}_{k,2}$, where from now on we denote by $\overline{\calH}_{g,n}$
the Deligne--Mumford compactification of the moduli space of hyperelliptic curves of genus $g$ with $n$ marked points.
More precisely, $T$ is a (possibly degenerate) hyperelliptic curve of genus $k$ with two marked
points $p_1$ and $p_2$ which are conjugate under the hyperelliptic involution.  We use the two marked points to attach $T$ to the two branches on which the singularity lies.
On the other hand the $A_{2k}$-singularities are unibranched, and the corresponding white vertices of~$\Gamma$ are replaced with dual graphs of stable tails $T\in \overline{\calH}_{k,1}$
where~$T$ is again hyperelliptic, attached to $C$ in a Weierstrass point.

Now we move to the $D_k$-singularities. The tail of a $D_4$-singularity is a (possibly degenerate) elliptic curve.
Similarly the $D_{2k+4}$-singularities for $k \geq 1$ have $3$ branches and the tail has genus $k+1$, more precisely
$T\in \overline{\calH}_{k+1,3}$ is hyperelliptic and the three marked points
$p_1,p_2.p_3$ have the property that two of them are conjugate under the hyperelliptic involution, while the third is general.
Finally  the $D_{2k+3}$-singularities for $k \geq 1$ have $2$ branches
and the tail $T\in \overline{\calH}_{k+1,2}$ is hyperelliptic. Of the two marked points $p_1,p_2$, one is a Weierstrass point and the other is general.

If we apply this procedure to the plane  cubic in Example \ref{E:D4graph}, then we have to replace the $D_4$-singularity by
a (possibly degenerate) elliptic curve. Let us, say, replace the  $D_4$-singularity by a smooth elliptic curve,
then we obtain the graph in Figure \ref{F:D4r}.

\begin{figure}[htb]
\begin{equation*}
\xymatrix@R=1em@C=1em{
&*{\bullet} * \ar@{-}[d]|-{\SelectTips{cm}{}\object@{}}^<{0}^>{1} &\\
&*{ \bullet}* \ar@{-}[rd]|-{\SelectTips{cm}{}\object@{}}^<{}^>{0} \ar@{-}[ld]|-{\SelectTips{cm}{}\object@{}}^<{}_>{0}  &\\
*{\bullet} * &&*{\bullet} * \\
}
\end{equation*}
\caption{ A realization $\Gamma_\real$ of the bipartite dual graph of the $D_4$ plane cubic}\label{F:D4r}
\end{figure}

If we apply this procedure to the plane quintic in Example \ref{exa:singularquintic1} then we have to replace both the $A_2$ and the $D_4$-singularity by
a (possibly degenerate) elliptic curve. Let us, say, replace the $A_2$-singularity by a nodal elliptic curve and the $D_4$-singularity by a smooth elliptic curve,
then we obtain the graph in Figure \ref{F:PQuinr}.
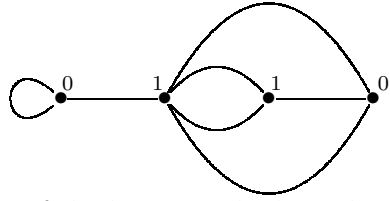
\begin{figure}[htb]
\begin{equation*}
\xymatrix@R=1em@C=3em{
&&&\\
 *{\bullet}*    \ar@{-}@(lu,ld) \ar@{-}[r]|-{\SelectTips{cm}{}\object@{}}^<{0}^>{1} &*{\bullet} * \ar @{-}@/^3pc/[rr]|-{\SelectTips{cm}{}\object@{}}^<{}^>{0}    \ar @{-}@/_3pc/[rr]    \ar @{-}@/^1pc/[r]    \ar @{-}@/_1pc/[r]  &*{\bullet}* & *{\bullet } *      \ar@{-}[l]|-{\SelectTips{cm}{}\object@{}}^<{}_>{1}  \\
 &&&\\
}
\end{equation*}

\caption{ A realization $\Gamma_\real$ of the bipartite dual graph associated to the  plane quintic} \label{F:PQuinr}
\end{figure}

If we apply this realization procedure to the dual graph of   a curve $C$, then  we obtain the dual graph of a nodal, but not necessarily stable curve $C_\real $.   Indeed,  it can happen that there are irreducible components of~$C_\real$ that are smooth rational curves which are only attached by one or two points.
If this is the case, we  then contract these irreducible components of $C_\real$ and obtain a stable curve  $C_{\stab}$ whose dual graph we denote by $\Gamma_{\stab}$. The above process is called {\em stabilization}.  At the level of graphs, the graph map $\Gamma_\real\to \Gamma_\stab$ is obtained by iteratively contracting all edges that connect to a genus $0$ vertex of valency $1$, and for every genus $0$ vertex of valency or $2$, contracting one of the two edges emanating from it (and so mapping such a genus 0 vertex to the vertex at the other end of that edge).
If we apply this procedure to the plane cubic in Example~\ref{E:D4graph}, we obtain the graph in Figure~\ref{F:D4s}.

\begin{figure}[htb]
\begin{equation*}
\xymatrix@R=1em@C=1em{
*{\ \  \bullet\   {}^1 }*
}
\end{equation*}
\caption{A stabilization $\Gamma_\stab$ of the bipartite dual graph of the $D_4$ plane cubic}\label{F:D4s}
\end{figure}

In the case of the plane quintic in Example~\ref{exa:singularquintic1}, the chosen realization of the curve was already stable.   To show that $\Gamma_\real$ and $\Gamma_\stab$ can differ for plane quintics, we include the following example.

\begin{exa}
Consider  a quintic curve
$D=C \cup M$ where $C$ is a smooth plane quartic and $M$ is a line meeting $C$ with multiplicity $4$, thus forming  an $A_7$-singularity.
This gives the bipartite graph $\Gamma$ with two black vertices and one white vertex, labeled with an  $A_7$.
Replacing the $A_7$-singularity by a smooth genus $3$ tail we obtain the graph $\Gamma_{\real}$, with three vertices and two edges.  We note that the curve $D_{\real}$ is not stable. Contracting the smooth rational curve $M$ we obtain a stable curve $D_{\stab}$ with dual graph $\Gamma_{\stab}$, consisting of two vertices and one edge.
\end{exa}

Starting from Proposition~\ref{pro:FScombinatorics} for the cover $\widetilde\Gamma_\stab\to\Gamma_\stab$, we will now deduce necessary conditions for $\widetilde\Gamma_\real\to\Gamma_\real$, and then for $\widetilde\Gamma\to\Gamma$, to admit a stabilization lying in the closure of a Friedman--Smith locus. The first easy step is the following:
\begin{lem}\label{L:realFS}
Let  $\widetilde\Gamma\to\Gamma$ be a degree $2$ map of bipartite graphs which is \'etale over the  white vertices.  Assume that there exists a realization $\widetilde\Gamma_\real\to\Gamma_\real$ whose stabilization $\widetilde\Gamma_\stab\to\Gamma_\stab$ gives  the dual graph of a double cover that lies in $\overline{FS}_n$ for some $n\ge 2$. Then the double cover of graphs $\widetilde\Gamma_\real\to\Gamma_\real$ satisfies the conclusion of Proposition~\ref{pro:FScombinatorics}.
\end{lem}
We note than unlike Proposition~\ref{pro:FScombinatorics}, which gives a combinatorial condition equivalent to an admissible double cover lying in $\overline{FS}_n$, here, since we are working with realizations that need not be stable,  we only claim that these are necessary conditions for a realization to have a stabilization in  $\overline{FS}_n$ --- which will be enough for our purposes of showing that stabilizations of realizations of double covers of plane quintics are never contained in the Friedman-Smith loci.

\begin{proof}
Indeed, as discussed above, the graph $\Gamma_\stab$ (resp.~$\widetilde\Gamma_\stab$) is obtained from the graph $\Gamma_\real$ (resp.~$\widetilde\Gamma_\real$) by contracting the edges
that connect to a genus $0$ vertex of valency $1$, and by contracting one of the two edges at each valency $2$ genus $0$ vertex.
In the latter case, we will always choose, arbitrarily, which edge to contract on $\Gamma_\real$, and will then contract the two preimages of that edge in $\widetilde\Gamma_\real$ correspondingly.
By this we mean that the resulting
graph map $\widetilde\Gamma_\real\to\widetilde\Gamma_\stab$ is then equivariant with respect to the covering involution.

Let $\widetilde V_\stab=\widetilde V_{\stab,1}\sqcup\widetilde V_{\stab,2}$ be the decomposition of $V(\widetilde\Gamma_\stab)$ provided by Proposition~\ref{pro:FScombinatorics}. We claim  that the preimages of $\widetilde V_{\stab,i}$ in $V(\widetilde \Gamma_\real)$ provide a decomposition of $V(\widetilde\Gamma_\real)$ satisfying the same properties.

Indeed, it is clear that these preimages are disjoint,
and also  invariant under the involution of $\widetilde\Gamma_\real$, so (1) holds; the preimage of a connected graph under a map that contracts some edges is connected, so (2) holds. Finally, the graph map $\widetilde\Gamma_\real\to\widetilde\Gamma_\stab$ is an isomorphism on the preimage of every open edge (minus the endpoints), and thus the preimages of the $2n$ edges are still $2n$ edges, so property (3) holds. 
\end{proof}
Our next goal is to obtain a necessary combinatorial property of the double cover of bipartite dual graphs $\widetilde\Gamma\to\Gamma$ that would be implied by some stabilization of some realization lying in~$\overline{FS}_n$. Thus the main result about double covers of bipartite graphs is the following.

\begin{cor}\label{cor:bipartiteFS}
Let  $\widetilde\Gamma\to\Gamma$ be a degree $2$ map of bipartite graphs which is \'etale over the white vertices.
If there exists a realization such that its stabilization is contained in $\overline{FS}_n$ for some $n\ge 2$, then the vertices  of~$\widetilde\Gamma$ can be decomposed as $V(\widetilde\Gamma)=\widetilde V_1\cup \widetilde V_2$, where
\begin{enumerate}
\item $\widetilde V_1$ and $\widetilde V_2$ are both non-empty;
\item $\widetilde V_1$ and $\widetilde V_2$ are both invariant under the involution;
\item the intersection $\widetilde V_1\cap \widetilde V_2$ contains no black vertices;
\item denoting by  $\widetilde \Gamma_i$  the subgraph of $\widetilde\Gamma$ consisting of the vertices of $\widetilde V_i$ together with  all edges such that both their endpoints lie in $\widetilde V_i$, we have that the subgraphs $\widetilde \Gamma_1$ and $\widetilde \Gamma_2$ are both connected.
\end{enumerate}
\end{cor}
\begin{rem}
Of course the approach to proving the corollary is by looking at the decomposition of $V(\widetilde\Gamma_\real)$ provided by Lemma~\ref{L:realFS}, and then transferring it to the vertices of $\widetilde\Gamma$.
Note, however, that in general the surjective morphism of curves $C_\real\to C$ does not give rise to a surjective graph map from $\Gamma_\real$ to $\Gamma$. Indeed, any white vertex of~$\Gamma$ gets replaced in $\Gamma_\real$ by a certain subgraph, except that any white vertex of~$\Gamma$ that corresponds to an $A_1$-singularity is simply erased in~$\Gamma_\real$, with the corresponding edges joined. Thus the $A_1$ white vertices of $\Gamma$ disappear on $\Gamma_\real$, and so there is no natural way to define a graph map from $\Gamma_\real$ to $\Gamma$.
\end{rem}
We are now ready to prove the corollary, while dealing carefully with the issue just described.

\begin{proof}[Proof of Corollary~\ref{cor:bipartiteFS}]
Let $V(\widetilde\Gamma_\real)=\widetilde V_{1,\real}\sqcup \widetilde V_{2,\real}$ be the decomposition of the vertices of $\widetilde\Gamma_\real$  provided by  Lemma \ref{L:realFS}, and let $\widetilde\Gamma_{i,\real}$ be the corresponding graphs consisting of all edges such that both of their endpoints lie in $\widetilde V_{i,\real}$, for $i=1,2$; note in particular that by property (2) each of these two graphs is  connected. Though, as discussed in the remark, there is in general no graph map from $\widetilde\Gamma_\real$ to $\widetilde\Gamma$, for every vertex of $\widetilde\Gamma_\real$, its image in $V(\widetilde\Gamma)$ is well-defined: if this vertex corresponds to some component of the stabilization of a singularity, then the image is the white vertex corresponding to that singularity, and if this vertex corresponds to an irreducible component of $\widetilde C$, there is also a corresponding black vertex in $\widetilde\Gamma$. We denote this map $p_\real:V(\widetilde\Gamma_\real)\to V(\widetilde\Gamma)$, and note that every black vertex has precisely one preimage under this map.

To define the sets $\widetilde V_1,\widetilde V_2$, it is natural to first start with ${\widetilde W}_i:=p_\real(\widetilde V_{i,\real})$ for $i=1,2$. However, by the remark above, the image $p_\real(V(\widetilde\Gamma_\real))$ is the complement in $V(\widetilde\Gamma)$  of the set of $A_1$ white vertices, and thus to decompose $V(\widetilde\Gamma)$ we need to decide where the $A_1$ white vertices will go. We let $\widetilde V_1$ be the union of ${\widetilde W}_1$ and all the $A_1$ white vertices of $\widetilde\Gamma$ such that {\em both} endpoints of the two edges emanating from them are contained in ${\widetilde W}_1$. We let $\widetilde V_2$ be the union of ${\widetilde W}_2$ and all the other $A_1$ white vertices --- i.e., of all $A_1$ vertices of $\widetilde\Gamma$ that are connected by an edge to at least one vertex in ${\widetilde W}_2$.

Since $\widetilde V_{i,\real}$ are both non-empty, it follows that ${\widetilde W}_i$, and thus also $\widetilde V_i$, are both non-empty. Since $\widetilde V_{i,\real}$ are both invariant under the involution, so are ${\widetilde W}_i$, and since our treatment of $A_1$ vertices is also invariant under the involution, it follows that both $\widetilde V_i$ are invariant under the involution. Since every black vertex has precisely one preimage under $p_\real$, no black vertex is contained in $\widetilde V_1\cap\widetilde V_2$, but we note that the intersection ${\widetilde W}_1\cap {\widetilde W}_2$, and thus also $\widetilde V_1\cap\widetilde V_2$, may be non-empty, if the vertices of some stable tail in $\widetilde\Gamma_\real$ are split between $\widetilde V_{1,\real}$ and $\widetilde V_{2,\real}$. Thus properties (1)-(3) are proven, and it remains to prove (4)

For (4) recall that $\widetilde\Gamma_i$ is defined to be the subgraph of $\widetilde\Gamma$ consisting of all edges such that both of their ends lie in $\widetilde V_i$. If we had a graph map from $\widetilde\Gamma_\real$ to $\widetilde\Gamma$,  we would have a surjective graph map from $\widetilde \Gamma_{i,\real}$ to $\widetilde \Gamma_i$, and (4), i.e.,  the connectedness of $\widetilde \Gamma_i$,   would follow immediately from the connectedness of $\widetilde\Gamma_{i,\real}$, since the image of a connected set under a continuous map is connected;
however,  due to the presence of $A_1$ white vertices in $\widetilde\Gamma$, we have to be more careful.
The basic idea to circumvent this issue is to simply replace $\widetilde \Gamma$ with a graph obtained by contracting, for each $A_1$ vertex of $\widetilde \Gamma$, one of the two edges emanating from it.
We now make this more precise.

For $i=1,2$ consider a contraction of the graph $\widetilde\Gamma_i$ obtained by taking each of its $A_1$ white vertices and contracting one of the edges adjacent to it, and denote $\widetilde F_i$ the graph thus obtained, so that there exists a graph map $\widetilde\Gamma_i\to  \widetilde F_i$; note that there is a natural bijection of sets $V(\widetilde F_i)$ and ${\widetilde W}_i$.
We claim that there also exists a graph map $\widetilde \Gamma_{i,\real}\to \widetilde F_i$ extending the map on vertices $p_\real:\widetilde V_{i,\real}\to {\widetilde W}_i$. Indeed, as discussed above, the only obstruction to having a {\em graph map} $\widetilde\Gamma_\real\to\widetilde\Gamma$
was the (possible) existence of  $A_1$ white vertices of $\widetilde\Gamma$. In particular, for any edge of $\widetilde\Gamma_\real$ such that at least one the vertices at its endpoints maps to a white vertex in $\widetilde\Gamma$, the image of this edge in $\widetilde\Gamma$ is naturally defined. It thus remains to define the graph map on those edges of $\widetilde\Gamma_{i,\real}$ such that both of their endpoints map to black vertices of $\widetilde\Gamma$ --- which is precisely to say that the two corresponding irreducible components of $\widetilde C$ intersect transversely, forming an $A_1$ singularity.

Let $e$ be such an edge of $\widetilde\Gamma_{i,\real}$ connecting vertices $v_1,v_2\in V(\widetilde\Gamma_{i,\real})$, for some $i=1,2$. Then by definition $p_\real(v_1),p_\real(v_2)\in {\widetilde W}_i$, and thus by definition the $A_1$ white vertex $w\in V(\widetilde\Gamma)$ corresponding to the edge $e$ is contained in $\widetilde V_i$. Thus one of the edges of $\widetilde\Gamma_i$ connecting $v_1$ to $w$, or connecting $v_2$ to $w$, is contracted in $\widetilde F_i$, and then we can define the image of the edge $e$ to be the remaining edge.

Thus $\widetilde F_i$ can be obtained from the connected graph $\widetilde\Gamma_{i,\real}$ as the image under a graph map, and thus $\widetilde F_i$ is connected. Since $\widetilde F_i$ is also obtained from $\widetilde\Gamma_i$ under a map contracting some edges, it follows that $\widetilde\Gamma_i$ is also connected, which is the desired property (4).
\end{proof}
We note, moreover, that condition (2) in the corollary, saying that the $\widetilde V_i$  are invariant under the involution, is equivalent to saying that the $\widetilde V_i$ are preimages of a decomposition $V(\Gamma)=V_1\cup V_2$.

\subsection{Stable reductions of bipartite graphs for \'etale double covers of plane quintics with $AD$-singularities}
Now if we start not only with a curve $C$ but also a double cover $\widetilde C \to C$ which is \'etale over $C$ (including the singularities), then the above process gives an \'etale double cover  $\widetilde C_{\real} \to C_{\real}$, and an  \'etale double cover  $\widetilde C_{\stab} \to C_{\stab}$ that is, in particular, an admissible double cover of stable curves. The main result of this section is to prove the following:

\begin{pro}\label{prop:nonFS}
Let $\widetilde D\to D$ be an \'etale (also above the singularities) double cover of a plane quintic with $AD$-singularities. Then the stabilization $\widetilde D_{\stab} \to D_{\stab}$ of any realization $\widetilde D_\real\to D_\real$ of $\widetilde D\to D$ is {\em not} contained in the closure  $\overline{FS}_n$ of any Friedman--Smith locus for any $n\geq 2$.
\end{pro}

Before we give the proof, we make a further definition that will be used in a simple but crucial lemma.
\begin{dfn}
A double cover $\widetilde \Gamma \to \Gamma$ of bipartite graphs is called a {\em properly \'etale degree 2} map of bipartite graphs if it is \'etale over all vertices (both black and white).
\end{dfn}

\begin{lem}\label{L:Tree}
Let $\Gamma$ be a bipartite graph that is a disjoint union of $k$ trees. Then for any properly \'etale degree $2$ map $\widetilde\Gamma\to\Gamma$ of bipartite graphs, $\widetilde \Gamma$ is the disjoint union of $2k$ trees.
\end{lem}
\begin{proof}
This is clear since a tree is simply connected. More precisely, since $h^0(\widetilde \Gamma) - h^1(\widetilde \Gamma) = 2( h^0(\Gamma) - h^1(\Gamma)) =2k$, we must have $h^0(\widetilde \Gamma) \geq 2k$. But since the
map is $2:1$, and connected components are mapped to connected components, it follows that $\widetilde \Gamma$ cannot have more than $2k$ connected components, and hence we must have $h^0(\widetilde\Gamma)=2k$, which then implies that $h^1(\widetilde\Gamma)=0$.
\end{proof}

We can now give the proof of  Proposition \ref{prop:nonFS}.
\begin{proof}
The proof uses in many crucial ways the fact that $D$ is a plane quintic curve. First of all, every irreducible component of $D$ is then itself a plane curve of degree at most 5. We distinguish two cases: when $D$ has an irreducible component of degree at least 3, and when all irreducible components of $D$ are plane curves of degrees 1 and 2 only. We crucially note that the only \'etale double cover of a genus 0 curve is the disconnected trivial double cover, and thus an \'etale double cover $\widetilde D\to D$ must be trivial over any degree 1 or 2 irreducible component of $D$.

\smallskip
If $D$ has an irreducible component $E$ that is a plane curve of degree at least 3, then such a component is unique, since the sum of degrees of all irreducible components of $D$ is equal to 5. Let $v$ be the black vertex of the bipartite dual graph $\Gamma$ of $D$ that corresponds to $E$. Let $\Gamma'$ be the possibly disconnected bipartite dual graph obtained from $\Gamma$ by removing the vertex $v$ and all the edges adjacent to $v$. Since the degree of $E$ is at least 3, while the degree of $D$ is 5, the sum of the degrees of the plane curves corresponding to black vertices of $\Gamma'$ is at most 2. Thus $\Gamma'$ has at most two black vertices, and since each of them corresponds to an irreducible plane curve of degree 1 or 2, each irreducible component of $D\setminus E$ is necessarily smooth.

If $\Gamma'$ has no black vertices, then $\Gamma'$ is the disjoint (possibly empty) union of a number of white vertices, and has no edges.

If $\Gamma'$ has only one black vertex $v'$, then $v'$ corresponds to an irreducible plane curve $E'$ of degree 1 or 2, which is thus necessarily smooth. Then for any singularity of $D$, at most one branch of the normalization of this singularity is contained in $E'$. Then $\Gamma'$ is the disjoint union of a bipartite graph that has one black vertex $v'$, connected by single edges to a number (possibly zero) of white vertices, union a number (possibly zero) of isolated white vertices.

If $\Gamma'$ has two black vertices $v_1$ and $v_2$ corresponding to irreducible plane curves $E_1$ and $E_2$, then both $E_1$ and $E_2$ must be plane curves of degree 1. Since two lines intersect in precisely one point, $\Gamma'$ has precisely one white vertex connected to both $v_1$ and $v_2$, with the type of singularity being whatever the union of $E$ and these two lines is at this one point. Any other vertex of $\Gamma'$ must then be white, and connected to at most one of the two vertices $v_1$ and $v_2$.

Altogether, we see that in each of these cases $\Gamma'$ is a (possibly empty) union of disjoint bipartite trees. Since all black vertices of $\Gamma'$ (if such exist) are plane curves of degree 1 or 2, the cover is \'etale over them, and thus $\widetilde\Gamma'\to\Gamma'$ (where $\widetilde\Gamma'$ denotes the preimage of $\Gamma'$ in $\widetilde\Gamma$) is a properly \'etale degree 2 map of bipartite graphs. Thus by Lemma~\ref{L:Tree}
the graph
$\widetilde\Gamma'$ is also a (possibly empty) union of disjoint trees.

Suppose now for contradiction that there exists a realization of $\widetilde\Gamma\to\Gamma$ whose stabilization lies in $\overline{FS}_n$ for some $n\ge 2$. The vertices of $\widetilde\Gamma$ can be decomposed following Corollary~\ref{cor:bipartiteFS} as $\widetilde V_1\cup\widetilde V_2$. By part (2) of the corollary, both sets $\widetilde V_i$ are invariant under the involution. Thus they are preimages of a decomposition $V_1\cup V_2$ of the vertices of $\Gamma$. Assume without loss of generality that $v\in V_1$. Since $v$ is a black vertex, it follows from part (3) of the corollary that $v\not\in V_2$.
Since, by definition, $\Gamma'$ contains all vertices of $\Gamma$ apart from $v$,  this implies that  $\Gamma_2\subset\Gamma'$.
From the above argument, $\Gamma'$ is a disjoint union of trees, and thus its subgraph $\Gamma_2$ is also a disjoint union of $k$ trees, where we know $k>0$, since $V_2$ is non-empty, by part (1) of Corollary~\ref{cor:bipartiteFS}. From Lemma~\ref{L:Tree} it follows that $\widetilde\Gamma_2$ is then the union of $2k>0$ disjoint trees. In particular, $\widetilde\Gamma_2$ is then disconnected, which contradicts property (4) of Corollary \ref{cor:bipartiteFS}. Thus for the case when $D$ has an irreducible component of degree at least 3 we have arrived
at a contradiction with the assumption that the stabilization of some realization of $\widetilde\Gamma\to \Gamma$ lies in $\overline{FS}_n$.

\smallskip
For the case when all irreducible components of $D$ have degree 1 or 2, note first that in this case the cover $\widetilde D\to D$ is \'etale over every irreducible component of $D$. Thus $\widetilde\Gamma\to\Gamma$ is a properly \'etale degree 2 map of bipartite graphs. Assume again for the sake of  contradiction that the stabilization of some realization of $\widetilde \Gamma\to \Gamma$ lies in $\overline{FS}_n$ for $n\ge 2$. Let then $\widetilde V_1\cup \widetilde V_2$ be the decomposition of $V(\widetilde\Gamma)$ as provided by Corollary~\ref{cor:bipartiteFS}, which is the preimage of a decomposition $V(\Gamma)=V_1\cup V_2$. Since the sum of the degrees of plane curves corresponding to all black vertices of $\Gamma$ is equal to 5, and every black vertex is contained in precisely one of $V_1$ and $V_2$, we can assume without loss of generality that the sum of the degrees of all the irreducible components of $D$ corresponding to the black vertices contained in $V_2$ is at most 2. We can thus apply the argument used in the previous case for $\Gamma'$ to the graph $\Gamma_2$ consisting of edges with both endpoints in $V_2$. To begin, the same argument shows that $\Gamma_2$ is a non-empty (since $V_2\ne\emptyset$) disjoint union of trees. But then again by Lemma~\ref{L:Tree} the bipartite graph $\widetilde\Gamma_2$ is a union of $2k>0$ disjoint trees, and thus is not connected. We have arrived at a contradiction with property (4) of Corollary~\ref{cor:bipartiteFS}.

\smallskip
Thus finally in both cases, of whether or not $D$ contains an irreducible component of degree at least 3, starting with the existence of a realization whose stabilization lies  in $\overline{FS}_n$ for $n\ge 2$, we have arrived at a contradiction. Thus for any \'etale over the singularities double cover of a plane quintic with AD-singularities, the stabilization of any realization of it does not lie in any $\overline{FS}_n$ for any $n\ge 2$.
\end{proof}

\begin{cor}\label{cor:ExtIsol}
The intermediate Jacobian map ${\widetilde {IJ}}^{V}$ extends over the locus of cubics with allowable singularities to a morphism
${\widetilde {IJ}}^V: (\tM \setminus\tDh) \to \Vor[5]$.
\end{cor}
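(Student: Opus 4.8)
The plan is to deduce the statement from Theorem~\ref{T:IJtoP}, Corollary~\ref{C:IJtoP}, and Proposition~\ref{prop:nonFS}, using the description (recalled in Section~\ref{sec:Ag}, following \cite{abh,vologodsky,cmghl}) of the indeterminacy locus of the Voronoi Prym map, and then patching the resulting local extensions over $\tM\setminus\tDh$. First I would fix a polystable cubic threefold $X$ with isolated singularities. By Theorem~\ref{T:GITcub} such an $X$ has only $A_k$ (with $k\le 5$) or $D_4$ singularities, in particular only allowable $AD$-singularities, and is not the chordal cubic. I then want to verify the hypothesis of Corollary~\ref{C:IJtoP}, namely that the rational map $\widetilde F^{'ns}(\mathcal X)\to\overline R_6\dashrightarrow\Vor[5]$ extends to a morphism. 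By Theorem~\ref{T:IJtoP} the first arrow $f\colon\widetilde F^{'ns}(\mathcal X)\to\overline R_6$ is already a morphism, so it suffices to show that the image of $f$ is disjoint from the indeterminacy locus of $P^{V}\colon\overline R_6\dashrightarrow\Vor[5]$, which is exactly $\bigcup_{n\ge 2}\overline{FS}_n$; granting this, $P^{V}\circ f$ is a morphism, and Corollary~\ref{C:IJtoP} produces an open set $U_X\subseteq\tM$ containing the preimage (under $\tM\to\bM$) of $[X]$ on which $\widetilde{IJ}^{V}$ is a morphism.

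The crucial point is therefore to control the image of $f$. Over the open locus of pairs $(X',\ell')$ with $X'$ smooth and $\ell'$ non-special, $f$ sends the point to an \'etale double cover of a smooth plane quintic, which lies in the interior $\mathcal R_6$ and hence in no $\overline{FS}_n$. Over a boundary point of $\widetilde F^{'ns}(\mathcal X)$ --- a point of the normal crossing discriminant --- the wonderful blow-up of the Weyl cover realizes a simultaneous semistable reduction of the universal family of \'etale double covers of plane quintics (this is the mechanism behind Theorem~\ref{T:dJOC}; cf.\ Section~\ref{sec:compare} and \cite{cml2}), so that $f$ sends such a point to an admissible cover $\widetilde D_{\stab}\to D_{\stab}$ obtained by realization and stabilization from an \'etale double cover $\widetilde D\to D$ of a plane quintic $D$ with $AD$-singularities; here $D$ is the discriminant curve of a fibre of the universal family of cubics near $X$, so by Proposition~\ref{P:Fano} it indeed has only $AD$-singularities. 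By Proposition~\ref{prop:nonFS} no such $\widetilde D_{\stab}\to D_{\stab}$ lies in any $\overline{FS}_n$ with $n\ge 2$. Hence every point of $\widetilde F^{'ns}(\mathcal X)$ maps outside $\bigcup_{n\ge 2}\overline{FS}_n$, which is what was needed.

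Finally I would patch. Every point of $\bM$ is represented by a unique polystable orbit, and by Theorem~\ref{T:GITcub} the only polystable cubic that is not of allowable isolated-singularity type is the chordal cubic, whose orbit is $\Xi$; since by construction $\tM\setminus\tDh$ maps to $\bM\setminus\{\Xi\}$ under $\tM\to\bM$, every point of $\tM\setminus\tDh$ lies over the class of a polystable cubic with isolated singularities, hence lies in some $U_X$. On overlaps the local extensions agree, because they coincide with $IJ$ on the dense open $\calM$ and $\Vor[5]$ is separated, so they glue to a morphism $\widetilde{IJ}^{V}\colon\tM\setminus\tDh\to\Vor[5]$ extending $IJ$. (The same argument applies to $\Perf[5]$, using that the indeterminacy locus of $P^{P}$ is contained in $\overline{FS}_2\cup\overline{FS}_3\subseteq\bigcup_{n\ge 2}\overline{FS}_n$.)

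The main obstacle is the identification in the second paragraph: showing that the image of $\widetilde F^{'ns}(\mathcal X)\to\overline R_6$ consists precisely of interior points together with realizations-followed-by-stabilizations of \'etale double covers of plane quintics with $AD$-singularities, so that Proposition~\ref{prop:nonFS} can be invoked. This rests on the semistable-reduction interpretation of the wonderful blow-up of the Weyl cover developed in Sections~\ref{sec:tM}--\ref{sec:compare}; once this is in place, together with the identification of $\tDh$ with the preimage of the chordal-cubic orbit, the corollary is essentially bookkeeping on top of the machinery already established.
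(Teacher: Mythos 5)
Your proposal is correct and follows essentially the same route as the paper: reduce via Corollary \ref{C:IJtoP} to the Prym map for \'etale double covers of plane quintics, invoke the known description (from \cite{abh,vologodsky,cmghl}) of the indeterminacy locus of $P^{V}$ as $\bigcup_{n\ge 2}\overline{FS}_n$, and apply Proposition \ref{prop:nonFS} to see that the admissible covers arising from stable reduction of discriminant quintics with $AD$-singularities avoid these loci. The only additions are explicit bookkeeping (checking polystable cubics with isolated singularities are allowable, and gluing the local extensions via separatedness), which the paper leaves implicit.
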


\begin{proof}
This is now the consequence of three reduction steps. The first reduction step was achieved in  Corollary \ref{C:IJtoP}:
in order to prove that ${\widetilde {IJ}}^{V}$ extends near a point in $\widetilde {\mathcal M}$ that lies in the preimage of a point in $\overline {\mathcal M}$ corresponding to a polystable cubic $X$ with isolated singularities,
it is enough to consider  corresponding  $1$-parameter degenerations to a pair $(X,\ell)$ where $\ell \subset X$ is a non-special line and to show that the Prym map $\overline R_6\dashrightarrow \Vor[5]$ is regular  in the neighborhood of the admissible cover  given by the stable reduction of
the \'etale double cover of the discriminant plane quintic arising from projection from $\ell$. For the second reduction step we use  \cite[Thm. 3.2 (1), (4)]{abh}, in conjunction with \cite[Thm.~0.1]{vologodsky}, see also
\cite[Thm.~5.6]{cmghl}: the indeterminacy locus of the Prym $\overline R_{g+1}\dashrightarrow \Vor$ is the union of the closures of the Friedman--Smith loci~$FS_n$, for all~$n\geq 2$. Finally, the third reduction
step is now Proposition \ref{prop:nonFS}:  the Prym variety of any \'etale double cover of a plane quintic with $AD$-singularities is never in the closure of  the union of the loci $\overline{FS}_n, n\geq 2$.
\end{proof}


\section{Extensions of the period map along the chordal cubic locus}\label{sec:chordal}

In this section, we discuss the extension of the period map over the chordal divisor $\tDh\subset \tM$.
This will follow rather easily from general principles and the following facts: (1) extension away from the chordal divisor,
(2) the chordal divisor generically parameterizes hyperelliptic genus $5$ curves (thus giving finite monodromy),
and (3) the chordal divisor meets the other boundary divisors in $\tM$ transversally.

We start by recalling that at the level of $\tM$ the chordal divisor $\tDh$ can be identified on the one hand with the strict transform of the  exceptional divisor $E$ (see Thm. \ref{resgitball}(3)) of the Kirwan blow-up
$\widehat{\mathcal M} \to \overline{\mathcal M}$ of the GIT quotient $\overline{\mathcal M}$, and on the other hand with the strict transform of the chordal Heegner divisor $D_h^*$  in the Baily--Borel compactification of the ball quotient model of \cite{act}. Since the ball quotient description plays a fundamental role in this section, we will now use the notation $\hDh$  for the divisor $E$ in $\widehat \calM$ (N.B. $\hDh$ and $\widehat \calM$ and are obtained as the $\QQ$-factorialization of $D_h^* \subset (\calB/\Gamma)^*$). Using the GIT perspective, one sees that $\hDh\subset \widehat{\mathcal M}$ is naturally identified with the GIT quotient for $12$ (unordered) points in $\PP^1$. The ball quotient perspective gives
further structure to  $\hDh$ and to the embedding $\hDh\subset \hM$.  In particular, one sees that  $\hDh$ is (the Baily--Borel compactification of) a $9$-dimensional ball quotient (namely, the open part $D_h=\calD_h/\Gamma$
is a Heegner divisor in $\calB/\Gamma$, which, in particular, gives that $D_h$ is uniformized by a $9$-dimensional complex ball; we write $D_h=\calB'/\Gamma'$ \footnote{Strictly speaking, given an irreducible Heegner divisor $D=\calD/\Gamma\subset \calB/\Gamma$, there is a natural map
$$\calB'/\Gamma'\to D\subset \calB/\Gamma,$$
which is the normalization of $D$; here $\calB'$ is one of the components of $\calD$ (which is a $\Gamma$-invariant arrangement of hyperplanes in $\calB$), and $\Gamma'$ is the normalizer in $\Gamma$ of that component. In our case, due to Proposition \ref{prop:transversal}, $D_h$ is normal (in fact, smooth in a stack sense), thus we can write $D_h=\calB'/\Gamma'$.  }). The fact that $\hDh$ can be identified with the GIT quotient for $12$ points in $\PP^1$ is originally due to Deligne--Mostow \cite{dm}, and discussed at length in \cite[\S4]{act}.

From our point of view the following result from \cite{act}, which says that the chordal divisor meets the discriminant divisor in $\hM$ transversally,
is of special relevance. Here we recall that  the complement of the locus of smooth cubic threefolds $\calM\subset \calB/\Gamma$
in the Allcock--Carlson--Toledo ball quotient model consists of two Heegner divisors $D_h=\cDh/\Gamma$ and $D_n=\cDn/\Gamma$
corresponding to the chordal cubic threefold and the nodal divisor respectively.

\begin{pro}[{\cite[Thm. 7.2]{act}}] \label{prop:transversal}
The following holds:
\begin{itemize}
\item[(1)] No two (distinct) hyperplanes of the chordal hyperplane arrangement $\cDh$ meet in $\calB$.
\item[(2)] If a nodal hyperplane and a chordal hyperplane meet in $\calB$, then they are orthogonal (more precisely their normal vectors are orthogonal).
\end{itemize}
\end{pro}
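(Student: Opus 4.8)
This is \cite[Thm.~7.2]{act}; we sketch the argument. Recall that $\calB\subset\PP(\Lambda\otimes_{\mathcal{E}}\CC)$ is the complex ball attached to the Allcock--Carlson--Toledo Eisenstein lattice $\Lambda$ (over $\mathcal{E}=\ZZ[\omega]$, equipped with a Hermitian form of signature $(1,10)$): it is the set of lines on which the form has the appropriate sign. The group $\Gamma$ acts on $\Lambda$, and each of the two arrangements is a single $\Gamma$-orbit of mirror hyperplanes: $\cDn=\bigcup_r r^\perp$, where $r$ runs over the $\Gamma$-orbit of the vanishing class of a nodal cubic fourfold with its order-three automorphism, and $\cDh=\bigcup_c c^\perp$, where $c$ runs over the $\Gamma$-orbit of the class cut out by a degeneration to the chordal cubic. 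In particular all the $r$ have a common norm $\nu_n$, all the $c$ a common norm $\nu_h$, and all of them are primitive in $\Lambda$.

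The plan is to reduce both statements to the geometry of rank-two Eisenstein sublattices. For mirror vectors $v_1,v_2$, the totally geodesic sub-balls $v_1^\perp\cap\calB$ and $v_2^\perp\cap\calB$ are disjoint if and only if the rank-two sublattice $\langle v_1,v_2\rangle_{\mathcal{E}}$ has signature $(1,1)$, equivalently if and only if its Gram determinant $\langle v_1,v_1\rangle\langle v_2,v_2\rangle-|\langle v_1,v_2\rangle|^2$ is strictly negative; the two sub-balls meet in $\calB$ exactly when $\langle v_1,v_2\rangle_{\mathcal{E}}$ is negative (semi)definite. Thus statement (1) is the assertion that any two distinct chordal vectors span a sublattice of signature $(1,1)$, and statement (2) is the assertion that a nodal vector $r$ and a chordal vector $c$ span a sublattice of signature $(1,1)$ unless $\langle r,c\rangle=0$ (in which case $\langle r,c\rangle_{\mathcal{E}}=\langle r\rangle\perp\langle c\rangle$ is negative definite and the sub-balls do meet). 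Both assertions reduce to a finite check on the Eisenstein inner products $\langle c_1,c_2\rangle$, resp.\ $\langle r,c\rangle$ --- which lie in $\mathcal{E}$ and are constrained by the common norms $\nu_n,\nu_h$ and by the structure of $\Lambda$: the excluded inner-product values would produce a rank-two negative-(semi)definite sublattice of a type that cannot occur in $\Lambda$, and the residual cases are eliminated using the identification, from \cite[\S 4]{act}, of the arrangement induced on the chordal divisor $D_h=\cDh/\Gamma$ with the Deligne--Mostow arrangement for $12$ unordered points on $\PP^1$; the boundary divisors of the latter arise only from collisions of points, i.e.\ from (the traces of) the nodal mirrors, so that the trace on $D_h$ of a second chordal mirror would be a boundary divisor of the wrong type, a contradiction.

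The real content is the arithmetic input --- the precise norms $\nu_n,\nu_h$, the primitivity of the orbit representatives, and the classification of the small negative-definite sublattices of $\Lambda$ --- which is exactly what is carried out in \cite[\S 4, \S 7]{act}; I would simply quote their Theorem 7.2 rather than redo it here. The one point that requires genuine work is precisely the exclusion of the forbidden rank-two sublattices, and this is where the specific structure of the Allcock--Carlson--Toledo lattice enters, rather than formal properties of complex hyperbolic balls. Granting the proposition, the combinatorial consequences we need --- that $\cDh$ does not self-intersect and meets $\cDn$ normally (indeed orthogonally), so that once the linear strata of $\cDn$ have been resolved by the wonderful blow-up the entire boundary of $\tM$, the chordal divisor $\tDh$ included, is normal crossing --- follow formally.
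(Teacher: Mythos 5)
Your proposal is correct and takes essentially the same route as the paper: the paper offers no argument of its own here, simply quoting \cite[Thm.~7.2]{act}, and you likewise rest on that citation, adding only a sketch of the underlying reduction (two mirror hyperplanes meet in $\calB$ exactly when the rank-two Eisenstein sublattice spanned by their normal vectors is negative definite, so both claims become finite inner-product computations constrained by the fixed norms of nodal and chordal vectors). Two small caveats on the sketch: meeting in the \emph{open} ball requires the span to be negative definite, not merely semidefinite (the degenerate case corresponds to a cusp on the boundary), and the exclusion of the forbidden inner products in \cite{act} is carried out purely arithmetically in the lattice rather than via the Deligne--Mostow identification of the chordal divisor, which you invoke as a fallback but is not needed.
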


As discussed in Section \ref{sec:tM}, $\tM$ is obtained from $\hM$ by performing the wonderful blow-up,
which has the effect of making the boundary of $\calM\subset \tM$ normal crossing (up to passing to finite covers). The fact that the chordal divisor is already transversal to the discriminant (or nodal) divisor at the level of $\hM$
 implies that (1) there is no extra blow-up with center contained in  the chordal divisor (although the centers may meet the chordal divisor), and that (2) the wonderful blow-up of $\hM$ restricts to the wonderful blow-up of $\hDh$  (when we regard $\hDh$ as a $9$-dimensional ball quotient). More concretely, $\tM\to\hM$ is obtained by blowing up the closures of the loci of cubics with $A_2,\dots, A_5,D_4$-singularities respectively. The chordal divisor $\hDh$ does not pass through the point corresponding to the $3D_4$ cubic in $\hM$, thus the blow-up leading to the $\widetilde{D}_{D_4}$ divisor will not affect the chordal locus. The remaining blow-ups, associated to the $A_2,\dots,A_5$ loci, will simply restrict to the steps of the wonderful blow-up associated to $\hDh$. By the construction of the wonderful blow-up (we refer the reader to \cite[\S2 and \S3]{cml2} for a more general discussion of wonderful blow-ups for ADE discriminants), and the transversality statement of Proposition \ref{prop:transversal}, all that remains to be noted is that the blow-up centers in $\hM$  restrict to the blow-up centers of  $\hDh$.

\begin{pro} The closure of the locus of cubics with
$A_k$-singularities meets the hyperelliptic divisor $\hDh$ in the locus corresponding to where $k+1$ points in $\PP^1$ coincide (under the identification  of $\hDh$  with the GIT quotient for $12$ points in $\PP^1$).
\end{pro}
\begin{proof} This is a simplified version of \cite[Thm. 2.1]{act}, which gives a much more precise statement including the restriction of combinations of $A_k$ singularities. Indeed, for the wonderful blow-up a single $A_k$ singularity suffices. Briefly, using the ball quotient description for both $\hM$ (cf. \cite{act}) and $\hDh$ (cf. \cite{dm}), the essential fact to note is that the nodal divisor $D_n=\calD_n/\Gamma$, parameterizing cubics with a
single $A_1$-singularity (see Theorem \ref{thm_ballmodel}) in $\hM$ restricts to the nodal hyperplane $D_{n}^{(h)}$ in $D_h$ corresponding to $2$ out of the $12$ points in $\PP^1$ coinciding. Then, the $A_k$ locus in $\hM$ corresponds to a codimension $k$ Shimura subvariety in $\hM$ associated to an intersection of type $A_k$ of hyperplanes in the arrangement $\calD_n\subset \calB/\Gamma$ (i.e., $\frac{k(k+1)}{2}$ hyperplanes corresponding to $\pm$-roots in an $A_k$ root system; see \cite[\S2]{cml2} for a full discussion). Each such hyperplane restricts to a hyperplane of the arrangement $\calD_n^{(h)}\subset \calB'/\Gamma'$. Proposition \ref{prop:transversal} guarantees that the resulting sub-arrangement of $\calD_n^{(h)}$ is again of type $A_k$, which geometrically means that $k+1$ of the $12$ points in $\PP^1$ come together.
\end{proof}

\begin{rem}\label{rem12pts}
The following holds generally for the GIT moduli space $\widehat {\mathcal H}_g$  of $2g+2$ (unordered) points in $\PP^1$. By an easy adaptation of results in \cite{cml2}, one sees that locally (up to a finite cover, in analytic or \'etale coordinates) near a point in moduli corresponding to some $k+1$ (of the $2g+2$) points collapsing, the discriminant is of type $A_{k}$ (as before this corresponds to $\frac{k(k+1)}{2}$ hyperplanes meeting according to the incidences of roots in the $A_k$ root system).
Consequently,
the results of \cite{cml2}    say that the wonderful blow-up $\widetilde \calH_g$
 resolves the rational map $\widehat \calH_g \dashrightarrow \overline \calM_{g}$. It is immediate to see (when restricted to the hyperelliptic case) that the wonderful blow-up process is the reverse of the contraction process of \cite{fedhyp}. Indeed, the various steps correspond to $\calH_g[k]$ in the notation of \cite{fedhyp}. Thus, quite generally $\widetilde \calH_g\to \overline \calM_{g}$ is a closed embedding which identifies $\widetilde \calH_g$ with the closure $\overline{\calH}_g$ of the hyperelliptic locus.
\end{rem}

Summarizing the discussion, we get:

\begin{pro}
The following holds:
\begin{itemize}
\item[(1)] $\hDh$ is naturally identified with the GIT quotient for $12$ (unordered) points in $\PP^1$.
\item[(2)] $\widetilde D_h$ is naturally identified with the closure of the hyperelliptic locus in $\bM_5$ (and thus isomorphic to $\bM_{0,12}/\Sigma_{12}$).
 \end{itemize}
\end{pro}
\begin{proof}
The identification of $\hDh$ with the GIT quotient is discussed in \cite{act}, in particular Section 4. Thus, we can regard $\hDh$ as the moduli space of hyperelliptic curves with up to
$A_5$-singularities (in the sense of \cite{fedhyp}). Furthermore, the transversality statement of
the previous Proposition \ref{prop:transversal}
implies that the locus in $\hDh$ corresponding to a configuration $T$ of singularities (of type $A_k$) is in the closure of the locus of cubics with the same configuration $T$ of singularities. The wonderful blow-up $\tM\to \hM$ corresponds to successively blowing up the locus of cubics with $A_5, \dots, A_2$-singularities. Again, by transversality, and the fact that everything is locally modeled by a hyperplane arrangement, we conclude that $\tDh\to \hDh$ is precisely the wonderful blow-up applied to
$\hDh$. At the same time the wonderful blow-up applied to the GIT moduli space for $12$ points in $\PP^1$ leads to the closure of the hyperelliptic locus in
the Deligne--Mumford moduli space $\bM_5$.
\end{proof}

As noted in \cite{act} (also \cite{col}), the limit intermediate Jacobian associated to a generic degeneration to the chordal cubic $X_0$ is a pure Hodge structure. This can be seen
as follows. First recall that $\Sing(X_0)$ is a rational normal curve of degree $4$. if $X_0=V(F_0)$ is the central fiber of a general pencil $X_t=V(F_0 + t F_1)$ then $F_1$ cuts out
$12$ points on  $\Sing(X_0)$ and these define a hyperelliptic curve of genus $5$. The Jacobian of this hyperelliptic curve is then the limit point of the intermediate Jacobians of
this pencil. It follows (see \cite[Sect. 4]{act} for a fuller discussion) that the period map $\calM\to \calA_5$ extends to a regular map at the generic point of $D_h$ (with image in
$\calA_5$, and not in the boundary of the Satake or a toroidal compactification). Note also that due to Proposition \ref{prop:transversal}, the hyperelliptic divisor is smooth (in a stack sense). By a standard result of Griffiths (e.g. \cite[Thm. 13.4.5]{carlson}), it follows that the monodromy around the hyperelliptic divisor is finite.
Thus, the extension of the period map along the chordal divisor will follow immediately from Corollary \ref{cor:finalextension}, which is a consequence of the following proposition.

\begin{pro}[Extension along trivial monodromy divisors]\label{prop:trivialextension}
Let $S$ be a smooth variety, $\overline S$ be a simple normal crossing compactification, and $D\subseteq \overline S\setminus S$ an irreducible boundary divisor. Assume that we are given a period map $P:S\to \mathcal A_g$ (in particular, $P$ is locally liftable to $\mathbb H_g$)
such that the monodromy around each of the boundary divisors is unipotent and such that it is trivial around $D$. Then, $P$ extends to a morphism  $P:\overline{S}\to \overline{\mathcal A}^\Sigma_g$ to a fixed toroidal compactification if and only if it extends to a morphism  $P:\overline{S}\setminus D\to \overline{\mathcal A}^\Sigma_g$.
\end{pro}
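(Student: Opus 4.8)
The plan is to reduce the extension problem to a purely local statement near a general point of $D$, using that toroidal compactifications of $\mathcal A_g$ are locally (in the analytic or \'etale topology) modeled on torus embeddings, and that extension of a period map to a toroidal compactification is a condition that can be checked on the monodromy cones associated to the punctures. One direction is trivial: if $P$ extends over all of $\overline S$ then it certainly extends over the open subset $\overline S\setminus D$. So the content is the converse. Assume $P:\overline S\setminus D\to \overline{\mathcal A}^\Sigma_g$ is a morphism; we must produce an extension across $D$.

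First I would localize. Since $\overline S$ is smooth with simple normal crossing boundary, near a point $s\in D$ we may choose analytic coordinates $(z_1,\dots,z_k,w_1,\dots,w_m)$ in which $D=\{z_1=0\}$ and the other boundary divisors meeting $s$ are $\{z_2=0\},\dots,\{z_k=0\}$. The period map on the punctured polydisc is governed by the monodromy operators $N_1,\dots,N_k$ about the respective branches; by hypothesis all $N_i$ are unipotent, and the monodromy about $D$ is trivial, i.e. $N_1=0$ (after passing to a finite cover, which as noted does not affect the extension question). The key algebraic input is then: the nilpotent cone $\sigma$ spanned by $N_1,\dots,N_k$ equals the cone spanned by $N_2,\dots,N_k$, because $N_1=0$ contributes nothing. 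Consequently the local normal form of the toroidal compactification $\overline{\mathcal A}^\Sigma_g$ along the stratum determined by $\sigma$ involves only the coordinates $z_2,\dots,z_k$; the coordinate $z_1$ enters the local model of the period map only through a holomorphic (non-vanishing-at-worst) factor, with no logarithmic term $\frac{1}{2\pi i}\log z_1$. In other words, after passing to the canonical multivalued lift and exponentiating, the map extends holomorphically in the $z_1$-direction by Riemann's removable singularity / Hartogs-type argument, because the obstruction to extending across $\{z_1=0\}$ is precisely the presence of a nontrivial $\log z_1$, which is absent when $N_1=0$.

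More precisely, I would argue as follows. The hypothesis gives a morphism on $\overline S\setminus D$; restricting to a punctured polydisc neighborhood $\Delta^{*k-1}\times\Delta^{*}_{z_1}\times \Delta^m$ of $s$ that avoids $D$ except along $z_1=0$, we already have a morphism there, and we want to fill in $z_1=0$. Because $N_1=0$, the canonical extension of the Hodge bundle (Deligne's canonical extension / the nilpotent orbit theorem of Schmid) has trivial monodromy in the $z_1$ variable, so the period matrix is single-valued and holomorphic in $z_1$ on the punctured disc, with bounded (indeed holomorphically extendable) behavior as $z_1\to 0$: it extends to a holomorphic family over $z_1=0$. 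Since the target toroidal compactification is, near the relevant boundary point, a geometric quotient of an open set in a smooth toric variety by a finite group (after the finite cover it is genuinely a torus embedding), and the map into the torus embedding is given in coordinates by monomials in $z_2,\dots,z_k$ times holomorphic functions, the whole map extends holomorphically across $z_1=0$. Finally, the extended map on $\overline S$ agrees with the given one on the dense open $\overline S\setminus D$, and both $\overline S$ (smooth, hence normal) and the toroidal compactification (separated) are nice enough that this local extension glues to a global morphism — one can invoke the same elementary extension principle for rational maps used elsewhere in the paper (Proposition \ref{P:ExtProp}), once we know the map is defined in codimension one and the source is normal.

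The main obstacle, and the step that requires care, is making rigorous the claim that \emph{triviality of $N_1$ forces the local model of the period map to be holomorphic (not merely meromorphic or logarithmic) across $\{z_1=0\}$}, uniformly as one moves within the stratum $D$ and as one approaches the deeper strata where $D$ meets the other boundary divisors $\{z_i=0\}$. This is where one needs the SNC hypothesis on $\overline S$ together with the precise structure theory of mixed Hodge structures / the several-variable $\mathrm{SL}_2$-orbit theorem: the point is that the limiting mixed Hodge structure and the nilpotent orbit depend holomorphically on the transverse parameter $z_1$ when $N_1=0$, so no bad degeneration can be hidden. Once this local holomorphic extendability is established, everything else — gluing, agreement with the given map, and the fact that the extension lands in the toroidal (and not merely Satake) compactification — is formal.
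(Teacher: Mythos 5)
Your proposal is correct, and its engine is the same as the paper's: because the monodromy about $D$ vanishes, the monodromy cone at a point $x\in D$ coincides with the monodromy cone at nearby boundary points lying on the other divisors but off $D$, so crossing $D$ costs nothing new. The difference is in how this is turned into an extension statement. The paper first applies Borel's theorem to get a map to $\Sat[5]$ and then quotes the combinatorial criterion (recalled in its proof, from the earlier work \cite{cmghl}) that $P$ extends at $x$ if and only if $\sigma_x$ is contained in some cone of $\Sigma$; the hypothesis that $P$ is a morphism on $\overline S\setminus D$ supplies such a cone at nearby points $y\in (D_1\cap\dots\cap D_k)\setminus D$, and $\sigma_x=\sigma_y$ finishes the argument in a few lines. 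You instead open up that black box and re-derive the extension in local toroidal charts via Deligne's canonical extension and the nilpotent orbit theorem; this is legitimate and yields a self-contained analytic picture, but it is what creates the ``uniformity near deeper strata'' worry you flag as the main obstacle --- a worry that disappears entirely once the criterion is quoted (and for which the several-variable nilpotent orbit theorem, not the $\mathrm{SL}_2$-orbit theorem, is the relevant input). One step you should make explicit: when you invoke ``the local normal form of $\overline{\mathcal A}^\Sigma_g$ along the stratum determined by $\sigma$'' and write the map as monomials in $z_2,\dots,z_k$ times holomorphic functions, you are tacitly assuming that the cone spanned by $N_2,\dots,N_k$ lies in some cone of the fixed admissible fan $\Sigma$; this is not automatic, and it is precisely what the assumed morphism on $\overline S\setminus D$ gives you at the nearby deep-stratum points with $z_1\neq 0$ --- the paper makes this pivot explicit, and without it your chart does not exist. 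Finally, the concluding gluing needs nothing as strong as Proposition \ref{P:ExtProp}: your local extensions agree with the given morphism on dense open sets and the target is separated, so they glue automatically.
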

\begin{proof}
Under the given assumptions  Borel's extension theorem gives us an extended period map
$$P^*:\overline{S}\to \mathcal A_g^*$$
to the Satake compactification.
The question of lifting $P^*$ to a toroidal compactification is a combinatorial question, which can be phrased as follows: {\it for each $x\in \overline S\setminus S$, let $\sigma_x$ be the associated monodromy cone (see e.g. \cite[\S2.1.2]{cmghl}). Then, the period map $P$ extends at $x$ if and only if there exists a cone $\tau$
in the chosen admissible decomposition $\Sigma$ such that  $\sigma_x\subseteq \tau$.} (We refer the reader to \cite[Sect. 2]{cmghl} for a review of monodromy cones for weight $1$ variations of Hodge structure, and for their relevance to extension questions for period maps.)

Let $x\in D$ and let $D_1,\dots, D_k$ be the other boundary divisors passing through $x$. As discussed in \cite[\S2.1.3]{cmghl}  the closure of the monodromy cone is given by
$\overline{\sigma}_x=\RR_{\ge 0} \langle N_0,N_1,\dots, N_k\rangle\subset \mathfrak{sp}(2g)_{\RR}$,
where $N_0$ is the (log) monodromy around $D$, and $N_i$ the (log) monodromies around $D_i$. (Note that in general $N_i=\log T_i$ are  defined over $\QQ$, but in the weight $1$ case they can be defined over $\ZZ$.) In the situation considered here, we have $N_0=0$, and thus
$$\overline{\sigma}_x=\RR_{\ge 0} \langle N_1,\dots, N_k\rangle$$
giving
an identification of monodromy cones: $\overline{\sigma}_x=\overline{\sigma}_y$ for any $y\in \left((D_1\cap\dots D_k)\setminus D\right)\cap U_x$, where $U_x$ is a small neighborhood around $x$,
(see \cite[Diagram (2.8)]{cmghl} and accompanying discussion for precise statements about the comparison of monodromies under partial smoothings).
Since by assumption $\overline{\sigma}_y \subset \tau$ for a cone $\tau \in \Sigma$, the proof is finished.
\end{proof}

We are now able to conclude the proof of our main result:

\begin{cor}\label{cor:finalextension}
The intermediate Jacobian map ${\widetilde {IJ}}^{V}$ extends along the chordal divisor and hence to a morphism
${\widetilde {IJ}}^V: \tM \to \Vor[5]$.
\end{cor}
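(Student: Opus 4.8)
The plan is to obtain the extension from Corollary~\ref{cor:ExtIsol} together with the trivial-monodromy criterion of Proposition~\ref{prop:trivialextension}, applied to the triple $(S,\overline S,D)=(\calM,\tM,\tDh)$. Since $\tM$ is normal crossing only in the stack sense, I would first pass to a finite cover on which $\tM\setminus\calM$ becomes a genuine simple normal crossing divisor and all local monodromies are unipotent; as recalled in the text, extending the period map is insensitive to such finite base changes, so it is harmless to work there. On this cover $\tDh$ --- which is irreducible, being identified with the closure of the hyperelliptic locus $\overline\calH_5\subset\bM_5$ --- plays the role of the divisor $D$, and $\widetilde{IJ}^V$ is the period map attached to the weight-one variation of Hodge structure on $\calM$, which is locally liftable by construction.

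The one hypothesis that genuinely needs to be checked is that the monodromy of this variation around $\tDh$ is trivial (after the finite cover), equivalently finite on $\calM$. This is the classical observation of Collino~\cite{col}, also explained in~\cite{act}: for a general pencil $X_t=V(F_0+tF_1)$ with $X_0$ the chordal cubic, the form $F_1$ cuts the singular locus $\Sing(X_0)$, a rational normal quartic curve $\cong\PP^1$, in $12$ points, and the limit of the intermediate Jacobians $IJ(X_t)$ as $t\to 0$ is the Jacobian of the genus-five hyperelliptic curve obtained as the double cover of $\PP^1$ branched at those $12$ points. In particular the limit mixed Hodge structure is pure --- the limiting semi-abelic variety has torus rank $0$ --- and hence the monodromy around the chordal divisor is finite. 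Alternatively this can be read off from the ball-quotient model, where $\cDh$ has no self-intersections by Proposition~\ref{prop:transversal}(1) and the generic point of $D_h$ corresponds to a smooth hyperelliptic genus-five curve.

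Granting these hypotheses, Proposition~\ref{prop:trivialextension} says that $\widetilde{IJ}^V$ extends to a morphism on $\tM$ if and only if it extends to a morphism on $\tM\setminus\tDh$, and the latter is exactly Corollary~\ref{cor:ExtIsol}; this produces the desired morphism $\widetilde{IJ}^V:\tM\to\Vor[5]$. The only delicate point --- but one already built into Proposition~\ref{prop:trivialextension} --- concerns a point $x\in\tDh$ lying on other boundary divisors $\tDA[k]$: there one must know that the closed monodromy cone $\overline\sigma_x$ receives no contribution from $\tDh$, so that it agrees with the monodromy cone at a nearby point of $\bigl(\bigcap_i \tDA[k_i]\bigr)\setminus\tDh$, where the map already extends; this local product structure is precisely what the transversality of $\tDh$ to the discriminant divisor, Proposition~\ref{prop:transversal}(2), supplies. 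I expect no genuinely new obstacle at this stage: the substantial work has gone into Corollaries~\ref{C:IJtoP} and~\ref{cor:ExtIsol} and into the transversality statement~\ref{prop:transversal}, and the chordal divisor is ``easy'' precisely because the monodromy there is finite, so the only thing to verify carefully is that finiteness.
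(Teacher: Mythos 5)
Your proposal is correct and is essentially the paper's own argument: the paper likewise verifies finiteness of the monodromy around $\tDh$ via the Collino--ACT description of the limit (the Jacobian of the genus-five hyperelliptic curve branched at the $12$ points cut out on $\Sing(X_0)$, hence a pure limit), passes to finite covers to reduce to trivial unipotent monodromy, and then combines Proposition \ref{prop:trivialextension} with Corollary \ref{cor:ExtIsol}. Your extra appeal to Proposition \ref{prop:transversal} for the local product structure at points of $\tDh$ meeting the $\tDA[k]$ matches the paper's surrounding discussion and introduces no divergence.
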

\begin{proof}
In the previous sections we have established that the intermediate Jacobian map extends  outside the chordal locus (cf. Corollary \ref{cor:ExtIsol}). The corollary now follows from Proposition \ref{prop:trivialextension} by noting that the  extension is insensitive to finite covers (see Proposition \ref{P:ExtProp}) and thus, we can assume (without loss of generality) that  the monodromy is unipotent as required in the assumptions of
Proposition \ref{prop:trivialextension}.
\end{proof}

\subsection{A geometric observation}  We sketch here another approach to the extension result along $\tDh$.  While this approach is not as efficient as that given above, it provides a very nice geometric picture which we would like to convey.
As discussed, we can view $\tDh$ as the hyperelliptic locus inside $\overline \calM_5$.
Since the Torelli map extends to a morphism $\overline \calM_5\to \Vor[5]$ it  extends in particular naturally along $\tDh$.
The results from the previous sections tell us that we have an extension $\tM\setminus \tDh \to \Vor[5]$.
In general, these two facts together are, of course, not enough to conclude that there is an extension $\tM\to \Vor[5]$.

However in our situation this indeed suffices.
Namely, as before let $x\in \tDh$ and consider, as in the proof above, the monodromy cone $\sigma_x=\langle N_1,\dots,N_k\rangle$ (where $N_i$ are
the monodromies around the other boundary divisors, and $N_0=0$ for the monodromy around the hyperelliptic locus).
There is an (a priori different) monodromy cone $\sigma'_x=\langle N'_1,\dots,N'_k\rangle$ coming from monodromies $N'_i$ by considering loops around the
discriminant but contained inside the hyperelliptic locus. However, it is clear that there are natural identifications
$N_i'=N_i$, leading to an identification $\sigma_x=\sigma_x'$.
This is a consequence of the transversality: consider a
loop $\gamma$ around a boundary divisor $D_i$ inside the hyperelliptic locus (or rather around $D_i\cap \tDh$ inside $\tDh$), this loop can be
moved out of $\tDh$ keeping everything smooth (in the sense of abelian varieties or VHS).
Now, since  we have an extension $\tDh\to\Vor[5]$, it follows that there
is a second Voronoi cone $\tau$ containing $\sigma'_x$, and thus $\sigma_x=\sigma'_x\subseteq \tau$, giving an extension $\tM\to \Vor[5]$
near $x\in \tDh$.

In other words, the two proofs of the extension along the chordal locus (or more generally trivial monodromy divisors)
are perfectly complementary: one says, that if there is an extension away from $\tDh$, there should be an extension also along $\tDh$, while the other
proof says that extension along $\tDh$ implies extension near $\tDh$.
The ingredient for the proof is the same in both cases: the behavior of the monodromy cone. In one case we say that the monodromy cone remains the same if we move
away from $\tDh$, while in the other case we say that the monodromy
 cone stays the same when restricted to $\tDh$.

 Another way to interpret this is the following.  The arrangement of boundary divisors in $\tM$ is locally stratified in a natural way by a canonical log resolution of a hyperplane arrangement of $AD$ root systems.  In the case of cubics with isolated singularities, this arises from the singularities of the cubics. In the case of $\tDh$, this arises naturally from degenerations of the branch locus for genus $5$ hyperelliptic curves, which in turn gives type $A$ arrangements.  All of these arrangements, and corresponding   monodromies are naturally identified, since they are all obtained from versal spaces of the singularities of the corresponding type.  Therefore the monodromy cone for $\tM$ around points in
 $\tDh$, and the monodromy cone for $\tDh$ at the corresponding point, are naturally identified. In particular, this allows us to make the following observation:

\begin{rem}[The monodromy cones are spanned by rank $1$ quadrics]\label{rem_moncone}
We know from the discussion in Section \ref{sec:compare} that in a small analytic neighborhood of a point  $x\in \widetilde \calM\setminus \tDh$, i.e., away from the hyperelliptic divisor, $\widetilde \calM$ can be thought of as parameterizing a family of Pryms. Since the monodromy transformations of Prym varieties are of Picard--Lefschetz type, it follows that the associated monodromy cone at $x$ is spanned by rank $1$ quadrics
(see \cite[\S 4]{cmghl} for further discussion). Now assume that $x\in \tDh$. We first note that $\tDh$ has no self-intersections (Proposition \ref{prop:transversal}). Assume that $x$ lies on finitely many further boundary components $D_1, \ldots, D_k$. We saw
in the proof of Proposition \ref{prop:trivialextension} that the monodromy cone is then spanned by $N_0, N_1, \ldots , N_k$ where $N_0$ comes from the hyperelliptic divisor, and in fact $N_0=0$, and the $N_i, i\geq 1$
are spanned by rank $1$ quadrics (as in the previous case). In either case the image lies in the matroidal locus.
\end{rem}


\section{Torus rank $1$ degenerations and images of boundary divisors of $\tM$}\label{sec:components}
In this section we will  identify the torus rank $1$ images of the boundary divisors of $\tM$  in $\AV$ and in $\AP$, thus proving Theorems \ref{theo:codimension1intro} and
\ref{theo:divisors}.
For this we will work  on Mumford's partial compactification  $\calA_5'$ . Recall that for any genus the partial
compactification  $\calA_g'$  is contained in every toroidal compactification $\calA^{\operatorname{tor}}_g$:
under the natural map
$\varphi: \calA^{\operatorname{tor}}_g \to \Sat$ this is the pre-image $\calA_g'= \varphi^{-1}(\calA_g \cup \calA_{g-1})$.  Since all toroidal compactifications
agree over $\calA_g \cup \calA_{g-1}$ the discussion concerning the partial compactification  is independent of which toroidal compactification we are working with,
in particular it holds for both $\AV$ and $\AP$.

Note that $\calA_g'$ parameterizes semi-abelic varieties of torus rank up to one.
The boundary
of $\calA_g' $, i.e.,~$\varphi^{-1}(\mathcal A_{g-1})$, is the universal Kummer variety
$\partial \calA_g'=\calX_{g-1}/\pm 1$, where $\calX_{g-1}\to\calA_{g-1}$ denotes the universal family of abelian varieties. There exists a universal family over $\calA_g'$, with the
boundary point $(A, \pm b)\in\calX_{g-1}$ (with $A\in\calA_{g-1},  b\in A$) corresponding to the semi-abelic variety $\overline G$ obtained by identifying, with a shift by $b$, the zero and infinity sections of the line bundle over $A$ given by $b$. The open part $G\subset\overline G$ is a group scheme, more precisely a semi-abelian variety, namely the extension
$$
 1\to\CC^*\to G\to A\to 0
$$
given by $b\in \operatorname{Ext}^1(A,\CC^*) \cong A^\vee  = A$, where the principal polarization is used to identify $A$ with its dual $A^\vee$. Note that $\pm b$ leads to  isomorphic degenerations.
This whole discussion has, as always, to be read in a stack sense, i.e.,  when working with concrete families and varieties one typically has to go to a finite cover, due to torsion elements
in the
symplectic group.

In \cite{grhu1} the boundary $\overline{IJ}\cap\partial\AP$ of the locus $IJ$  of intermediate Jacobians was determined, and it was shown that this consists of two irreducible
components of dimension $9$ which were  labelled  $(I)$ and $(III)_2$ there.
To simplify notation we will simply relabel the components $\mathbf A$ and $\mathbf B$ in this paper (and trust that there will be no confusion with an abelian variety $A$).
To describe these,  recall that we  denote by $\calJ_g\subset\calA_g$ the Jacobian locus, and by $\calH_g\subset\calA_g$ the locus of hyperelliptic Jacobians in $\calA_g$. Let $\Theta_A$ be a symmetric  polarization divisor on a ppav $A$ (defined up to translation by a $2$-torsion point), and let $\operatorname{Sing}\Theta_A$ be its singular locus. For a curve $C$ we denote a symmetric
theta divisor of the Jacobian $J(C)$ by $\Theta_C$.

We recall that the computations in \cite{grhu1} come from the well-known Fourier--Jacobi expansion of theta functions near the boundary.  This was applied to compute the boundary of the locus of ppav with a vanishing theta gradient in \cite[Prop.~12]{grsmconjectures}.  At this point we also take the opportunity to correct an unfortunate
typographical error in \cite[Thm.~9.1]{grhu1}, where there is an extra factor of 2 multiplying $z_3$ in the second of the loci below.

\begin{teo}[{\cite[Thm.~9.1]{grhu1}}]\label{theo:codimension1}
The intersection of $\overline{IJ}$ with the boundary of the partial compactification consists of two irreducible components, each of dimension $9$, namely the closures of
\begin{equation}\label{locusI}
 {\bf A}:=\lbrace (A,z_4)\mid A=J(C) \in\calJ_4,z_4\in 2_*\Sing\Theta_C\rbrace \ \ (=(I))
\end{equation}
and
\begin{equation}\label{locusIII2}
 {\bf B}:=\lbrace (A,z_4)\mid A=E\times J(C')\in\calA_1\times \calH_3,
  z_4=(z_1,z_3), z_3\in \Theta_{C'} = C' - C'\rbrace \ \  (=(III_2))
\end{equation}
where $\calJ_4$ denotes the locus of Jacobians of smooth genus 4 curves, $\calH_3$ denotes the hyperelliptic locus in genus $3$ and where we use $z_k$ to denote a point in an abelian variety of dimension $k$.
\end{teo}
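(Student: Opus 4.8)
The plan is to follow \cite{grhu1}: analyze one--parameter degenerations of smooth cubic threefolds whose limit intermediate Jacobian has torus rank exactly one, compute the limit semi--abelic variety --- equivalently, the extension datum $(A',\pm b)\in\calX_4/\pm 1$ in Mumford's partial compactification --- via the Fourier--Jacobi expansion of the relevant theta function, and match the resulting loci with $\mathbf A$ and $\mathbf B$; completeness will then come from the extension map of Theorem \ref{teo:main}. Recall that over the locus of torus rank $\le 1$ all toroidal compactifications of $\calA_5$ agree with $\calA_5'$, so the statement does not depend on the choice of toroidal compactification, and it suffices to locate $\overline{IJ}\cap\beta_1^0$ inside the universal Kummer variety $\partial\calA_5'=\calX_4/\pm 1$.

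To exhibit $\mathbf A$: let $X_t\rightsquigarrow X_0$ be a generic degeneration to a cubic threefold with a single node $p$. Projection from $p$ realizes the blow-up of $X_0$ at $p$ over $\PP^3$ with discriminant a canonically embedded genus $4$ curve $C$ (a $(2,3)$ complete intersection), and by Clemens--Griffiths \cite{cg} the abelian part of the limit is $A'=J(C)$. To pin down $b$ I would write the period matrix of $IJ(X_t)$ in a symplectic basis adapted to the vanishing cycle, let the last diagonal entry tend to $i\infty$, and read off $b\in J(C)\cong J(C)^\vee$ from the off--diagonal block; equivalently, one matches the degenerate theta divisor of $IJ(X_0)$ against the leading Fourier--Jacobi coefficient $\theta_0(\tau',w')$ and its $n=\pm 1$ neighbours. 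Tracking the factor $2$ coming from $\Theta_{\widetilde D}|_P=2\Theta_P$ in the Prym description then gives $b\in 2_*\Sing\Theta_C$, so $\overline{\mathbf A}\subseteq\overline{IJ}\cap\beta_1^0$; the dimension count $\dim\calJ_4+\dim\Sing\Theta_C=9+0$ shows $\mathbf A$ is an irreducible component of dimension $9$.

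To exhibit $\mathbf B$: a cubic threefold acquiring an $A_3$--singularity gives, after projection from a non-special line, a plane quintic with an $A_3$--singularity; the stable reduction attaches a hyperelliptic genus $1$ tail along a conjugate pair of points (the $\overline{\calH}_{1,2}$--structure of an $A_3$ recalled in \S\ref{sec:proof}), and carrying this through the Prym/conic--bundle picture leads to a limit with abelian part $E\times J(C')$, $C'$ hyperelliptic of genus $3$, and extension datum $z_4=(z_1,z_3)$ with $z_3\in\Theta_{C'}=C'-C'$ --- \emph{with no spurious factor of $2$}; this discrepancy is the content of the correction to \cite[Thm.~9.1]{grhu1} recorded above. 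The dimension count $\dim\calA_1+\dim\calH_3+\dim E+\dim(C'-C')=1+5+1+2=9$ shows $\mathbf B$ is the second irreducible $9$-dimensional component, and since everything is insensitive to the choice of toroidal compactification the same holds in $\Perf[5]$.

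Finally, that $\mathbf A$ and $\mathbf B$ are the \emph{only} components: by Theorem \ref{teo:main} the set $\overline{IJ}\setminus IJ$ is the union of the images of the finitely many boundary divisors $\tDA[1],\dots,\tDA[5],\tDD,\tDh$ of $\tM$, and running the stable--reduction analysis of \S\ref{sec:proof} through each of these shows that the only $9$-dimensional images lying in $\beta_1^0$ are $\overline{\mathbf A}$ (from $\tDA[1]$) and $\overline{\mathbf B}$ (from $\tDA[3]$): the divisor $\tDh$ has finite monodromy (Collino \cite{col}) and $\tDA[2]$ has image $\overline{\mathbf K}$, both of which lie in $\calA_5$ rather than in $\beta_1^0$, while $\tDA[4]$ is contracted into $\calA_5$, $\tDA[5]$ has torus rank one but is contracted to a locus of dimension $<9$, and $\tDD$ is contracted into torus rank $2$. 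The main obstacle throughout is not identifying the abelian parts of the limits --- this is classical conic--bundle geometry --- but pinning down the extension data $b$ and $z_3$, i.e.\ the delicate Fourier--Jacobi computation in which the factors of $2$ coming from the Prym polarization and from Mumford's gluing datum must be tracked with care, precisely the subtlety behind that correction.
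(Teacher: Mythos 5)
Your route is genuinely different from the paper's. The paper does not reprove this statement here: it is imported from \cite[Thm.~9.1]{grhu1}, where $\overline{IJ}\cap\beta_1^0$ is computed intrinsically on the abelian-variety side, by Fourier--Jacobi expansion of the theta-function conditions cutting out the intermediate Jacobian locus (cf.\ the reference to \cite[Prop.~12]{grsmconjectures} made just before the statement); neither degenerations of cubics nor the extension theorem enter there, so your opening claim to be ``following \cite{grhu1}'' mischaracterizes that argument. What you propose instead is to deduce the statement from Theorem \ref{teo:main} together with a divisor-by-divisor degeneration analysis of $\tM$ --- essentially the paper's later logic run in reverse, since in the paper Theorem \ref{theo:codimension1} is an \emph{input} to Theorems \ref{prop:locusA}, \ref{prop:locusB} and \ref{theo:divisors}. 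This is legitimate (Theorem \ref{teo:main} is proved without using the present statement) and it would yield Theorem \ref{theo:divisors} along the way; but you must then use only the \emph{independent} determinations of the boundary images --- the compact parts from \cite{cml} and the Prym-theoretic extension data as in \S\ref{sec:components} (Theorem \ref{theo_CollMurre}, Lemma \ref{lem:extensiondata}, the $A_3$ analysis, Table \ref{tablecon}) --- and not Theorems \ref{prop:locusA}/\ref{prop:locusB} themselves, which rest on the very statement you are proving.

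Two steps of your sketch need real arguments. First, for $\mathbf A$ the claim that the extension datum lies in $2_*\Sing\Theta_C$ does not follow from ``the factor $2$ in $\Theta_{\widetilde D}|_P=2\Theta_P$'': the actual mechanism (Lemma \ref{lem:extensiondata}) is that the extension class equals $\mathcal O_{N(\widetilde D)}(\tilde p_1^+-\tilde p_2^+-\tilde p_1^-+\tilde p_2^-)$ by \cite{abh}, while Mumford's analysis identifies $\Sing\Theta_P$ with $\{\pi^*G\otimes\mathcal O(\tilde p_1^{\pm}+\tilde p_2^{\mp})\}$, and the factor $2$ appears only when translating by a theta characteristic (using $K_{N(D)}=G^{\otimes 2}\otimes\mathcal O(p_1+p_2)$) to pass to degree $0$; your Fourier--Jacobi plan of ``reading $b$ off the off-diagonal block'' would have to reproduce this identification, or you should simply quote Collino--Murre \cite{cm}, as the paper does. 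Second, for $\mathbf B$ your degeneration argument only gives $\widetilde{IJ}(\tDA[3])\subseteq\overline{\mathbf B}$; the theorem also needs $\mathbf B\subseteq\overline{IJ}$, i.e.\ that the $A_3$-locus \emph{dominates} $\mathbf B$, and the count $\dim\mathbf B=9$ does not give this --- one needs the reconstruction of an $A_3$-cubic from a generic datum (Proposition \ref{P:B2} in families, or reversing the trigonal construction), as the paper remarks after its $A_3$ analysis. With these two points supplied, and the standard Cartier-divisor argument (which you use implicitly) that every component of $\overline{IJ}\cap\beta_1^0$ is $9$-dimensional and contained in the image of one of the seven boundary divisors, your completeness step is sound: the remaining divisors have images generically in $\calA_5$ ($\tDh$, $\tDA[2]$, $\tDA[4]$), of dimension at most $8$ ($\tDA[5]$), or in torus rank $2$ ($\tDD$).
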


Let us make a few comments explaining why the loci above are well defined (independent of choices).
First, since any two symmetric theta divisors differ by a $2$-torsion point and since we are multiplying by $2$, the choice of a symmetric $\Theta_C$ in $\mathbf A$ does not matter. In fact, in the case where $C$ is not hyperelliptic, if  $N$ and $\hat N$ are the (not necessarily distinct) $g^1_3$'s on $C$, then $2_*\operatorname{Sing}\Theta_C=\{(N\otimes \hat N^{-1})^{\otimes \pm1}\}$. For the locus $\mathbf B$, we would like to recall a basic fact about theta divisors of hyperelliptic genus $3$ curves. In general there is no canonical representative of the theta divisor of a curve in its
degree $0$ Jacobian.  For  genus $3$ hyperelliptic curves, however, we can use the hyperelliptic pencil  $\kappa=g^1_2$, which is a distinguished theta-characteristic,  to identify $J^2(C')$ with $J^0(C')=J(C')$. We use this to define a distinguished theta divisor $\Theta_{C'}=W_2 - \kappa$ and note that this is also the same as the difference variety $C' - C'$. In particular $\Theta_{C'}$ is a well defined symmetric theta divisor,  characterized by the fact that its singularity is at the origin.
This also explains the translation from the language of theta functions in \cite[Prop.~12]{grsmconjectures} to the geometric language used here.  Namely, the theta function determined by theta-null  in \cite[Prop.~12]{grsmconjectures} is the one
characterized by the fact that its singularity is at the origin.

Theorem \ref{theo:codimension1} together with the results of \cite{cml} give a quick way to identify the components $ {\bf A}$ and $\mathbf B$  as the images of
boundary divisors in $\tM$.

\begin{teo}[{\cite{cm}}]\label{prop:locusA}
The intermediate Jacobian of a generic cubic threefold with a unique $A_1$-singularity corresponds to a generic point of the locus ${\bf A}$.
\end{teo}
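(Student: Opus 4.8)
The plan is to deduce the statement from the Satake-level description of $\tM\to\Sat[5]$ established in \cite{cml} together with the classification of the boundary of $\overline{IJ}$ in Mumford's partial compactification given by Theorem~\ref{theo:codimension1}; in particular, no new Fourier--Jacobi computation will be needed. Recall from Theorem~\ref{theo:wonderfulblowup} that $\tDA[1]$ is the strict transform of the discriminant divisor in $\bM$, so its generic point corresponds to a cubic threefold $X$ with a single node $p$, and what must be shown is that $\widetilde{IJ}^V$ carries the generic point of $\tDA[1]$ to the generic point of $\mathbf A$.

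First I would recall the classical geometry (Clemens--Griffiths \cite{cg}, Collino \cite{col}): blowing up the node $p$ and projecting from it exhibits $\widetilde X_p\to\PP^3$ as a birational morphism whose exceptional locus maps onto the smooth complete intersection curve $C=V(Q,K)$ of a quadric and a cubic in $\PP^3$ --- a canonically embedded curve of genus $4$ lying on a smooth quadric, which for $X$ generic is a generic curve of genus $4$. A generic one-parameter smoothing of $X$ has local monodromy a nonzero transvection along the vanishing cycle of the node, so the limiting semi-abelic variety has torus rank exactly $1$ with abelian part $J(C)$. Equivalently --- and this is the form I would actually quote --- by \cite{cml} the image $\widetilde{IJ}^*(\tDA[1])\subset\Sat[5]$ is the closure $\overline{\calJ_4}$ of the locus of Jacobians of smooth genus $4$ curves, and the generic point of $\tDA[1]$ maps to the generic point of $\calJ_4$.

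Now set $Z:=\widetilde{IJ}^V(\tDA[1])\subseteq\Vor[5]$. Since $\tDA[1]$ is irreducible of dimension $9$ (Remark~\ref{rem:components}, and $\dim\tM=10$), and the composite $\tDA[1]\to\Vor[5]\to\Sat[5]$ is generically finite onto its image $\overline{\calJ_4}$, the map $\widetilde{IJ}^V|_{\tDA[1]}$ is itself generically finite and $Z$ is irreducible of dimension $9$. Because the Satake image of the generic point of $\tDA[1]$ lies in $\calA_4$, and a semi-abelic variety whose Satake point lies in $\calA_4$ has torus rank exactly $1$, the generic point $\xi$ of $Z$ lies in the torus rank $1$ boundary $\partial\calA_5'=\varphi^{-1}(\calA_4)$; of course $Z\subseteq\overline{IJ}$, so $\xi\in\overline{IJ}\cap\partial\calA_5'$. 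By Theorem~\ref{theo:codimension1} this locus is $\overline{\mathbf A}\cup\overline{\mathbf B}$, a union of two irreducible loci of dimension $9$; being irreducible of dimension $9$, $Z$ must therefore coincide with $\overline{\mathbf A}$ or with $\overline{\mathbf B}$. To rule out the latter, note that the projection $\partial\calA_5'=\calX_4/\pm1\to\calA_4$ forgetting the extension datum sends every point of $\overline{\mathbf B}$ into the locus $\overline{\calA_1\times\calH_3}$ of decomposable ppav's, whereas it sends $\xi$ to $J(C)$ with $C$ a generic --- in particular indecomposable and non-hyperelliptic --- genus $4$ curve. Hence $Z=\overline{\mathbf A}$, and since $Z$ is irreducible its generic point is the generic point of $\overline{\mathbf A}=\mathbf A$; this is exactly the assertion of the theorem. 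The same argument applies verbatim in $\Perf[5]$.

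The only genuinely nontrivial ingredients are the two results being invoked: the identification in \cite{cml} of the Satake image of $\tDA[1]$ with $\overline{\calJ_4}$ (equivalently, the classical fact that the abelian part of the limit intermediate Jacobian of a generic nodal cubic is the Jacobian of a generic genus $4$ curve), and the completeness of the list $\overline{\mathbf A}\cup\overline{\mathbf B}$ for the torus rank $1$ boundary of $\overline{IJ}$ in Theorem~\ref{theo:codimension1}. Granting these, the argument is formal --- and, importantly, the precise value of the extension datum $z_4\in 2_*\Sing\Theta_C$ is simply read off from Theorem~\ref{theo:codimension1} rather than recomputed by hand.
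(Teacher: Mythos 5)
Your argument is correct and rests on exactly the same ingredients as the paper's proof --- the fact that $\widetilde{IJ}^{V}$ is a morphism, the classification $\overline{IJ}\cap\beta_1^0=\overline{\mathbf A}\cup\overline{\mathbf B}$ of Theorem \ref{theo:codimension1}, and the compact-part data from \cite[Table 1]{cml} --- the only difference being direction: you push $\tDA[1]$ forward and identify its $9$-dimensional irreducible image by ruling out $\overline{\mathbf B}$ via the compact part, whereas the paper notes that some boundary divisor must map onto $\overline{\mathbf A}$ and that only $\tDA[1]$ has compact part a generic point of $\calJ_4$. This is essentially the same proof.
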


\begin{proof}
Since $\widetilde{IJ}^V:  \tM \to \Vor[5]$ is a morphism, there must be at least one boundary divisor in $\tM$ that is mapped to the closure of the locus ${\bf A}$.
By \cite[Table 1]{cml} the only boundary divisor of $\tM$ which has the property
that the compact part of the degenerate intermediate Jacobian is a general point in $\calJ_4$
is the divisor $\tDA[1]$, which must therefore map to the closure of ${\bf A}$.
\end{proof}

\begin{teo}\label{prop:locusB}
The intermediate Jacobian of a generic cubic threefold with a unique $A_3$-singularity corresponds to a generic point of the locus ${\bf B}$.
\end{teo}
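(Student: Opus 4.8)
The plan is to argue exactly as in the proof of Theorem \ref{prop:locusA}, using that $\widetilde{IJ}^V\colon\tM\to\Vor[5]$ is a morphism (Corollary \ref{cor:finalextension}). First I would observe that, by Theorem \ref{theo:codimension1}, the closure $\overline{\mathbf B}$ is one of the two irreducible $9$-dimensional components of the torus rank $1$ part of the boundary of $\overline{IJ}$; in particular it is a codimension-one component of $\overline{IJ}\setminus IJ$. Since $IJ\cong\calM$ is dense in $\overline{IJ}$, the locus $\tM\setminus\calM$ is a union of the boundary divisors $\tDA[1],\dots,\tDA[5],\tDD,\tDh$, and $\widetilde{IJ}^V$ is a morphism of complete varieties, the image $\widetilde{IJ}^V(\tM\setminus\calM)$ is the union of the (closed) images of these divisors; hence at least one of them must dominate $\overline{\mathbf B}$, and since that divisor and $\overline{\mathbf B}$ are both irreducible of dimension $9$, its image is exactly $\overline{\mathbf B}$.

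The second and main step is to identify which divisor this is. The abelian part of a generic point of $\mathbf B$ is $E\times J(C')$ with $E$ an elliptic curve and $C'$ a hyperelliptic curve of genus $3$, i.e.\ a generic point of $\calA_1\times\calH_3$. By \cite[Table 1]{cml}, among the boundary divisors of $\tM$ the divisor $\tDA[3]$ is the unique one whose associated degenerate intermediate Jacobian has abelian part of this form: $\tDh$ maps to $\overline{\calH}_5$, the divisor $\tDA[1]$ has abelian part a generic point of $\calJ_4$ (Theorem \ref{prop:locusA}), the divisor $\tDA[2]$ has abelian part $E\times J(C)$ with $J(C)\in\calJ_4\cap\theta_{\operatorname{null}}$, and the remaining divisors $\tDA[4]$, $\tDA[5]$, $\tDD$ have abelian parts lying in $\calA_5$, or in deeper boundary strata with a different decomposition (for $\tDA[5]$ the relevant hyperelliptic factor has genus $2$, coming from the genus $1$--tail bookkeeping of \S\ref{sec:proof} for an $A_5$-singularity), again by \cite[Table 1]{cml}. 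Therefore $\widetilde{IJ}^V(\tDA[3])=\overline{\mathbf B}$. Finally, since $\tDA[3]$ is irreducible (Remark \ref{rem:components}), its generic point corresponds to a cubic threefold with a single $A_3$-singularity, and the restriction $\tDA[3]\to\overline{\mathbf B}$ is a dominant morphism of irreducible varieties, so $\widetilde{IJ}^V$ sends the generic point of $\tDA[3]$ to the generic point of $\overline{\mathbf B}$, which is a generic point of $\mathbf B$; this is the assertion.

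I expect the only delicate point to be the uniqueness claim extracted from \cite[Table 1]{cml} — that no boundary divisor other than $\tDA[3]$ produces a hyperelliptic genus $3$ factor in the abelian part of its generic limit. If one wanted an argument independent of that table, one would instead carry out the direct Prym computation: project a generic $A_3$-cubic from a non-special line to obtain a plane quintic $D$ with one $A_3$-singularity and its étale double cover $\widetilde D\to D$, pass to the stable reduction (attaching a genus $1$ tail at the two branches of the $A_3$, as in \S\ref{sec:proof}), and compute the Prym of the resulting admissible cover together with its Mumford extension datum $z_4$ via the Fourier--Jacobi expansion as in \cite{grhu1}. The combinatorial bookkeeping of the dual graph and of the restriction of the $2$-torsion line bundle $\eta$ to the components — in particular checking that the abelian part is precisely $E\times J(C')$ with $C'$ hyperelliptic of genus $3$ and that $z_3$ ranges over the difference variety $C'-C'$ — is the real technical burden of that alternative route, which the Table-based argument above avoids entirely.
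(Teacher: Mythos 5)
Your proposal is correct and follows essentially the same route as the paper: since $\widetilde{IJ}^{V}$ is a morphism some boundary divisor of $\tM$ must map onto the $9$-dimensional component $\overline{\mathbf B}$, and by \cite[Table 1]{cml} the divisor $\tDA[3]$ is the only one whose generic limit has compact part a product of an elliptic curve with a hyperelliptic genus $3$ Jacobian. The alternative you sketch (projection from a non-special line, stable reduction inserting the elliptic tail at the $A_3$-point, and computation of the Prym with its extension datum) is precisely the second, geometric proof the paper carries out afterwards via Recillas' trigonal construction.
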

\begin{proof}
As in the previous case the proof is very straightforward: by \cite[Table 1]{cml}  the only divisor which has the property that the compact abelian part of the intermediate Jacobian
of a general point on this divisor is a product of an elliptic curve with a hyperelliptic curve of genus $3$ is $\tDA[3]$.
\end{proof}

We can in fact be more explicit about the identifications in the theorems above.
Since
this also involves some beautiful geometry, we will sketch this in the following subsections, thus also recovering a classical result due to Collino--Murre \cite{cm} (see also \cite{cg}).

\subsection{Cubic threefolds with an $A_1$-singularity and the locus $\mathbf A$} In this section, we prove
Theorem \ref{theo_CollMurre}, below, which  is essentially due to Collino--Murre \cite{cm} (see also \cite{cg}), and expands on Theorem \ref{prop:locusA}.    We explain a proof here that generalizes to semi-stable  cubic threefolds with isolated singularities.

First recall that if $X$ is a cubic threefold with AD-singularities, then projecting from a singularity $x_0$  of $X$ gives a birational  map $X\dashrightarrow \mathbb P^3$.    Blowing-up at $x_0$ we obtain  $f: \widetilde X \to X$ and a morphism
$g: \widetilde X \to \PP^3$ resolving the rational projection map.  The exceptional locus $E$ of the morphism $g$ maps to a curve $\Sigma \subset \PP^3$, which parameterizes the lines contained in $X$  passing through $x_0$. The curve $\Sigma$ lies on a quadric $Q$, namely the projectivized tangent cone of $X$ at the node $x_0$.  It is the complete intersection in $\mathbb P^3$  of the quadric $Q$ with the cubic $X$,  and $\widetilde X$ is the blow-up of $ \mathbb P^3$ along $\Sigma$. We call $\Sigma$ the associated $(2,3)$-curve, and we note that $p_a(\Sigma)=4$.     The singularities of $\widetilde X$ are in 1-1 correspondence with the singularities of $\Sigma$, including the type.
 We can summarize this in the diagram
$$
\xymatrix{
&Q  \ar@{^(->}[r] \ar[ld]&\widetilde X \ar[ld]_f \ar[rd]^g&E \ar@{_(->}[l] \ar[rd]&\\
x_0\ar@{^(->}[r]&X\ar@{-->}[rr]^{\pi_{x_0}}&&\mathbb P^3&\ar@{_(->}[l]\Sigma.\\
}
$$
Note also that given a $(2,3)$-complete intersection curve $\Sigma$ with AD-singularities, there is an associated cubic threefold with AD-singularities and a given singular point making $\Sigma$ the associated $(2,3)$-curve.
For more details, see \cite[\S 3.1]{cml}.

\begin{teo}[{Collino--Murre \cite{cm}}]\label{theo_CollMurre}
Let $X$ be a cubic threefold with a unique $A_1$-singularity. The extended intermediate Jacobian map $\widetilde{IJ}^{V}:\tM \to\Vor[5]$ near $X$ ($\tM$ and $\overline {\mathcal M}$ agree near $X$)
maps $X$ to a torus rank $1$ degeneration given by an extension
$$
1 \to \CC^* \to IJ(X) \to J(\Sigma) \to 0
$$
where $\Sigma$ is the $(2,3)$-curve associated to $X$, a smooth genus 4 curve that is  non-hyperelliptic and  has  no vanishing theta-null,  and the extension datum is given by
$$
(N \otimes \hat N^{-1})^{\otimes \pm 1} \in J(\Sigma)
$$
where $N$ and $\hat N$ are the two $g^1_3$'s on $\Sigma$.  Conversely, given such an extension, there is a cubic threefold $X$  with a unique $A_1$-singularity with $IJ(X)$ identified with the given extension.
\end{teo}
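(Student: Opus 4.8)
The plan is to combine the classical geometry of projecting a nodal cubic threefold from its node with the fact (Corollary~\ref{cor:ExtIsol}) that $\widetilde{IJ}^{V}$ is already a morphism near $X$, and then to pin down the limit using the description of the torus rank $1$ boundary of $\overline{IJ}$ in Theorem~\ref{theo:codimension1} (equivalently, Theorem~\ref{prop:locusA}). First I would record the geometry of the node: projecting $X$ from its unique singular point $x_0$ and blowing up $x_0$ realizes $\widetilde X$ as the blow-up of $\PP^3$ along the associated $(2,3)$-curve $\Sigma$, which lies on the quadric $Q\subset\PP^3$ that is the projectivized tangent cone of $X$ at $x_0$ (see \S\ref{sec:tM} and \cite[\S3.1]{cml}). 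Since $x_0$ is an $A_1$-singularity, $Q$ is a smooth quadric and $\Sigma$ is a smooth complete intersection of genus $4$; hence $\Sigma$ is non-hyperelliptic, lies on no singular quadric --- i.e.\ has no vanishing theta-null --- and carries exactly the two distinct pencils $N,\hat N$ ($g^1_3$'s) cut out by the two rulings of $Q$, with $N\otimes\hat N=K_\Sigma$. For the converse I would note that any non-hyperelliptic genus $4$ curve $\Sigma$ with no vanishing theta-null has canonical model lying on a unique smooth quadric $Q$, and that the reverse construction --- blow up $\PP^3$ along $\Sigma$ and contract the proper transform of $Q$, whose normal bundle is $\mathcal O_{\PP^1\times\PP^1}(-1,-1)$ --- produces a cubic threefold with a single $A_1$-singularity whose associated $(2,3)$-curve is $\Sigma$; this is classical.

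Next I would identify the limiting semi-abelian variety. Near $X$ no blow-up is performed along the $A_1$-stratum, so $\tM$ and $\overline{\calM}$ agree there and $\tDA[1]$ is the strict transform of the discriminant; by Corollary~\ref{cor:ExtIsol}, $\widetilde{IJ}^{V}$ is a morphism near $X$, so $\widetilde{IJ}^{V}(X)$ equals the limit of $IJ(X_t)$ along a general pencil $X_t$ with $X_0=X$. Around $t=0$ the monodromy is a single Picard--Lefschetz transformation, so the logarithm of monodromy has square zero and the limit has torus rank at most $1$; it has torus rank exactly $1$, with abelian part $J(\Sigma)$, by the classical analysis of Clemens--Griffiths \cite{cg} (see also \cite{cm,col} and \cite[Table~1]{cml}). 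Thus $\widetilde{IJ}^{V}(X)$ lies in Mumford's partial compactification $\calA_5'$ over $J(\Sigma)\in\calJ_4$: it is a point $\big(J(\Sigma),z_4\big)$ with $z_4\in J(\Sigma)/\!\pm1$ determining the extension $1\to\CC^*\to IJ(X)\to J(\Sigma)\to 0$.

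To pin down $z_4$, I would use that for $X$ general with a single $A_1$-singularity the curve $\Sigma$ is a general genus $4$ curve --- $X$ together with its node is recovered from $\Sigma$ by the construction above, and both sides have dimension $9$ --- so $J(\Sigma)$ is a general, indecomposable, genus $4$ Jacobian. By Theorem~\ref{theo:codimension1}, the only $9$-dimensional component of $\overline{IJ}\cap\partial\calA_5'$ whose generic abelian part is a general genus $4$ Jacobian is $\mathbf A$ (the other component, $\mathbf B$, has decomposable abelian part), so $\widetilde{IJ}^{V}(X)$ lies in the open locus $\mathbf A$ and hence $z_4\in 2_*\Sing\Theta_\Sigma$; since $\Sigma$ has no vanishing theta-null this is precisely the point determined by the two $g^1_3$'s as in the statement (the remark following Theorem~\ref{theo:codimension1}), and the two possibilities give isomorphic semi-abelian varieties. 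This proves the statement for general $X$; it extends to every cubic with a unique $A_1$-singularity because $\tDA[1]$ is irreducible (Remark~\ref{rem:components}) and both $\widetilde{IJ}^{V}$ and the assignment $X\mapsto\big(J(\Sigma),z_4\big)$ are morphisms on $\tDA[1]$ agreeing on a dense open subset. The converse then follows by running the construction of the first step backwards and applying the forward direction.

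The step I expect to be the main obstacle is the precise identification of $z_4$: the torus rank and the abelian part $J(\Sigma)$ are essentially formal once the geometry of $\Sigma$ is in place, but locating $z_4$ inside $2_*\Sing\Theta_\Sigma$ ultimately rests on the Fourier--Jacobi computation behind Theorem~\ref{theo:codimension1} (equivalently, on the information encoded in \cite[Table~1]{cml}). A more self-contained route would replace this by a direct Abel--Jacobi computation using the exceptional curves of the two small resolutions of the node of $X$, which recovers $N$ and $\hat N$ geometrically; this is the version that most transparently generalizes to cubics with worse isolated singularities, but it is considerably more laborious.
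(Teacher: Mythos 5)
Your argument is correct and is essentially the paper's own short proof (Theorem \ref{prop:locusA} together with the remark following the theorem): both rest on the classification $\overline{IJ}\cap\beta_1^0=\overline{\mathbf A}\cup\overline{\mathbf B}$ of Theorem \ref{theo:codimension1}, rule out $\mathbf B$ because the compact part is the indecomposable Jacobian $J(\Sigma)=J(\widetilde X)$ coming from the description of $\widetilde X$ as the blow-up of $\PP^3$ along $\Sigma$, read off the extension datum from the definition of $\mathbf A$ (with $2_*\Sing\Theta_\Sigma=\{(N\otimes\hat N^{-1})^{\otimes\pm1}\}$), and spread out from the generic nodal cubic using irreducibility of the $A_1$ locus. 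Be aware, though, that the paper's featured argument is a second, independent proof --- via a non-special line, the extension-data formula of \cite{abh}, Mumford's lemma (Lemma \ref{lem:extensiondata}), and Recillas' trigonal construction combined with the Bruce--Wall description of lines on a nodal cubic surface (Proposition \ref{pro:IdentificationCollMurre}) --- which computes the extension class directly rather than by dimension/generality, and which is the template for the deeper boundary strata; this is precisely the computation you defer, and it goes through Pryms rather than the Abel--Jacobi route you sketch as an alternative.
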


\begin{rem} This follows easily from what we have shown above.
From Theorem \ref{prop:locusA}, the only thing to do is to identify the genus $4$ curve $C$ with $\Sigma$.  But we know that $\widetilde X$ is isomorphic to the blow-up of $\mathbb P^3$ along $\Sigma$, and so $J(\widetilde X)=J(\Sigma)$.   By basic results on degenerations of Hodge structures, the compact part of $IJ(X)$ is identified with $J(\widetilde X)$, and we are done.    Conversely, given such an extension, one has a $(2,3)$-curve, and the associated cubic $X$ has $IJ(X)$ identified with the extension.  The main point in what follows is to utilize the Prym construction to prove the theorem above, in order to illustrate how to extend these results to cubics with more complicated singularities.
\end{rem}

\subsubsection{The Prym construction in the $A_1$-case}
For this we start with a pair $(X,\ell)$ where $X$ is a cubic with exactly one $A_1$-singularity and $\ell$ is a non-special line. Projection from $\ell$ defines a conic bundle with
discriminant curve $D \subset \PP^2$, a quintic with exactly one node $p$. Note that a quintic curve with exactly one singularity, which is of type $A_1$, is necessarily irreducible (see, e.g., \cite[Cor.~3.7]{cml}). We will be in this situation throughout our treatment of the $A_1$ case.
Let $\nu : N(D) \to D$ be the normalization. Then $N(D)$ has genus $5$ and by \cite[p.207]{acgh} it is non-hyperelliptic
with a unique $g^1_3$, which is given by the pencil of lines through the node of $D$.

The conic bundle structure defines an \'etale $2:1$ cover $\pi_D: \widetilde D \to D$ given by a $2$-torsion line bundle $\eta_D$, which is nontrivial (Remark \ref{R:B1}).
Consequently, the cover $\widetilde D\to D$ is nontrivial;  in fact, $\widetilde D$ is irreducible as shown in the following lemma:

\begin{lem}[{\cite{cml}}]\label{lem:irreducible}
Let $(X,\ell)$ be a cubic threefold with a single allowable singularity, together with a non-special line $\ell$. Let $(\widetilde D, D)$ be the \'etale double cover of the discriminant plane quintic $D$ determined from projection from $\ell$. Then both $D$ and $\widetilde D$ are irreducible.
\end{lem}

\begin{proof} This assertion is made in the proof of \cite[Prop.~5.2]{cml}.  We include the details here.
As discussed (Proposition \ref{P:Fano}), the singularities of $D$ are in correspondence with the singularities of $X$. Thus, $D$ has a unique allowable singularity say at $p\in D$ (at worst $A_5$ or $D_4$). An elementary application of Bezout's theorem  allows us to conclude that $D$ is irreducible \cite[Cor.~3.7]{cml}.
As we have seen, the  \'etale double cover $\widetilde D\to D$ arising from projection from $\ell$ is nontrivial (Remark \ref{R:B1}), and consequently, it is immediate to see that $\widetilde D$ is irreducible in the $A_{2k}$ case.  While one can deduce the statement for the $A_{2k+1}$ case via a degeneration argument, we present here a short argument that works in all cases, including $D_4$.

Let $\eta_D$ be the $2$-torsion line bundle associated to the
 \'etale double cover $\widetilde D\to D$, and let $\kappa_D=\eta_D\otimes \mathcal O_D(1)$ be the associated theta characteristic.  Taking normalizations, $\nu:N(D)\to D$ and $\tilde \nu:N(\widetilde D)\to \widetilde D$, we obtain an \'etale double cover $N(\widetilde D)\to N(D)$.  Since $D$ is irreducible, the same is true of $N(D)$, and we would like to establish that $N(\widetilde D)$ is irreducible.  Since $\nu^*\eta_D$ is the $2$-torsion line bundle associated to $N(\widetilde D)\to N(D)$, our goal is to show that $\nu^*\eta_D$ is nontrivial.

 To this end, recall that $\kappa_D=\mathcal O_D(\sum_{i=1}^5p_i)$, where $\sum_{i=1}^52p_i$ is the divisor with support in the smooth locus of $D$, cut by the smooth plane conic arising as the second discriminant (\eqref{E:MatB}, Proposition \ref{P:Fano}, Remark \ref{E:KappaConic}).   In particular, since $h^0(\kappa_D)=1$ (Proposition \ref{P:B1}),  we have that $p_1,\dots,p_5$ are not collinear.
Let $L$ be a general hyperplane section of $D$ (i.e., of $\mathcal O_D(1)$), in particular, not passing through any of the points $p,p_1,\dots,p_5$.  Then $\eta_D=\mathcal O_D(L-\sum_{i=1}^5p_i)$, and we have $\nu^*\eta_D\cong \mathcal O_{N(D)}$
if and only if $\sum_{i=1}^5p_i\sim L$ on $N(D)$.
Now, by virtue of  the fact that the genus of $N(D)$ is equal to $3$, $4$, or $5$, Riemann--Roch and Clifford's Theorem imply that for any divisor $E$ of degree $5$ on $N(D)$, we have $h^0(N(D),\mathcal O_{N(D)}(E))\le 3$.  Since $h^0(\mathcal O_D(1))=3$, it follows that the natural inclusion, $\nu^*H^0(D,\mathcal O_D(1))\subseteq H^0(N(D),\nu^*\mathcal O_D(1))$ is an equality.
In other words,  $\sum_{i=1}^5p_i\sim L$ on $N(D)$ implies that $\sum_{i=1}^5p_i$ on $D$ is the zero locus of a global section of $\mathcal O_D(1)$; i.e., the $p_1,\dots,p_5$ are collinear, a contradiction.
 Thus $\nu^*\eta_D$ is nontrivial, so that $N(\widetilde D)$, and therefore $\widetilde D$, is irreducible.
\end{proof}

\begin{rem}\label{R:irreducible}
We note here that the proof above also proves the following statement.
Let $(X,\ell)$ be a cubic threefold with allowable singularities, together with a non-special line $\ell$. Let $(\widetilde D, D)$ be the \'etale double cover of the discriminant plane quintic $D$ determined from projection from $\ell$. Then if $D$ is irreducible, and $g(N(D))\ge 3$, then $\widetilde D$ is irreducible.  We also have that $N(D)$ is non-hyperelliptic, since it has a base-point free $g^1_3$ (see e.g., \cite[p.12]{acgh}).
\end{rem}

By Proposition \ref{P:B1} the
line bundle ${\mathcal O}_D(1) \otimes \eta_D$ is an odd theta characteristic with $h^0(D,{\mathcal O}_D(1) \otimes \eta_D)=1$. Let $\eta_{N(D)}=\nu^*(\eta_D)$ and
$\pi: N(\widetilde  D) \to N(D)$ be the corresponding \'etale $2:1$ cover.  Since the period map for cubics extends at $X$, we have $IJ(X)=P_{\widetilde D/D}$.
The results of \cite{abh} give us a description of $P_{\widetilde D/D}$ as an extension
$$
1 \to \mathbb C^*\to IJ(X) \to P_{N(\widetilde D)/N(D)}\to 0
$$
with extension data given by the line bundle:
$$
 \mathcal O_{N(\widetilde D)}(\tilde p_1^+ -\tilde p_2^+ -\tilde p_1^- +\tilde p_2^- ).
$$

Here we are using the following notation.
For the node $p$ of $D$ we set $\nu^{-1}(p)=\{p_1,p_2\}$.
Let $\tilde p^+$ and $\tilde p^-$ be the pre-images of $p$ under the \'etale cover $\pi': \widetilde D \to D$.
Now let $\tilde p_1^{\pm}$ and $\tilde p_2^{\pm}$ be the pre-images of
$p_1$ and $p_2$ respectively under the cover $\pi: N(\widetilde D) \to N(D)$ of the normalizations. Our convention is such that
the normalization map $\widetilde \nu: N(\widetilde D) \to \widetilde D$  maps $\widetilde \nu(\tilde p_1^+) =\widetilde \nu(\tilde p_2^+)= \tilde p^+$
and $\widetilde \nu(\tilde p_1^-) =\widetilde \nu(\tilde p_2^-)= \tilde p^-$.

The following lemma shows that this extension data is of the same type as described in Theorem \ref{theo_CollMurre}:
$$
 \mathcal O_{N(\widetilde D)}(\tilde p_1^+ -\tilde p_2^+ -\tilde p_1^- +\tilde p_2^- )\in 2_*\Sing(\Theta_{P_{N(\widetilde D)/N(D)}}).
$$

\begin{lem}\label{lem:extensiondata}  For brevity in notation, set $C=N(D)$, and $\widetilde C=N(\widetilde D)$.
The following holds:
 \begin{itemize}
\item[(1)] Let $G$ be the unique $g^1_3$ on $C$. Then
\begin{equation}\label{equ:thetasing5}
\Sing(\Theta_{P_{\widetilde C/C}})=\{\pi^*G\otimes \mathcal O_{\widetilde C}(\tilde p_1^+ +\tilde p_2^-), \pi^*G\otimes \mathcal O_{\widetilde C}(\tilde p_1^- +\tilde p_2^+)\}
\subseteq \operatorname{Pic}^{8}(\widetilde C).
\end{equation}
\item[(2)] Translation by the inverse of a theta characteristic and multiplication by $2$ gives
\begin{equation}\label{equ:thetasing4}
2_*\Sing(\Theta_{P_{\widetilde C/C}}) = \{{\mathcal O}_{\widetilde C}(\tilde p_1^+ - \tilde p_1^- + \tilde p_2^- - \tilde p_2^+)^{\otimes \pm 1}   \}
\subseteq \operatorname{Pic}^{0}(\widetilde C).
\end{equation}
\end{itemize}
\end{lem}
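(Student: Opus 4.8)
The plan is to realize the theta divisor of $P:=P_{\widetilde C/C}$ as a Brill--Noether locus on $\widetilde C$ and to combine Mumford's description of its singularities \cite{mprym} with the explicit trigonal geometry of $C=N(D)$. Since $C$ has genus $5$ the Prym $P$ has dimension $4$, and by Theorem \ref{theo_CollMurre} (together with its proof) the compact part $P_{N(\widetilde D)/N(D)}=P$ is the Jacobian of the smooth, non-hyperelliptic, theta-null-free $(2,3)$-curve $\Sigma$; hence by the Riemann singularity theorem $\Sing(\Theta_P)$ consists of exactly two reduced points, each of multiplicity $2$. Consequently, for part (1) it suffices to exhibit two distinct singular points of the claimed form: these then exhaust $\Sing(\Theta_P)$, and in particular one never has to separately argue that other candidates, such as $\pi^*G\otimes\mathcal O_{\widetilde C}(\tilde p_1^++\tilde p_2^+)$ (which also satisfies $\Nm=K_C$ with $h^0=2$), fail to be singular.

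First I would set up the Prym theta divisor on $\widetilde C$. As $\pi$ is \'etale, $K_{\widetilde C}=\pi^*K_C$, and the principal polarization is represented in $\operatorname{Pic}^8(\widetilde C)$ by $\Theta_P=\{L\mid \Nm(L)=K_C,\ h^0(L)\ge 2,\ h^0(L)\ \text{even}\}$; translating by $\pi^*\kappa_C$ for any theta characteristic $\kappa_C$ of $C$ yields a symmetric representative $\Theta_P-\pi^*\kappa_C\subseteq P\subseteq\operatorname{Pic}^0(\widetilde C)$, symmetric because $\pi^*\kappa_C$ is a fixed point of the involution $L\mapsto K_{\widetilde C}\otimes L^{-1}$, which preserves $\Theta_P$, and the choice of $\kappa_C$ is immaterial once one multiplies by $2$. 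Next I would record the geometry of $C=N(D)$: it carries the unique $g^1_3$, namely $G=\nu^*\mathcal O_D(1)(-p_1-p_2)$, cut out by the pencil of lines through the node of $D$ (cf.\ \cite[p.~207]{acgh}), and from $\omega_D=\mathcal O_D(2)$ and $\nu^*\omega_D=\omega_C(p_1+p_2)$ one obtains the key relation $K_C\cong G^{\otimes 2}\otimes\mathcal O_C(p_1+p_2)$. Put $L_\pm:=\pi^*G\otimes\mathcal O_{\widetilde C}(\tilde p_1^{\pm}+\tilde p_2^{\mp})$. Then $\Nm(L_\pm)=G^{\otimes 2}\otimes\mathcal O_C(p_1+p_2)=K_C$, and $h^0(L_\pm)=h^0(\pi^*G)=h^0(G)+h^0(G\otimes\eta_C)=2$, once one checks that $G\otimes\eta_C$ is not effective (this is forced: otherwise $h^0(L_\pm)$ would be odd and $\ge 3$, hence $L_\pm$ would lie in the odd component of $\Nm^{-1}(K_C)$ or be a multiplicity-$\ge 3$ point, contradicting the description of $\Sing(\Theta_P)$ via $J(\Sigma)$). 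Thus $L_\pm\in\Theta_P$.

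Finally I would apply Mumford's description of $\Sing(\Theta_P)$ for an \'etale double cover \cite{mprym}: the stable singularities $\{h^0\ge 4\}$ are absent here, and a point $L$ with $h^0(L)=2$ is singular provided it has the exceptional form $L\cong\pi^*M\otimes\mathcal O_{\widetilde C}(B)$ with $h^0(M)\ge 2$ and $B\ge 0$ disjoint from its conjugate $\sigma^*B$ under the covering involution $\sigma$. Each $L_\pm$ is visibly of this form, with $M=G$ and $B=\tilde p_1^{\pm}+\tilde p_2^{\mp}$, so $L_+,L_-\in\Sing(\Theta_P)$; being two distinct points they exhaust $\Sing(\Theta_P)$, which is \eqref{equ:thetasing5}. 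Part (2) is then bookkeeping: $\Sing(\Theta_P-\pi^*\kappa_C)=\{L_\pm-\pi^*\kappa_C\}$ because this representative is symmetric, and applying $2_*$ while using $\kappa_C^{\otimes 2}=K_C=G^{\otimes 2}\otimes\mathcal O_C(p_1+p_2)$ and $\pi^*\mathcal O_C(p_1+p_2)=\mathcal O_{\widetilde C}(\tilde p_1^++\tilde p_1^-+\tilde p_2^++\tilde p_2^-)$ collapses $2(L_\pm-\pi^*\kappa_C)$ to $\mathcal O_{\widetilde C}(\tilde p_1^+-\tilde p_1^-+\tilde p_2^--\tilde p_2^+)^{\otimes\pm1}$, which is \eqref{equ:thetasing4}.

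The step I expect to be the main obstacle is the correct application of Mumford's singularity criterion in this mildly degenerate (nodal-quintic) situation: verifying that $L_\pm$ is genuinely a singular, rather than a smooth, point of $\Theta_P$, which at bottom amounts to computing the rank of the multiplication map $H^0(\widetilde C,L_\pm)\otimes H^0(\widetilde C,\sigma^*L_\pm)\to H^0(\widetilde C,K_{\widetilde C})=\pi^*H^0(C,K_C)\oplus H^0(\widetilde C,K_{\widetilde C})^{-}$ and checking that its image lies in the invariant summand; and, relatedly, pinning down that $h^0(\pi^*G)=2$, i.e.\ that $G\otimes\eta_C$ is not effective. If one prefers not to use the shortcut that $\#\Sing(\Theta_P)=2$, the same criterion must in addition be used to show that $\pi^*G\otimes\mathcal O_{\widetilde C}(\tilde p_1^++\tilde p_2^+)$ is a smooth point of $\Theta_P$ — the delicate comparison that separates the two $\sigma$-orbits of candidate singular points.
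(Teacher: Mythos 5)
Your architecture is genuinely different from the paper's, and it has a gap at exactly the step you flag as the main obstacle. The paper never certifies directly that the mixed-sign bundles are singular points: it uses only the ``only if'' direction of Mumford's classification (every point of $\Sing(\Theta_{P_{\widetilde C/C}})$, which is nonempty and $\iota^*$-stable by the trigonal construction, must be of the form $\pi^*M\otimes\mathcal O_{\widetilde C}(B)$ with $h^0(M)\ge 2$, $B\ge 0$), enumerates the candidates --- either $M$ a theta characteristic with $h^0(M)\ge 2$, ruled out because $D$ is nodal and not cuspidal, or $M=G$ and $B$ one of the four lifts of $p_1+p_2$ --- and then eliminates the same-sign pair by showing $h^0(\pi^*G\otimes\mathcal O_{\widetilde C}(\tilde p_1^++\tilde p_2^+))$ is odd; what remains is the $\iota^*$-orbit $\{L_+,L_-\}$, with $L_\pm=\pi^*G\otimes\mathcal O_{\widetilde C}(\tilde p_1^\pm+\tilde p_2^\mp)$. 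You instead want to verify directly that $L_\pm$ are singular and conclude by the count $\#\Sing(\Theta_{J(\Sigma)})=2$. That route could be made to work, but your verification that $h^0(L_\pm)=2$ is not sound as written. First, the equality $h^0(L_\pm)=h^0(\pi^*G)$ is asserted without argument (twisting by the effective divisor $\tilde p_1^++\tilde p_2^-$ could a priori increase $h^0$). More seriously, the justification that $G\otimes\eta$ is not effective is circular: you argue that otherwise $L_\pm$ would lie in the odd component of $\Nm^{-1}(K_C)$ or be a point of multiplicity $\ge 3$, ``contradicting the description of $\Sing(\Theta_P)$ via $J(\Sigma)$'' --- but at that stage you have not yet shown that $L_\pm$ lies on the Prym theta divisor at all, so there is nothing to contradict; if $h^0(L_\pm)$ were odd, the only consequence would be that your chosen candidates are off the even component $P^+$ and your strategy fails. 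What your sufficiency argument actually needs is precisely the evenness (indeed $h^0(L_\pm)=2$), and once that is known the exceptional form does give a singular point automatically --- the multiplication map $H^0(L_\pm)\otimes H^0(\iota^*L_\pm)\to H^0(K_{\widetilde C})$ lands in $\pi^*H^0(K_C)$ for any bundle of the form $\pi^*M\otimes\mathcal O_{\widetilde C}(B)$ with $H^0(L_\pm)=\pi^*H^0(M)\cdot s_B$, so the ``invariant summand'' check you worry about is not the issue. In short, your shortcut of never discussing $\pi^*G\otimes\mathcal O_{\widetilde C}(\tilde p_1^++\tilde p_2^+)$ does not avoid the parity computation the paper defers; it relocates it from the same-sign pair (odd) to the mixed pair (even) and then assumes the answer.

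Two smaller points. Your count of singular points imports from Theorem \ref{theo_CollMurre} that $\Sigma$ has no vanishing theta-null, hence exactly two distinct double points; the paper deliberately avoids this (its proof allows ``not necessarily distinct'' points, using only that the trigonal construction makes $P_{\widetilde C/C}$ the Jacobian of a non-hyperelliptic genus $4$ curve, and recovers the full set as $\{\tilde L,\iota^*\tilde L\}$), which keeps the Prym-theoretic argument more self-contained within this subsection. And if you do use the count, your exhaustion also needs $L_+\ne L_-$, which you assert but do not check (it is easy: equality would force $\tilde p_1^++\tilde p_2^-\sim\tilde p_1^-+\tilde p_2^+$, i.e.\ a $g^1_2$ on $\widetilde C$, which is impossible, but it should be said).
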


\begin{proof}
We first note that $K_C=G^{\otimes 2}\otimes \mathcal O_C(p_1+p_2)$. This follows from $K_D=\mathcal O_{D}(2)$, the fact that the $g^1_3$ is given by the lines through the node of $D$, and normalization.
Hence the second claim follows from the first, which we will prove now.
To start, the trigonal construction, recalled below, implies that $P_{\widetilde C/C}$ is a Jacobian of a genus $4$ tetragonal curve, which is not hyperelliptic.
Therefore,  $\Sing(\Theta_{P_{\widetilde C/C}})$ consists of two (not necessarily distinct) points, which must be exchanged by $\iota^*$.
Hence, via Mumford's description of the Prym \cite{mprym},
$$
\operatorname{Sing}\Theta_{P_{\widetilde C/C}}=\{\tilde L,\iota^*\tilde L\}\subseteq \operatorname{Pic}^8(\widetilde C)
$$
for some $\tilde L\in \operatorname{Pic}^8(\widetilde C)$ with $\operatorname{Nm}\tilde L=K_C$ and $h^0(\tilde L)\equiv 0 \pmod 2$.
Moreover, since $\dim \operatorname{Sing}\Theta_{P_{\widetilde C/C}} =0\ge g-5$, Mumford's lemma~\cite[p.~345]{mprym}
implies that
$$
\tilde L=\pi^*M\otimes \mathcal O_{\widetilde C}(B), \ \ \iota^*\tilde L=\pi^*M\otimes \mathcal O_{\widetilde C}(\iota^*B)
$$
with  $M$ a line bundle on $C$ with $h^0(M)\ge 2$,  and $B\ge 0$ an effective divisor on $\widetilde C$.  Since $C$ is not hyperelliptic, we know that $\deg M\ge 3$,  so that there are two cases to consider:

\begin{enumerate}

\item $\deg M=4$, $B=0$ (in which case $\tilde L=\pi^*M$, and $M$ is a theta characteristic with sections; recall $K_C=\operatorname{Nm}\tilde L =\operatorname{Nm}\pi^*M=M^{\otimes 2}$);

\item $\deg M=3$,  $\deg B=2$ (in which case $M=G$, and $B=\tilde p_1^\pm +\tilde p_2^\pm$).
\end{enumerate}

Case (1) can be ruled out since $D$ is nodal, and therefore does not have a unibranched singularity.
Indeed, the theta characteristic $M$ is in particular a $g^1_4$, so it is either of the form $M=G(q)$ for some point $q$, or $M=G(p_1+p_2-q)$ (see \cite[p.208]{acgh}, and note that since $\nu^*\mathcal O_D(1)=G(p_1+p_2)$, the line bundle $M=G(p_1+p_2-q)$  corresponds to the linear system of lines in the plane through $q$).  The first gives   $G^{\otimes 2}(2q)=K_C=G^{\otimes 2}(p_1+p_2)$, which is only possible if $2q=p_1+p_2$; in other words, $q=p_1=p_2$, and the singularity of $D$ is unibranched, a contradiction. The second gives $G^{\otimes 2}(2p_1+2p_2-2q)=K_C=G^{\otimes 2}(p_1+p_2)$, which for the same reason would force $q=p_1=p_2$, and therefore the singularity of  $D$ to be unibranched, again a contradiction.

For Case (2),
it suffices by the symmetries to rule out
 $\tilde L=\pi^*G\otimes \mathcal O_{\widetilde C}(\tilde p_1^+ +\tilde p_2^+)$; this can be accomplished by showing that this line bundle has an odd dimensional space of global sections, since by assumption $h^0(\tilde L)\equiv 0\pmod 2$.    To this end, observe that $
\tilde L=\pi^*G\otimes \mathcal O_{\widetilde C}(\tilde p_1^+ +\tilde p_2^+)=\pi^*\nu^*\mathcal O_D(1)\otimes \mathcal O_{\widetilde C}(-\tilde p_1^--\tilde p_2^-) =(\tilde \nu)^* (\pi_D)^*\mathcal O_D(1)\otimes \mathcal O_{\widetilde C}(-\tilde p_1^--\tilde p_2^-)$,
so that $(\pi_D\tilde \nu)_*\tilde L=\mathcal O_D(1)\otimes (\pi_{D})_{*}\tilde \nu_* \mathcal O_{\widetilde C}(-\tilde p_1^--\tilde p_2^-)=\mathcal O_D(1)\otimes (\pi_{D})_{*}\mathcal I_{\tilde p^-}$, where $\mathcal I_{\tilde p^-}$ is the ideal sheaf of $\tilde p^-$ in $\mathcal O_{\widetilde D}$.
Applying $(\pi_{D})_{*}$ to the short exact sequence  $0\to \mathcal I_{\tilde p^-}\to \mathcal O_{\widetilde D}\to \mathbb C_{\tilde p^-}\to 0$, and tensoring by  $\mathcal O_D(1)$, we obtain
$$
0\to \mathcal O_D(1)\otimes (\pi_{D})_{*}\mathcal I_{p-}\to\mathcal O_D(1) \oplus( \mathcal O_D(1)\otimes \eta_D) \to \mathcal O_D(1)\otimes \mathbb C_p\to 0.
$$
The map on the right is clearly surjective on global sections, and thus we have
$$
h^0(\tilde L)=h^0((\pi_{D}\tilde \nu)_*\tilde L)= h^0(\mathcal O_D(1)\otimes (\pi_{D})_{*}\mathcal I_{p-})=3+h^0(\mathcal O_D(1)\otimes \eta_D)-1=3,
$$
completing the proof.
\end{proof}

\subsubsection{The trigonal construction in the $A_1$-case}
Again, for notational convenience, let us set $C=N(D)$, and $\widetilde C=N(\widetilde D)$.
Since  $C$ is trigonal,
 it  follows from Recillas' trigonal construction \cite{rec} that there exists a non-hyperelliptic
tetragonal genus $4$ curve $\Sigma_1$ such that the Prym variety $P_{\widetilde C/C}=\Prym (\widetilde C \to C)$ is isomorphic to the Jacobian of $\Sigma_1$:
\begin{equation}\label{equ:trigonal}
(P_{\widetilde C/C},\Theta_{P_{\widetilde C/C}}) \cong (J(\Sigma_1), \Theta_{\Sigma_1}).
\end{equation}
We do not want to go into the details of the trigonal construction, but, for future reference, we want to explain how the curve $\Sigma_1$ can be constructed
from the double cover $\pi: \widetilde C \to C$ and the $g^1_3$ on $C$, namely as
\begin{equation}\label{equ:trigonalcon}
\Sigma_1=\{\tilde p_1+\tilde p_2+\tilde p_3\in \operatorname{Sym}^3\widetilde C: \exists p\in \mathbb P^1, \ \ \pi(\tilde p_1+\tilde p_2+ \tilde p_3)=t^{-1}(p)\}/\sim
\end{equation}
where $t: C \to \mathbb P^1$ is the $3:1$ map given by the $g^1_3$, $\iota: \widetilde C \to \widetilde C$ is the covering involution, and the equivalence relation is given by
\begin{equation*}
\tilde p_1+\tilde p_2+\tilde p_3\sim \iota(\tilde p_1)+\iota(\tilde p_2)+\iota(\tilde p_3).
\end{equation*}

Both $\Sigma_1$ and the associated $(2,3)$-curve are smooth complete intersection genus $4$ curves in $\mathbb P^3$.  Here we give a direct proof via the trigonal construction that the two curves are isomorphic.

\begin{pro}\label{pro:IdentificationCollMurre}
The trigonal construction gives an isomorphism
$\Sigma_1 \cong \Sigma$, where $\Sigma$ is the $(2,3)$-curve associated to $X$.
\end{pro}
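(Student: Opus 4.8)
The plan is to construct the isomorphism directly, by matching tetragonal structures on the two curves. Recall first that projecting $X$ from its node $x_0$ identifies $\widetilde X=\mathrm{Bl}_{x_0}X$ with $\mathrm{Bl}_\Sigma\PP^3$, so that a point of $\Sigma$ is a line of $X$ through $x_0$; since $x_0\notin\ell$, the line $\ell$ maps isomorphically to a line $\bar\ell\subset\PP^3$, and I claim $\bar\ell\cap\Sigma$ consists of exactly the two points corresponding to the two lines $m,m'$ of $X$ through $x_0$ lying in the plane $\langle\ell,x_0\rangle$ -- equivalently, the two components of the residual conic over the node $p$ of $D$. The pencil of planes through $\bar\ell$ then cuts out on $\Sigma$ the fixed divisor $[m]+[m']$ plus a moving divisor of degree $4$, and this defines the tetragonal map $\rho\colon\Sigma\to\PP^1$. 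On the other side, $\Sigma_1$ is presented by \eqref{equ:trigonalcon} as a curve of degree $4$ over the base $\PP^1$ of the trigonal map $\phi\colon C\to\PP^1$. The first point is to identify these two copies of $\PP^1$: a $3$-plane $\PP^3_t\subset\PP^4$ containing both $\ell$ and $x_0$ corresponds to a line $L_t\ni p$ in $\PP^2$ (a point of the base of $\phi$) and, via projection from $x_0$, to a plane through $\bar\ell$ (a point of the base of $\rho$); and $S_t:=X\cap\PP^3_t$ is a cubic surface whose only singularity, for general $t$, is a node at $x_0$.

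Over such a $t$, the fibre $\phi^{-1}(t)$ is the set of three planes $\Pi_1,\Pi_2,\Pi_3\supset\ell$ inside $\PP^3_t$ for which $S_t\cap\Pi_i=\ell\cup a_i\cup a_i'$ with $a_i,a_i'$ lines, a point of $\widetilde C$ over $q_i$ being a choice $b_i\in\{a_i,a_i'\}$ of one of these two lines. I would then define the comparison map over $\PP^1$ by sending a general line $m\subset X$ through $x_0$ (so $m\cap\ell=\varnothing$, hence $m\subset\langle\ell,m\rangle=\PP^3_t$ with $t=\rho([m])$) to the point $\tilde q_1+\tilde q_2+\tilde q_3$ of $\Sym^3\widetilde C$ in which $\tilde q_i$ is the line of $\{a_i,a_i'\}$ met by $m$: indeed $m$ meets each plane $\Pi_i$ in a single point, which lies on $S_t\cap\Pi_i$ and off $\ell$, hence on $a_i$ or on $a_i'$. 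The image of $q_1+q_2+q_3$ in $C$ is $\phi^{-1}(t)$, so this point lies on $\Sigma_1$, and the only ambiguity -- the labelling of the pairs $\{a_i,a_i'\}$ -- is exactly the identification by the covering involution $\iota$ built into \eqref{equ:trigonalcon}. Since $\Sigma$ is a smooth curve and $\Sigma_1$ is projective, the resulting rational map $\Sigma\dashrightarrow\Sigma_1$ is a morphism over $\PP^1$.

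It remains to see that this morphism is an isomorphism. Because it is compatible with the maps of degree $4$, namely $\rho$ and $\Sigma_1\to\PP^1$, it suffices to check that it is bijective on a general fibre, i.e.\ that the incidence ``$m$ meets $b_i$'' sets up a bijection between the four lines of $S_t$ through $x_0$ not meeting $\ell$ and the four $\iota$-classes of triples. I would verify this on the minimal resolution $\widetilde S_t=\mathrm{Bl}_{x_0}S_t$, which is a weak del Pezzo surface of degree $3$; projection from $x_0$ identifies it with the blow-up of $\PP^2$ at the six points $\Sigma\cap\PP^2_t$, two of which, $[m]$ and $[m']$, lie on the image of $\bar\ell$, and three of which are collinear (the line through them being the $(-2)$-curve over the node). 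Under this identification the six lines of $S_t$ through $x_0$ become the six exceptional curves, the lines of $S_t$ meeting $\ell$ become the strict transforms of the lines through pairs of the six points, and ``meets'' becomes ``the point lies on the line''; a direct combinatorial count then yields the required bijection.

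I expect this last step to be the main obstacle: pinning down the precise configuration of the six points (the collinearity forced by the node, and which of them lie on the image of $\bar\ell$), then carrying out the combinatorial matching, and finally checking that the rational map is correctly normalized with respect to $\iota$ and behaves well over the two base points $[m],[m']$ of $\rho$. As a consistency check -- and as a fallback argument -- one can observe that $J(\Sigma)$ and $J(\Sigma_1)$ are both the compact part of the torus rank $1$ degeneration $IJ(X)$: the former by Theorem \ref{theo_CollMurre}, via $\widetilde X=\mathrm{Bl}_\Sigma\PP^3$, and the latter by the trigonal construction together with the results of \cite{abh}; hence $(J(\Sigma),\Theta_\Sigma)\cong(J(\Sigma_1),\Theta_{\Sigma_1})$ as principally polarized abelian varieties, and $\Sigma\cong\Sigma_1$ by the Torelli theorem in genus $4$. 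The content of this subsection, however, is the explicit geometric identification described above.
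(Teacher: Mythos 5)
Your construction is essentially the proof in the paper: both arguments identify $\Sigma$ and $\Sigma_1$ fiberwise over the pencil of $\PP^3$'s containing $\langle \ell,x_0\rangle$, by sending a line $m\subset X$ through $x_0$ to the triple consisting of one component from each residual line-pair over $\phi^{-1}(t)$ that $m$ meets, the ambiguity being exactly the involution $\iota$ in \eqref{equ:trigonalcon}. The ``direct combinatorial count'' you defer is precisely what the paper extracts from the Bruce--Wall description of the $21$ lines on a one-nodal cubic surface: with $\ell=\ell_{12}$ the residual pairs are $(\ell_{34},\ell_{56}),(\ell_{35},\ell_{46}),(\ell_{36},\ell_{45})$, the line $\ell_k$ ($k=3,\dots,6$) meets exactly the member of each pair whose index contains $k$, and the four resulting triples represent the four $\iota$-classes, each exactly once; so the fiberwise bijection you want does hold and is a one-line check. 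One correction to the model you propose for carrying this out: projecting the nodal surface $S_t$ from its node exhibits $\widetilde S_t$ as the blow-up of $\PP^2$ at six points lying on a smooth conic, whose strict transform is the $(-2)$-curve over the node (its class is $2H-E_1-\cdots-E_6$, meeting every exceptional curve once, as it must since every line through the node meets the projectivized tangent cone); it is not the blow-up at six points with three collinear, as you assert. Fortunately the incidences you actually need, namely $E_k\cdot(H-E_i-E_j)=\delta_{ki}+\delta_{kj}$ and the pairing of disjoint-index lines into the residual pairs with $\ell$, do not depend on this, so your route goes through once the configuration is stated correctly. Your Torelli fallback is also valid and not circular (the identification of the compact part with $J(\Sigma)$ rests on $\widetilde X\cong \operatorname{Bl}_\Sigma\PP^3$ and degeneration of Hodge structures, not on this proposition), but, as you note, it bypasses the explicit geometric identification that is the point of the statement.
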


Before we give the proof of this proposition it is useful to review the Bruce--Wall description of the lines on a
cubic surface $S$ with exactly one singularity, which is of type $A_1$.
There are $6$ lines (counted with multiplicity $2$ as points of the Fano scheme of lines) $\ell_1,\cdots,\ell_6$ passing through the singular point of $S$.
There are $\binom{6}{2}=15$ lines $\ell_{ij}$, $1\le i<j\le 6$, determined by the plane $\langle \ell_i,\ell_j\rangle$ (i.e., the residual line of intersection, which meets $\ell_i$ and $\ell_j$ away from their point of intersection, since otherwise  $\ell_{ij}$ would pass through the singular point of the surface).
These  $21$  lines (or $27$ counted with multiplicity) are all of the lines on $S$.    Their further  incidence can be described as follows.   The line $\ell_{ij}$ meets  $\ell_k$ if and only if $k=i,j$, and meets $\ell_{kp}$ if and only if $i,j,k,p$ are distinct.  Moreover, as can be deduced from what was just explained, the lines~$\ell_{ij}$, $\ell_{kp}$, and $\ell_{qr}$ are coplanar if $i,j,k,p,q,r$ are all distinct.

\begin{proof}[Proof of Proposition \ref{pro:IdentificationCollMurre}]
We want to show that the genus $4$ curve $\Sigma_1$ from the trigonal construction is identified with the $(2,3)$-curve $\Sigma$.   We know that points on the $(2,3)$-curve $\Sigma$ correspond to lines $\ell'$ in $X$ passing through the singular point.  Let $\ell$ be the non-special line we chose in $X$ in order to define the \'etale double cover $\widetilde D\to D$, which gives rise to the curve $\Sigma_1$, as explained above.  If we choose a general such $\ell'\in \Sigma$ (general line through the singular point), then we will get a
cubic surface with exactly one singularity, which is of type $A_1$, defined by intersecting $X$ with the $\mathbb P^3$ given by $\langle \ell ,\ell' \rangle$.   This $\mathbb P^3$ corresponds under projection from $\ell$ to a line in $\mathbb P^2$  passing through the $A_1$-singularity of the plane quintic $D$.  This line has $3$ residual points on $D$, giving an effective divisor $B = p_{34}+p_{35}+p_{36}$ in the $g^1_3$ on the normalization $C$ of $D$.    Now in the notation of Bruce--Wall, let us take $\ell$ to be $\ell_{12}$, and $\ell'$ to be $\ell_6$.   From the Bruce--Wall description, there are $4$ pairs of coplanar lines meeting $\ell=\ell_{12}$, namely,  $(\ell_{34},\ell_{56})$, $(\ell_{35},\ell_{46})$, $(\ell_{36}, \ell_{45})$, and $(\ell_1,\ell_2)$.  We have chosen the labeling so that the first pair of lines maps to  $p_{34}$, the second to $p_{35}$ and the third to $p_{36}$.   Note that  the lines $\ell_{34},\ell_{56}, \ell_{35},\ell_{46},\ell_{36}, \ell_{45}$ correspond to the points in $\widetilde C$ lying over the respective points of $C$, which correspond to the pairs  $(\ell_{34},\ell_{56})$, $(\ell_{35},\ell_{46})$, $(\ell_{36}, \ell_{45})$.  The line $\ell_6$  picks out the lines $\ell_{56}$, $\ell_{46}$ and $\ell_{36}$ (it meets these lines).    The point $\ell_{56}+\ell_{46}+\ell_{36}\in \operatorname{Sym}^3(\widetilde C)$  is by definition a point of $\Sigma_1$, the curve obtained from the trigonal construction (note the equivalence in the construction means that if we chose lines not meeting $\ell_6$, i.e., indices not including $6$, instead of the indices including $6$,  we would get the same point of the genus 4 curve).  This process is reversible, and gives a birational map (defined on general points) $\Sigma\dashrightarrow \Sigma_1$.  This of course extends to an isomorphism.
\end{proof}
\begin{rem}
One can reverse the trigonal construction and thus show that the generic point in $\bf A$ can be obtained as the intermediate Jacobian of a cubic with a unique
$A_1$-singularity.
\end{rem}

\subsection{Cubic threefolds with an $A_3$-singularity and the locus $\mathbf B$}

We will now deal with the locus ${\bf B}$.
Again, in this case it is very instructive to understand the geometry of an $A_3$ degeneration in concrete terms.
That is, we give a  second geometric proof of Theorem \ref{prop:locusB} via the theory of Prym varieties.
In particular, this is the first example where we encounter the phenomenon of taking a stable reduction of a family of plane quintics with allowable singularities and
inserting a tail.
Let $\Delta\subset\CC$ be the unit disk, and let $\mathscr X\to\Delta$ be a family of cubic threefolds such that $X_0$ has a unique $A_3$-singularity, while all other fibers are smooth. By abuse of notation let $\widetilde{IJ}:\Delta\to\AV$ denote the induced intermediate Jacobian map --- which we know is a morphism. We want to show
that $\widetilde{IJ}(X_0)\in {\bf B}$.

The extension problem can be formulated in terms of double covers of plane quintics, so that $\widetilde{IJ}(\mathscr X)$ is then the Prym of the family $\widetilde{\mathscr D}\to\mathscr D\to\Delta$ of
\'etale double covers of plane quintics,
such that the central fiber $D$
has a unique singularity, which is of type $A_3$, while
$D_t$ is smooth for $t\ne 0$.
As was already mentioned in Lemma~\ref{lem:irreducible}, a plane quintic curve with exactly one singularity, which is of type $A_3$, is irreducible.
We will compute the intermediate Jacobians by computing the Pryms for some stable reduction $\pi:\widetilde{\mathscr C}\to\mathscr C\to\Delta$ of this family --- so after a finite base change, over the punctured disk $\Delta^0$ the families $\widetilde{\mathscr D}$ and $\widetilde{\mathscr C}$ agree.

To describe the central fiber $\widetilde C\to C$ of $\widetilde{\mathscr C}$ in more detail, recall from \cite[\S 6]{has}, see also Section \ref{sec:proof}, that the $A_3$-singularity
of $D$ is replaced in the stable reduction by a ``tail'' $T$, that is an elliptic bridge. Since $D$ has a unique $A_3$-singularity, its partial normalization $N(D)$ at this singularity
is smooth, and thus $C$ has two irreducible components $T$ and $N(D)$ meeting in two points. Clearly $N(D)$ has genus $4$. Let $(x,y)\in T$ and $(p,q)\in N(D)$ be the
points where the tail $T$ and the normalization $N(D)$ meet.  The pointed curve $(N(D),p,q)\in \calM_{4,2}$ has the following properties: the pencil of lines
through the $A_3$-singularity
defines a half-canonical $g^1_3$ on $N(D)$. This implies that the canonical model of $N(D)$ lies on a quadric cone.
We also note that the $g^1_3$ contains a member of the form $p+q+r$  for some point on $N(D)$ and thus the three points $p,q$ and $r$ lie on a ruling of the quadric cone.

We now describe the curve $\widetilde C$.  To begin, the cover $\widetilde D\to D$ is nontrivial (Remark \ref{R:B1}), and~$\widetilde D$ is irreducible 
(Lemma \ref{lem:irreducible}).
It follows that  $\widetilde C$ has three irreducible components, $T_1$ and $T_2$ which are interchanged by the covering
involution, and $N(\widetilde D)$, the normalization of $\widetilde D$, which gives a nontrivial \'etale double cover $N(\widetilde D)\to N(D)$.
We denote by $(p_1,q_1,p_2,q_2)\in N(\widetilde D)$ the nodes, and we can now compute the associated Prym of the cover $\widetilde C\to C$.

First note that the geometric genus of $C$ (by which we mean the sum of the genera of the irreducible components of the normalization) is equal to $1+4=5$, while the geometric genus of $\widetilde C$ is $1+1+7=9$. Thus the abelian part of the Prym has
dimension $4=9-5$,  and the torus rank is equal to $1$; thus the Prym lies in $\partial\calA_5'$.
The abelian part of the Prym is clearly equal to $T\times  P_{N(\widetilde D)/N(D)}$, where $ P_{ N(\widetilde D)/N(D)}$ is the Prym variety associated to the double
cover $ N(\widetilde D)\to N(D)$,
see also \cite[Thm.~5.4]{beauville}.
Since $N(D)$ is trigonal, one can use Recillas' construction \cite{rec}
to determine the Prym variety $ P_{ N(\widetilde D)/N(D)}$.
This shows in particular that the Prym variety $ P_{ N( \widetilde D)/N(D)}=J(Y)$ is a Jacobian of a genus $3$ curve $Y$. Moreover, since $N(D)$ has a half-canonical $g^1_3$ its Jacobian has a vanishing theta null,
and the Schottky--Jung proportionality (see Mumford~\cite{mprym}) then implies that $ P_{ N(\widetilde D)/N(D)}$ also has a vanishing theta null, and thus $Y$ is hyperelliptic. The crucial step is now to determine the
extension datum. The following lemma, together with the fact that $Y$ is hyperelliptic, reproves Theorem \ref{prop:locusB} (with $E=T$ and $C'=Y$).
\begin{lem}
The extension datum is of the form
\begin{equation}
\left(\mathscr O_T(x-y),\mathscr O_{Y}(\xi_1 - \xi_2)\right)\in T\times \Theta_Y
\end{equation}
where $x,y$ are the nodes on $T$, and $\xi_1, \xi_2 \in Y$.
\end{lem}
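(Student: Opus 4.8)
The plan is to compute $\widetilde{IJ}(X_0)$ directly from the admissible cover $\widetilde C\to C$ furnished by the stable reduction, by combining the standard node--position description of the extension data of a degenerating generalized Jacobian with the passage to the anti--invariant (Prym) part. Since $\widetilde{IJ}^{V}$ is already a morphism near $X_0$ (Corollary~\ref{cor:ExtIsol}) and we are nowhere near a Friedman--Smith locus (Proposition~\ref{prop:nonFS}), $\widetilde{IJ}(X_0)$ is exactly the Prym of $\widetilde C\to C$, which can be read off combinatorially. First I would record the dual graphs: $C=T\cup N(D)$ meets in the two points $\{p\sim x,\,q\sim y\}$, so its dual graph $\Gamma$ is two vertices joined by two edges, while upstairs $\widetilde C=T_1\cup T_2\cup\widetilde N(D)$, with $T_i$ glued to $\widetilde N(D)$ along $\{x_i\sim p_i,\,y_i\sim q_i\}$, the involution $\iota$ interchanging $T_1\leftrightarrow T_2$ (via $x_1\leftrightarrow x_2$, $y_1\leftrightarrow y_2$) and acting on $\widetilde N(D)$ as the covering involution $\sigma$ (so $\sigma(p_1)=p_2$ and $\sigma(q_1)=q_2$). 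Then $H_1(\widetilde\Gamma,\ZZ)$ is spanned by the loop $\gamma_1$ through $T_1$ and the loop $\gamma_2$ through $T_2$; $\gamma_1+\gamma_2$ is $\iota$--invariant and descends to the loop of $\Gamma$, while $\gamma_1-\gamma_2$ is anti--invariant and is the vanishing $\CC^{*}$ of the Prym, confirming the torus rank $1$ already noted in the text.

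Next I would apply the classical recipe (Mumford--Namikawa; cf.\ the $A_1$ case, Lemma~\ref{lem:extensiondata}, and \cite{abh}): the abelian part of the generalized Jacobian of $\widetilde C$ is $J(T_1)\times J(T_2)\times J(\widetilde N(D))$, and the extension class of its torus $\Hom(H_1(\widetilde\Gamma,\ZZ),\CC^{*})$ assigns to a loop the sum over its edges of the difference of the two branch points of the node on the normalization; thus, up to orientation, $b(\gamma_i)=\mathcal O_{T_i}(x_i-y_i)\boxtimes\mathcal O_{\widetilde N(D)}(q_i-p_i)$. The Prym of $\widetilde C\to C$ is the anti--invariant part: its abelian part is the anti--diagonal $\cong T$ inside $J(T_1)\times J(T_2)$ together with $P_{\widetilde N(D)/N(D)}$, and its extension datum is the class attached to $\gamma_1-\gamma_2$. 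Computing: the $J(T_1)\times J(T_2)$--component of $b(\gamma_1-\gamma_2)$ is $\bigl(\mathcal O_{T_1}(x_1-y_1),\,-\mathcal O_{T_2}(x_2-y_2)\bigr)$, which under the tail identifications $T_1\cong T\cong T_2$ becomes $\bigl(\mathcal O_T(x-y),-\mathcal O_T(x-y)\bigr)$, i.e.\ the point $\mathcal O_T(x-y)$ of the anti--diagonal; and the $J(\widetilde N(D))$--component is $\mathcal O_{\widetilde N(D)}(p_1-q_1-p_2+q_2)$, which is already $\sigma$--anti--invariant (since $\sigma$ swaps $p_1\leftrightarrow p_2$ and $q_1\leftrightarrow q_2$), hence lies in $P_{\widetilde N(D)/N(D)}=J(Y)$ with no further projection needed. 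This yields the factor $\mathcal O_T(x-y)$ of the lemma.

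It then remains to see that the class $\mathcal O_{\widetilde N(D)}(p_1-q_1-p_2+q_2)\in J(Y)$ is of the form $\mathcal O_Y(\xi_1-\xi_2)$. For this I would invoke Recillas' trigonal construction \cite{rec}, \eqref{equ:trigonalcon}, together with the fact noted in the text that the half--canonical $g^1_3$ on $N(D)$ contains a member $p+q+r$. Writing $r_1,r_2$ for the two preimages of $r$ in $\widetilde N(D)$, the four points of $Y$ over the corresponding point of $\PP^1$ are the $\iota$--classes of lifts of $p+q+r$; in particular $\xi_1:=p_1+q_2+r_1$ and $\xi_2:=p_2+q_1+r_1$ give two distinct such points of $Y$, and under the Recillas isomorphism $J(Y)\cong P_{\widetilde N(D)/N(D)}$ the class $\mathcal O_Y(\xi_1-\xi_2)$ corresponds (up to the harmless factor $2$ inherent in Prym normalizations) to $\mathcal O_{\widetilde N(D)}(\xi_1-\xi_2)=\mathcal O_{\widetilde N(D)}(p_1-q_1-p_2+q_2)$; since $Y$ is hyperelliptic, $\Theta_Y=Y-Y$, so this point lies in $\Theta_Y$, as claimed. (Alternatively, once the abelian part $E\times J(Y)$ with $Y$ hyperelliptic is known, Corollary~\ref{cor:finalextension} together with Theorem~\ref{theo:codimension1} already forces the point into $\overline{\mathbf B}$, so only the $\mathcal O_T(x-y)$ computation is strictly needed.)

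I expect the principal difficulty to be purely bookkeeping: keeping the orientations of $\gamma_1,\gamma_2$ and the signs of the node differences consistent throughout, and pinning down the precise form of the Recillas isomorphism against which the explicit lifts $\xi_1,\xi_2$ are matched. None of the $\pm$ or factor--$2$ ambiguities matter here, since $(T,\pm b)$ yield isomorphic degenerations and the lemma only records the shape of the extension datum.
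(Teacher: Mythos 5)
Your proof is correct and takes essentially the same route as the paper: the paper simply cites \cite[Prop.~1.5]{abh} for the extension datum $\left(\mathscr O_T(x-y),\,\mathscr O(p_1+q_2-p_2-q_1)\right)$, which is exactly what your dual-graph/anti-invariant computation rederives. The remaining steps coincide: both identify this class with $\mathscr O_Y(\xi_1-\xi_2)$ via the explicit points $\xi_1=p_1+q_2+\tilde r$, $\xi_2=p_2+q_1+\tilde r$ of $Y\subset\operatorname{Sym}^3(\widetilde N(D))$ coming from Recillas' construction \eqref{equ:trigonalcon}, and conclude with $\Theta_Y=Y-Y$ since $Y$ is hyperelliptic.
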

\begin{proof}
It follows from  \cite[Prop.~1.5]{abh} that the extension datum is of the form
\begin{equation*}
\left(\mathscr O_T(x-y),\mathscr O_{Y}\left((p_1+q_2)-(p_2+q_1)\right)\right)\in T\times P_{\widetilde N(D)/N(D)}.
\end{equation*}
where $x,y$ are the nodes on $T$ and $(p_1,q_1,p_2,q_2)$ are the nodes on~$\widetilde N(D)$.
Now recall that there exists $r\in N(D)$ such that $p+q+r$ is the $g^1_3$; let $\tilde r\in N(\widetilde D)$ be a preimage of $r$.
As we have already pointed out, it follows from Recillas' trigonal construction  that $P_{ N(\widetilde D)/N(D)}=J(Y)$ for some hyperelliptic genus $3$ curve $Y$.
Recall further from (\ref{equ:trigonalcon}) that we can view $Y$  as sitting in
$\operatorname{Sym^3}(N(\widetilde  D))$ via the trigonal construction, and taking $\xi_1=p_1+q_2+\tilde r\in Y$ and
$\xi_2=p_2+q_1+\tilde r \in Y$, we find that $\mathscr O_{Y}\left((p_1+q_2)-(p_2+q_1)\right) = \mathscr O_{Y}(\xi_1 - \xi_2)$. Since $Y$ is hyperelliptic, the theta divisor $\Theta_Y$ is given by the
difference variety $Y - Y$, and this proves the lemma.
\end{proof}
\begin{rem}
One can also show that given an irreducible \'etale double cover $\widetilde D \to D$,
where $D$ has a unique singularity, which is of type $A_3$, one can reconstruct a cubic
threefold $X$ with unique singularity,  which is an $A_3$-singularity.
A similar statement holds for families of double covers of plane quintics (Proposition \ref{P:B2}).
In particular the map from $D_{A_3}$
to $\bf B$ is dominant (which also follows from our first proof of Theorem \ref{prop:locusB}).
\end{rem}

\subsection{Images of boundary divisors in $\widetilde M$}
Using similar analysis, one can compute the extension $\widetilde{IJ}$ to a generic cubic with a unique $A_n$-singularity, $n \leq 5$, or a unique $D_4$-sin\-gu\-la\-rity.   Together with work of Collino on chordal cubics, this allows for the description of the images of each of the boundary divisors in $\widetilde M$ under the morphism $\widetilde {IJ}:\widetilde M\to \overline {IJ}$.
We give the torus rank $1$ results in the table below: this table expands on \cite[Table 1]{cml} --- which is concerned with the map to the Satake compactification, and thus only provides the compact part; the rows for $A_1$ and $A_3$ summarize the results above.    The description of the image of the $\tDD[4]$ divisor is not as concise, and since this divisor is   mapped to the torus  rank $2$ locus, we only give a brief description  here (see Remark \ref{R:D4Div}).  Moreover, the results in the next section show that there are no divisorial boundary components of $\overline{IJ}$ with torus rank $2$; in particular,  the $\tDD[4]$ divisor is  contracted, and therefore does not map to a divisorial boundary component of $\overline{IJ}$.

In the table below, the  first column describes the unique singularity of the cubic.
The second column describes the associated $(2,3)$-curve.
It is easy to see that the Jacobian of the normalization of the $(2,3)$-curve has the same dimension
as the  Prym of the normalization of the discriminant curve  (independent of the degeneration), and is for instance a hyperelliptic Jacobian if and only if the Prym is.
In an argument similar to Proposition \ref{pro:IdentificationCollMurre}, Krisztian Havasi \cite{havasi}
has shown that the two abelian varieties are in fact always  isomorphic.
 This is the data in the column $C_{(2,3)}$.  For more details, see \cite{cml, cmjl1}.
The third column describes the tails arising from the stable reduction of the singularity of the associated plane quintic.  The compact part of the Prym is an abelian variety that is a product of abelian varieties of the type in columns two and three. This is summarized in column four.  The dimension of this locus is tallied in column five.  Next, we  give the extension data, which given the abelian variety $A$, is determined by a point $x\in A/\pm$.  The locus of points $x\in A$ that arise as extension data is given in column 6.    The dimension of this locus is given in column 7.  The total dimension of the locus associated to cubics with the given singularity is the sum of columns 5 and 7.  This is given in column 8.  The corresponding loci in \cite{grhu1} are given in the last column.

We note also that curiously enough the boundary divisor $\tDA[5]\subset\partial\tM$ is contracted under~$\widetilde{IJ}$ to a codimension one locus within
the closure of the locus $\bf{A}$, which is  the image of the $A_1$-divisor $\tDA[1]$ under the map $\widetilde {IJ}$.

\small
\begin{table}[htb]
\begin{center} \begin{tabular}{||c||c|c||l|c|c|c||c||c||}
\hline $\operatorname{Sing}(X\subset \mathbb P^4)$ & $C_{(2,3)}\subset\mathbb P^3$& Tail $T$ & Compact Part& dim & Extension&dim&dim& Thm.~\ref{theo:divisors}\\
\hline\hline secant&ribbon  &  --&$\calH_5$&9&--&--&9&$\mathcal H_5$ \\
 \hline $A_1$&$\calM_4-\theta_{\textnormal{null}}$ &  -- &$\calJ_4$& 9&$\pm(g^1_3-\hat g^1_3)$&0&9&${\bf A}$\\
\hline  $A_2$&$\theta_{\textnormal{null}}\cap  \calM_4$ & $ \calM_{1,1}$& $\calA_1\times (\calJ_4 \cap \theta_{\textnormal{null}})$&1+8&--&--&9&$\mathbf K$ \\
\hline $A_3$& $ \calH_{3,2}$& $ \calM_{1,2}$&$ \calA_1 \times  \calH_{3}$&1+5& $T\times \Theta_C$&3 &$9$&${\bf B}$\\
\hline \hline  $A_4$& $ \calH_{3,1}$ & $ \calM_{2,1}$& $\calA_2\times  \calH_{3}$&3+5&--&--&$8$&$\subsetneq \overline{\mathcal H_5}$\\
\hline $A_5$& $ \calM_{2,2}$& $ \calM_{2,2}$&  $\calA_2\times \calA_2$ &3+3 &$\subseteq  2_*\Theta_{T} \times 2_*\Theta_{C}$&2&$\le 8$&$\subsetneq \overline{\mathbf A} $\\
\hline
\end{tabular}
\vspace{0.2cm}
\caption{The extended period map on the boundary divisors.}\label{tablecon}
\end{center}
\end{table}
\normalsize

\begin{rem}\label{R:D4Div}
In the $D_4$ case,  the $(2,3)$-curve is an arithmetic genus $4$ curve lying on the union of two planes in $\mathbb P^3$, consisting of two elliptic curve components, meeting each other in $3$ nodes along the line of intersection of the two planes.    The tail from the $D_4$ singularity is an elliptic curve with $3$ marked points.   One can conclude from the Prym construction that in the $D_4$ case, the compact part consists of the product of $3$ elliptic curves.  The remaining degeneration data can also  be obtained from the Prym construction, however, the description is less concise.
 We suspect that the $D_4$ case may give rise to  the $8$ dimensional locus $\mathbf B22$ described in Theorem \ref{theo:degenerationscod2}.
\end{rem}


\section{Boundary strata of $\overline{IJ}$ of torus rank 2}\label{sec:2strata}

In this section we would like to describe the geometry of the intersection of $\overline{IJ}$ with (the main stratum of)  the torus rank $2$ stratum in a toroidal
compactification $\calA_5^{\operatorname{tor}}$. Again, given the natural
map
$$
\varphi: \calA_g^{\operatorname{tor}} \to \Sat = \calA_g \sqcup \calA_{g-1} \sqcup \calA_{g-2} \sqcup \ldots \sqcup \calA_0
$$
we let $\beta_i^{0,\operatorname{tor}}:= \varphi^{-1}(\calA_{g-i})$. Since all known toroidal compactifications, in particular $\Perf$ and $\Vor$, coincide outside
$\varphi^{-1} (\Sat[g-4])$ the stratum  $\beta_2^{0,\operatorname{tor}}$ does not depend on the chosen toroidal compactification and we will
simply denote it by $\beta_2^0$. The codimension of $\beta_2^0$ in $ \calA_g^{\operatorname{tor}}$ is $2$, in the case of genus $g=5$ it thus
has dimension $13$. The stratum $\beta_2^0$ is stratified into two substrata $\beta(\sigma_{1+1})$ and $\beta(\sigma_{K3})$
depending on the two (up to change of coordinates)
non-degenerate cones
in either the second Voronoi or the perfect cone decomposition in genus $2$. In terms of generating forms these are given by $\sigma_{1+1}=\langle x_1^2,x_2^2 \rangle$
and  $\sigma_{K3}=\langle x_1^2,x_2^2, (x_1-x_2)^2 \rangle$. The codimension of $\beta(\sigma_{1+1})$ in $\calA_g^{\operatorname{tor}} $ is
$2$ and that of $\beta(\sigma_{K3})$ is $3$. We recall that, up to a finite group quotient, the stratum
$\beta(\sigma_{1+1})$ is a $\CC^*$-bundle over the two-fold cartesian product $\mathcal X_{g-2}^{\times 2} \to \calA_{g-2}$ of the universal family
$\mathcal X
_{g-2} \to \calA_{g-2}$. Intrinsically, this $\CC^*$-bundle is the Poincar\'e bundle $\mathcal P$ over $\mathcal X_{g-2}^{\times 2} $ trivialized along the $0$-section of
the cartesian product of the universal families with its own $0$-section removed. The stratum $\beta(\sigma_{K3})$ lies in the closure of the stratum $\beta(\sigma_{1+1})$. It
is, again up to the action of its symmetry group, isomorphic to  $\mathcal X_{g-2}^{\times 2} \to \calA_{g-2}$ and can be thought of as the $0$-section of the Poincar\'e bundle.
In \cite[Prop.~10.3]{grhu2} the intersection $\overline{IJ}\cap\beta(\sigma_{K3})$ was determined and shown to be purely 7-dimensional.
Here we will discuss the intersection of $\overline{IJ}$ with $\beta_{2,0}^0:=\beta(\sigma_{1+1})$. Geometrically $\beta_{2,0}^0$ parameterizes semi-abelic varieties with torus rank $2$ whose
normalization is a $\PP^1\times\PP^1$ bundle over an abelian variety of dimension $g-2$.

We shall now restrict ourselves to the case we are interested in, namely $g=5$. As most of our arguments depend on concrete calculations we must first fix the
coordinates with which we will be working. To do this start with a period matrix in genus $5$:
$$
\tau_5=
\begin{pmatrix}
\tau_{11} & \tau_{12}  & \tau_{13}  & \tau_{14} & \tau_{15} \\
* & \tau_{22}  & \tau_{23}  & \tau_{24} & \tau_{25} \\
* &  *  & \tau_{33}  & \tau_{34} & \tau_{35} \\
* & *  &*  & \tau_{44} & \tau_{45} \\
* & *  & *  & * & \tau_{55}
\end{pmatrix}.
$$
In order to first describe $\beta_1^0$ we consider the map
$$
{\mathbb H} _5\to \CC^* \times \CC^4 \times {\mathbb H} _4,\qquad \tau_5 \mapsto (t_1, z_4, \tau_4)
$$
where $t_1=e^{2 \pi i \tau_{11}}$, $z_4=(\tau_{12}, \tau_{13},\tau_{14},\tau_{15})$ and $\tau_4=({\tau_{ij}}), i,j=2,\ldots,5$.
The partial compactification $\calA_5\sqcup\beta_1^0$ is obtained by the torus embedding $\CC^* \hookrightarrow \CC$, in other words by adding the origin $\{0\}$ to $\CC^*$, and $\beta_1^0$ is then given by $t_1=0$.
This also shows that $\beta_1^0$ is the universal Kummer family
family ${\mathcal X}_{4}/(\pm 1)$ where the base is given by $\tau_4$ and $z_4=(\tau_{12}, \tau_{13},\tau_{14},\tau_{15})$ are coordinates on the fibers. The Kummer involution
is given by $z_4 \mapsto -z_4$ which is induced by an involution in $\Sp(5,\ZZ)$. In terms of semi-abelic varieties parameterized by points on $\beta_1^0$ the situation is the following. Given $(\tau_4,z_4)$, then  $z_4\in A_{\tau_4}$ defines a $\CC^*$-extension of $A_{\tau_4}$. This is compactified to a $\PP^1$-bundle where the $0$-section and the $\infty$-section are glued with a shift by $z_4$.
Then $\pm z_4$ define isomorphic semi-abelic varieties.

We now move to the stratum $\beta_{2,0}^0$. For this we consider the partial quotient
 $$
{\mathbb H} _5\to (\CC^*)^3 \times \CC^3 \times \CC^3 \times {\mathbb H} _3,\quad \tau_5 \mapsto (t_1,t_2,t_3), (b, z_3, \tau_3)
$$
where $t_1=e^{2 \pi i \tau_{11}}$, $t_2=e^{2 \pi i \tau_{12}}$, $t_3=e^{2 \pi i \tau_{22}}$, $b=(\tau_{23},\tau_{24},\tau_{25})$, $z_3=(\tau_{13},\tau_{14},\tau_{15})$
and $\tau_3=({\tau_{ij}}), i,j=3,\ldots,5$. The partial compactification given by the cone $\sigma_{1+1}=\langle x_1^2,x_2^2 \rangle$ is then obtained by considering the torus
embedding $(\CC^*)^3 \hookrightarrow \CC \times \CC^* \times \CC$, and  $\beta_{2,0}^0$ is given by $t_1=t_3=0$. Above we have described $\beta_{2,0}^0$ as the
$\CC^*$ bundle over the product $\mathcal X_{3}^{\times 2} \to \calA_{3}$ given by the Poincar\'e bundle with the $0$-section removed. The connection to the variables we have just
introduced is the following: $\tau_3$ is a point in the base $\calA_3$, $(b,z) \in A_{\tau_3} \times A_{\tau_3}$, and $t_2$ is the fiber coordinate of the Poincar\'e bundle.
We will rename this variable $x:=t_2$ as this fits in better with calculations which have appeared previously in the literature.  The points in  $\beta_{2,0}^0$ have the following
interpretation in terms of semi-abelic varieties. First of all the pair $(b,z_3)$ determines a $(\CC^*)^2$-extension over the abelian threefold $A_{\tau_3}$. This is then compactified
to a $\PP^1 \times \PP^1$-bundle over $A_{\tau_3}$, the opposite sides of which are further glued with shifts by $b$ and $z$ on the base, respectively, and via multiplication by $x\in \CC^*$ on $\PP^1$ (i.e., the identifications are $(z_3,0,t)\sim(z_3+b, \infty,xt)$ and $(z_3,t,0)\sim(z_3+z,xt,\infty)$).

We will find it convenient to consider points in $\beta_2^0$ as limit points of $\beta_1^0$, understood as points on a partial compactification $\calX_4'/\pm 1$ of the universal Kummer variety $\calX_4 / \pm1$, over the partial compactification $\ab[4]'$. Recall that Mumford's partial toroidal compactification is defined as $\ab[4]'=\ab[4] \sqcup \calX_3/\pm 1$.
In order to avoid confusion with Kummer involutions we will describe the boundary $\partial\calX_4'$ before taking the involution, as a family over $\calX_3/\pm 1$. Recall from the discussion above that we have the universal semi-abelian family over $\calX_3$: for every $(\tau_3,b)\in\calX_3$ we take the $\CC^*$-extension over $A_{\tau_3}$ given by $b$. Varying $(\tau_3,b)$ then gives us a $\CC^*$-bundle over $\calX_3$ which we compactify to a $\PP^1$-bundle.
We now glue the
$0$-section and the $\infty$-section of this $\PP^1$-bundle as follows:
denoting, as before, by $z\in A_{\tau_3}$ and $x\in \PP^1$ the coordinates in the universal family and fibers of the $\PP^1$ bundle respectively, the gluing is given by
$(z,0)\sim (z+b,\infty)$. In this way, up to Kummer involutions,  $\beta_2^0$ can be written as the union of two strata: $\beta(\sigma_{K3})$ corresponds to the locus where $x=0$ (which is identified with the locus where $x=\infty$), and $\beta(\sigma_{1+1})$ is the total space of the universal $\CC^*$-bundle over $\calX_3$.

The reader will notice that this description is
very similar to the discussion of the family of torus rank $2$ semi-abelic varieties in the previous paragraph. Clearly these two are closely related, but should not be confused.
The former is a family of semi-abelic varieties {\em over} the stratum $\beta_2^0(\sigma_{1+1})$, the latter construction takes place {\em inside} the toroidal compactification of
$\ab[5]$ itself, namely it describes the part in any of the standard toroidal compactifications of $\ab[5]$ which lies over $\ab[3]$ in $\Sat[5]$ (in fact this description of the torus rank $2$ part holds, adapted suitably, for all genera).

The description in the last paragraph seemingly destroys the symmetry of the two factors in the family $\calX_3^{\times 2}$ in the previous description. However, as the intrinsic
description of $\beta_{2,0}^0$ shows there is a symmetry exchanging the two factors of  $\mathcal X_{3}^{\times 2}$. This symmetry comes from the fact that the cone
$\sigma_{1+1}=\langle x_1^2,x_2^2 \rangle$ allows the symmetry which interchanges $x_1$ and $x_2$.
Geometrically one should think of this as follows: on a suitable level cover of $\calA_5^{\operatorname{tor}}$ the two boundary divisors given by $t_1=0$ and $t_3=0$ correspond to different divisorial irreducible components of the boundary, say $D_1$ and $D_3$, which intersect along some two-dimensional locus on the level cover, which covers the stratum $\beta_2^0\subset \calA_5^{\operatorname{tor}}$. The deck transformation group of this level cover acts transitively on the set of its irreducible boundary divisors, and in particular contains an involution $j$ that interchanges $D_1$ and $D_3$, and thus induces an involution on their intersection --- which $j$ preserves as a set.
The involution induced by $j$ on $\beta_{2,0}^0$ is just interchanging the two factors of $\mathcal X_{3}^{\times 2}$.

Our goal is to describe $\overline{IJ}\cap\beta_{2}^0$; in principle one could approach this by using the computations of the theta gradients on the boundary components performed in \cite{grhu2}, but as the locus where a suitable gradient of theta vanishes has an additional component  $\calA_1\times\theta_{\rm null}^{(4)}\subset\calA_5$, one would then need to distinguish its boundary from that of $IJ$. Thus we take a different approach.
We have already recalled the description of $\overline{IJ}\cap\beta_1^0={\bf A} \cup {\bf B}$ from \cite[Thm.~9.1]{grhu1}, see Theorem \ref{theo:codimension1}.

\begin{pro}
$\overline{IJ}\cap\beta_{2}^0=(\overline{\bf A}\cup\overline{\bf B})\cap\beta_{2}^0$.
\end{pro}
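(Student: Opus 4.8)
The plan is to derive both inclusions from the fact that ${\widetilde {IJ}}^{V}\colon\tM\to\overline{IJ}$ is a surjective morphism, together with the monodromy analysis underlying Theorem \ref{teo:main}. The inclusion $\supseteq$ is free: by Theorems \ref{prop:locusA} and \ref{prop:locusB} and the properness of ${\widetilde {IJ}}^{V}$ one has $\overline{\mathbf A}={\widetilde {IJ}}^{V}(\overline{\tDA[1]})$ and $\overline{\mathbf B}={\widetilde {IJ}}^{V}(\overline{\tDA[3]})$, so $\overline{\mathbf A}\cup\overline{\mathbf B}\subseteq\overline{IJ}$. For the nontrivial inclusion I would first pass, as usual, to a finite cover $\tM'\to\tM$ on which the boundary of $\calM$ is simple normal crossing (Theorem \ref{theo:wonderfulblowup}(1)); the statement is local and insensitive to such covers. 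The key input is the matroidal property underlying the main theorem (Corollary \ref{cor:matroidal}): the monodromy cone at every point of $\tM'$ is spanned by rank one quadrics, so in particular the monodromy around each irreducible boundary divisor $D'$ of $\tM'$ is a single positive semidefinite integral quadric $N_{D'}$ of rank $\le 1$.

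The next step is the observation that if $D'$ is a boundary divisor of $\tM'$ with $N_{D'}\ne 0$, then ${\widetilde {IJ}}^{V}(D')\subseteq\overline{\mathbf A}\cup\overline{\mathbf B}$. Indeed, the generic point of such a $D'$ lies on no other boundary divisor, so its monodromy cone is the ray $\RR_{\ge 0}N_{D'}$, and ${\widetilde {IJ}}^{V}$ carries it to a point of $\Vor[5]$ of torus rank $\operatorname{rk}N_{D'}=1$, hence to a point of $\beta_1^0\cap\overline{IJ}$; by Theorem \ref{theo:codimension1} (\cite[Thm.~9.1]{grhu1}) this locus is $(\overline{\mathbf A}\cup\overline{\mathbf B})\cap\beta_1^0$, and since $D'$ is irreducible its whole image lies in the closed set $\overline{\mathbf A}\cup\overline{\mathbf B}$. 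Granting this, the proof finishes quickly: given $p\in\overline{IJ}\cap\beta_2^0$, pick $\tilde p\in\tM'$ over $p$; since $p$ has torus rank $2$, $\tilde p$ lies in the boundary, on some divisors $D_1',\dots,D_k'$, with monodromy cone $\sigma_{\tilde p}=\RR_{\ge 0}\langle N_{D_1'},\dots,N_{D_k'}\rangle$, each generator of rank $\le 1$. Because the image has torus rank $2$, the cone $\sigma_{\tilde p}$ contains a rank $2$ element; in particular it is nonzero, so some $N_{D_j'}$ has rank exactly $1$. Then ${\widetilde {IJ}}^{V}(D_j')\subseteq\overline{\mathbf A}\cup\overline{\mathbf B}$ by the previous step, and since $\tilde p\in D_j'$ we conclude $p\in\overline{\mathbf A}\cup\overline{\mathbf B}$, which gives $\overline{IJ}\cap\beta_2^0\subseteq(\overline{\mathbf A}\cup\overline{\mathbf B})\cap\beta_2^0$.

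The step I expect to carry the real content is the rank $\le 1$ claim for the monodromy around each boundary divisor of $\tM'$: this is the divisorial manifestation of the matroidal statement (Corollary \ref{cor:matroidal}) and is exactly why the construction of $\tM$ uses the wonderful blow-up of the Weyl cover rather than an arbitrary log resolution. A related point needing care is the $D_4$-divisor $\tDD[4]\subset\tM$, which is itself contracted into the torus rank $2$ boundary (see Remark \ref{R:D4Div}): on the snc cover $\tM'$ it must split into several divisors, each carrying only a rank one monodromy, so that the argument above still applies to it uniformly; alternatively, $\tDD[4]$ can be treated directly through the Prym/quintic picture for a cubic with a $D_4$ singularity, confirming that its image lands in $\overline{\mathbf B}$. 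Once the matroidal input and the genus four description $\overline{IJ}\cap\beta_1^0=\overline{\mathbf A}\cup\overline{\mathbf B}$ are granted, no further computation is needed for the present statement; the explicit determination of $(\overline{\mathbf A}\cup\overline{\mathbf B})\cap\beta_2^0$ is then the content of the coordinate analysis set up above.
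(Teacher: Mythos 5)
Your reduction to the claim that the monodromy around every boundary divisor of (a finite snc cover of) $\tM$ has rank $\le 1$ is where the argument breaks. Corollary \ref{cor:matroidal} does not give this: it says the monodromy cones are contained in matroidal cones of the fan (so the image lies in $\Matr[5]$), not that each divisorial monodromy $N_{D'}$ is itself of rank $\le 1$. In fact the claim is false for the divisor $\tDD[4]$: by Theorem \ref{theo:divisors} and Remark \ref{R:D4Div} its generic point is mapped into the torus rank $2$ boundary, so $N_{\tDD[4]}$ has rank $2$ (its ray lies inside the matroidal cone $\langle x_1^2,x_2^2\rangle$ without being spanned by rank one forms). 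Your attempted repair --- that on the snc cover $\tM'$ the preimage of $\tDD[4]$ ``splits into several divisors, each carrying only a rank one monodromy'' --- cannot work: a finite cover replaces the local monodromy by a power of a unipotent element, and the logarithm of a power has the same rank, so every component of the preimage of $\tDD[4]$ still carries rank $2$ monodromy. Consequently, for a point $p\in\overline{IJ}\cap\beta_2^0$ whose preimage $\tilde p$ lies only on (the preimage of) $\tDD[4]$, no divisor of rank one monodromy passes through $\tilde p$, and your argument says nothing about $p$. This is exactly the delicate case, and your fallback (``treat $\tDD[4]$ directly through the Prym/quintic picture, confirming that its image lands in $\overline{\mathbf B}$'') is asserted but not carried out; note the paper itself only conjectures the precise identification of the image of $\tDD[4]$ (with $\mathbf B22$) in Remarks \ref{R:D4Div} and \ref{remidentify}.

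The paper closes this gap by a dimension argument rather than a divisor-by-divisor monodromy bound: since $\beta_2^0$ (resp.\ the toroidal boundary) is a Cartier divisor situation, every component of $\overline{IJ}\cap\partial\AP$ is $9$-dimensional; any component contained in $\beta_2$ would have to be dominated by a boundary divisor of $\tM$, and by Table \ref{tablecon} the only divisor mapping generically into $\beta_2$ is $\tDD[4]$, whose image is at most $8$-dimensional (by inspecting the extension data, or because the Fourier--Jacobi computations of \cite{grhu2} show $\overline{IJ}\cap\beta_2^0$ is $8$-dimensional). Hence $\overline{IJ}\cap\partial\AP=\overline{\mathbf A}\cup\overline{\mathbf B}$, which yields the statement upon intersecting with $\beta_2^0$. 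So to make your proof work you must either establish the $8$-dimensionality (or the containment in $\overline{\mathbf A}\cup\overline{\mathbf B}$) of the image of $\tDD[4]$ by an actual Prym computation, or revert to the purity-plus-dimension count; the rank one monodromy claim as stated is not available. The easy inclusion $\supseteq$ in your first step is fine.
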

\begin{proof}
As $\beta_2^0$ is a stratum in any toroidal compactification, it suffices to prove the proposition for the case of $\AP$.  We claim that for this compactification the equality $\overline{IJ}\cap\partial\AP=\overline{\bf A}\cup\overline{\bf B}$ holds; this claim immediately implies the proposition, by intersecting both sides with $\beta_2^0$.

To prove the claim, we first note that the boundary $\partial\AP$ is a Cartier divisor in~$\AP$, and thus every irreducible component of the intersection $\overline{IJ}\cap\partial\AP$ must have codimension one within $\overline{IJ}$. Thus to prove the claim we need to show that there does not exist a $9$-dimensional irreducible component of $\overline{IJ}\cap\partial\AP$ contained in $\beta_2$ --- from which it would follow that each irreducible component of $\overline{IJ}\cap\partial\AP=\overline{IJ}\cap\beta_1$ intersects $\beta_1^0$, which by the previous results would mean it is the closure of either $A$ and $B$. Suppose for contradiction that there were an 9-dimensional component of $\overline{IJ}\cap\AP$ contained in $\beta_2$. As any such component would have to come from a divisor in $\tM$, we see
that no possibilities in Table~\ref{tablecon} give such divisorial components, and thus the only case to potentially investigate is the image of $\tDD[4]$ --- which by Remark~\ref{R:D4Div} is indeed mapped generically into $\beta_2^0$. One can then investigate the extension data further to show that the image of $\tDD[4]$ is indeed at most 8-dimensional,
but also we note that computations in \cite{grhu2} describe explicitly the intersection $\overline{IJ}\cap\beta_2^0$ by computing the Fourier--Jacobi expansion of theta gradients --- and one easily sees in fact that this intersection is 8-dimensional.
\end{proof}

Thus to compute the intersection $\overline{IJ}\cap\beta_2^0$, we will need to study the closures of ${\bf A}$ and ${\bf B}$ and we will do this in the picture of the
partial compactification of the Kummer family. To fix notation, we write $J(C)=\Pic^0(C)$ for the (degree zero) Jacobian of the curve, and write $\Theta\subset\Pic^{g-1}(C)$ for the canonically defined theta divisor, which is the vanishing locus of a function $\theta$. In degree zero different symmetric (under the involution $\pm 1$) theta divisors differ by points of order two, so the notion of $2_*\Theta_C\subset\Pic^0(C)$ makes sense, as the image of any symmetric degree zero theta divisor under the multiplication by two map. We also note that any hyperelliptic curve $C$ of genus 3 has precisely one vanishing theta-null, and thus inside its $\Pic^0(C)$ there exists a unique symmetric theta divisor whose unique singularity is at the origin; it is simply given as $C-C$. Also note that for an elliptic curve $E_t, t \in \mathbb H_1$ we have $g-1=0$, and thus
the canonical theta divisor lives in $\Pic^0$.
This is, working with the standard theta function, the unique odd $2$-torsion point, which in $E_t$ is given by $(1+t)/2$.
\begin{teo}\label{theo:degenerationscod2}
The intersection $\overline{IJ}\cap\beta_{2}^0$ of the closure of the locus of intermediate Jacobians with the locus of semi-abelic varieties of torus rank $2$ consists of the closure of the  following four irreducible loci in $\beta_{2,0}^0$, denoted ${\bf A}11b$, ${\bf B}1$, ${\bf B}21$, ${\bf B}22$, and given as follows:
\begin{flalign*}
  {\bf A}11b:=\lbrace \tau_3&=J(C),z=2(p+q)-K_C, b=-z=K_C-2(p+q) , x=y^2\mid&\\ &C\in\calM_3, p,q\in C,
  \ y=-\operatorname{grad}_z\theta(\tau,p+q)/\operatorname{grad}_z\theta(\tau,K_C-p-q) \rbrace&
\end{flalign*}
\begin{flalign*}
  {\bf B}1:=\lbrace \tau_3&\in \calH_3, b=0, z\in \Theta_{\tau_3}, x\in \CC^*\rbrace&
\end{flalign*}
\begin{flalign*}
  {\bf B}21:=\lbrace \tau_3=&t\times\tau_2 \in \calA_1 \times \calA_2,  b=(0,b_2) \in \{0\} \times 2_*\Theta_{\tau_2}, z=(z_1,z_2) \mid&\\
 &\theta(\tau_2,z_2)+x\theta(\tau_2,z_2+b_2)=0\rbrace&
\end{flalign*}
\begin{flalign*}
  {\bf B}22:=\lbrace \tau_3=&t\times t'\times t'' \in {\calA_1} \times \calA_1 \times \calA_1, b=(0,b',b''), z=(z,z',z'') \mid &\\ &\theta(t',z')\theta(t'',z'') +x\theta(t',z'+b')\theta(t'',z''+b'')=0\rbrace.&
\end{flalign*}
We denote throughout by $\tau_i$ a period matrix of an abelian $i$-fold, use $t$ for period matrices of elliptic curves, $b_i,z_i\in A_{\tau_i}$, and $b',b'',z,z',z''$
are points on elliptic curves.
\end{teo}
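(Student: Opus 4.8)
The proof rests on the reduction $\overline{IJ}\cap\beta_2^0=(\overline{\bf A}\cup\overline{\bf B})\cap\beta_2^0$ just established, so it suffices to intersect the closures of the two torus rank $1$ components ${\bf A}$ and ${\bf B}$ with the main torus rank $2$ stratum, working throughout in the affine chart for $\beta(\sigma_{1+1})$ with coordinates $(\tau_3,b,z,x)$ introduced above (anything that lands in the deeper stratum $\beta(\sigma_{K3})$ was already treated in \cite[Prop.~10.3]{grhu2}). A point of ${\bf A}$ over a genus $4$ Jacobian $J(C)$, or of ${\bf B}$ over $E\times J(C')$, specializes into $\beta_2^0$ exactly when its abelian part degenerates to a semi-abelic variety of torus rank $1$; by the Torelli--Mumford--Namikawa description of $\partial\calA_g'$ this happens precisely along \emph{one-nodal} degenerations of the underlying curve, the semi-abelian extension datum being the difference $p-q$ of the two branches of the node. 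I will organize the computation along these one-nodal degenerations. As a complementary check, since $\widetilde{IJ}^{V}\colon\tM\to\Vor[5]$ is a morphism, $\overline{IJ}\cap\beta_2^0$ is covered by images of boundary strata of $\tM$ of codimension at most two (including the codimension-one divisor $\tDD[4]$, which by Remark~\ref{R:D4Div} already maps generically into $\beta_2^0$), and for each such stratum the Mumford--Beauville Prym construction applied to the corresponding degenerate double cover of a plane quintic pins down the limiting semi-abelic variety together with its extension data.

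\emph{The locus coming from ${\bf A}$.} Degenerate $C$ to a stable genus $4$ curve with one non-separating node $p\sim q$; then $\tau_3=J(\widetilde C)$ for $\widetilde C$ the genus $3$ normalization, $b=p-q$, and the remaining coordinates are the limit of $2_*\operatorname{Sing}\Theta_C$ as one crosses this boundary divisor of $\calA_4'$. I would compute this limit by the Fourier--Jacobi expansion of $\theta$ and of $\operatorname{grad}\theta$ along the divisor, exactly in the spirit of \cite[Prop.~12]{grsmconjectures} and \cite[Thm.~9.1]{grhu1}. Since $2_*\operatorname{Sing}\Theta_C$ is cut out by the simultaneous vanishing of $\theta$ \emph{and} its first-order part, the leading terms of the expansion force $z$ and $b=-z$ to be $\pm\bigl(2(p+q)-K_C\bigr)$ and --- this is the delicate point --- force the $\PP^1$-gluing parameter to be the \emph{square} $x=y^2$ of the ratio $y=-\operatorname{grad}_z\theta(\tau,p+q)/\operatorname{grad}_z\theta(\tau,K_C-p-q)$, the appearance of a square reflecting the quadratic nature of the singularity condition. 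Separating nodes of $C$ give a reducible $J(C)$ and hence no new torus rank $2$ component (they remain in $\beta_1^0$ over a reducible point, or land in $\beta(\sigma_{K3})$). This yields exactly ${\bf A}11b$, whose irreducibility follows from that of the universal family of pairs $(\tau,p+q)$ over $\calM_3$.

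\emph{The loci coming from ${\bf B}$.} Here the abelian part $E\times J(C')$, with $C'$ hyperelliptic of genus $3$, degenerates in three genuinely different ways. (i) If $E$ becomes a nodal cubic, the $\CC^*$ it contributes is trivially glued over the unchanged factor $J(C')\in\calH_3$, the datum $z_3\in\Theta_{C'}=C'-C'$ is unaffected, and the point $z_1\in E$ becomes the fibre coordinate $x\in\CC^*$; this is ${\bf B}1$, with $b=0$. (ii) If instead $C'$ acquires one non-separating node, its genus $2$ normalization $\widetilde{C'}$ is automatically hyperelliptic, so $\tau_3=t\times\tau_2\in\calA_1\times\calA_2$ with $t=E$, $\tau_2=J(\widetilde{C'})$, and the nodal extension datum is forced into $\{0\}\times 2_*\Theta_{\tau_2}$ by the hyperelliptic structure; the condition $z_3\in\Theta_{C'}$, expanded by Fourier--Jacobi on $\theta_{C'}$, degenerates to $\theta(\tau_2,z_2)+x\,\theta(\tau_2,z_2+b_2)=0$, which is ${\bf B}21$. (iii) Degenerating $C'$ further, or taking a separating node and then degenerating the genus $2$ piece, produces three elliptic factors $\tau_3=t\times t'\times t''$ and, iterating the expansion, the bilinear relation $\theta(t',z')\theta(t'',z'')+x\,\theta(t',z'+b')\theta(t'',z''+b'')=0$, which is ${\bf B}22$. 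In each case one checks that no further branching occurs, computes the dimension, and deduces irreducibility of ${\bf B}21$ and ${\bf B}22$ from the irreducibility of the hypersurfaces cut out by these theta-relations.

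\emph{Completeness, and the main difficulty.} To see that ${\bf A}11b,{\bf B}1,{\bf B}21,{\bf B}22$ and nothing else fill out $\overline{IJ}\cap\beta_2^0$, I would match the above with the direct computation in \cite{grhu2} of the Fourier--Jacobi expansion of the relevant theta gradients over $\beta_2^0$, which already pins down $\overline{IJ}\cap\beta_2^0$ set-theoretically --- in particular identifying the $8$-dimensional image of $\tDD[4]$, cf.~Remark~\ref{R:D4Div} --- so that a term-by-term comparison closes the argument. The main obstacle is precisely this: carrying out and normalizing the iterated Fourier--Jacobi expansions near the torus rank $2$ boundary, getting the quadratic relation $x=y^2$ in ${\bf A}11b$ and the (bi)linear relations in ${\bf B}21$ and ${\bf B}22$ exactly right, and reconciling the normalizations across \cite{grhu1} and \cite{grhu2} --- the excerpt already flags one such subtlety, the spurious factor of $2$ multiplying $z_3$ in \cite[Thm.~9.1]{grhu1}.
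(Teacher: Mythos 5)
Your reduction to $(\overline{\bf A}\cup\overline{\bf B})\cap\beta_2^0$ and your treatment of ${\bf B}$ (degenerate either factor of $\calA_1\times\calH_3$, then use the known boundary of the theta-null divisor to split the second case into ${\bf B}21$ and ${\bf B}22$) agree with the paper. The genuine gap is in your treatment of ${\bf A}$, where you take exactly the direct route the paper deliberately avoids, and as sketched it does not work. The limit equations you propose to solve along the boundary --- $\theta(\tau,z)=\theta(\tau,z+b)=0$ together with proportionality of the two gradients (for $x\ne 0,\infty$), resp.\ $z\in\Sing\Theta_\tau$ (for $x=0$) --- do not ``force'' ${\bf A}11b$: they admit further $8$-dimensional solution components, notably $\{b=0,\ z\in 2_*\Theta_C,\ x=1\}$ and a component over $\calA_1\times\calA_2$ (the paper's ${\bf A}11a$ and ${\bf A}13$). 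The whole difficulty is to decide which of these candidates actually lie in the closure of ${\bf A}$, i.e.\ arise as limits of singular points of theta divisors of \emph{Jacobians} of smooth genus $4$ curves, rather than as limits only from the other components of the universal singular locus; the Fourier--Jacobi expansion cannot see this distinction (this is the ``limit linear systems'' problem the paper explicitly sidesteps). The paper instead uses the decomposition $\calS=\calS_{\rm null}\cup\calS_{\rm dec}\cup\calS'$ with ${\bf A}=2_*\calS'$, rules out $\calS_{\rm null}$ because limits of two-torsion points have $x=\pm1$, computes $\partial(2_*\calS_{\rm dec})$ directly (which is where ${\bf A}11a$ comes from), and --- crucially --- invokes cubic-theoretic input: Lemma \ref{lem:A2} (a cubic with $A_1+A_2$-singularities has torus rank $1$), dimension counts for the images of $\tDA[4],\tDA[5],\tDD[4]$ and the chordal divisor, and the irreducibility of the locus of cubics with two $A_1$-singularities (Remark \ref{rem:components}), which forces exactly one of the three candidates to be the image of the $2A_1$-locus and identifies it as ${\bf A}11b$. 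None of this is replaceable by the expansion alone; moreover your restriction to one-nodal degenerations with generic normalization never even produces the candidate over $\calA_1\times\calA_2$ that must be excluded. (Incidentally, the square $x=y^2$ in ${\bf A}11b$ comes from the fiberwise multiplication by $2$, not from ``the quadratic nature of the singularity condition''.)

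Your fallback completeness argument has a second gap: matching against the theta-gradient computations of \cite{grhu2} is exactly the approach the paper rejects at the start of Section \ref{sec:2strata}, because the locus where the relevant gradient vanishes has the extra component $\calA_1\times\theta_{\operatorname{null}}^{(4)}\subset\calA_5$, so its boundary over $\beta_2^0$ does not pin down $\overline{IJ}\cap\beta_2^0$ set-theoretically; one would still have to separate that component's boundary from that of $IJ$, which is the same problem in disguise. So while your case-${\bf B}$ analysis and the overall frame are sound, the identification of the unique ${\bf A}$-component and the completeness of the list are not established by your argument.
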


\begin{rem}
Note that the theorem above describes the intersection with all of $\beta_2^0$, as a union of these 4 irreducible 8-dimensional components. The intersection $\overline{IJ}\cap\beta(\sigma_{K3})$ was already computed explicitly in \cite[Proposition~10.3]{grhu1} and shown to be purely 7-dimensional. The theorem above gives an independent proof of this result, as in none of the components above we have $x=0$ identically, so none of them are contained in $\beta(\sigma_{K3})$.
\end{rem}
The proof of the theorem will use the following statement about the geometry of cubics:
\begin{lem}\label{lem:A2}
Let $X$ be a cubic threefold with precisely one $A_1$- and one $A_2$-singularity. Then $IJ(X)$ has torus rank $1$ and is thus not contained in $\beta_2$.  Here, by $IJ(X)$, we mean the image under  $\widetilde {IJ}$ of a point in $\widetilde {\mathcal M}$ lying over $X$.
\end{lem}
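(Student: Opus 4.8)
The plan is to count genera carefully, exactly as was done for the isolated‐singularity cases above (the $A_1$ and $A_3$ analyses in Section \ref{sec:components}), and to conclude that the torus rank is forced to be $1$. Let $(X,\ell)$ be a pair with $\ell$ a non-special line on $X$, and let $\widetilde D\to D$ be the associated \'etale double cover of the discriminant plane quintic obtained by projection from $\ell$. By Proposition \ref{P:Fano}, the singularities of $D$ correspond one-to-one with those of $X$, including analytic type; so $D$ has exactly one $A_1$- and one $A_2$-singularity, and no others. Since the extended intermediate Jacobian map is a morphism (Corollary \ref{cor:finalextension}) and, outside the chordal locus, agrees with the Prym of the stable reduction of $\widetilde D\to D$ (the reduction achieved in Section \ref{sec:compare} together with Proposition \ref{prop:nonFS}), it suffices to compute the Prym of the stable reduction $\widetilde C\to C$ of this family of double covers and to check its torus rank.

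First I would carry out the realization/stabilization procedure of Section \ref{sec:proof}. The $A_1$-singularity is simply normalized away (the two branches are joined, contributing nothing to the tail), so the genus contributed there is $0$. The $A_2$-singularity is unibranch; by the recipe in Section \ref{sec:proof} it is replaced by a genus-$1$ hyperelliptic tail attached at a Weierstrass point. Thus if $N(D)$ denotes the partial normalization of $D$ at both singularities, then $N(D)$ is smooth of genus $g(N(D)) = p_a(D) - \delta(A_1) - \delta(A_2) = 6 - 1 - 1 = 4$, and $C$ has two components: $N(D)$ of genus $4$ and the elliptic tail $T$, meeting in a single point. Hence the geometric genus of $C$ is $4 + 1 = 5$. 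Since the cover is \'etale over both singularities, $\widetilde D\to D$ lifts to $\widetilde C\to C$ where $\widetilde C$ has $\widetilde{N(D)}\to N(D)$ an \'etale double cover of genus $7$ and two copies $T_1,T_2$ of the elliptic tail (interchanged by the involution), giving geometric genus $7 + 1 + 1 = 9$. The abelian part of the Prym therefore has dimension $9 - 5 = 4$, and the torus rank equals (arithmetic genus of $C$ minus geometric genus), which here is $\dim\ker$ of the map on $H^1$ coming from the single node, namely $1$. Concretely: $C$ is of compact type except for one node joining $T$ to $N(D)$, so the toric rank of the limit semi-abelian variety is exactly $1$, and the Prym lies in $\partial\calA_5' = \beta_1^0$, not in $\beta_2^0$.

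The main obstacle, and the only point needing care, is making sure that the $A_2$-tail genuinely contributes an elliptic \emph{bridge}-type phenomenon rather than forcing extra torus rank: one must confirm that the node joining $T$ to $N(D)$ is the only node of $C$ (so torus rank $1$, not $0$ or $2$), which follows because $T$ is attached at a single marked point (the $A_2$ is unibranch) and $N(D)$ is smooth. I would also double-check that the Prym of an \'etale double cover where one component (the tail pair $T_1\sqcup T_2$) is \'etale-covered by two copies does not drop the abelian dimension below $4$ — this is the standard computation à la \cite[Thm.~5.4]{beauville} and \cite[Prop.~1.5]{abh} already invoked in the $A_3$ case, and it shows the abelian part splits as (the elliptic $T$) $\times\, P_{\widetilde{N(D)}/N(D)}$ with the latter of dimension $3$, totalling $4$. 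Since $4 = 5 - 1$ and the boundary stratum $\beta_2^0$ parameterizes torus rank $2$ degenerations, $IJ(X)\notin\beta_2^0$, which is the assertion. Finally I would remark that the case where $X$ itself (rather than a general deformation) is the relevant point of $\widetilde{\mathcal M}$ is handled by the same argument applied to a one-parameter smoothing, exactly as in Corollary \ref{cor:ExtIsol}.
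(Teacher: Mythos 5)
Your final numbers (abelian part of dimension $4$, hence torus rank $1$) agree with the paper, but the stable reduction you describe is wrong, and the step that is supposed to produce the torus rank does not work. The $A_1$-singularity of the discriminant quintic is \emph{not} ``normalized away'': in the realization procedure of Section \ref{sec:proof} the white $A_1$-vertex is deleted and its two edges are joined into a single edge, i.e.\ the node \emph{persists} as a node of the limit curve. The correct stable reduction $C$ is a genus-$4$ curve with one self-node (from the $A_1$) together with the elliptic tail $T$ (from the $A_2$) attached at one point, so $p_a(C)=6$, as it must be for a limit of plane quintics. The curve you describe --- smooth $N(D)$ of genus $4$ and $T$ meeting in a single point, with no further node --- has arithmetic genus $5$ and is of compact type; the Prym of an admissible double cover of it would be a compact $4$-dimensional abelian variety, which cannot be a limit of $5$-dimensional intermediate Jacobians. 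Relatedly, your justification of the torus rank is incorrect on its own terms: the node joining $T$ to $N(D)$ is a \emph{separating} node, and separating nodes contribute nothing to the toric part (the dual graph is a tree there), while ``arithmetic minus geometric genus'' is in any case a formula for the torus rank of $J(C)$, not of the Prym.

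The torus rank comes exactly from the node you discarded. In the correct picture the cover is \'etale over the self-node, so $\widetilde C$ consists of the genus-$7$ \'etale double cover of the normalization carrying two nodes over the self-node (interchanged by the involution), plus the two elliptic tails $T_1,T_2$. Then $b_1(\widetilde\Gamma)=2$, $b_1(\Gamma)=1$, and the anti-invariant part of $H_1$ of the graph has rank $1$; equivalently, the abelian part has dimension $9-5=4$ while the Prym is $5$-dimensional, so the torus rank is exactly $1$, which is the assertion of the Lemma. This is essentially how the paper argues (phrased as: $J(\widetilde C)$ is two elliptic curves times a semi-abelic variety of torus rank at most $2$, while $J(C)$ already has torus rank $1$). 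A secondary omission: the paper first checks that $D$ is irreducible (a quintic splitting off a line or conic could not have just one $A_1$ and one $A_2$ as its only singularities, for intersection-degree reasons); you use the irreducible-quintic genus count without comment, so this step should be added.
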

\begin{proof}
Projecting from a non-special line  $\ell$ on $X$ we obtain a conic bundle whose discriminant $D$ is a plane curve with one $A_1$- and one $A_2$-singularity and
a double cover $\widetilde D \to D$ which is \'etale over the singularities. $D$ must be irreducible, since a cubic or a quartic with one cusp would intersect the residual conic or line
in more than just an ordinary node.
It follows from Remark \ref{R:irreducible} that $\widetilde D$ is also irreducible.

The stable reduction of $D$ is a stable curve with two irreducible components, one being an elliptic tail (which
replaces the cusp), and the other being an arithmetic  genus $5$ curve with one self-node, which intersects the elliptic curve in one
point.  The stable reduction of $\widetilde D$ is a stable curve with three irreducible components, two being elliptic tails (which
replace the cusps and are interchanged by the involution), and the other being an irreducible arithmetic genus $7$ curve with two self-nodes (interchanged by the involution), which intersects each of the  elliptic tails in one
point.  Setting $\widetilde \Gamma$ to be the dual graph of the stable reduction of $\widetilde D$, and $H_1(\widetilde \Gamma,\mathbb Z)^-$ to be the anti-invariant part of the homology with respect to the involution, one can easily compute that $\operatorname{rank}H_1(\widetilde \Gamma,\mathbb Z)^-=1$, so that the Prym has torus rank $1$ (\cite[Prop.~1.3]{abh}).  In fact, one can describe the Prym in more detail, lying in the intersection of the closures of the loci  $\mathbf A$ and $\mathbf K$ (see \cite[\S 4.1.12]{havasi}, which was completed after the first version of our preprint appeared, and contains a  number of further examples).
\end{proof}

We will now prove the theorem --- the method is a combination of analytic computations (partially known) for degenerations of theta functions on semi-abelic varieties, combined with various results on the loci of cubic threefolds with singularities that were obtained above.
\begin{proof}[Proof of Theorem \ref{theo:degenerationscod2}]
Since $\beta_2^0$ is a Cartier divisor in the partial boundary  $\beta_1^0 \cup \beta_2^0$ it follows that all components of
$\overline{IJ}\cap\beta_{2}^0=(\overline{\bf A}\cup\overline{\bf B})\cap\beta_{2}^0$ have dimension $8$. It also follows from \cite[Prop.~10.3]{grhu1} that all components
of the intersection $(\overline{\bf A}\cup\overline{\bf B})\cap\beta(\sigma_{K3})$ have dimension $7$, hence it will be sufficient to consider the intersection
$(\overline{\bf A}\cup\overline{\bf B})\cap\beta_{2,0}^0$.

The proof now proceeds by a rather lengthy enumeration of possible cases.
For this, we first need to recall the analytic description of the theta divisor of a semi-abelic variety of torus rank one corresponding to a point $(\tau,b)\in\calX_{g-1}/\pm 1\subset {\calA}'_g$. The semi-abelic variety is glued from the $\PP^1$-bundle over $A_{\tau}$ given by $b$ (thought of as the point of the dual abelian variety, identified with $A_\tau$ by the principal polarization), by identifying the $0$ and $\infty$ sections with a shift by $b$. As shown by Mumford \cite{mumfordAg} (see \cite{grhu2} for many more details), the equation for the theta divisor of the semi-abelic variety is
$$
  \theta(\tau,z)+x\theta(\tau,z+b)=0
$$
where $z\in A_\tau$ is the coordinate on the abelian part, and $x\in\PP^1$ is the coordinate on the fiber.
We now describe the closures of the components ${\bf A}$ and ${\bf B}$ in the partial toroidal compactification of $\calX_4/\pm 1$, and the approaches we take are somewhat different. While for component {\bf B} we are dealing with the closure of the universal theta divisor in the partial toroidal compactification, and the above description would suffice directly, for component {\bf A} such a direct approach would be much harder. Indeed, locus {\bf A} is defined using the geometry of the locus of Jacobians and the global family of singularities of theta divisors over it, and describing the degenerations of this would require a suitable study of limit linear systems. Instead of taking this approach, we use the known results on the global family of singularities of theta divisors over $\calA_g$, and the fact that the locus of cubics with two $A_1$-singularities is irreducible, see Remark \ref{rem:components}.

\subsection*{Case {\bf A}}
Mumford \cite{mumfordAg}  introduced the universal family $\calS:=\operatorname{Sing}_{vert}\Theta\subset\calX_g$ of singularities of theta divisors in the vertical direction.
This has been further studied by Debarre \cite{debarredecomposes}, Ciliberto and van der Geer \cite{cvdg1,cvdg2} and Salvati Manni and the second-named author \cite{grsmordertwo}. In particular it is known, see \cite[Cor.~8.10]{cvdg2} and references therein, that $\calS$ has three irreducible components, $\calS=\calS_{\rm null}\cup\calS_{\rm dec}\cup \calS'$, where $\calS_{\rm null}$ is the locus of two-torsion points that are singular on the theta divisors over $\Theta_{\rm null}\subset\calA_g$, $\calS_{\rm dec}$ denotes the locus
\begin{equation}\label{Sdec}
 \calS_{\rm dec}=\lbrace \tau=t\times \tau_{g-1}, z\in\Theta_t\times \Theta_{\tau_{g-1}}\rbrace\subset\calX_g
\end{equation}
that projects to $\calA_1\times\calA_{g-1}$, and finally $\calS'$ is the remaining irreducible component of $\calS$.
For $g=4$ the theta divisor is singular at a point not of order two only if the ppav is a Jacobian, and since $\theta_{\rm null}$ does not contain the Jacobian locus $\calJ_4$, it follows that  $2_*\calS'$ (which means the image under the fiberwise multiplication by two) is precisely the locus {\bf A}.
To describe the boundary of ${\bf A}=2_*\calS'$ in the partial toroidal compactification of $\calX_4$, we will thus first describe the boundary of all of $2_*\calS$, and then identify which irreducible component(s) of the boundary of $2_*\calS$ are in fact contained in $2_*\calS'$.

\subsubsection*{The boundary of $\calS$}
In this case we are dealing with the singularities of the semi-abelic theta divisor, which come in two different flavors, described by Mumford \cite{mumfordAg}, depending on whether $x\ne0,\infty$ (and the singularity is at a smooth point of the semi-abelic variety) or $x=0$. The semi-abelic theta divisor is singular at some point with $x\ne0,\infty$ --- which is a smooth point of the semi-abelic variety --- if the gradient of the
semi-abelic theta function is zero at such a point, which is equivalent to saying that
\begin{equation}\label{equ:A11}
  \theta(\tau,z)=\theta(\tau,z+b)=0\quad{\rm and}\quad \operatorname{grad}_z\theta(\tau,z)+x\operatorname{grad}_z\theta(\tau,z+b)=0,
\end{equation}
where the last condition means simply that the two gradient vectors are proportional.

The semi-abelic theta divisor is always singular at points with $x=0$, as these are singularities of the semi-abelic variety itself. However, as shown by Mumford \cite{mumfordAg}, a point on the semi-abelic theta divisor with $x=0$ is the limit of singular points of theta divisors on abelian varieties if and only if such a point lies on the singular locus of the theta divisor of the base abelian variety, i.e., if $z\in\Sing\Theta_\tau$.
We will label these two cases as ${\bf A}1$ and ${\bf A}2$. Moreover, in each of these cases we could a priori have different irreducible components of the locus corresponding to the different possibilities of the dimension of $\Sing\Theta_\tau$. As is well-known, the theta divisor of a genus $3$ curve $C$ is smooth
if and only if $C$ is not hyperelliptic  (see eg.~\cite[p.~250]{acgh}).  The theta divisor of the Jacobian of a hyperelliptic genus $3$ curve has one singular point,
namely its $g^1_2$. For a product of an elliptic curve and an abelian surface, the singular locus of the theta divisor is a curve: if $\tau_3=\tau_1\times\tau_2$, then we have
\begin{equation}\label{thetaproduct}
 \Theta_{\tau_3}=(A_{\tau_1}\times\Theta_{\tau_2})\cup (\Theta_{\tau_1}\times A_{\tau_2})
 \quad {\rm and}\quad \Sing\Theta_{\tau_3}=\Theta_{\tau_1}\times\Theta_{\tau_2}.
\end{equation}
Also in the case of a product of $3$ elliptic curves, the singular locus of the theta divisor has dimension $1$, however it is no longer irreducible.
We will label these three possibilities by 1,2,3 (corresponding to $\calJ_3,\calH_3,\calA_1\times\calA_2$) written as the second digit in our numbering.
We note that a priori they can lead to loci in $\overline{IJ}\cap\beta_{2}^0$ of different dimensions (and in fact they do), while as we know that $\overline{IJ}\cap\beta_{2}^0$ is equidimensional of dimension 8, we are only interested in the $8$-dimensional components. We finally recall that the boundary $\overline{\calJ}_4\cap\beta_1^0$ is the locus $(\tau_3,p-q)$ where $p,q$ lie on the curve of which $\tau_3$ is the Jacobian matrix. We thus have the following cases.

\subsubsection*{Case ${\bf A}11$} Recall this means we are looking for the singularities for $x\ne 0$, and $\tau_3\in\calJ_3$ is not hyperelliptic. Then we must have $b=p-q$, $\theta(\tau,z)=\theta(\tau,z+b)=0$ and $\operatorname{grad}_z\theta(\tau,z)+x \operatorname{grad}_z\theta(\tau,z+b)=0$. This gives 12 complex variables (6 for $\tau_3$, 1 each for $p,q$, 3 for $z$, 1 for $x$) and 5 conditions. Nevertheless, we claim that this locus is $8$-dimensional. In fact it has two irreducible components. The first is given by $b=0$, thus
we obtain singularities for the theta divisor for all $z\in \Theta_C \times \{0\}$ and $x=-1$. Remembering that we have to multiply by $2$ we obtain
the following locus:
\begin{flalign*}
 {\bf A}11a:=\lbrace \tau_3=&J(C), b=0 ,z\in 2_*\Theta_C, x=1\rbrace.&
\end{flalign*}

There is, however, also a component with $b\neq 0$.
To describe this we will first work in the Jacobian $J^2(C)$ of degree $2$.
Recall that the canonical model of a non-hyperelliptic curve $C$ of genus $3$
is a plane quartic and the theta divisor $\Theta_C=S^2(C)$ is just the second symmetric product of $C$. Given a pair ${p,q}$ of points on $C$ the geometric form of the
Riemann--Roch theorem tells us that the image of the Gauss map of  ${p,q}$ in the dual projective plane is the line $L$ spanned by $p$ and $q$. Now let
$C \cap L= \{p,q,p',q'\}$ (which is to say that we have $K_C=p+q+p'+q'$). Then any pair of points in this intersection has the same image under the Gauss map (which also shows that the Gauss map is $6:1$). We can then
take $z:=p+q$ and $z+b:=p'+q'=K_C-p-q$.
These points are identified under the Gauss map, i.e., the gradients $ \operatorname{grad}_z\theta(\tau,z)$ and
$\operatorname{grad}_z\theta(\tau,z+b)$ are proportional.
To translate this into  degree $0$ we replace $z$ and $b$ by $z  + \kappa$ and $b+\kappa$ respectively, where $\kappa$ is a theta characteristic (whose choice does not matter
since we multiply by $2$).
This gives us precisely the parametrization of the locus ${\bf A}11b$ in the theorem.

\subsubsection*{Case ${\bf A}12$.} We claim that this does not give a component in $\overline{IJ}\cap\beta_{2}^0$.
In this case $\tau_3\in \calH_3$, which means that the curve $C$ varies in a $5$-dimensional family.
We claim that we do not get an $8$-dimensional family because the Equations \eqref{equ:A11} define a variety of codimension at least two for every smooth hyperelliptic curve $C$.
Let us first assume that $z$ and $z+b$ are not in the singular locus of the (degree $2$) theta divisor $\Theta_C$. On the regular part of the theta divisor the Gauss map is a finite map of degree $4$.
Hence the condition that the two gradients are proportional is codimension $2$. If $z$ is the $g_2^1$ and $b\neq 0$, then $x=0$ and again we get something of codimension $2$
(moreover we would be in case ${\bf A}2$ from the start). The case that $z+b$ is the $g_2^1$ and $b\neq 0$ is analogous (formally leading to $x=\infty$). Finally we could have $b=0$ and $z$ being the
$g^1_2$, in which case $x$ can be arbitrary. But this is only $1$-dimensional.

\subsubsection*{Case ${\bf A}13$.} In this case we have $\tau_3=t\times\tau_2\in\calA_1\times\calA_2$, and the theta divisor becomes reducible and singular as described by Equation \eqref{thetaproduct}. If we have $b\in E_t\times\Theta_{\tau_2}$, then the conditions $z\in\Theta_{\tau_3}$ and $z+b\in\Theta_{\tau_3}$ are independent. In this case for the dimension count we would have $4$ for $\tau_3$, $2$ for $b$, $3$ for $z$, $1$ for $x$ for a total of $10$, but then would have two conditions for the points to lie on the theta divisor, while the condition for the gradients is not satisfied automatically, and thus cuts the dimension of the locus down to at most 7.

However, if $b=(0,b_2)$, then for the case when $z\in \Theta_t\times A_{\tau_2}\subset \Theta_{\tau_3}$, we would automatically have $b+z\in\Theta_{\tau_3}$. Noticing that $\Theta_t=\lbrace (1+t)/2\rbrace$,
we compute for the gradients:
$$
  \partial_{z_1}(\theta(z)+x\theta(z+b))=\partial_{z_1}\theta(t,(1+t)/2)(\theta(\tau_2,z_2)+x\theta(\tau_2,z_2+b_2))
$$
and vanishing of this gives one condition determining $x$ uniquely, while
$$
\partial_{z_2}(\theta(z)+x\theta(z+b))=\theta(t,(1+t)/2)\cdot(\ldots)=0
$$
is automatically satisfied. Remembering to multiply the $z$ by 2, we thus get the 8-dimensional locus
\begin{flalign*}
  {\bf A}13:= \lbrace\tau_3= &t \times\tau_2  \in \calA_1 \times \calA_2, b=(0,b_2), z=(0,2z_2), x=y^2\mid& \\   &\theta(\tau_2,z_2)+y\theta(\tau_2,z_2+b_2)\rbrace.&
\end{flalign*}

We now proceed to the case ${\bf A}2$, that is when $x=0$, and we must have $z\in 2_*\Sing\Theta_{\tau_3}$.

\subsubsection*{Case ${\bf A}21$} This would be the case when $\tau_3$ is a non-hyperelliptic Jacobian of genus 3, but then its theta divisor is smooth, hence the case ${\bf A}21$ is impossible.

\subsubsection*{Case ${\bf A}22$} Here we have $\tau_3\in \calH_3$, so $z\in 2_*\Sing\Theta_{\tau_3}$ is simply the one point $0$, and the dimension count gives $5$ for $\tau_3$, and $2$ for $b$, which is too small to yield a component of $\overline{IJ}\cap\beta_{2}^0$.

\subsubsection*{Case ${\bf A}23$} Here we have $\tau_3=t\times\tau_2$ (so 4 parameters), $b$ gives two extra parameters, $x=0$, and $\dim\Sing\Theta_{\tau_3}=1$, so the total dimension is at most 7, and we do not get an 8-dimensional component of $(\overline{\bf A}\cup\overline{\bf B})\cap\beta_{2}^0$.

\smallskip
We have thus finally determined the boundary of $2_*\calS$ within the partial compactification of $\calX_4$ to consist of the three components ${\bf A}11a, {\bf A}11b, {\bf A}13$ described above, and will now need to argue that only ${\bf A}11b$ is in fact contained in the closure of the locus ${\bf A}$.
By the properties of the wonderful blow-up any component of the intersection $\overline{\bf A}\cap\beta_2^0\subset{\mathcal A}_5^{\operatorname{tor}}$, must correspond to some further degeneration of $A_1$-cubics, i.e., must arise as the image of cubics with some collection of singularities that is a degeneration of $A_1$, or from the chordal cubic. We can easily see by inspecting the abelian parts and dimensions, that none of the components ${\bf A}11a, {\bf A}11b, {\bf A}13$ are contained in the image of $\tDA[4]$ (which maps to an 8-dimensional locus not contained in the boundary), $\tDA[5]$ (which maps to an 8-dimensional locus contained in $\beta_1$ but not in $\beta_2$), or $\tDD[4]$ (for which the compact part is a product of three elliptic curves, and this is not the case for any of the components ${\bf A}xx$).

Moreover, by Lemma \ref{lem:A2}  none of the ${\bf A}xx$ components can come from $\tDA[1] \cap \tDA[2]$.
As our discussion of case ${\bf B}$ will show, see Remark \ref{remidentify}, no ${\bf A}xx$ component comes from $\tDA[1] \cap \tDA[3]$. Finally, the chordal cubic locus maps to the locus of Jacobians of hyperelliptic curves. The locus of hyperelliptic genus 5 curves is 9-dimensional, and as one easily sees (eg.~from its identification with the moduli of twelve unordered points on $\PP^1$), its intersection with $\beta_2$ is of codimension two (the curve must have at least two nodes). Thus the intersection of the locus of hyperelliptic Jacobians with $\beta_2$ is 7-dimensional, and cannot contain any of the 8-dimensional loci ${\bf A}xx$.

Hence, finally we see that the only locus in the wonderful compactification that could map to one of the components ${\bf A}xx$ is the self-intersection $\tDA[1] \cap \tDA[1]$. By Remark \ref{rem:components}, the locus of cubic threefolds with two $A_1$-singularities is irreducible, and hence exactly one of the components
${\bf A}11a$, ${\bf A}11b$ or ${\bf A}13$ must occur as the image of the irreducible locus of $2A_1$ cubics.

We will now show that the locus ${\bf A}11b$ is not contained in the boundary of either $2_*\calS_{\rm null}$ or $2_*\calS_{\rm dec}$. Since it is nonetheless contained in the boundary of
$2_*\calS$, it must thus be contained in the boundary of ${\bf A}=2_*\calS'$, and by the above, must thus be the only such component. This finishes the proof in this case.

Indeed, to see that ${\bf A}11b$ is not contained in the boundary of $2_*\calS_{\rm null}$, we note that the limit of two-torsion points on smooth ppav under rank $2$ semi-abelic degenerations has been studied in detail in \cite{grhu1,grhu2} --- in particular the $x$ coordinate of such points must be equal to $\pm 1$. Since the coordinate $x$ of a generic point of ${\bf A}11b$ is different from $\pm 1$, it follows that ${\bf A}11b$ is not contained in the closure of $\calS_{\rm null}.$
It remains to show that  ${\bf A}11b$ is not contained in the boundary of $\calS_{\rm dec}$.
To obtain the boundary of $\calS_{\rm dec}$, defined by \eqref{Sdec}, one has to degenerate either the $t\in\calA_1$ or the $\tau\in\calA_3$ factor of the compact part.
If $\tau$ is degenerated, then the base would be $t\times\tau_2$ for some $\tau_2\in\calA_2$, and thus it could not be a generic point of ${\bf A}11b$, which is over an indecomposable base. On the other hand, if the $t$ factor were degenerated to $i\infty$, then we would have $b=0$, while $x$, being the limit of twice the point $\Theta_t=(1+t)/2$, would become zero.
This shows that ${\bf A}11b$ is not contained in the boundary of $\calS_{\rm dec}$. At the same time we get precisely the description of component ${\bf A}11a$ (which is thus contained in the boundary of $2_*\calS_{\rm dec}$).
Note that this argument does not suffice to show that ${\bf A}11a$ cannot {\em also} be contained in the boundary of $2_*\calS'$, for this we have to use  the fact that
the locus of cubics with two $A_1$-singularities is irreducible.

We shall now proceed to describe the boundary of locus ${\bf B}$.

\subsection*{Case ${\bf B}$}
In this case the geometry is much simpler, but one has to be careful with the combinatorics.
First of all, the boundary of $\calA_1\times\calH_3$ in $\beta_1^0\subset {\calA_4^{\operatorname{tor}{}}}$, where  ${\calA_4^{\operatorname{tor}{}}}$ denotes any toroidal compactification of $\ab[4]$,
has two components, corresponding to which factor degenerates to a semi-abelic variety. We will denote these possibilities by 1 and 2, corresponding to whether $t\in\calA_1$ or $\tau_3\in \calH_3$ degenerates.

\subsubsection*{Case ${\bf B}1$} This case turns out to be very easy:  the abelian part of the semi-abelic variety is simply given by $\tau_3\in \calH_3$. In this case there cannot be any extension data (we degenerated the elliptic curve, the corresponding abelian part is trivial), i.e., we have $b=0$. The dimension count gives $5$ for $\tau_3$, plus $3$ for the point $z$, and we thus get the component ${\bf B}1$ in the theorem.

\subsubsection*{Case ${\bf B}2$} This is the situation where $\tau_3$ degenerates. Since the locus ${\bf B}$ is the product of the universal family of elliptic curves $\calX_1$, and the universal theta divisor over $\calH_3$, we need to describe the degeneration of the universal theta divisor over $\calH_3$, and then take arbitrary $t\in\calA_1$,  $z=(z_1,z_2)$ with $z_1\in E_t$ arbitrary, and  $b=(0,b_2)$. Note furthermore that $\calH_3$ is the theta-null divisor in $\calA_3$. The degeneration  of the singularities of the universal theta divisor
over the theta-null divisor in $\calA_3$
is well-known, see \cite[Equation (21)]{grhu1}. Noticing that we have $\theta_{\rm null}^{(2)}=\calA_1\times\calA_1$, we have in any toroidal  compactification ${\calA_3^{\operatorname{tor}{}}}$ (which in fact all coincide in genus $3$)
that
$$
 \overline{\theta_{\rm null}^{(3)}}\cap\beta_1^0=2_*\Theta_2\cup\pi^{-1}(\calA_1\times\calA_1)
$$
where $\Theta_2\subset\calX_2$ is the universal theta divisor, and $\pi:\calX_2\to\calA_2$.
We then consider separately the two cases corresponding to the two components on the right.

\subsubsection*{Case ${\bf B}21$} This is the case when $(\tau_2,b_2)\in\calX_2$ lies on $2_*$ applied to the universal (symmetric) theta divisor, while $(x,z_2)$ lies on the corresponding semi-abelic theta divisor. These two equations yield precisely the component ${\bf B}21$ of the theorem, and one easily checks that it is $8$-dimensional (1 parameter for $t$, $3$ for $\tau_2$, $1$ for $b_2$, $1$ for $z_1$, $2$ for $z_2$, and $1$ for $x$, for a total of 9, minus one equation for $\tau_2,z_2,x$, so we get $8$).

\subsubsection*{Case ${\bf B}22$}
In this case we must have $\tau_2\in\calA_1\times\calA_1$, while $b_2$ can be arbitrary.
Note that this also means that the theta function becomes a product of two theta functions of genus $1$. We thus get the locus ${\bf B}22$ from the statement of Theorem  \ref{theo:degenerationscod2}, and check that its dimension is equal to 8.
\end{proof}

\begin{rem}\label{remidentify}
The way the proof proceeds, and the way we have labeled the loci, is such that the locus ${\bf A}11b$ is contained in the locus ${\bf A}$, which is the image of the divisor $\tDA[1]$, while the loci ${\bf B}yy$ are contained in the locus ${\bf B}$, which is the image of the divisor $\tDA[3]$. We note that as ${\bf A}11b$ is not equal to any of ${\bf B}yy$, this means that none of the four loci above are contained in the image of $\tDA[1]\cap\tDA[3]$. This can be stated as saying that no component of $\overline{IJ}\cap\beta_2^0$ is contained in $\overline{\bf A}\cap\overline{\bf B}$, or as saying that no component of $\overline{\bf A}\cap\overline{\bf B}$ is contained in $\beta_2^0$ --- and in fact one can easily see that $\overline{\bf A}\cap\overline{\bf B}\cap\beta_1^0$ is non-empty and thus 8-dimensional as expected.

As discussed in the course of the proof, the images of the boundary divisors $\tDA[4]$ and $\tDA[5]$  are 8-dimensional, and not contained in $\beta_2$. Lemma \ref{lem:A2} shows that the image of the (8-dimensional) intersection $\tDA[1]\cap\tDA[2]$ is not contained in $\beta_2$, either. As we have already seen, ${\bf A}11b$ is thus the image of the self-intersection of $\tDA[1]$ (which is the irreducible locus of cubics with $2A_1$-singularities), while the ${\bf B}xx$ loci could be the image of an irreducible component of the self-intersection of $\tDA[3]$, or of $\tDA[3]$ with $\tDD[4]$ (note the image of a generic point of $\tDA[2]\cap\tDA[3]$ has torus rank $1$, and thus $\tDA[2]\cap\tDA[3]$ maps to an 8-dimensional locus not contained in $\beta_2$). Indeed, it seems likely to us that ${\bf B}22$ is equal to the image of $\tDD[4]$, while we expect  that
the locus of singular cubics with $2A_3$-singularities has two connected components which map to
${\bf B}1$ and ${\bf B}21$ respectively.
\end{rem}


\bibliographystyle{amsalpha}

\bibliography{cubics}

\newcommand{\etalchar}[1]{$^{#1}$}
\def\cprime{$'$}
\providecommand{\bysame}{\leavevmode\hbox to3em{\hrulefill}\thinspace}
\providecommand{\MR}{\relax\ifhmode\unskip\space\fi MR }
\providecommand{\MRhref}[2]{%
  \href{http://www.ams.org/mathscinet-getitem?mr=#1}{#2}
}
\providecommand{\href}[2]{#2}
\begin{thebibliography}{CMGHL17}

\bibitem[AB12]{ab}
V.~Alexeev and A.~Brunyate, \emph{Extending the {T}orelli map to toroidal
  compactifications of {S}iegel space}, Invent. Math. \textbf{188} (2012),
  no.~1, 175--196.

\bibitem[ABH02]{abh}
V.~Alexeev, Ch. Birkenhake, and K.~Hulek, \emph{Degenerations of {P}rym
  varieties}, J. Reine Angew. Math. \textbf{553} (2002), 73--116.

\bibitem[ACGH85]{acgh}
E.~Arbarello, M.~Cornalba, P.~A. Griffiths, and J.~Harris, \emph{Geometry of
  algebraic curves. {V}ol. {I}}, Grundlehren der Mathematischen Wissenschaften
  [Fundamental Principles of Mathematical Sciences], vol. 267, Springer-Verlag,
  New York, 1985.

\bibitem[ACT11]{act}
D.~Allcock, J.~A. Carlson, and D.~Toledo, \emph{The moduli space of cubic
  threefolds as a ball quotient}, Mem. Amer. Math. Soc. \textbf{209} (2011),
  no.~985, xii+70.

\bibitem[AK77]{ak}
A.~B. Altman and S.~L. Kleiman, \emph{Foundations of the theory of {F}ano
  schemes}, Compositio Math. \textbf{34} (1977), no.~1, 3--47.

\bibitem[Ale02]{alexeev02}
V.~Alexeev, \emph{Complete moduli in the presence of semiabelian group action},
  Ann. of Math. (2) \textbf{155} (2002), no.~3, 611--708.

\bibitem[All03]{a}
D.~Allcock, \emph{The moduli space of cubic threefolds}, J. Algebraic Geom.
  \textbf{12} (2003), no.~2, 201--223.

\bibitem[ALT{\etalchar{+}}12]{aletal}
V.~Alexeev, R.~Livingston, J.~Tenini, M.~Arap, X.~Hu, L.~Huckaba, P.~McFaddin,
  S.~Musgrave, J.~Shin, and C~Ulrich, \emph{Extended {T}orelli map to the
  {I}gusa blowup in genus 6, 7, and 8}, Exp. Math. \textbf{21} (2012), no.~2,
  193--203.

\bibitem[AMRT10]{AMRT}
A.~Ash, D.~Mumford, M.~Rapoport, and Y.-S. Tai, \emph{Smooth compactifications
  of locally symmetric varieties}, second ed., Cambridge Mathematical Library,
  Cambridge University Press, Cambridge, 2010, With the collaboration of Peter
  Scholze.

\bibitem[Bea77a]{beauville}
A.~Beauville, \emph{Prym varieties and the {S}chottky problem}, Invent. Math.
  \textbf{41} (1977), no.~2, 149--196.

\bibitem[Bea77b]{bint}
\bysame, \emph{Vari\'et\'es de {P}rym et jacobiennes interm\'ediaires}, Ann.
  Sci. \'Ecole Norm. Sup. (4) \textbf{10} (1977), no.~3, 309--391.

\bibitem[Bea00]{bdet}
\bysame, \emph{Determinantal hypersurfaces}, Michigan Math. J. \textbf{48}
  (2000), 39--64.

\bibitem[BLR90]{neron}
S.~Bosch, W.~L{\"u}tkebohmert, and M.~Raynaud, \emph{N\'eron models},
  Ergebnisse der Mathematik und ihrer Grenzgebiete (3) [Results in Mathematics
  and Related Areas (3)], vol.~21, Springer-Verlag, Berlin, 1990.

\bibitem[Cau09]{cautis}
S.~Cautis, \emph{The abelian monodromy extension property for families of
  curves}, Math. Ann. \textbf{344} (2009), no.~3, 717--747.

\bibitem[CG72]{cg}
C.~H. Clemens and P.~A. Griffiths, \emph{The intermediate {J}acobian of the
  cubic threefold}, Ann. of Math. (2) \textbf{95} (1972), 281--356.

\bibitem[CKS86]{cks}
E.~H. Cattani, A.~Kaplan, and W.~Schmid, \emph{Degeneration of {H}odge
  structures}, Ann. of Math. (2) \textbf{123} (1986), no.~3, 457--535.

\bibitem[CM78]{cm}
A.~Collino and J.~P. Murre, \emph{The intermediate {J}acobian of a cubic
  threefold with one ordinary double point; an algebraic-geometric approach. {I
  and II}}, Indag. Math. \textbf{40} (1978), no.~1, 43--55 and 56--71.

\bibitem[CMF05]{cmf}
S.~Casalaina-Martin and R.~Friedman, \emph{Cubic threefolds and abelian
  varieties of dimension five}, J. Algebraic Geom. \textbf{14} (2005), no.~2,
  295--326.

\bibitem[CMGHL17]{cmghl}
S.~Casalaina-Martin, S.~Grushevsky, K.~Hulek, and R.~Laza, \emph{Extending the
  {P}rym map to toroidal compactifications of the moduli space of abelian
  varieties}, J. Eur. Math. Soc. (JEMS) \textbf{19} (2017), no.~3, 659--723,
  With an appendix by Mathieu Dutour Sikiri\'{c}.

\bibitem[CMJL12]{cmjl1}
S.~Casalaina-Martin, D.~Jensen, and R.~Laza, \emph{The geometry of the ball
  quotient model of the moduli space of genus four curves}, Compact moduli
  spaces and vector bundles, Contemp. Math., vol. 564, Amer. Math. Soc.,
  Providence, RI, 2012, pp.~107--136.

\bibitem[CML09]{cml}
S.~Casalaina-Martin and R.~Laza, \emph{The moduli space of cubic threefolds via
  degenerations of the intermediate {J}acobian}, J. Reine Angew. Math.
  \textbf{633} (2009), 29--65.

\bibitem[CML13]{cml2}
\bysame, \emph{Simultaneous semi-stable reduction for curves with {ADE}
  singularities}, Trans. Amer. Math. Soc. \textbf{365} (2013), no.~5,
  2271--2295.

\bibitem[CMSP17]{carlson}
J.~A. Carlson, S.~M\"{u}ller-Stach, and C.~Peters, \emph{Period mappings and
  period domains}, Cambridge Studies in Advanced Mathematics, vol. 168,
  Cambridge University Press, Cambridge, 2017.

\bibitem[Col82]{col}
A.~Collino, \emph{The fundamental group of the {F}ano surface. {I}, {II}},
  Algebraic threefolds (Varenna, 1981), Lecture Notes in Math., vol. 947,
  Springer, Berlin, 1982, pp.~209--218, 219--220.

\bibitem[CvdG00]{cvdg1}
C.~Ciliberto and G.~van~der Geer, \emph{The moduli space of abelian varieties
  and the singularities of the theta divisor}, Surveys in differential
  geometry, Surv. Differ. Geom., VII, Int. Press, Somerville, MA, 2000,
  pp.~61--81.

\bibitem[CvdG08]{cvdg2}
\bysame, \emph{Andreotti-{M}ayer loci and the {S}chottky problem}, Doc. Math.
  \textbf{13} (2008), 453--504.

\bibitem[Deb92]{debarredecomposes}
O.~Debarre, \emph{Le lieu des vari\'et\'es ab\'eliennes dont le diviseur
  th\^eta est singulier a deux composantes}, Ann. Sci. \'Ecole Norm. Sup. (4)
  \textbf{25} (1992), no.~6, 687--707.

\bibitem[dJO97]{dJO}
A.~J. de~Jong and F.~Oort, \emph{On extending families of curves}, J. Algebraic
  Geom. \textbf{6} (1997), no.~3, 545--562.

\bibitem[DM86]{dm}
P.~Deligne and G.~D. Mostow, \emph{Monodromy of hypergeometric functions and
  nonlattice integral monodromy}, Inst. Hautes \'Etudes Sci. Publ. Math.
  (1986), no.~63, 5--89.

\bibitem[dP07]{dP}
A.~A. du~Plessis, \emph{Versality properties of projective hypersurfaces}, Real
  and complex singularities, Trends Math., Birkh\"{a}user, Basel, 2007,
  pp.~289--298.

\bibitem[dPW00]{DPW00}
A.~A. du~Plessis and C.~T.~C. Wall, \emph{Singular hypersurfaces, versality,
  and {G}orenstein algebras}, J. Algebraic Geom. \textbf{9} (2000), no.~2,
  309--322.

\bibitem[Fed14]{fedhyp}
M.~Fedorchuk, \emph{Moduli spaces of hyperelliptic curves with {A} and {D}
  singularities}, Math. Z. \textbf{276} (2014), no.~1-2, 299--328.

\bibitem[FS86]{fs}
R.~Friedman and R.~C. Smith, \emph{Degenerations of {P}rym varieties and
  intersections of three quadrics}, Invent. Math. \textbf{85} (1986), no.~3,
  615--635.

\bibitem[GH11]{grhu2}
S.~Grushevsky and K.~Hulek, \emph{Principally polarized semi-abelic varieties
  of small torus rank, and the {A}ndreotti-{M}ayer loci}, Pure Appl. Math. Q.
  \textbf{7} (2011), no.~4, Special Issue: In memory of Eckart Viehweg,
  1309--1360.

\bibitem[GH12]{grhu1}
\bysame, \emph{The class of the locus of intermediate {J}acobians of cubic
  threefolds}, Invent. Math. \textbf{190} (2012), no.~1, 119--168.

\bibitem[Gri69]{griffiths}
P.~A. Griffiths, \emph{On the periods of certain rational integrals. {I}}, Ann.
  of Math. (2) \textbf{90} (1969), 460--495.

\bibitem[Gro67]{EGAIV4}
A.~Grothendieck, \emph{\'{E}l\'ements de g\'eom\'etrie alg\'ebrique. {IV}.
  \'{E}tude locale des sch\'emas et des morphismes de sch\'emas {IV}}, Inst.
  Hautes \'Etudes Sci. Publ. Math. (1967), no.~32, 361.

\bibitem[Gro68]{grothendieck1968sga}
A~Grothendieck, \emph{Sga ii}, Advanc. Studies in Pure Math \textbf{2} (1968).

\bibitem[GSM07]{grsmordertwo}
S.~Grushevsky and R.~Salvati~Manni, \emph{Singularities of the theta divisor at
  points of order two}, Int. Math. Res. Not. IMRN (2007), no.~15, Art. ID
  rnm045, 15.

\bibitem[GSM09]{grsmconjectures}
\bysame, \emph{The loci of abelian varieties with points of high multiplicity
  on the theta divisor}, Geom. Dedicata \textbf{139} (2009), 233--247.

\bibitem[Gwe05]{g}
T.~Gwena, \emph{Degenerations of cubic threefolds and matroids}, Proc. Amer.
  Math. Soc. \textbf{133} (2005), no.~5, 1317--1323 (electronic).

\bibitem[Har77]{h}
R.~Hartshorne, \emph{Algebraic geometry}, Springer-Verlag, New York, 1977,
  Graduate Texts in Mathematics, No. 52.

\bibitem[Has00]{has}
B.~Hassett, \emph{Local stable reduction of plane curve singularities}, J.
  Reine Angew. Math. \textbf{520} (2000), 169--194.

\bibitem[Hav16]{havasi}
K.~Havasi, \emph{Geometric realizations of strata in the boundary of the locus
  of intermediate {J}acobians of cubic threefolds}, Ph.D. thesis, University of
  Colorado at Boulder, 2016.

\bibitem[Kon02]{k2}
S.~Kond{\=o}, \emph{The moduli space of curves of genus 4 and
  {D}eligne-{M}ostow's complex reflection groups}, Algebraic geometry 2000,
  Azumino (Hotaka), Adv. Stud. Pure Math., vol.~36, Math. Soc. Japan, Tokyo,
  2002, pp.~383--400.

\bibitem[Laz10]{laza}
R.~Laza, \emph{The moduli space of cubic fourfolds via the period map}, Ann. of
  Math. (2) \textbf{172} (2010), no.~1, 673--711.

\bibitem[Loo03]{l1}
E.~Looijenga, \emph{Compactifications defined by arrangements. {I}. {T}he ball
  quotient case}, Duke Math. J. \textbf{118} (2003), no.~1, 151--187.

\bibitem[Loo09]{looijenga4folds}
\bysame, \emph{The period map for cubic fourfolds}, Invent. Math. \textbf{177}
  (2009), no.~1, 213--233.

\bibitem[LS07]{ls}
E.~Looijenga and R.~Swierstra, \emph{The period map for cubic threefolds},
  Compos. Math. \textbf{143} (2007), no.~4, 1037--1049.

\bibitem[LSV17]{LSV}
R.~Laza, G.~Sacc\`a, and C.~Voisin, \emph{A hyper-{K}\"{a}hler compactification
  of the intermediate {J}acobian fibration associated with a cubic 4-fold},
  Acta Math. \textbf{218} (2017), no.~1, 55--135.

\bibitem[Mum74]{mprym}
D.~Mumford, \emph{Prym varieties. {I}}, Contributions to analysis (a collection
  of papers dedicated to Lipman Bers), Academic Press, New York, 1974,
  pp.~325--350.

\bibitem[Mum83]{mumfordAg}
\bysame, \emph{On the {K}odaira dimension of the {S}iegel modular variety},
  Algebraic geometry---open problems ({R}avello, 1982), Lecture Notes in Math.,
  vol. 997, Springer, Berlin, 1983, pp.~348--375.

\bibitem[MV12]{MV12}
M.~Melo and F.~Viviani, \emph{Comparing perfect and 2nd {V}oronoi
  decompositions: the matroidal locus}, Math. Ann. \textbf{354} (2012), no.~4,
  1521--1554.

\bibitem[Nam76a]{nam76I}
Y.~Namikawa, \emph{A new compactification of the {S}iegel space and
  degeneration of {A}belian varieties. {I}}, Math. Ann. \textbf{221} (1976),
  no.~2, 97--141.

\bibitem[Nam76b]{nam76II}
\bysame, \emph{A new compactification of the {S}iegel space and degeneration of
  {A}belian varieties. {II}}, Math. Ann. \textbf{221} (1976), no.~3, 201--241.

\bibitem[Nam80]{Nam}
\bysame, \emph{Toroidal compactification of {S}iegel spaces}, Lecture Notes in
  Mathematics, vol. 812, Springer, Berlin, 1980.

\bibitem[O'G99]{OG10}
K.~G. O'Grady, \emph{Desingularized moduli spaces of sheaves on a {$K3$}}, J.
  Reine Angew. Math. \textbf{512} (1999), 49--117.

\bibitem[Ols08]{olsson}
M.~Olsson, \emph{Compactifying moduli spaces for abelian varieties}, Lecture
  Notes in Mathematics, vol. 1958, Springer-Verlag, Berlin, 2008.

\bibitem[RB78]{rb}
S.~S. Ry{\v{s}}kov and E.~P. Baranovski{\u\i}, \emph{{$C$}-types of
  {$n$}-dimensional lattices and {$5$}-dimensional primitive parallelohedra
  (with application to the theory of coverings)}, Proc. Steklov Inst. Math.
  (1978), no.~4, 140, Cover to cover translation of Trudy Mat. Inst. Steklov
  {{\bf{1}}37} (1976), Translated by R. M. Erdahl.

\bibitem[RE88]{er}
S.~S. Ry{\v{s}}kov and R.~M. Erdahl, \emph{The empty sphere. {II}}, Canad. J.
  Math. \textbf{40} (1988), no.~5, 1058--1073.

\bibitem[Rec74]{rec}
S.~Recillas, \emph{Jacobians of curves with {$g\sp{1}\sb{4}$}'s are the
  {P}rym's of trigonal curves}, Bol. Soc. Mat. Mexicana (2) \textbf{19} (1974),
  no.~1, 9--13.

\bibitem[SB06]{sb}
N.~I. Shepherd-Barron, \emph{Perfect forms and the moduli space of abelian
  varieties}, Invent. Math. \textbf{163} (2006), no.~1, 25--45.

\bibitem[Ser06]{sernesi}
E.~Sernesi, \emph{Deformations of algebraic schemes}, Grundlehren der
  Mathematischen Wissenschaften [Fundamental Principles of Mathematical
  Sciences], vol. 334, Springer-Verlag, Berlin, 2006.

\bibitem[Sha80]{shah}
J.~Shah, \emph{A complete moduli space for {$K3$} surfaces of degree {$2$}},
  Ann. of Math. (2) \textbf{112} (1980), no.~3, 485--510.

\bibitem[Voi86]{voisin}
C.~Voisin, \emph{Th\'eor\`eme de {T}orelli pour les cubiques de {${\bf P}\sp
  5$}}, Invent. Math. \textbf{86} (1986), no.~3, 577--601.

\bibitem[Vol02]{vologodsky}
V.~Vologodsky, \emph{The locus of indeterminacy of the {P}rym map}, J. Reine
  Angew. Math. \textbf{553} (2002), 117--124.

\bibitem[Yok02]{yokoyama}
M.~Yokoyama, \emph{Stability of cubic 3-folds}, Tokyo J. Math. \textbf{25}
  (2002), no.~1, 85--105.

\bibitem[Zhe17]{zz}
Z.~Zheng, \emph{Aspects of certain occult period maps}, to appear in Nagoya
  Math. J. (arXiv:1711.02415), 2017.

\end{thebibliography}

\end{document}